\documentclass{article}[ht]
\usepackage{graphicx}
\usepackage[utf8]{inputenc}
\usepackage[twoside,letterpaper,margin=1in]{geometry}
\usepackage{microtype}
\usepackage{graphicx,url,amsmath,amsthm,amsfonts,amssymb,subfigure,bbm,bm,enumitem}
\usepackage{hyperref}
\usepackage{graphics,epic,eepic,color}
\usepackage{tikz}       
\usepackage{pgfplots}    
\usepackage{sidecap}
\pgfplotsset{compat=1.18}
\usepackage{comment}

\newtheorem{theorem}{Theorem}
\newtheorem{conjecture}[theorem]{Conjecture}
\newtheorem{proposition}[theorem]{Proposition}

\newtheorem{lemma}[theorem]{Lemma}
\newtheorem{corollary}[theorem]{Corollary}
\newtheorem{question}[theorem]{Question}

\DeclareMathOperator{\Cr}{\mathrm{cr}}
\DeclareMathOperator{\conv}{conv}
\DeclareMathOperator{\cone}{pos}
\DeclareMathOperator{\sph}{sph}

\DeclareMathOperator{\area}{area}

\DeclareMathOperator{\var}{var}

\DeclareMathOperator{\sign}{sign}

\title{Crossing numbers of simplicial complexes}
\author{Alfredo Hubard\footnote{Universit\'e Gustave Eiffel, CNRS, LIGM, Marne-la-Vall\'ee, France. alfredo.hubard@univ-eiffel.fr}, Cheikh Saliou Ndiaye\footnote{Universit\'e Gustave Eiffel, Univ Paris Est Creteil, CNRS, LAMA UMR8050, F-77447 Marne-la-
Vallée, France. cheikh-saliou.ndiaye@univ-eiffel.fr} and Oriol Solé-Pi\footnote{Departmet of Mathematics, Massachusetts Institute of Technology, Cambridge, MA, US. oriolsp@mit.edu}}

\date{}

\begin{document}
\maketitle
\begin{abstract}This paper studies crossing numbers of $d$-dimensional simplicial complexes drawn in $\mathbb R^{2d}$ or $\mathbb S^{2d}$.
We clarify and generalize a number of known results and conjectures from the graph case to higher dimensions: the crossing lemma (including the exceptional case of two-dimensional complexes drawn in dimension $4$), a generalized Harary-Hill conjecture (and evidence for it), existence of a midrange crossing constant (in a weak sense), and upper bound constructions based on stochastic geometry (related to Sylvester's problem) and the cyclic polytope. 

Some of our results are new even in the case  of graphs: we give a simple proof of the Harary-Hill conjecture for antipodal drawings, and sharpen the threshold above which the rectilinear and spherical crossing numbers of the Erd\H{o}s-R\'enyi random graph behave like those of the complete graph. 
\end{abstract}

\section{Introduction}

The crossing number is one of the best studied measures of non-planarity for graphs. Several variations of the crossing number have been considered in the literature, and countless results have been obtained from both the algorithmic and theoretical points of view. We refer the reader to~\cite{schaefersurvey} and~\cite{thirteenproblems} for an overview of the literature on crossing numbers, as well as multiple interesting open problems in this area. 

In this paper, we study crossing numbers of simplicial complexes, for which our understanding is considerably poorer. 

A main theme of the paper is that the best constructions we are aware of for graphs come from stochastic geometry, and this is also the case in higher dimensions.  

By the dimension of a simplicial complex $X$ we mean the greatest dimension of one of its faces. For a simplicial complex $X$ of dimension $d$ and a continuous map $f \colon X \to \mathbb R^{2d}$, the \textit{crossing number} of $f$, denoted by $\Cr(f)$, is the number of self-intersections of $f$. The \textit{crossing number} of $X$, denoted by $\Cr(X)$, is the minimum of $\Cr(f)$ when taken over all piecewise linear generic maps $f \colon X \to \mathbb R^{2d}$. We will also discuss:
\begin{itemize}
    \item  the \emph{independent pair crossing number}, denoted by $\Cr_\iota(X)$, for which instead of counting crossings we count pairs of vertex-disjoint $d$-faces whose images intersect,
    \item the \emph{rectilinear}  crossing number, denoted by $\Cr_{\mathbb R^{2d}}$, in which we restrict the problem to facewise affine maps of $X$ to $\mathbb R^{2d}$,
    \item and the \emph{spherical} crossing number, denoted by $\Cr_{\mathbb S^{2d}}$, for which we restrict the problem to spherically geodesic maps from $X$ to $\mathbb S^{2d}$
    \end{itemize}
    We call the last two variants the \emph{geometric crossing numbers}. Many non-quantitative results that are valid for one of the two geometric crossing numbers are valid for both with the same proof. For briefness, we sometimes only state one of the two.

    In every case we assume that $f$ is a generic piecewise linear map\footnote{In the case of $\mathbb S^{2d}$, the cone from the origin of the image of every face is piecewise linear and the map is piecewise geodesically spherical}. 
    Generic here means that every self intersection of $f(X)$ is transverse and isolated from the rest, and that for every point $y$ in the target space the set $f^{-1}(y)$ contains at most two points. This, in particular, guarantees that $\Cr(f)$ is well defined.

The following inequalities are immediate from the definitions. 

\[\Cr_\iota(X) \leq  \Cr(X) \leq  \Cr_{\mathbb S^{2d}}(X)\leq \Cr_{\mathbb R^{2d}}(X).\]

We denote the set of $i$-dimensional faces of $X$ by $X(i)$, and we write $X_{(i)}$ for the $i$-skeleton of $X$, i.e., the simplicial complex that consists of all the faces of $X$ up to dimension $i$. 

In this paper we focus on the  extremal problem: what is the smallest crossing number of a $d$-dimensional simplicial complex $X$ with a prescribed number of $d$-faces, $|X(d)|$, and $(d-1)$-faces, $|X(d-1)|$?
It turns out that, provided that $\frac{|X(d)|}{|X(d-1)|}$ is above a certain constant (that depends only on the dimension), the crossing number of every simplicial complex  grows at least like $\frac{|X(d)|^{d+2}}{|X(d-1)|^{d+1}}$.  Given a sequence of complexes $\mathcal X=\{X_n\}_{n=1}^\infty$, we adopt the following notation $$B(\mathcal X):=\lim_n \Cr(X_n) \frac{|X_n(d-1)|^{d+1}}{|X_n(d)|^{d+2}}$$ whenever the limit exists. We call the constant $B(\mathcal X)$ the \emph{crossing constant} of $\mathcal {X}$. We denote the independent-pair, spherical and rectilinear analogs by $B^\iota$, $B^{\mathbb S}$, $B^{\mathbb R}$ and call them the pair crossing constant, the spherical crossing constant and the rectilinear crossing constant.

We organize the discussion in three distinct regimes:

\begin{itemize}
\item 
the sparse regime: $|X(d)|= \Theta(|X(d-1)|)$. 
\item the dense regime: 
$|X(d)|=\Theta(|X(d-1)|^{\frac{d+1}{d}})$.
\item the midrange: roughly, this is everything that is neither sparse nor dense. 
\end{itemize}

There is a vast literature on crossing number for the case of graphs ($d=1$). We begin by surveying the results on the aforementioned extremal question in the case of graphs from the point of view of the crossing constant. We then discuss higher dimensional generalizations for each regime.

\subsection{Crossing numbers of graphs}

The study of crossing numbers was pioneered by Tur\'an and Hill~\cite{turan1977note,HararyHill1963}, who respectively considered drawings of the complete bipartite graph and of the complete graph, and asked for the minimum possible number of crossings. The celebrated conjectures of  Zarankiewicz and Harary-Hill address these questions, respectively. Each conjecture was motivated by a drawing believed to be optimal, and both have generated much subsequent work. See~\cite{Székely2016} for an account of these problems' history and the state of the art from a decade ago, as well as~\cite{balogh2019closing,bojan2020_hill_conjecture,balogh2023crossing,brosch2024new,streltsova2025spherical} for some more recent progress.

The Harary-Hill~\cite{guy1967combinatorial,HararyHill1963} conjecture states that \[\Cr(K_n)=\frac{1}{4} \left\lfloor \frac{n}{2} \right\rfloor \left\lfloor \frac{n - 1}{2} \right\rfloor \left\lfloor \frac{n - 2}{2} \right\rfloor\ \left\lfloor \frac{n - 3}{2} \right\rfloor\,\sim \frac{n^4}{64}.\] 
Meanwhile, the Zarankiewicz conjecture~\cite{zarankiewicz1955problem} (see also~\cite{urbanik1955solution}) claims that the crossing number of the complete bipartite graph with $n$ vertices on each side\footnote{The Zarankiewicz conjecture addresses also the more general case of $\Cr(K_{n,m})$, but here we focus only on the balanced case.} is  \[\Cr(K_{n,n})=\left(\lfloor \frac{n}{2} \rfloor\lfloor \frac{n - 1}{2} \rfloor\right)^2  \sim \frac{n^4}{16}. \] 

Richter and Thomassen~\cite{RichterThomassen1997} showed that if the limit $\lim_{n \to \infty} \frac{1}{n^4}\Cr(K_{n,n})$ (which is known to exist) evaluates to $\frac{1}{16}$, then one also has that $\lim_{n \to \infty} \frac{1}{n^4} \Cr(K_n)=\frac{1}{64}$. 
Below we will discuss a variant of the crossing number in which edges are assumed to be drawn as rectilinear segments in the plane $\mathbb R^2$, and a variant in which edges are assumed to be drawn as geodesic arcs on the round sphere. The constructions of Zarankiewicz and Hill can be realized by rectilinear and spherical drawings, respectively. A general idea that underlies all that follows is that in many interesting cases $\Cr(X)=\Cr_\mathbb S(X)$. 
\footnote{When this manuscript was about to be made public, a solution to the Zarankiewicz and Hill conjectures was announced. Apart from the addition of this footnote, we have not revised the text to reflect this. }

In the 1970s, Erd\H{o}s and Guy made two conjectures about crossing numbers of arbitrary graphs. The first of these conjectures was solved in 1982 by Ajtai, Chvátal, Newborn and Szemerédi~\cite{ACNS1982} and, independently, by Leighton~\cite{Leighton1983}, giving rise to what is usually known as the \textit{crossing lemma}. Much work has gone into improving the constant in this result~\cite{PachToth1997, pach2006improving, ackerman2019topological,bungener2024improving,bungener2026five}. The version below is due to B{\"u}ngener, Franz, Kaufmann and Pfister~\cite{bungener2026five}.

\begin{theorem}\label{thm:crossing_lemma}The crossing number of any graph $X$ satisfying $|X(1)|\geq 7.39 |X(0)|$ is at least $\frac{1}{27.3} \frac{|X(1)|^3}{|X(0)|^2}$.
\end{theorem}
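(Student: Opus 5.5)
The plan follows the standard two-stage architecture of all sharpened crossing lemmas. First establish a \emph{linear} lower bound of the form
\[\Cr(X)\ \geq\ \alpha|X(1)|-\beta|X(0)|,\]
valid for every graph $X$ with at least a few vertices. Then boost it to the cubic bound via the probabilistic amplification argument of Ajtai--Chv\'atal--Newborn--Szemer\'edi and Leighton.

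For the linear stage I would follow the discharging approach of Ackerman and its refinements by B\"ungener et al. Take an optimal drawing and, by the usual redrawing tricks, assume that adjacent edges do not cross and no two edges cross twice. Then look at the planarization and its faces. The key input is a density bound for graphs admitting drawings where each edge is crossed at most $k$ times ($k$-planar graphs) for small $k$. For instance:
\begin{itemize}
\item $1$-planar graphs have at most $4n-8$ edges;
\item $2$-planar graphs have at most $5n-10$ edges;
\item $3$-planar graphs have at most $5.5n-11$ edges;
\item $4$-planar graphs have at most $6n-12$ edges.
\end{itemize}
Further, higher-$k$ bounds obtained by charging each face of the planarization according to its size and number of crossing vertices.

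Iteratively deleting a most-crossed edge from the drawing, and summing these bounds, gives an inequality such as $\Cr(X)\ge c_1|X(1)|-c_2|X(0)|$ with explicit constants. The best known values, which are what the constants $7.39$ and $27.3$ encode, come from a more delicate discharging that bounds $|X(1)|$ against $|X(0)|$ plus a weighted crossing count directly, rather than by iterating.

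For the amplification stage, sample each vertex independently with probability $p$ and apply the linear bound to the induced subdrawing. Taking expectations gives
\[p^4\Cr(X)\ \ge\ \alpha p^2|X(1)|-\beta p|X(0)|.\]
Choose $p=\gamma|X(0)|/|X(1)|$, optimizing $\gamma$. The optimal $\gamma$ corresponds to $p=1$ at the threshold $|X(1)|=\gamma^{-1}... $, namely the stated $7.39|X(0)|$. This yields $\Cr(X)\ge \frac{\alpha\gamma-\beta}{\gamma^3}\cdot\frac{|X(1)|^3}{|X(0)|^2}$. One then checks numerically that this evaluates to at least $1/27.3$, using that $p\le1$ precisely under the hypothesis of the theorem.

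The main obstacle is entirely the linear stage with sharp constants. The amplification is routine, but obtaining a linear inequality good enough to produce $27.3$ requires a careful charging scheme over cells of the planarization. That scheme must handle many local configurations, in particular faces bounded by few edge-segments with many crossings. I would try to import the charging rules of Ackerman ($\Cr\ge 5|E|-\frac{139}{6}|V|$ style inequalities) and then improve the handling of the worst local configurations, which is where the new constant must come from. Since this theorem is quoted from~\cite{bungener2026five}, in the paper itself I would simply cite it rather than reproduce that case analysis.
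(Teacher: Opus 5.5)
This matches the paper, which gives no proof of its own and simply quotes the result from~\cite{bungener2026five}. Your outline is the standard architecture behind such results: a linear bound $\Cr(X)\geq\alpha|X(1)|-\beta|X(0)|$, amplified by vertex sampling, with optimum $\frac{4\alpha^3}{27\beta^2}$ attained at $p=1$ when $|X(1)|=\frac{3\beta}{2\alpha}|X(0)|$.

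One correction to your guess about where the constants come from. Suppose $7.39$ is this optimal threshold, as Ackerman's $6.95=\frac{3}{2\cdot 5}\cdot\frac{139}{6}$ is. Then the value $\frac{1}{27.3}$ forces $\alpha=6$ and $\beta\approx 29.56$. So the cited bound rests on a linear inequality with edge coefficient $6$, going one level past $4$-planarity, and not merely on a sharper $\beta$ in Ackerman's $5|X(1)|-\frac{139}{6}|X(0)|$.
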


For $a\in[3,4]$, we understand the growth of the least crossing number among graphs $X$ with $|X(1)|\geq a|X(0)|$. Indeed, consider an $n$-vertex quadrangulation of the sphere (which, by Euler's formula, has $n-2$ quadrangles and $2n-4$ edges). Then, add a diagonal for each quadrangle and an $(a-3)$ fraction of the remaining diagonals of the same quadrangles. This gives rise to a graph $X$ with $|X(1)|=a|X(0)|-O(1)$ edges which has the least possible crossing number among all graphs with the same number of vertices and edges (i.e., $(a-3)(n-2)$ crossings). Moreover, as $n$ grows, the crossing number of $X$ is asymptotically $\frac{a-3}{a^3}\frac{|X(1)|^3}{|X(0)|^2}$. From results in~\cite{pach2006improving}, it follows that if $X$ is a graph with $|X(1)|\geq a |X(0)|$ then $\Cr(X) \geq \frac{7a-25}{3a^3}\frac{|X(1)|^3}{|X(0)|^2}$, and this is best possible for $4\leq a \leq 5$. It was also conjectured there that if $|X(1)|\geq a|X(0)|$ then $\Cr(X) \geq \frac{25a-105}{6a^3}\frac{|X(1)|^3}{|X(0)|^2}$; if true, this would be best possible in the regime $5 \leq a \leq 6$ due to a construction in the same paper.

For larger $a$ and for graphs with $n$ vertices and $a n$ edges, the graphs with the smallest crossing number that we are aware of are obtained by taking a section $[-k,k]^2\cap\mathbb Z^2$ of the integer lattice and connecting pairs of integer points at Euclidean distance at most $r$ for an appropriately chosen $r=r(a)$. We denote this example by $D_{r,k}$ in what follows, where usually $r=o(k)$ and $r$ is such that  $r^2$ is a sum of two squares. These graphs come with a natural drawing. Namely, one can simply perturb the vertex set slightly to ensure general position, and then draw every edge as a segment. (We note that it would be asymptotically equivalent but perhaps cleaner to work with a lattice on a flat torus).

For $s$ a positive integer and $t>0$, define \[\kappa_1(s,t):=\min_{X,|X(0)|=s, |X(1)|\geq t} \Cr(X)\] and \[b_1(a):=\lim_{s \to \infty} \kappa_1(s,a s) \frac{s^{2}}{(a s)^{3}}=\frac{1}{a^{3}} \lim_{s \to \infty}\frac{\kappa_1(s,a s)}{s}.\]
As we explain below, it is well known that $b_1(a)$ exists. Figure~\ref{fig:1} summarizes the aforementioned results. 

\begin{SCfigure}[][h]
\begin{tikzpicture}[scale=0.85]
\begin{axis}[
    domain=0:10,
    samples=1000,
    xmin=0, xmax=14,
    ymin=0, ymax=1/8,
    xlabel={$|X(1)|/|X(0)|$},
    ylabel={crossing constant},
    grid=both,
     yticklabel style={font=\tiny},
      xticklabel style={font=\tiny},
]

\addplot[
    blue,
    thick,
    domain=0:3
]
{0};

\addplot[
    blue,
    thick,
    domain=3:4
]
{(x-3)/(x^3)};

\addplot[
    cyan,
    thick,
    domain=4:14
]
{(x-3)/(x^3)};

\addplot[
    blue,
    thick,
    domain=4:5
]
{(7*x-25)/(3*x^3)};

\addplot[
    blue,
    thick,
    dashed,
    domain=5:6
]
{(25*x-105)/(6*x^3)};

\addplot[
    green!60,
    thick
]
coordinates {(7.39,1/27.3) (14,1/27.3)};

\addplot[
    only marks,
    mark=*,
    mark size=1pt,
    red
]
coordinates {
    (4,1/64)
    (6,0.04166667)
    (10,0.05500000)
    (12,0.06134259)
    (14,0.06851312)
 
};

\end{axis}
\end{tikzpicture}
\caption{The horizontal axis corresponds to half the average degree, $a$. The vertical axis to $b_1(a)$. The solid blue line describes $b_1(a)$ for $a\leq 5$, and the dashed blue line corresponds to the conjecture for $5<a<6$. The green horizontal line is the lower bound on $b_1(a)$ for $a\geq 7.39$ provided by Theorem~\ref{thm:crossing_lemma}. The cyan curve corresponds to the standard proof using Euler's formula. The red dots are upper bounds for $a=4,6,10,12,14$ which have been derived computationally from the graphs $D_{r,k}$ by perturbing their natural drawings slightly; the perturbations used are very unlikely to be optimal.\\ }\label{fig:1}

\end{SCfigure}

The second of the Erd\H{o}s-Guy conjectures from~\cite{ErdosGuy1973} was resolved by Pach, Spencer and T\'oth~\cite{pach1996new} in 1996. Before stating this result, define the \textit{midrange crossing constant} as
\[B_1=\limsup_{a \to \infty} b_1(a).\]

\begin{theorem}\label{midrange}\cite{pach1996new}
Along any sequence of pairs $(s,t)$ with $s \ll t \ll s^2$, $\lim_{s\to+\infty}\kappa_1(s,t)\frac{s^2}{t^3}=B_1$.
\end{theorem}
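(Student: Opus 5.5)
Write $f(s,t):=\kappa_1(s,t)\frac{s^2}{t^3}$, so that $b_1(a)=\lim_s f(s,as)$. The plan is the Pach--Spencer--T\'oth strategy: prove an approximate monotonicity of $f$ in both directions using two operations, random vertex sampling (for lower bounds) and blow-ups of optimal drawings (for upper bounds). Then sandwich $f(s,t)$ along any sequence with $t/s\to\infty$, $t/s^2\to 0$ between quantities that converge to $\limsup_a b_1(a)=B_1$. By Theorem~\ref{thm:crossing_lemma}, $f$ is bounded below by a positive constant once $t\ge 7.39s$. The trivial bound $\Cr\le\binom{t}{2}$ only gives $f\le s^2/(2t)$, which is not bounded, so a real upper bound on $f$ is needed. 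The blow-up construction below supplies it.

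\textbf{Lower bound via sampling.} Fix large $a$, let $X$ have $s$ vertices and $t$ edges, and take an optimal drawing. Keep each vertex independently with probability $p$ chosen so that the sample has about $ps$ vertices and about $p^2t\approx a\,ps$ edges, i.e.\ $p=as/t\to 0$. We need $ps\to\infty$, which holds since $ps=as^2/t\to\infty$ because $t\ll s^2$. The expected number of surviving crossings is at most $p^4\Cr(X)$, and the induced drawing witnesses that this is at least the crossing number of the sample. Concentration of the vertex and edge counts (edges via a second-moment bound; degrees may be uneven, but variance is controlled since $t\gg s$), together with monotonicity of $\kappa_1$ in $t$, gives
\[
p^4\kappa_1(s,t)\ge (1-o(1))\,\kappa_1\bigl(ps,(1-\epsilon)a\,ps\bigr).
\]
Rescaling yields $\liminf f(s,t)\ge (1-\epsilon)^3 b_1((1-\epsilon)a)-o(1)$. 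To reach $B_1=\limsup b_1$ rather than an arbitrary value, choose $a$ along a sequence realizing the limsup, absorbing the $\epsilon$ perturbation by a local monotonicity property of $b_1$ (obtained from the same sampling with $p$ close to $1$). If needed, first establish that $\lim b_1(a)$ exists, again by sampling.

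\textbf{Upper bound via blow-up.} Take $Y$ optimal for $(m,am)$ with $m$ large. Replace each vertex by $k$ copies placed in a tiny neighbourhood of its image, and each edge by the complete bipartite $K_{k,k}$ between the clusters, drawn as near-parallel copies. This has $km$ vertices, $k^2am$ edges, and $k^4\Cr(Y)$ crossings from crossing edge pairs. There are additional local crossings near each cluster among edges sharing a cluster, at most about $\sum_v \deg(v)^2k^4$ in total. This would spoil the $k^4/(k^2)^3\cdot s^2$ scaling unless degrees in $Y$ are bounded, so use a balanced-degree optimal graph: split high-degree vertices beforehand, at a cost of $o(1)$ in $b_1$ as $a\to\infty$. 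Then $\sum\deg^2=O(a^2m)$, contributing $O(a^2 m k^4)$ crossings, which is negligible relative to the main term $\approx b_1(a)\,a^3 m k^4$ as $a\to\infty$. Choosing $m,k$ with $km\approx s$ and $k^2am\approx t$ (so $k\approx t/(as)\to\infty$ and $m\approx as^2/t\to\infty$), and deleting extra vertices and edges, gives $\limsup f(s,t)\le b_1(a)+O(1/a)$. Letting $a\to\infty$ gives at most $B_1$.

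\textbf{Main obstacle.} The hardest step is the degree control in the blow-up. Local crossings at a cluster among the $\deg(v)k$ edges incident to it number up to $\deg(v)^2k^4$ if drawn carelessly. One needs both a careful local routing that keeps them to lower order and a reduction to near-regular optimal graphs. For the latter I would first try showing via sampling that removing the few very high-degree vertices changes $b_1$ negligibly.
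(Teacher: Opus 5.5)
Your architecture (vertex sampling for the lower bound, blow-up of a near-optimal drawing for the upper bound) is the one the paper uses for the $d$-dimensional analogue in Lemmas~\ref{midrange_sampling} and~\ref{midrange_blowup}. The graph statement itself is only cited from~\cite{pach1996new}. Your lower-bound half works after two corrections.

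\emph{Variance.} Since $\sum_v\deg(v)^2\le 2ts$, the normalized variance of the sampled edge count is at most $\frac{1}{p^2t}+\frac{2s}{pt}=\frac{1}{p^2t}+\frac{2}{a}$. So concentration comes from taking $a$ large (failure probability $O(1/(a\epsilon^2))+o(1)$), not from $t\gg s$.

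\emph{Monotonicity.} No local monotonicity of $b_1$ is needed. Oversample with $p=(1+4\delta)a_0s/t$, so that on the good event the sample $Z$ satisfies $|Z(1)|\ge a_0|Z(0)|$. Then use $\Cr(Z)\ge a_0^3b_1(a_0)|Z(0)|$, which holds for every such $Z$ by taking disjoint copies of $Z$. Choosing large $a_0$ with $b_1(a_0)$ close to $B_1$ gives $\liminf f\ge B_1$. The bound $\mathbb E[\Cr(Y)]\le p^4\Cr(X)$ also uses that optimal drawings of graphs have crossings only between independent edges.

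The genuine gap is the degree reduction. This is exactly where the paper's blow-up lemma stops at $B_d'$, the constant defined over $q$-equitable complexes, with $B_1=B_1'$ deferred to~\cite{pach1996new}. The route you propose, removing the high-degree vertices and justifying this by sampling, does not work. Nothing you have shows that such vertices carry a negligible share of the edges of an optimal graph, and independent vertex sampling cannot discard them selectively. Your claim that one gets $\sum\deg^2=O(a^2m)$ at cost $o(1)$ is also inconsistent: a cap $Ca$ with $C$ fixed can cost a constant factor.

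What works is splitting inside the drawing. For a threshold $D=\lambda a$, replace each $v$ with $\deg(v)>D$ by $\lceil\deg(v)/D\rceil$ points in a small disk around $v$. Each new point receives a contiguous block of at most $D$ edges of the rotation at $v$, and the edges are reconnected inside the disk. This creates no crossings and no multiple edges, and adds at most $2am/D=2m/\lambda$ vertices.

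Take $1\ll\lambda\ll a$, e.g.\ $\lambda=\sqrt a$; this is essentially the paper's $q$-equitability with $q=3/2$. The splitting then costs a factor $(1+2/\lambda)^2$. The cluster crossings number $O(k^4\sum_v\deg(v)^2)=O(\lambda a^2mk^4)$, a fraction $O(\lambda/a)$ of the main term $b_1(a)a^3mk^4$, because $b_1(a)\ge 1/27.3$. No special local routing is needed beyond making any two strands cross $O(1)$ times (parallel copies in thin tubes, straight segments near the clusters).

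Choose $k,m$ so the blown-up graph has at most $s$ vertices and at least $t$ edges. Pad with isolated vertices rather than deleting vertices, since deleting vertices could drop the edge count below $t$. You obtain $\limsup f\le\liminf_{a\to\infty}b_1(a)$. Together with the lower bound this closes the sandwich, and shows in passing that $\lim_a b_1(a)$ exists.
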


The best estimates for $B_1$ that we are aware of are $0.03663\sim \frac{1}{27.3}\leq B_1\leq \frac{8}{9\pi^2}\sim 0.0901$. While the details of the inequality $B_1\leq\frac{8}{9\pi^2}$ are left to the reader in~\cite{pach1996new,pach2006improving}, an elegant proof of this inequality, generalizing a construction of Moon, can be found in~\cite{czabarka2019some}.  Figure~\ref{fig:2} analyzes the speed of convergence of the grid examples (and their natural drawings) to the best known upper bound. 

\begin{SCfigure}[][h]
\begin{tikzpicture}[scale=0.8]
\begin{axis}[
    domain=0:10,
    samples=1000,
    xmin=4, xmax=800,
    ymin=0, ymax=1/8,
    xlabel={$|X(1)|/|X(0)|$},
    ylabel={crossing constant},
    grid=both,
     yticklabel style={font=\tiny},
      xticklabel style={font=\tiny},
]

\addplot[
    green!60,
    thick
]
coordinates {(7.39,1/27.3) (14,1/27.3)};

\addplot[
    teal,
    thick
]
coordinates {(0, {8/(9*pi^2)}) (1000, {8/(9*pi^2)})};

\addplot[
    orange,
    domain=4:800,
    samples=1000
]
{(8/(9*pi^2))*(1 - 4.05/x + 9.33/(x^2))};
\addplot[
    only marks,
    mark=*,
    mark size=0.5pt,
    red
]
coordinates {
    (4,1/64)
    (6,0.04166667)
    (10,0.05500000)
    (12,0.06134259)
    (14,0.06851312)
    (18.0, 0.07098765)
     (22.0, 0.07400451)
     (24.0, 0.07667824) 
     (28.0, 0.07826166)
     (30.0, 0.07885185)
     (34.0, 0.07950845)
     (40.0, 0.08214062)
     (44.0, 0.08230419)
     (48.0, 0.08265516)
     (50.0, 0.083056)
     (54.0, 0.08362546)
     (56.0, 0.08413812)
     (72, 0.0851)
};
\end{axis}
\end{tikzpicture}
\caption{The axes have the same meaning as in Figure~\ref{fig:1}. The red points are again the upper bounds on $b_1(a)$ (for certain integer values of $a$) which have been derived computationally from graphs of the form $D_{r,k}$. The orange curve is the quadratic function of $1/a$ that best fits the red points in the sense of least squares. The dark green line lies at height $\frac{8}{9\pi^2}$.\\ \\ \\  }\label{fig:2}
\end{SCfigure}

Let us define
\[b^+_1(\lambda):=\liminf_{s \to \infty} \kappa_1\left(s, \lambda{\binom s2} \right)\frac{8}{\lambda^3 s^4}.\]

In~\cite{Moon1965}, Moon observed that if the complete graph $K_n$ is drawn on the round sphere by sampling $n$ vertices uniformly and independently at random, and every pair of vertices is then connected using the shortest geodesic between them, then the expected crossing number is $\frac{3}{8}{\binom n4}\sim\frac{n^4}{64}$. Asymptotically, this coincides with the Harary-Hill conjecture.  The random geometric graph on the sphere
is constructed by drawing $n$ points independently at random from the uniform distribution on the round two-dimensional sphere, and then adding an edge between any two of them which are at distance no more than some parameter $r$. As before, edges are drawn by geodesic arcs. In~\cite{czabarka2019some}, it was shown that in this model the edge density is $\frac{1}{2}(1-\cos r)$ and the crossing density is $\frac{1}{8 \pi^2}(\sin r-r \cos r)^2$. This, in turn, implies that \[b^+_1(\lambda) \leq \frac{1}{\pi^2}\frac{(2\sqrt{\lambda-\lambda^2}-(1-2\lambda)\arccos(1-2\lambda))^2}{8 \lambda^3}.\]
When $\lambda=1$, one recovers Moon's bound, and when $\lambda \to 0$ we recover the bound $B_1 \leq \frac{8}{9 \pi^2}$. 

This is the first problem that we are aware of in which the geometric random graph (in fixed dimension) might yield the minimizer of a non-metric extremal problem. It would be remarkable if this construction from stochastic geometry turned out to yield the dense graph with the least crossing number for every density $\lambda \in (0,1]$, a problem in combinatorial topology. Figure 3 depicts the upper bound on $b_1^+(\lambda)$ that arises from the random geometric graph as $\lambda$ ranges from $0$ to $1$, and the best lower bound we could find combining results from
\cite{pach2006improving, montaron2005improvement,balogh2019closing,bungener2026five}.

\begin{SCfigure}[][ht]
\begin{tikzpicture}[scale=0.7]
\begin{axis}[
    xmin=0, xmax=1,
    ymin=0, ymax=1/8,
    xlabel={density},
    ylabel={crossing constant},
    grid=both,
    ytick={0,0.025,0.05,0.075,0.1,0.125},
    yticklabel style={
        font=\tiny,
        /pgf/number format/fixed,
        /pgf/number format/precision=3
    },
    xticklabel style={font=\tiny},
    label style={font=\small},
]
\addplot[orange]
table[x=lambda,y=upper,col sep=comma]
{dense_graph_lower.csv};

\addplot[green!60,thick]
table[x=lambda,y=lower,col sep=comma]
{dense_graph_lower.csv};
\end{axis}
\end{tikzpicture}
\caption{The upper bound (orange) for $b^+_1(\lambda)$ is derived from the spherical random geometric graph. At $\lambda=1$ it equals Moon's bound of $\frac{1}{8}$, and as $\lambda\to0$ it tends to $\frac{8}{9\pi^2}$, the best known upper bound on the midrange crossing constant. The lower bound (green) combines results of~\cite{pach2006improving,montaron2005improvement,balogh2019closing,bungener2026five}.\\ \\ }
\label{fig:3}
\end{SCfigure}

Notice from Figure~\ref{fig:2} that if the graphs $D_{r,k}$ (along with their natural drawings) aren't far from optimal, obtaining a close-to-optimal lower bound for the midrange crossing constant following the kind of techniques used in~\cite{pach2006improving,montaron2005improvement,ackerman2019topological,bungener2024improving,bungener2026five} will require extending our understanding of topological graphs to the case where the average degree is around $400$.

We now make explicit a well-known connection of crossing numbers with stochastic geometry. 
Given a probability measure $\mu$ in the plane such that any line has measure zero, Sylvester's problem asks for the probability $q(\mu)$ that four random points drawn independently according to $\mu$ are in convex position. Scheinerman and Wilf~\cite{Rectilinear_ScheinermanWilf1994} observed that 
\[
\lim _{n\to+\infty} \Cr_{\mathbb R^2}(K_n)\frac{1}{{\binom n4}}=\inf_{\mu} q(\mu),
\] where $\operatorname{cr}_{\mathbb R^2}(K_n)$ is the rectilinear crossing number of the complete graph $K_n$ and the infimum is taken among the family of measures that do not charge lines.

Sylvester's problem has been much studied in the case where the probability is uniform on a convex body. 
Blaschke~\cite{Blaschke1923} showed that, among convex bodies, the probability of four points being in convex position is maximized by the round disk and minimized by the triangle (and by the square among centrally symmetric convex bodies). 
Because of this, it might be tempting to use the distance functions induced by either the $|\cdot|_\infty$ or the $|\cdot|_1$ norms when defining $D_{r,k}$. 
However, using either of these distance functions in the definition of $D_{r,k}$ results in a slightly larger number of crossings. 
In fact, let us consider a uniform distribution on $[0,1]^2$ and a centrally symmetric convex set $K$ which induces the norm $|.|_K$. By normalizing appropriately, we can ensure that $\area(tK)=t^2$ for all $t>0$.
Now, define \[q_{\operatorname{mid}}(K):=\lim_{t \to 0} \frac{\mathbb P_{x_1,x_2,x_3,x_4}(\textrm{the segments }x_1x_3\textrm{ and }x_2x_4\textrm{ cross and } |x_i-x_{i+2}|_K<t)}{t^6}, \]
where indices are understood modulo $4$ and $x_1,x_2,x_3,x_4$ are i.i.d.\ uniform points from $[0,1]^2$. 
It is not hard to show that for any convex set $K$, $B_1 \leq q_{\operatorname{mid}}(K)$, and it turns out that $q_{mid}(K)$ is minimized precisely when $K$ is affinely equivalent to a disk, in which case it takes the value $\frac{8}{9\pi^2}$~\cite{lepsveridze2026uniform}.

\subsection{A crossing lemma for simplicial complexes}

A \textit{simplicial complex} $X$ is a family of finite subsets of a fixed set such that if $\sigma$ is in $X$ and $\tau$ is a subset of $\sigma$, then $\tau$ is also in $X$ (in other words, $X$ is \textit{downwards-closed}). Suppose that the vertex set of $X$ is $[n]$ and consider the standard basis $e_0,e_1, \ldots e_{n-1}$ of $ \mathbb R^n$. For each $\sigma \in X$, consider the set $\conv(\{e_i\}_{i \in \sigma})$ (indices are taken mod $n$) equipped with the topology it inherits as a subset of $\mathbb R^n$. This topological space is the \textit{geometric realization} of $X$. In what follows, we use $X$ to denote both the simplicial complex and its geometric realization. The subsets in $X$ are also called \textit{faces} or \textit{simplices} of $X$. The \textit{dimension} of a face is the number of elements it contains minus one, and the \textit{dimension} of $X$ is the largest dimension among its faces. Faces of dimension $0$ (i.e., singletons) are called \textit{vertices}, and faces which are not properly contained in some other face are called \textit{facets}.
The $i$\textit{-skeleton} of $X$, denoted by $X_{(i)}$, is the simplicial complex that has the same faces as $X$ up to dimension $i$, and no faces of dimension higher than $i$. We denote the set of $i$-faces (i.e., faces of dimension $i$) of $X$ by $X(i)$. The degree of a face $\tau$ with respect to the $k$-faces, denoted by $\deg_k(\tau)$, is the number of $k$-faces of $X$ having $\tau$ as a subface. The simplicial complex that contains all the subsets of a set of cardinality $n+1$, denoted by $\Delta^n$, is called the $n$\textit{-simplex}. 

In this paper, we focus on the case of $d$-dimensional complexes drawn in $\mathbb R^{2d}$ (or, equivalently, $\mathbb S^{2d}$). Whenever we write $\Cr(X)$ we assume that we are working with piecewise linear maps from $X$ to either $\mathbb R^{2d}$ or $\mathbb S^{2d}$, where $d$ is implicitly assumed to be the dimension of $X$.  In this case, the co-dimensions of two facets add up to the host dimension $2d$. Hence, generically, intersections between facets have dimension $0$. The case $d=1$ corresponds to the classical crossing numbers of graphs drawn on the plane. For any topological space $Y$, a map $f:X\to Y$ is called an \textit{embedding} if it is a homeomorphism between $X$ and its image $f(X)$.

Crossing numbers of simplicial complexes and similar extremal problems in combinatorial geometry and combinatorial topology have been considered in the past.  Dey and Pach~\cite{dey_pach1998} studied the maximum number of facets that an affinely embedded simplicial complex of some fixed dimension may have without forcing certain configurations to appear. In Chapter 8 of her PhD thesis, Gundert (under the supervision of Wagner)~\cite{gundert2013expansion} gave a lower bound for the affine crossing number of certain high dimensional expanders. Later, Wagner and Wild~\cite{wagner2023coboundary} provided a lower bound on the crossing number of a simplicial complex in terms of the high-dimensional expansion properties of its so called \textit{deleted join}. See also~\cite{wagner2013} for multiple related results and questions.

In~\cite{adiprasito2018combinatorial}, a conjecture concerning crossing numbers of simplicial complexes (referred to as the Gr\"unbaum-Kalai-Sarkaria Conjecture) was proven. The relations between the different variants of this conjecture (see Conjecture 27 in~\cite{kalai2002algebraic}, Conjecture 1 in~\cite{SARKARIA1987}, Theorem 3 in~\cite{sarkaria} and the last sentence of~\cite{GRUNBAUM_1970_higher_four_color}) are explained in~\cite{gundert2018complexity}. 
Roughly speaking, these conjectures claim that the $d$-skeleton of the boundary of the $(2d+1)$-dimensional cyclic polytope maximizes the ratio $|X(d)|/|X(d-1)|$ among complexes with crossing number zero. This conjecture generalizes McMullen's celebrated Upper Bound Theorem for polytopes~\cite{McMullen1970} and Stanley's generalization to simplicial spheres~\cite{Stanley}.
We refer to the aforementioned references for precise statements of these results and their relation to algebraic shifting. In the proof of the moreover part of Corollary~\ref{crossing_lemma} below, we will also use the cyclic polytope as the main building block in our construction.   

\begin{theorem}\label{Adi}\cite{adiprasito2018combinatorial} If $X$ is a simplicial complex of dimension $d$ with $|X(d)|> (d+2)|X(d-1)|$, then $\Cr(X)> 0$.
 \end{theorem}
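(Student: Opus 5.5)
Theorem~\ref{Adi} is the Gr\"unbaum--Kalai--Sarkaria conjecture, proved in~\cite{adiprasito2018combinatorial}. The paper only cites it, so I will not reprove it in full. The route I would follow is algebraic: assume $X$ embeds, and derive $|X(d)|\le (d+2)|X(d-1)|$ from a Lefschetz-type injectivity in the face ring.

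The plan runs by contradiction. Assume $\Cr(X)=0$, so $X$ admits a PL embedding into $\mathbb R^{2d}\subset\mathbb S^{2d}$. The first step is to make this embedding simplicial. After a suitable subdivision $X'$ of $X$, the image of $X'$ is a subcomplex of a PL triangulation $S$ of $\mathbb S^{2d}$. I would then have to check that the inequality we want is insensitive to subdivision. The cleanest way to do this is to pass to Kalai's algebraic shifting, or to Sarkaria's coloring/deleted-join formulation, where the relevant invariant is intrinsic to $X$.

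The second step works in the Stanley--Reisner ring $\mathbb F[S]$, over a field $\mathbb F$ of characteristic $2$. I would take its generic Artinian reduction $A(S)=\mathbb F[S]/(\theta_1,\dots,\theta_{2d+1})$ and a generic linear form $\ell$. The key input is the hard Lefschetz property for $S$: the map $\ell\colon A(S)_d\to A(S)_{d+1}$ is an isomorphism. This is Adiprasito's theorem; the Papadakis--Petrotou anisotropy argument is the short route in characteristic $2$. From it, I would deduce a restricted Lefschetz statement for the subcomplex $X$. Namely, in the quotient $A(X)$ obtained from the ideal of non-faces of $X$ inside $S$, the multiplication
\[
\ell\colon A(X)_d\to A(X)_{d+1}
\]
must be surjective. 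Equivalently, the dual map on the ``cycle'' side must be injective; this is the step where the biased pairing property enters.

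The final step is counting. Since $X$ is $d$-dimensional and the system of parameters has length $2d+1$, the ring $A(X)$ has degree-$(d+1)$ part of dimension controlled by $f_d$. Its degree-$d$ part is controlled by $f_{d-1}$, through the $h$-numbers computed from the $f$-vector with $2d+1$ parameters. The surjectivity of the previous step then translates into an inequality $h$-type$(d+1)\le h$-type$(d)$. I would compute this combinatorially, with the lower faces contributing negatively, and obtain $f_d\le (d+2)f_{d-1}$. This contradicts the hypothesis.

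I expect the main obstacle to be the Lefschetz step: both the hard Lefschetz property for arbitrary PL $2d$-spheres, and its transfer to the subcomplex $X$ through biased pairing. This is the deep content of~\cite{adiprasito2018combinatorial}. The other steps, the simplicial approximation and the $f$-to-$h$ bookkeeping, are routine by comparison.
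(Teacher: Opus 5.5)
The paper gives no proof of this result; it is quoted from~\cite{adiprasito2018combinatorial}. Your architecture is the right one: a triangulated $2d$-sphere $S$ containing $X$, the middle Lefschetz isomorphism $\ell\colon A(S)_d\to A(S)_{d+1}$, passage to $X$, and then a count. But the two steps you call routine do not work as you describe them.

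\textbf{The subdivision step.} The inequality $f_d\le(d+2)f_{d-1}$ cannot be transferred from a subdivision $X'$ back to $X$. Subdividing every edge of an arbitrary graph once gives $f_1'=2f_1\le 3(f_0+f_1)=3f_0'$, so the inequality for $X'$ says nothing about $X$. The algebraic condition behaves the same way. For $d=1$ it is the vanishing of $\bigl(\mathbb F[G]/(\theta_1,\dots,\theta_4)\bigr)_2$, i.e.\ generic stress-freeness in $\mathbb R^3$. This holds for the edge-subdivision of $K_5$ but not for $K_5$. Algebraic shifting does not repair this, because it is an invariant of the complex, not of the underlying polyhedron. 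Moreover, your ring $A(X)$ is a quotient of $A(S)$ only when $X$ itself, not $X'$, is a subcomplex of $S$. What your argument actually needs is that $X$ itself lies in a triangulated $2d$-sphere (or homology sphere). Obtaining that from a PL embedding is a separate topological step, and your outline does not supply it.

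\textbf{The count.} You cut the $(d+1)$-dimensional ring $\mathbb F[X]$ by $2d+1$ forms, and $X$ need not be Cohen--Macaulay. So $\dim A(X)_k$ is not given by $h$-numbers computed from the $f$-vector. The formal inequality $h^{(2d+1)}_{d+1}\le h^{(2d+1)}_d$ expands to
\[
f_d\le (d+2)f_{d-1}-\binom{d+3}{2}f_{d-2}+\binom{d+4}{3}f_{d-3}-\cdots .
\]
This is false in general: for $d=1$ it reads $f_1\le 3f_0-6$, which fails for a single edge. For $d\ge 3$, the alternating signs mean it would not give $f_d\le(d+2)f_{d-1}$ even where it held.

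The missing tool is Kalai's symmetric algebraic shifting $\Delta^s(X)$ (over a field of characteristic $0$), which has the same $f$-vector as $X$.
\begin{itemize}
\item Surjectivity of $\ell\colon A(X)_d\to A(X)_{d+1}$ for generic $\Theta,\ell$ says $\bigl(\mathbb F[X]/(\theta_1,\dots,\theta_{2d+1},\ell)\bigr)_{d+1}=0$.
\item This is equivalent to $\{d+3,\dots,2d+3\}\notin\Delta^s(X)$, i.e.\ every $d$-face of $\Delta^s(X)$ has its smallest vertex in $\{1,\dots,d+2\}$.
\item Deleting that smallest vertex is an at most $(d+2)$-to-one map onto $(d-1)$-faces, which gives $f_d\le(d+2)f_{d-1}$.
\end{itemize}

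A minor point: surjectivity on $A(X)$ follows at once from surjectivity on $A(S)$, because $A(X)$ is a quotient of $A(S)$. Biased pairing belongs to the proof of the Lefschetz theorem for $S$, not to this transfer.
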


Note that the case $d=1$ of this theorem is folklore, and follows easily from Euler's formula and double counting. The starting point of this paper is the following corollary of Theorem~\ref{Adi}.

\begin{corollary}\label{crossing_lemma} Let $X$ be a $d$-dimensional simplicial complex with $|X(d)|\geq a|X(d-1)|$, for $a>0$. \\
1) If $d\neq 2$ then 
\[\Cr(X)\geq \frac{a-(d+2)}{a^{d+2}}\cdot\frac{|X(d)|^{(d+2)}}{|X(d-1)|^{(d+1)}}.\] 
2) Moreover, still for $d\neq 2$, if $d+2 <  a\leq d+2+\frac{2}{d+1}$, this is best possible (up to lower order terms) as $|X(d-1)|\to \infty$.
3) For any $d\ge 1$, one has
$\Cr_{\mathbb R^{2d}}(X) \geq \Cr_{\mathbb S^{2d}}(X)\geq \frac{a-(d+2)}{a^{d+2}}\cdot\frac{|X(d)|^{(d+2)}}{|X(d-1)|^{(d+1)}}.$ 
\end{corollary}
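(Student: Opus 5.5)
The plan is the classical route for the crossing lemma. First I derive a \emph{linear} bound from Theorem~\ref{Adi}, and then I amplify it by random sampling of vertices. For the linear bound, take an optimal drawing $f$ of $X$. As long as the current complex has more than $(d+2)$ times as many $(d-1)$-faces as $d$-faces, Theorem~\ref{Adi} says its drawing has a crossing, and I delete one $d$-face involved in it. Each deletion removes at least one crossing and does not change the $(d-1)$-faces. Hence
\[\Cr(f)\ \ge\ |X(d)|-(d+2)|X(d-1)|\]
for every drawing. The same argument works for spherical drawings, since deleting a facet keeps a drawing geodesic.

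For amplification I need every crossing to be between two \emph{vertex-disjoint} facets, since only such a crossing involves $2d+2$ distinct vertices. This is exactly where $d\neq 2$ enters. For $d=1$, crossings between adjacent edges can be removed by the standard redrawing. For $d\ge 3$, I would invoke the standard PL fact that intersections between two $d$-simplices sharing a vertex in $\mathbb R^{2d}$ can be removed without creating new crossings, via a local Whitney/finger-move type modification supported near the common face. For part 3 this step is automatic. In a generic geodesic drawing, two $d$-simplices sharing a vertex $v$ span (great) $d$-subspaces through $v$, which generically meet only at $v$, so adjacent facets never cross. This is why part 3 holds for every $d$, including $d=2$.

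Now let $m=|X(d)|$, $n=|X(d-1)|$, and keep each vertex independently with probability $p=an/m\le 1$. Consider the induced subcomplex with the restricted drawing. A $d$-face survives with probability $p^{d+1}$, a $(d-1)$-face with probability $p^{d}$, and a crossing between disjoint facets with probability $p^{2d+2}$. Taking expectations in the linear bound gives
\[p^{2d+2}\Cr(f)\ \ge\ p^{d+1}m-(d+2)p^{d}n .\]
Dividing through and substituting $p=an/m$ yields
\[\Cr(f)\ge \frac{m^{d+2}}{a^{d+1}n^{d+1}}-\frac{(d+2)\,m^{d+2}}{a^{d+2}n^{d+1}}=\frac{a-(d+2)}{a^{d+2}}\cdot\frac{m^{d+2}}{n^{d+1}},\]
which is parts 1 and 3.

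For part 2 (tightness), I need complexes with $m\approx an$ having drawings with $(a-(d+2))n(1+o(1))$ crossings, i.e.\ every facet beyond ratio $d+2$ costs exactly one crossing. By the computation above, at $m=an$ this matches the bound. I would mimic the graph construction (a quadrangulation plus diagonals). Start from an embedded complex whose facet/ridge ratio tends to $d+2$, built from boundaries of $(2d+1)$-dimensional cyclic polytopes (the extremal examples for Theorem~\ref{Adi}), glued or arranged so that there are many ``slots''. Each slot is a configuration in which adding one new $d$-face, with no new $(d-1)$-faces, forces exactly one crossing; for instance a van Kampen--Flores type local picture, where a missing facet of a $(2d+2)$-vertex configuration can be added with a single crossing. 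Filling a fraction of the slots interpolates $a$ from $d+2$ up to the maximal number of slots per ridge. The main obstacle is this bookkeeping: designing the slot structure so that slots are pairwise independent (each added facet crosses exactly one existing facet), that no new $(d-1)$-faces are created, and that the count of available slots per $(d-1)$-face gives exactly the threshold $\frac{2}{d+1}$. I would first try the analogue of putting one extra ``diagonal'' $d$-face for each $(2d+1)$-simplex of a triangulation of $\mathbb S^{2d}$ derived from the cyclic polytope, and then count faces via the Dehn--Sommerville relations.
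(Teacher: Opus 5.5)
Your argument for parts 1) and 3) is the paper's argument. The deletion step is Corollary~\ref{coro:cr-d+2}. Crossings between non-disjoint faces are removed by Lemma~\ref{whitney_trick} for $d\geq 3$, and they are absent in generic affine or geodesic drawings, which is why part 3) holds for $d=2$ as well. You then sample vertices with $p=a|X(d-1)|/|X(d)|$. Two small repairs are needed. First, the condition in your deletion loop is reversed: it should read $|X(d)|>(d+2)|X(d-1)|$ for the current complex. Second, you must also eliminate self-intersections of a single PL-drawn $d$-face. These survive sampling with probability $p^{d+1}$ rather than $p^{2d+2}$, so they would break the expectation identity. Lemma~\ref{whitney_trick} covers this case too.

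Part 2) is a genuine gap. What you give is a plan, not a construction. You leave open exactly the points that make up the proof:
\begin{itemize}
\item that each added $d$-face crosses exactly one existing face;
\item that added faces do not cross each other;
\item that no new $(d-1)$-faces are created;
\item above all, that one can reach $\frac{2}{d+1}$ such faces per $(d-1)$-face.
\end{itemize}
Nothing in your sketch produces this number. The graph-case picture of a diagonal inside two adjacent cells also does not transfer directly. For $d\geq 2$ the triangulation $\partial C(n,2d+1)$ is $d$-neighborly, so the boundary of any $d$-face you add is already drawn. If its vertices lie in two adjacent $2d$-cells, it must contain both apices, and its $(d-1)$-faces through the edge joining them lie outside those two cells. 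So the ``slot'' is not local.

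The missing idea, used in Proposition~\ref{prop:first-sparse-interval}, is to work with $P=C(n,2d)$ rather than a $(2d+1)$-dimensional polytope, taking $n>2d+3$ coprime to $d+1$.
\begin{itemize}
\item Since $P$ is $d$-neighborly, $\partial P$ carries the complete $(d-1)$-skeleton. By Gale's evenness criterion, a $(d+1)$-set is a boundary face unless it is \emph{separated}, meaning no two of its labels are cyclically consecutive.
\item Two vertex-disjoint moment-curve $d$-simplices cross, exactly once, if and only if their vertices alternate cyclically.
\item Split the separated sets into the $n$ classes $\mathcal A_r$ according to their label sum mod $n$. These classes all have size $\frac{1}{d+1}\binom{n-d-2}{d}$.
\item If $A$ and $B$ alternate, the difference of their classes lies in $\{d+1,\dots,n-d-1\}$ mod $n$. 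Hence within $\mathcal A_r\cup\cdots\cup\mathcal A_{r+d+1}$ the only crossing pairs are $\{A,A+1\}$, between the first and the last class.
\item Glue two copies of $P$ along $\partial P$ to get a PL $2d$-sphere. Draw $\mathcal A_0,\dots,\mathcal A_d$ affinely in one copy and $\mathcal A_{d+2},\dots,\mathcal A_{2d+2}$ in the other. Together with $\partial P$ this is crossing-free, with $f_d(\partial P)+2\binom{n-d-2}{d}=(d+2)\binom nd+O(n^{d-1})$ faces of dimension $d$.
\item Any subfamily of $\mathcal A_{d+1}$ (in the first copy) and of $\mathcal A_{2d+3}$ (in the second) can now be added. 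Each added face crosses exactly one face, its translate by $-1$, and added faces do not cross each other. This gives up to $\frac{2}{d+1}\binom nd$ faces at one crossing each.
\item Adding $\lfloor (a-d-2)\binom{n-d-2}{d}\rfloor$ of them gives ratio $a$ with $(a-d-2)\binom nd+O(n^{d-1})$ crossings, matching part 1).
\end{itemize}
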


\noindent{\textbf{\em Remark:}} The expression $\frac{a-(d+2)}{a^{d+2}}$ achieves its maximum for $a= d+3+\frac{1}{d+1}$. Thus, if $d\neq 2$ and $a\geq d+3+\frac{1}{d+1}$, then \[\Cr(X)\geq \frac{(d+1)^{d+1}}{(d+2)^{2d+3}}\cdot  \frac{|X(d)|^{(d+2)}}{|X(d-1)|^{(d+1)}},\]
  and $\Cr_{\mathbb R^{2d}}(X) \geq \Cr_{\mathbb S^{2d}}(X)\geq \frac{(d+1)^{d+1}}{(d+2)^{2d+3}}\cdot  \frac{|X(d)|^{(d+2)}}{|X(d-1)|^{(d+1)}}$ for all $d$.

The case $d=1$ of this Corollary is the aforementioned \emph{crossing number inequality}. It was independently noticed by Dey~\cite{Dey1993}, Wagner~\cite{wagner2013} and others, that a version of Theorem~\ref{Adi} for $\Cr_\mathbb R$ or for $\Cr_\iota$  implies a version of  Corollary~\ref{crossing_lemma}. In~\cite{adiprasito2018combinatorial}, Corollary~\ref{crossing_lemma} is stated with the value $a=d+3$ (there, the reader is referred to~\cite{wagner2013}, where the result is not proved and the discussion is built around the Van Kampen class). 
As far as we are aware, no detailed proof of Corollary~\ref{crossing_lemma} can be found in the literature, and the case $d=2$ is exceptional, as we explain in detail below. 

For $d>2$, a variant of the Whitney trick (Lemma~\ref{whitney_trick} below) implies that in a drawing that minimizes crossings the star of every vertex is embedded. The introduction of the survey~\cite{schaefersurvey} includes a discussion on the difficulties and common mistakes involved in attempting to bypass an analogue of Lemma~\ref{whitney_trick} for variants of the crossing number for graphs.\footnote{The works~\cite{sarkaria, Nevo2007_HigherMinors, wagner2011_minors_random} and, in particular, \cite{wagner2013} propose an approach towards the Kalai-Sarkaria conjecture through the non-vanishing of the Van Kampen class. 
This easily yields Corollary~\ref{crossing_lemma} for all $d$, and this is what was meant in Theorem 3.12 of~\cite{wagner2013}.}
It turns out that we can use Theorem~\ref{Adi} (together with Lemma~\ref{whitney_trick}) for $d=3$ to obtain a crossing lemma also in the exceptional case $d=2$. 

\begin{proposition}\label{cased=2} Let $X$ be a $2$-dimensional simplicial complex with $|X(2)|\geq \frac{20}{3}|X(1)|$. 
\[\Cr(X)\geq \frac{3^3}{2^8 5^3}\cdot\frac{|X(2)|^{4}}{|X(1)|^{3}}.\] 
\end{proposition}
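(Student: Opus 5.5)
The constants point to a specific route. Note that $\frac{3^3}{2^8 5^3}=\frac{27}{256\cdot 5^3}$ is exactly $\max_a \frac{a-5}{a^4}$, attained at $a=\frac{20}{3}=\frac{4\cdot 5}{3}$. So I plan to prove a linear crossing bound with constant $5$ (that is, $d+3$ for $d=2$, rather than $d+2=4$) and then amplify it by random sampling of vertices, as in Corollary~\ref{crossing_lemma}.

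\textbf{Step 1: a linear bound.} I will show that any generic PL map $f\colon X\to\mathbb R^4$ of a $2$-complex has at least $|X(2)|-5|X(1)|$ crossings between \emph{vertex-disjoint} pairs of triangles. Counting only vertex-disjoint pairs is essential. In dimension $4$ we cannot use Lemma~\ref{whitney_trick} to remove crossings between adjacent triangles. Such crossings survive vertex sampling with probability $p^5$ or larger rather than $p^6$, which would break the amplification step. I also cannot simply delete one triangle per crossing and invoke Theorem~\ref{Adi} with $d=2$, because adjacent crossings are uncontrolled.

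The plan is to lift to dimension $3$, where the Whitney trick is available in the ambient $\mathbb R^6$:
\begin{itemize}
\item Build from $X$ and $f$ a $3$-dimensional complex $Y$ (a cone/join-type construction over $X$) together with a map $F\colon Y\to\mathbb R^6$. The $3$-faces of $Y$ should correspond to triangles of $X$, and the $2$-faces of $Y$ should be controlled by $|X(1)|$ (plus the triangles).
\item Apply Lemma~\ref{whitney_trick} to $F$ to remove all crossings between adjacent $3$-faces. This should force every adjacent-pair crossing of $f$ to become removable.
\item Delete one $3$-face of $Y$ per remaining crossing. The remaining crossings come from vertex-disjoint pairs of triangles of $X$.
\item Apply Theorem~\ref{Adi} with $d=3$ to the resulting embedded complex, so that $|Y(3)|-\#\{\text{crossings}\}\le 5|Y(2)|$. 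Translating back should give the constant $5$.
\end{itemize}

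\textbf{Step 2: amplification.} Given a drawing of $X$ realizing $\Cr(X)$, keep each vertex independently with probability $p$ and restrict the drawing. Taking expectations in Step 1:
\begin{itemize}
\item each triangle survives with probability $p^3$;
\item each edge survives with probability $p^2$;
\item each vertex-disjoint crossing survives with probability $p^6$.
\end{itemize}
This gives $p^6\Cr(X)\ge p^3|X(2)|-5p^2|X(1)|$. Choosing $p=\frac{20|X(1)|}{3|X(2)|}$, which is at most $1$ exactly when $|X(2)|\ge\frac{20}{3}|X(1)|$, yields $\Cr(X)\ge p^{-4}\cdot\frac{5}{3}|X(1)|=\frac{3^3}{2^8 5^3}\frac{|X(2)|^4}{|X(1)|^3}$.

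\textbf{Main obstacle.} The hard part is Step 1: choosing the lifted complex $Y$ so that two things hold at once. First, its face counts must give precisely the constant $5$; the naive cone gives ratios that are far too small, so some multiple-apex join or product construction is likely needed. Second, every adjacent crossing of $f$ must become a crossing of adjacent $3$-faces of $Y$ in $\mathbb R^6$, so that Lemma~\ref{whitney_trick} can eliminate it. My first attempt would be to join $X$ with a small set of apexes placed in the extra two coordinates, so that lifting $f$ linearly keeps vertex-disjoint crossings as isolated transverse points. I would then check whether the Whitney moves in $\mathbb R^6$ can be performed without creating new vertex-disjoint crossings.
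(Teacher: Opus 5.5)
Your Step 2 is correct and is exactly how the paper concludes (Lemma~\ref{amplification} with $\alpha=1$, $\beta=5$, evaluated at $a=20/3$). The gap is Step 1. You leave it open, and the lifting scheme you sketch would not produce the constant $5$.

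\textbf{Why the sketched lift fails.} For $Y=X*[q]$ one has $|Y(3)|=q|X(2)|$ and $|Y(2)|=|X(2)|+q|X(1)|$. So even when the lift has no independent crossings, Theorem~\ref{Adi} with $d=3$ only yields $|X(2)|\le\frac{5q}{q-5}|X(1)|$. The constant $5$ appears only in the limit $q\to\infty$, and a ``small set of apexes'' gives a strictly weaker final constant. In that limit, however, you cannot carry crossings of $f$ through the lift and then delete one $3$-face of $Y$ per crossing. A crossing point of $f(\sigma)$ and $f(\tau)$ lies on the common base of all the cones over $\sigma$ and over $\tau$. Before perturbation it is therefore a non-generic point shared by all $q(q-1)$ independent pairs $\sigma*y_i$, $\tau*y_j$ with $i\neq j$. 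After a generic perturbation it becomes a number of independent crossings growing quadratically in $q$. For instance, displacing $f(\tau)$ slightly in the direction of the circle, with equally spaced apexes, leaves about $q^2/4$ of these pairs crossing. If $c$ is the number of independent crossings of $f$, your deletion inequality reads $q|X(2)|-\Theta(q^2)c\le5(|X(2)|+q|X(1)|)$, which degenerates as $q\to\infty$.

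\textbf{The missing idea.} Kill the independent crossings \emph{downstairs}, before lifting, so that only the zero case has to be lifted.
\begin{itemize}
\item Take a drawing $f$ of $X$ and a maximal subcomplex $X'\supseteq X_{(1)}$ on which $f$ has no crossing between vertex-disjoint triangles. Every triangle outside $X'$ then has an independent crossing with a triangle of $X'$. So $f$ has at least $|X(2)|-|X'(2)|$ independent crossing pairs, as in Corollary~\ref{coro:cr-d+2}.
\item Adjacent crossings inside $X'$ do no harm, because all you now need is the implication $\Cr_\iota(X')=0\Rightarrow|X'(2)|\le5|X'(1)|$. Your objection to deleting triangles only rules out invoking Theorem~\ref{Adi} directly in dimension $2$.
\item To prove the implication, put $f(X')$ in the equatorial $\mathbb S^4\subset\mathbb S^4*\mathbb S^1=\mathbb S^6$. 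Place $q$ apexes on the orthogonal circle and send $\sigma*y_i$ to the spherical cone. Two vertex-disjoint $3$-faces have the form $\sigma*y_i,\tau*y_j$ with $i\neq j$ and $\sigma\cap\tau=\emptyset$. Their images meet only in $f(\sigma)\cap f(\tau)=\emptyset$, and this disjointness survives a small generic PL perturbation.
\item Every remaining singularity lies inside one $3$-face or between two $3$-faces sharing a vertex of $X'$ or an apex. This includes everything inherited from adjacent crossings and self-intersections of $f$. Lemma~\ref{whitney_trick} in $\mathbb R^6$ removes these without altering any other singularity, which also disposes of your worry about creating new independent crossings.
\item Theorem~\ref{Adi} with $d=3$ then gives $q|X'(2)|\le5(|X'(2)|+q|X'(1)|)$. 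Letting $q\to\infty$ yields $|X'(2)|\le5|X(1)|$, hence $\Cr_\iota(X)\ge|X(2)|-5|X(1)|$.
\end{itemize}
Your Step 2 then finishes the proof.
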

The decimal expression of $\frac{3^3}{2^8 5^3}$ is $0.00084375$.
 The key additional idea required in the proof of this proposition is that if $f \colon X \to \mathbb S^{4}$ is a map that sends vertex-disjoint $2$-faces to disjoint geometric simplices, then for any positive integer $q$ there exists a map $f \colon X* [q] \to \mathbb S^4 * \mathbb S^1=\mathbb S^{6}$ with the same property. Here, $[q]$ denotes the simplicial complex that has only $q$ disjoint points, and $*$ is the join operation; the definition of this operation will be given below.

 An interesting closely connected fact follows from the proof of Proposition 4.3 in~\cite{Melikhov2022}. Namely,  if $f_1 \colon X_1 \to \mathbb R^{2d_1}$, and $f_2 \colon X_2 \to \mathbb R^{2d_2}$, one can show that there exists a map $\hat{f_1*f_2}\colon X_1 * X_2 \to \mathbb R^{2(d_1+d_2+1)}$, which is built starting from the join $f_1*f_2\colon X_1 * X_2 \to \mathbb R^{2(d_1+d_2)+1}$, such that
$\Cr(\hat{f_1*f_2})\leq \Cr(f_1)\Cr(f_2)$. In particular, in this setting,
\[\Cr(X_1* X_2)\leq \Cr(X_1)\Cr(X_2).\]

\noindent{\textbf{\em Remark:}}  In a subsequent paper we will show that the statment of Corollary \ref{crossing_lemma} is valid for d=2.

\subsection{Complete and random complexes}

Recall that we write $\Delta^{n-1}$ to denote the simplex of dimension $n-1$, which has $n$ vertices. 

The next theorem generalizes Moon's bound to every $d$ and it is equivalent to an old result of Wendel~\cite{Wendel}. 

\begin{theorem}\label{Moon_Wendel}
  The limits 
 $\lim_{n \to \infty}\Cr_\iota(\Delta^{n-1}_{(d)}) \frac{{\binom nd}^{d+1}}{{\binom{n}{d+1}}^{d+2}}$ and $\lim_{n \to \infty}\Cr_{\mathbb S}(\Delta^{n-1}_{(d)}) \frac{{\binom{n}{d}}^{d+1}}{{\binom{n}{d+1}}^{d+2}}$ exist. Moreover,
 \[\frac{(d+1)^{(d+2)}d!}{(2d+3)!}\leq B^\iota(\Delta_{(d)})\leq B^\mathbb S(\Delta_{(d)})\leq   \frac{1}{2^{2d+2}}\frac{(d+1)^{d}}{d!},\] where $\Delta_{(d)}:=\{\Delta_{(d)}^{n}\}_{n=1}^\infty$.
 
\end{theorem}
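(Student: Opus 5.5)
The plan has three parts: existence of the limits by a monotone averaging argument, the lower bound from the van Kampen–Flores theorem by double counting, and the upper bound from a random spherical drawing analysed with a sign-symmetry (Wendel-type) argument.

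\textbf{Normalization.} For $\Delta^{n-1}_{(d)}$ we have $|X(d)|=\binom{n}{d+1}\sim n^{d+1}/(d+1)!$ and $|X(d-1)|=\binom nd\sim n^d/d!$. Hence
\[\frac{|X(d-1)|^{d+1}}{|X(d)|^{d+2}}\sim \frac{(d+1)^{d+2}\,d!}{n^{2d+2}},\]
and everything reduces to comparing crossing numbers with $n^{2d+2}$, or equivalently with $\binom{n}{2d+2}$.

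\textbf{Existence of the limits.} Let $c(n)$ denote either $\Cr_\iota(\Delta^{n-1}_{(d)})$ or $\Cr_{\mathbb S}(\Delta^{n-1}_{(d)})$. I will show that $c(n)/\binom{n}{2d+2}$ is nondecreasing in $n$.
\begin{itemize}
\item Take an optimal drawing of $\Delta^{n-1}_{(d)}$ and restrict it to each of the $n$ subcomplexes spanned by $n-1$ of the vertices.
\item A crossing, or a crossing pair of vertex-disjoint $d$-faces, involves at most $2d+2$ vertices. It therefore survives in at least $n-2d-2$ of these restrictions.
\item This gives $n\,c(n-1)\le (n-2d-2)\,c(n)$, which is exactly the claimed monotonicity.
\end{itemize}
The ratio is bounded above by the upper-bound construction below (in the $\mathbb S$ case) or trivially (in the $\iota$ case), so both limits exist.

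\textbf{Lower bound.} I will use the van Kampen–Flores theorem: $\Cr_\iota(\Delta^{2d+2}_{(d)})\ge 1$.
\begin{itemize}
\item In any drawing of $\Delta^{n-1}_{(d)}$, every set of $2d+3$ vertices spans a copy of $\Delta^{2d+2}_{(d)}$, so it contains a crossing pair of vertex-disjoint $d$-faces.
\item Such a pair uses $2d+2$ vertices, so it is counted in at most $n-2d-2$ of these $(2d+3)$-sets.
\item Hence $\Cr_\iota\ge \binom{n}{2d+3}/(n-2d-2)\sim n^{2d+2}/(2d+3)!$.
\end{itemize}
Multiplying by the normalization gives $B^\iota\ge (d+1)^{d+2}d!/(2d+3)!$. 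The middle inequality $B^\iota\le B^{\mathbb S}$ is immediate from $\Cr_\iota\le\Cr_{\mathbb S}$.

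\textbf{Upper bound: the drawing and intersections of faces sharing vertices.} Place $n$ i.i.d.\ uniform points on $\mathbb S^{2d}\subset\mathbb R^{2d+1}$ and draw each $d$-face as the spherical geodesic simplex on its vertices.
\begin{itemize}
\item Almost surely, any at most $2d+1$ of the points are linearly independent. They therefore lie in an open hemisphere, so each face is well defined.
\item If two faces share a vertex, their union has at most $2d+1$ vertices, so it lies in an open hemisphere. Gnomonic projection then reduces the question to affine general position in $\mathbb R^{2d}$, where such faces meet only along their common face. So all crossings come from vertex-disjoint pairs.
\end{itemize}

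\textbf{Upper bound: counting crossings among $2d+2$ points.} Fix $2d+2$ of the points. They satisfy a unique (up to scaling) linear dependence $\sum\lambda_i x_i=0$, with all $\lambda_i\neq 0$.
\begin{itemize}
\item Two disjoint faces $\sigma,\tau$ on these points intersect exactly when some dependence is nonnegative on $\sigma$ and nonpositive on $\tau$. By uniqueness, this happens precisely when $\{\sigma,\tau\}$ is the sign partition of $\lambda$, and the intersection is then a single point.
\item Replacing $x_i$ by $-x_i$ preserves the uniform measure and flips the sign of $\lambda_i$. So the sign vector of $\lambda$ is uniform on $\{\pm\}^{2d+2}$, up to a global sign. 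This is where Wendel's computation enters.
\item The expected number of crossings on a $(2d+2)$-set is therefore the probability that the sign partition is balanced, namely $\binom{2d+2}{d+1}2^{-(2d+2)}$.
\end{itemize}
Summing over all $(2d+2)$-sets,
\[\mathbb E\,\Cr=\binom{n}{2d+2}\binom{2d+2}{d+1}2^{-(2d+2)}\sim \frac{n^{2d+2}}{(d+1)!^2\,2^{2d+2}}.\]
Multiplying by the normalization gives $B^{\mathbb S}\le\frac{(d+1)^d}{d!\,2^{2d+2}}$.

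\textbf{Main obstacle.} The delicate step is making the spherical drawing rigorous. One must check that geodesic simplices are well defined and generic, and that faces sharing vertices never cross. I expect the hemisphere argument above to settle both points.
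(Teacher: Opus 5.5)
Your proposal follows the paper's proof in all three parts: monotone averaging over vertex deletions for existence, van Kampen--Flores plus double counting for the lower bound, and uniform random vertices with geodesic faces plus sign-flip symmetrization for the upper bound. The only difference is cosmetic. You read crossings off the sign pattern of the unique linear dependence among $2d+2$ points (the content of the paper's later Lemma~\ref{crossing-dependency}), getting $\binom{2d+2}{d+1}2^{-(2d+2)}$ per $(2d+2)$-set. The paper instead conditions on $v_i=\pm a_i$, $u_i=\pm b_i$, uses that the $2^{d+1}$ sign-flipped simplices tile an equatorial $d$-sphere and that two such spheres meet in exactly two antipodal points, and gets $2^{-(2d+1)}$ per vertex-disjoint pair. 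Since $\binom{n}{2d+2}\binom{2d+2}{d+1}=\binom{n}{d+1}\binom{n-d-1}{d+1}$, the totals agree.

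One step, however, is argued in the wrong direction. Write $f_v$ for the restriction of an optimal drawing $f$ to the other $n-1$ vertices. You have $\sum_v \Cr(f_v)\geq n\,c(n-1)$. To conclude $n\,c(n-1)\leq (n-2d-2)\,c(n)$ you need $\sum_v \Cr(f_v)\leq (n-2d-2)\,c(n)$. That is, every counted crossing must use \emph{at least} $2d+2$ vertices, and so survive in \emph{at most} $n-2d-2$ deletions. Your statement (``at most $2d+2$ vertices, so it survives in at least $n-2d-2$ restrictions'') only gives the reverse inequality, which cannot be combined with the first one.

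The correct fact is that each counted crossing uses exactly $2d+2$ vertices. For $\Cr_\iota$ this is immediate. For $\Cr_{\mathbb S}$ you need your linear-independence argument that faces sharing a vertex never cross. Apply it to the optimal geodesic drawing, whose vertices may be taken in linear general position, and not only to the random one. This is precisely the point that fails for the plain crossing number when $d=2$, which is why the paper treats $\Cr$ separately via Lemma~\ref{whitney_trick}.
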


For $d \to \infty$ the lower bound decays like $\frac{e}{8 \sqrt{2}} \frac{1}{d} \big( \frac{e}{4})^d$ and the upper bound like $\frac{e}{4 \sqrt{2\pi}} \frac{1}{\sqrt{d}} \big( \frac{e}{4})^d$.
The upper bound follows easily from a classic result of Wendel in geometric probability as we will explain. 
The lower bound for $\Cr(\Delta^{n-1}_{(d)})$ follows from double counting and work of Van Kampen~\cite{vanKampen1932} and, independently, Flores~\cite{Flores1933}, who showed that $\Cr_\iota(\Delta^{2d+2}_{(d)})=1$.\footnote{More generally, for the iterated join $K:=\Delta^{2k_1+2}_{(k_1)}*\Delta^{2k_2+2}_{(k_2)}*\ldots *\Delta^{2k_s+2}_{(k_s)}$ with $\sum_i (k_i+1)=d+1$, we have $\Cr(K)\neq 0$.}

The generalization of the complete bipartite graph to higher dimensions is the $(d+1)$-fold join of the discrete set with $n$ points $[n]$, denoted by $[n]^{*(d+1)}$. The join of two disjoint simplicial complexes $X$ and $Y$ is defined as \[X *Y:=\{\sigma \cup \tau: \sigma \in X, \tau \in Y\},\] where we use the convention that $\emptyset$ is in both $X$ and $Y$.  If $X$ is a simplicial complex, $X^{*k}$ denotes its $k$-fold join.  
This way, $[n]^{*(d+1)}$ is a $(d+1)$-partite complex. Van Kampen and Flores showed that $\Cr([3]^{*(d+1)})=1$, generalizing the non-planarity of the complete bipartite graph with $3$ vertices on each part. Write $[\infty]^{*(d+1)}:=\{[n]^{*(d+1)}\}_{n=1}^{\infty}$.

\begin{corollary}\label{double_count}
 The limits 
$\lim_{n \to \infty} \Cr_\iota\left([n]^{*(d+1)}\right) \frac{\big({(d+1)n^d\big)}^{(d+1)}}{\big (n^{(d+1)}\big)^{(d+2)}}$, $\lim_{n \to \infty} \Cr_\mathbb S\left([n]^{*(d+1)}\right) \frac{\big({(d+1)n^d\big)}^{(d+1)}}{\big (n^{(d+1)}\big)^{(d+2)}}$ 
exist. Moreover, we have that, \[B^\mathbb S\left([\infty]^{*(d+1)}\right)\leq (d+1)! B^\mathbb S(\Delta_{(d)})
,\] and   \[\left(\frac{d+1}{6}\right)^{d+1} \leq B^\iota([\infty]^{*(d+1)})\leq B^\mathbb S([\infty]^{*(d+1)})\leq  \left(\frac{d+1}{4}\right)^{d+1}.\] 
\end{corollary}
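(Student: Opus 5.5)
The plan is to work directly with the normalisation. For $X=[n]^{*(d+1)}$ we have $|X(d-1)|=(d+1)n^d$ and $|X(d)|=n^{d+1}$. Hence the quantity in the limits is $\Cr(\cdot)\,(d+1)^{d+1}/n^{2d+2}$, and it suffices to study $\Cr_\iota([n]^{*(d+1)})/\binom n2^{d+1}$ and its spherical analogue.

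The key combinatorial observation concerns a pair of vertex-disjoint $d$-faces of $[n]^{*(d+1)}$. Each face takes exactly one vertex from each of the $d+1$ parts, so the pair uses exactly two vertices of each part. For the spherical drawings I would also check that, for a generic geodesic map into $\mathbb S^{2d}$, two $d$-simplices sharing a vertex meet only at their common face. This reduces the spherical count to independent pairs, so the same double counting applies to $\Cr_\iota$ and $\Cr_{\mathbb S}$.

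\emph{Existence of the limits.} Take an optimal drawing of $[n]^{*(d+1)}$ and restrict it to every subcomplex $[m]^{*(d+1)}$ obtained by choosing $m$ vertices in each part. Restrictions of spherical drawings are spherical, and independent crossing pairs restrict correctly. A fixed crossing pair lies in exactly $\binom{n-2}{m-2}^{d+1}$ of the $\binom nm^{d+1}$ subcomplexes. Averaging therefore gives
\[\frac{\Cr([m]^{*(d+1)})}{\binom m2^{d+1}}\le\frac{\Cr([n]^{*(d+1)})}{\binom n2^{d+1}}.\]
So the sequence is nondecreasing in $n$. It is bounded by $1$, hence it converges.

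\emph{Lower bound.} I would invoke Van Kampen--Flores, $\Cr_\iota([3]^{*(d+1)})\ge 1$. There are $\binom n3^{d+1}$ copies of $[3]^{*(d+1)}$, each containing a crossing independent pair. A given independent pair lies in $(n-2)^{d+1}$ copies, since we choose the third vertex in each part. Thus
\[\Cr_\iota([n]^{*(d+1)})\ge \frac{\binom n3^{d+1}}{(n-2)^{d+1}}\sim\left(\frac{n^2}{6}\right)^{d+1}.\]
Multiplying by $(d+1)^{d+1}/n^{2d+2}$ gives $\left(\frac{d+1}{6}\right)^{d+1}$.

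\emph{Upper bounds.} Set $N=(d+1)n$ and take a near-optimal spherical drawing of $\Delta^{N-1}_{(d)}$. Its number of crossings is $\sim B^{\mathbb S}(\Delta_{(d)})\binom N{d+1}^{d+2}/\binom Nd^{d+1}\sim B^{\mathbb S}(\Delta_{(d)})\,N^{2d+2}/\big((d+1)^{d+1}(d+1)!\big)$. Partition the $N$ vertices uniformly at random into $d+1$ parts of size $n$; then $[n]^{*(d+1)}$ is a subcomplex, and it inherits a spherical drawing. A crossing pair of $\Delta$ survives exactly when both facets are rainbow. Asymptotically this happens with probability $\big((d+1)!/(d+1)^{d+1}\big)^2$, since equipartition versus independent colouring only affects lower-order terms. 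Taking expectations, some partition has at most $\sim B^{\mathbb S}(\Delta_{(d)})\,(d+1)!\,n^{2d+2}/(d+1)^{d+1}$ crossings, and after normalisation this gives $B^{\mathbb S}([\infty]^{*(d+1)})\le (d+1)!\,B^{\mathbb S}(\Delta_{(d)})$. Plugging in the Wendel upper bound from Theorem~\ref{Moon_Wendel} gives $(d+1)!\cdot\frac{(d+1)^d}{4^{d+1}d!}=\left(\frac{d+1}{4}\right)^{d+1}$. The middle inequality $B^\iota\le B^{\mathbb S}$ is immediate.

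I expect the main care to be needed in the genericity claim for spherical drawings, i.e.\ that adjacent faces do not cross, and in controlling the rainbow probability under an exact equipartition. Both look routine.
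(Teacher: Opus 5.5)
Your proposal is correct and follows essentially the paper's proof. It uses monotonicity of the normalised count under restriction for existence; your restriction to balanced sub-partite complexes $[m]^{*(d+1)}$ is the right way to carry out the vertex-deletion argument that the paper only says ``is also valid''. It uses Van Kampen--Flores with double counting over the ${\binom{n}{3}}^{d+1}$ copies of $[3]^{*(d+1)}$ for the lower bound. For the upper bound, it restricts an optimal spherical drawing of $\Delta^{(d+1)n-1}_{(d)}$ to a uniformly random balanced partite subcomplex, which is the paper's ratio $R_n$. One harmless slip: the trivial bound on $\Cr_\iota([n]^{*(d+1)})/{\binom{n}{2}}^{d+1}$ is $2^d$, not $1$, since each choice of two vertices per part carries $2^d$ vertex-disjoint pairs of $d$-faces; any finite bound suffices for the monotone convergence.
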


\noindent{\textbf{\em Remark:}}
Theorem~\ref{Moon_Wendel} and Corollary~\ref{double_count} hold for the crossing number provided $d\neq 2$. The case $d=1$ recovers the previously mentioned result of Richter and Thomassen from~\cite{RichterThomassen1997}.

\subsubsection{Spherical and rectilinear crossing numbers}

 Radon's lemma claims that a pointset $P$ with $2d+2$ points in dimension $2d$ in general position can be uniquely partitioned into $A\cup B$, $A\cap B=\emptyset$, and $\conv(A)\cap \conv(B)\neq \emptyset$. If we assume that $|A|\leq |B|$, then the type of the Radon partition is $|A|$. One endcase, $k=1$, corresponds to one point in the interior of the convex hull of the others, the remaining cases correspond to $P$ being in convex position. The case relevant to crossing numbers is the other endcase, where $k=d+1$.

Similarly, for a vector set $V=\{v_1,v_2, \ldots v_{2d+2}\}$ in $\mathbb S^{2d}\subset \mathbb R^{2d+1}$,  generically there exists a unique (up to multiplication by a non-zero scalar) $\lambda \in \mathbb R^{2d+2}$ such that $\sum_{i=1}^{2d+2} \lambda_iv_i=0$, and we define the spherical Radon partition by splitting $A:=\{v_i \in V: \lambda_i\geq 0\}$, $B:=\{v_i \in V: \lambda_i<0\}$.

Sylvester's problem can be generalized as follows. For a probability distribution $\mu$ on $\mathbb R^{2d}$ denote by $q(\mu)$ the probability that $2d+2$ points drawn independently from $\mu$ have Radon partition with $d+1$ points on each part. Analogously, for $\mathbb S^{2d}$, let $q(\mu)$ be the probability that $2d+2$ points drawn independently from $\mu$ have spherical Radon partition with $d+1$ points on each part.

Frick, Newman, and Pegden~\cite{fnp} studied this refinement of Sylvester's problem for independent Gaussian vectors, and showed that it is equivalent to Youden's demon problem — the problem of locating the mean among the order statistics of Gaussian points on the line — with the key tool being the fact that the Gale dual of independent Gaussian samples is again a Gaussian point set (with mean zero). 

The following is a straightforward generalization of Scheinerman and Wilf's observation. 

\begin{proposition} We have that
\[\lim_{n \to \infty} \frac{1}{{\binom{n}{2d+2}}}\Cr_{\mathbb R^{2d}} (\Delta_{(d)}^{n-1})=\inf_{\mu} q(\mu),\]
where the infimum is taken over measures $\mu$ on $\mathbb R^{2d}$ which do not charge affine hyperplanes. Also, 
\[\lim_{n \to \infty} \frac{1}{{\binom{n}{2d+2}}}\Cr_{\mathbb S^{2d}} (\Delta_{(d)}^{n-1})=\inf_{\mu} q(\mu),\] where the infimum is taken over measures on $\mathbb S^{2d}$ which do not charge hyperplanes.

\end{proposition}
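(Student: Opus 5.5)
The plan is to follow Scheinerman and Wilf: establish the two inequalities separately, and get existence of the limit from a monotonicity argument. The key geometric observation is that, for a facewise affine generic map $f$ of $\Delta^{n-1}_{(d)}$ into $\mathbb R^{2d}$, two $d$-faces can intersect only if they are vertex-disjoint. If they share a vertex, the two simplices span at most $2d+1$ points in general position, hence are affinely independent, and their convex hulls meet only in the common face. Moreover, two vertex-disjoint $d$-faces $\sigma,\tau$ cross (exactly once, by general position) if and only if $\{\sigma,\tau\}$ is the Radon partition of the $2d+2$ points $f(\sigma\cup\tau)$, since Radon's partition is unique. Consequently each $(2d+2)$-subset of vertices contributes $1$ or $0$ crossings, according to whether its Radon partition has type $d+1$, and
\[\Cr(f)=\#\{S\in\tbinom{[n]}{2d+2}: \text{Radon partition of } f(S) \text{ has type } d+1\}.\]
The same holds on $\mathbb S^{2d}$ for spherically geodesic maps, with the spherical Radon partition. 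There, $\sum\lambda_iv_i=0$ with $A$ and $B$ of size $d+1$ is exactly the condition that the cones over $f(\sigma)$ and $f(\tau)$ meet in a ray, so that the geodesic simplices intersect. (Here one must note that with $|A|=|B|=d+1$, the point on the ray lies in both geodesic simplices or in both antipodes, and the sign convention handles this.)

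Existence of the limit comes from averaging. Deleting a vertex shows that $c_n:=\Cr(\Delta^{n-1}_{(d)})/\binom{n}{2d+2}$ is nondecreasing in $n$: any $(2d+2)$-set avoids a uniformly random vertex with probability $(n-2d-2)/n$, so $\Cr_n\ge \frac{n}{n-2d-2}\Cr_{n-1}$. Since $c_n\le 1$, the limit $L$ exists.

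For the inequality $L\le\inf_\mu q(\mu)$, sample $n$ i.i.d.\ points from $\mu$. Because $\mu$ charges no hyperplane, the points are almost surely in general position, and the expected crossing number equals $\binom{n}{2d+2}q(\mu)$, so some drawing achieves at most this. For the reverse inequality, take an optimal drawing of $\Delta^{m-1}_{(d)}$ and let $\mu$ be the uniform measure on its $m$ vertices, smoothed by small Gaussians so that it charges no hyperplane. The resulting $q(\mu)$ is the probability that $2d+2$ samples realize a crossing. Samples landing at distinct vertices reproduce the count $c_m$ (a perturbation small enough preserves the Radon types), and the collision probability is $O(1/m)$. Hence $\inf q\le c_m+O(1/m)$, which gives $\inf q\le L$.

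The main obstacle I expect is the spherical case: checking carefully that the spherical Radon partition of type $d+1$ is equivalent to an actual crossing of the geodesic simplices (which requires each image simplex to lie in an open hemisphere, i.e.\ be short), and that generic spherical drawings have no other crossings. I would handle this by lifting to cones in $\mathbb R^{2d+1}$ and applying the unique linear dependence of $2d+2$ generic vectors.
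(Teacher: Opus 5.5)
Your argument is correct and follows the Scheinerman--Wilf route the paper has in mind. The paper states this proposition without a separate proof and relies on the same ingredients: the Radon/dependency characterization of crossings, as in Lemma~\ref{crossing-dependency}, and the vertex-deletion averaging it uses to show that $B^\iota(\Delta_{(d)})$ exists.

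Two small touch-ups:
\begin{itemize}
\item With Gaussian smoothing the Radon types are preserved only with probability $1-o(1)$ as the variance tends to $0$. Alternatively, smooth by the uniform measure on small balls or caps. Either way you still get $\inf_\mu q(\mu)\le c_m+O(1/m)$.
\item The hemisphere issue you flag is automatic. The positive cone of $d+1$ linearly independent vectors is pointed, so its spherical simplex lies in an open hemisphere.
\end{itemize}
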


Streltsova and Wagner~\cite{streltsova2025spherical}, building on earlier work of Wagner~\cite{LovaszVesztergombiWagnerWelzl2004, wagner2003ksets}, proved the Harary-Hill conjecture for spherical drawings. Their insights come partially from the Linhart-Eckhoff-Welzl conjecture, a generalization of McMullen's upper bound theorem, which we mentioned above and recall now in more detail. 
 
Consider the point set 
\[V_{n,m}:=\{v^i:=(i,i^2,i^3, \ldots i^{m})\in \mathbb N^{m}:i \in \{1,2,\ldots n\}\};\]
we call it a cyclic configuration and define the cyclic polytope on $n$ vertices in dimension $m$ as $C_{n,m}:=\conv(V_{n,m})$ (see~\cite{Stanley,Grunbaum,Matousek2002} for background on cyclic polytopes).  In what follows, we label the vertex $(i,i^2,\ldots i^m)$ simply by $i$. 
The Upper Bound Theorem claims that for each $s$, no convex polytope of $n$ vertices in $\mathbb R^m$ has more $s$-faces than the cyclic polytope $C_{n,m}$.

For a configuration $V\subset\mathbb R^m$ in linear general position, meaning that every $m$ vectors are linearly independent, consider the arrangement of hemispheres $\mathcal A(V):=\{H(v):v \in V\}$ where $H(v):=\{x \in \mathbb S^{m-1}:\langle x,v\rangle\leq 0\}$. Two points $x,y \in \mathbb S^{m-1}$ are on the same face of this arrangement whenever, for every $v \in V$, \[\sign(\langle x,v\rangle)=\sign(\langle y,v \rangle).\]

Given a face $\sigma \subset \mathbb S^{m-1}$ of the arrangement $\mathcal A(V)$, 
choose $x\in\sigma$ and let $s$ be the number of vectors in $V$ such that $\sign(\langle x,v\rangle)=0$; then, $\dim(\sigma)=(m-1)-s$. Let $t$ be the number of vectors in $V$ such that $\sign(\langle x,v\rangle)=-1$. We say that $t$ is the \emph{level} of the face $\sigma$. Denote by $f_{s,t}(V)$ the number of $(m-1)-s$ dimensional faces of level $t$. 

As a vector configuration, $V_{n,m}$ has the same sign patterns as $\{(1,i,\ldots,i^{m-1}):1\leq i\leq n\}$, since $(i,i^2,\ldots,i^m)=i(1,i,\ldots,i^{m-1})$. Thus, the arrangement formulation uses the cyclic polytope in affine dimension $m-1$. With this convention, the Upper Bound Theorem is equivalent to the following: for every $V\subset \mathbb R^m$ in linear general position with $|V|=n$ and every $s$ with $0\leq s \leq m-1$, $f_{s,0}(V)\leq f_{s,0}(V_{n,m})$. 
Write $f_{s,\leq t}:=\sum_{i=0}^t f_{s,i}$. Linhart, Eckhoff and Welzl independently conjectured the following.
\begin{conjecture}[Linhart-Eckhoff-Welzl]\label{gubt} Let $V\subset \mathbb R^m$ be a vector configuration in linear general position with $|V|=n$. For every integer $0\leq t \leq \frac{n-m-1}{2}$ and every integer $s$ with $0\leq s \leq m-1$, we have
\[f_{s,\leq t}(V)\leq f_{s,\leq t}(V_{n,m}).\]
\end{conjecture}

Following Streltsova and Wagner, we use Gale duality to show that the geodesic drawing on the alternating moment curve gives $\Cr_{\mathbb S^{2d}}(\Delta^{m-1}_{(d)})\leq\binom{\lfloor m/2\rfloor}{d+1}\binom{\lfloor(m-1)/2\rfloor}{d+1}$ for all $m$, and that the Linhart--Eckhoff--Welzl conjecture implies equality (Proposition~\ref{prop:alternating}). We derive the following:
\begin{proposition}\label{prop:lew-spherical}
    If the Linhart-Eckhoff-Welzl conjecture holds, we then have that
    \[ B^{\mathbb{S}}(\Delta_{(d)})=\frac{1}{2^{2d+2}} \frac{(d+1)^d}{d!}.\]
\end{proposition}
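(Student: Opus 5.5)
The plan is to reduce the statement to an exact formula for $\Cr_{\mathbb S^{2d}}(\Delta^{m-1}_{(d)})$ and then compute the resulting limit. The equality direction of Proposition~\ref{prop:alternating}, which is where Conjecture~\ref{gubt} enters, will be used rather than re-proved. Theorem~\ref{Moon_Wendel} already gives that the limit defining $B^{\mathbb S}(\Delta_{(d)})$ exists and is at most $\frac{1}{2^{2d+2}}\frac{(d+1)^d}{d!}$. So it suffices to show that, under the conjecture, the normalized crossing numbers are asymptotically at least this value.

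First, I invoke Proposition~\ref{prop:alternating}. The geodesic drawing of $\Delta^{m-1}_{(d)}$ on the alternating moment curve in $\mathbb S^{2d}$ has
\[\binom{\lfloor m/2\rfloor}{d+1}\binom{\lfloor (m-1)/2\rfloor}{d+1}\]
crossings, and if Conjecture~\ref{gubt} holds this count is optimal. The mechanism for optimality is as follows.
\begin{itemize}
\item For a spherical drawing given by vectors $v_1,\dots,v_m\in\mathbb S^{2d}$, a crossing between two vertex-disjoint $d$-faces is exactly a $(2d+2)$-subset whose spherical Radon partition has type $(d+1,d+1)$.
\item Pass to the Gale dual configuration $W\subset\mathbb R^{m-2d-1}$. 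A $(2d+2)$-subset of $V$ corresponds to a complementary $(m-2d-2)$-subset of $W$, which spans a hyperplane, i.e.\ a vertex (a $0$-dimensional face) of the arrangement $\mathcal A(W)$.
\item The Radon type of the subset of $V$ translates into the level of that vertex: a balanced $(d+1,d+1)$ partition corresponds to the vertex and its antipode both having level $d+1$ relative to the $2d+2$ remaining vectors.
\end{itemize}
Hence $\Cr$ of the drawing equals a fixed combination of the numbers $f_{s,t}(W)$ with $s=m-2d-2$. Writing that combination as a count of vertices \emph{not} at low levels, Conjecture~\ref{gubt} bounds the low-level counts from above by those of the cyclic configuration. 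This bounds the crossing number from below by its value on the alternating moment curve, which is Gale dual to a cyclic configuration.

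Second, I compute the asymptotics. As $m\to\infty$,
\[\binom{\lfloor m/2\rfloor}{d+1}\binom{\lfloor (m-1)/2\rfloor}{d+1}\sim \frac{(m/2)^{2d+2}}{((d+1)!)^2},\qquad \frac{\binom md^{d+1}}{\binom m{d+1}^{d+2}}\sim m^{-2(d+1)}\frac{((d+1)!)^{d+2}}{(d!)^{d+1}}.\]
Multiplying these gives
\[2^{-(2d+2)}\frac{((d+1)!)^{d}}{(d!)^{d+1}}=\frac{1}{2^{2d+2}}\frac{(d+1)^d}{d!},\]
which matches the upper bound of Theorem~\ref{Moon_Wendel}, as required.

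The main obstacle is the first step: the precise Gale-duality dictionary between balanced Radon partitions and arrangement levels. One must get the level indexing right, including how antipodal pairs are counted, and check that the range $t\le \frac{n-m-1}{2}$ in Conjecture~\ref{gubt} covers every level needed. I would first verify this dictionary on the case $d=1$, where it should reproduce the Streltsova--Wagner argument, and then carry over the same bookkeeping verbatim.
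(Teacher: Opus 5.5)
Your proposal is correct and follows the paper's proof. The paper likewise takes the exact value $\binom{\lfloor m/2\rfloor}{d+1}\binom{\lfloor(m-1)/2\rfloor}{d+1}$ from Proposition~\ref{prop:alternating}, where Conjecture~\ref{gubt} enters with $t=d$ exactly at the allowed bound via Lemmas~\ref{crossing-dependency} and~\ref{lem:gale-welzl}, and then carries out the same asymptotic normalization to obtain $\frac{(d+1)^d}{2^{2d+2}d!}$. Your appeal to Theorem~\ref{Moon_Wendel} for existence of the limit and for the upper bound is harmless but unnecessary, since the exact formula already determines the limit directly.
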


It is remarkable that both in the sparse regime $|X(d)|\sim C |X(d-1)|$ and the dense regime $|X(d)|\sim C |X(d-1)|^{\frac{d+1}{d}}$, the cyclic configurations and their Gale duals (called cocyclic configurations)  are conjecturally extremal. In the midrange the situation is less clear and we will discuss it in the next subsection. But, before that, we discuss another family of simplicial complexes of interest in the sparse regime and midrange, namely random simplicial complexes. 

\subsubsection{Geometric crossing numbers of Linial-Meshulam complexes}

Let $d\geq 1$. We consider, for every positive integer $n$ and any $p\in(0,1)$, the Linial-Meshulam random simplicial complex $\Delta_d(n,p)$. This is the random $d$-dimensional simplicial complex on $n$ labeled vertices whose $(d-1)$-skeleton is complete and in which each $d$-face is included independently with probability $p$. Note that for $d=1$, this corresponds precisely to the classical Erd\H{o}s-R\'enyi random graph $G(n,p)$. The subindex $d$ will be omitted from now on. 

Spencer and Tóth~\cite{SpencerToth2002} studied the crossing number of Erd\H{o}s-R\'enyi graphs and, as one of their main results, they showed that, for any sequence of probabilities which satisfies $p\gg \log n/n$, one has that \[\mathrm{cr}_{\mathbb R^2}(G)=p^2\mathrm{cr}_{\mathbb R^2}(K_n)+o(p^2n^4)\] with high probability as $n\rightarrow\infty$.
  We remark that their proof readily extends to spherical (instead of rectilinear) crossing numbers. Spencer and Tóth asked whether the condition that $p\gg\log n/n$ is necessary, and conjectured that $np\rightarrow \infty$ should actually suffice. Here, we prove and generalize this conjecture to all dimensions.

\begin{theorem}\label{thm:rectlinear_random_complex}
Fix \(d\geq 1\) and let \(X\sim \Delta_d(n,p)\).
If\[p n^d\rightarrow \infty,\] then \[\operatorname{cr}_{\mathbb R^{2d}}(X)=p^2\operatorname{cr}_{\mathbb R^{2d}}(\Delta_{(d)}^{n-1})+o_{\mathbb P}\bigl(p^2 n^{2d+2}\bigr).\] Here, $o_{\mathbb P}\bigl(p^2 n^{2d+2}\bigr)$ denotes an error term which, when divided by $p^2n^{2d+2}$, goes to $0$ in probability.

\end{theorem}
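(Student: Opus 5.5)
The two inequalities are proved separately; the upper bound is the easy one.

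\emph{Upper bound.} Take an optimal rectilinear drawing $f$ of $\Delta^{n-1}_{(d)}$ and restrict it to $X$. A crossing of $f$ between two $d$-faces $\sigma,\tau$ survives in $X$ exactly when both faces are kept. In a generic rectilinear drawing, crossing faces are vertex-disjoint, since faces sharing a vertex meet only at that vertex. So the two faces are kept independently, with probability $p^2$, and
\[\mathbb E\,\Cr_{\mathbb R^{2d}}(f|_X)=p^2\Cr_{\mathbb R^{2d}}(\Delta^{n-1}_{(d)}).\]
The count is a sum of $\Theta(n^{2d+2})$ indicators $\mathbb 1_\sigma\mathbb 1_\tau$, and two such terms are dependent only if they share a face. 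The variance is therefore at most $O(n^{3d+2}p^3+n^{2d+2}p^2)$, which is $o(p^4n^{4d+4})$ as soon as $pn^d\to\infty$. Chebyshev then gives the upper bound with error $o_{\mathbb P}(p^2n^{2d+2})$.

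\emph{Lower bound, approach.} We must show that every rectilinear drawing of $X$ has at least $p^2\Cr_{\mathbb R^{2d}}(\Delta^{n-1}_{(d)})-o(p^2n^{2d+2})$ crossings, simultaneously for all drawings. The combinatorial type of a rectilinear drawing is the order type of the $n$ points, and there are $\exp(O(n\log n))$ order types. A union bound over order types, as in Spencer--T\'oth, needs concentration at scale $\exp(-\omega(n\log n))$, and this is exactly where their hypothesis $p\gg\log n/n$ comes from. To reach $pn^d\to\infty$ I would avoid the union bound over full drawings and instead use the averaging/limit description from the proposition relating $\Cr_{\mathbb R^{2d}}(\Delta^{n-1}_{(d)})$ to $\inf_\mu q(\mu)$.

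\emph{Lower bound, reduction to blocks.} Fix a large constant $k$. For any drawing of $X$ and any $k$-subset $S$ of vertices, the drawing restricted to $S$ is a point configuration on $k$ points. Its crossings among $d$-faces of $X[S]$ are counted by the $(2d+2)$-subsets of $S$ that have a balanced Radon partition and whose two $(d+1)$-parts are both faces of $X$. The key claim is a \emph{typicality statement}: with high probability, for most $(2d+2)$-tuples, the two relevant facets are present with frequency about $p^2$, uniformly over every way of choosing which tuples are balanced Radon. This cannot hold for arbitrary subfamilies. It should hold for families realizable by a point configuration, whose number is controlled by Warren/Milnor--Thom type bounds: at most $\exp(O(n\log n))$ families. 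So we still need the failure probability for each family to be $\exp(-\omega(n\log n))$.

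\emph{Lower bound, concentration (main obstacle).} For a fixed realizable family $\mathcal F$ of balanced Radon $(2d+2)$-tuples, the count $Z_{\mathcal F}=\sum_{(\sigma,\tau)\in\mathcal F}\mathbb 1_\sigma\mathbb 1_\tau$ is a quadratic polynomial in the $\binom nd$-many face indicators. We only need a lower tail, so I would apply Janson's inequality. Here $\mu=p^2|\mathcal F|$ and $\Delta\lesssim p^3 n^{3d+2}$, which gives
\[\mathbb P\bigl(Z_{\mathcal F}<(1-\varepsilon)\mu\bigr)\le \exp\bigl(-c\mu^2/\Delta\bigr)=\exp\bigl(-c\,p\,n^{d+2}\bigr).\]
This applies when $|\mathcal F|=\Theta(n^{2d+2})$, that is, when the drawing's crossing count is within a constant factor of the minimum, which is all we need. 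Since $pn^{d+2}=n^2\cdot pn^d\gg n\log n$ whenever $pn^d\to\infty$, the union bound over $\exp(O(n\log n))$ order types goes through. Drawings with $|\mathcal F|$ much larger than the minimum only help. The delicate part is making the Janson $\Delta$ computation rigorous for arbitrary realizable families, since the codegree bounds must hold uniformly. Once that is done, $\Cr_{\mathbb R^{2d}}(X)\ge(1-\varepsilon)p^2\Cr_{\mathbb R^{2d}}(\Delta^{n-1}_{(d)})$ w.h.p., and letting $\varepsilon\to0$ finishes the proof.
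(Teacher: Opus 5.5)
Your upper bound is the paper's argument: restrict an optimal drawing of $\Delta^{n-1}_{(d)}$ and apply Chebyshev. One correction there: dependent pairs of crossing pairs number $O(n^{3d+3})$, not $O(n^{3d+2})$, since a shared $d$-face plus two further $d$-faces involve $3d+3$ vertices. In the upper bound this is harmless, because $p^3n^{3d+3}=o(p^4n^{4d+4})$ as soon as $pn^{d+1}\to\infty$.

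In the lower bound, however, the same miscount is fatal. For any family $\mathcal F$ with $|\mathcal F|=\Theta(n^{2d+2})$, Cauchy--Schwarz applied to the $\mathcal F$-degrees of the $d$-faces gives $\Delta\geq c\,p^3|\mathcal F|^2/\binom{n}{d+1}=\Omega(p^3n^{3d+3})$. So Janson only yields $\exp(-c\varepsilon^2pn^{d+1})$, not $\exp(-cpn^{d+2})$. No per-family estimate can do better. The number of $d$-faces of $X$ is $\mathrm{Bin}(\binom{n}{d+1},p)$, and it drops below $(1-2\varepsilon)$ times its mean with probability $\exp(-\Theta(\varepsilon^2pn^{d+1}))$. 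Conditionally on this, $Z_{\mathcal F}$ has mean at most about $(1-2\varepsilon)^2\mu$. Against $\exp(\Theta(n\log n))$ order types, the union bound therefore needs $pn^d\gg\log n$. That is exactly the Spencer--T\'oth hypothesis you set out to remove. As a sanity check, your bound on $\Delta$ would make the argument work for $d=1$ already at $p=1/(2n)$. There, w.h.p.\ every component of $G(n,p)$ has at most one cycle, so the graph is planar and has rectilinear crossing number $0$ by F\'ary's theorem, while $p^2\Cr_{\mathbb R^2}(K_n)=\Theta(n^2)$.

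The missing idea is to replace the union bound over order types by one over tuples of vertex subsets, which costs only $2^{O(n)}$. The paper does this with the semi-algebraic regularity lemma of Fox--Pach--Suk. The vertex set of an optimal drawing of $X$ has an equitable partition into $K\leq M(d,\eta)$ parts such that all but $\eta\binom{K}{2d+2}$ choices of $2d+2$ parts have same-type transversals. Crossing pairs of $\Delta^{n-1}_{(d)}$ that use a part twice, or a non-same-type tuple, number $O(n^{2d+2}/K+\eta n^{2d+2})$ and are discarded. For each remaining crossing pair of part tuples $(I,J)$, every $I$-transversal face of $X$ crosses every $J$-transversal face, contributing $F_X((V_i)_{i\in I})\,F_X((V_j)_{j\in J})$ crossings. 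It therefore suffices that $F_X(U_0,\dots,U_d)=(1+o(1))\,p\prod_i|U_i|$ simultaneously for all disjoint $U_i$ with $|U_i|\geq n/k$. There are at most $2^{(d+1)n}$ such tuples. Chernoff gives failure probability $\exp(-c\varepsilon_n^2p(n/k)^{d+1})$ with $\varepsilon_n=(pn^d)^{-1/4}$. Since $\varepsilon_n^2pn^{d+1}=n(pn^d)^{1/2}\gg n$, this is precisely where $pn^d\to\infty$ suffices.
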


It seems very likely that the $pn^d\rightarrow\infty$ requirement is tight. Indeed, this is the threshold at which the ``giant connected component'' spanned by the $d$-dimensional faces becomes of size $n-o(n)$.

Theorem~\ref{thm:rectlinear_random_complex} is really a consequence of a technique due to Fox, Pach and Suk~\cite{approximating_crossing}, which they used to show that the graph parameter $\mathrm{cr}_{\mathbb R^2}$ is well-behaved with respect to the graph cut norm. This technique relies, in turn, on the semi-algebraic regularity lemma of the same authors~\cite{fox_semialgebraic}. Similar ideas were used by Gundert to study crossing numbers of expanding simplicial complexes in Chapter 8 of her PhD thesis~\cite{gundert2013expansion}.

\subsubsection{Antipodal drawings}

 There has been a lot of very interesting work confirming the Harary-Hill conjecture under additional hypotheses on the drawing. This line of research began with a result found independently in~\cite{AbregoFernandez‑Merchant2005}, where it was shown that $\Cr_{\mathbb R^2}(K_n)$ is at least the Harary-Hill bound, and in~\cite{LovaszVesztergombiWagnerWelzl2004}, where it was further proven that $\Cr_{\mathbb R^2}(K_n)$ is asymptotically larger than $\frac{n^4}{64}$. Building on these breakthroughs, the hypotheses on the drawing that imply the Harary-Hill conjecture have been relaxed in several directions~\cite{abrego2012two,abrego2014shellable,balko2015crossing}. We emphasize the case of \emph{bi-shellable} drawings~\cite{abrego2018bishellable}, which generalizes all the previously discovered ones, and a more recent remarkable result~\cite{streltsova2025spherical} which confirms the Harary-Hill conjecture for $\Cr_{\mathbb S^2}$. We prove yet another result of this kind for all dimensions.

Given a simplicial complex $X$ with a free $\mathbb Z_2$ action, we define the \emph{antipodal crossing number} of $X$ to be the smallest crossing number $\Cr(f)$, where the map $f \colon X \to \mathbb S^{2d}$ is required to be piece-wise linear, generic and $\mathbb Z_2$-equivariant. We denote this crossing number by $\Cr^{\mathbb Z_2}(X)$.

Let $K_1^{d+1}$ be the unit ball of the Minkowski space $\ell_1^{d+1}$; this polytope is known as the \textit{cross polytope}, and it is the convex hull of the vectors in the standard basis and their antipodals, that is, $K_1^{d+1}=\conv(e_1,-e_1,e_2,-e_2, \ldots e_d,-e_d,e_{d+1},-e_{d+1})$. 
Its boundary is a $d$-dimensional topological sphere with a triangulation (which possesses many symmetries) $\partial(K_1^{d+1})$ that can be equivalently constructed as the $(d+1)$-fold join of $\mathbb S^0$, denoted by $(\mathbb S^0)^{*(d+1)}$. 
See~\cite{Matousek2003} for more details.

 Consider the simplicial complex $(\mathbb S^0)^{*n}_{(d)}$, a subcomplex of the simplex on $2n$ vertices, $\Delta^{2n-1}_{(d)}$, in which we erase the edges of a perfect matching, together with all the faces that contain at least one edge of this matching. It is easy to see that $(\mathbb S^0)^{*n}_{(d)}$ contains a $1-o(1)$ fraction of the $d$-faces of $\Delta^{2n-1}_{(d)}$.
 Each factor $\mathbb S^0$ comes with a free $\mathbb Z_2$ action that interchanges the vertices. 
 We extend this action linearly on faces to all of $(\mathbb S^0)^{(*n)}$, which we identify with the boundary of the $n$-dimensional cross polytope. 
 This is a triangulation of $\mathbb S^{n-1}$ and the involution is the antipodal map. 
 
 The situation is similar in the case of the $(d+1)$-partite complex. Namely, any perfect matching of the vertices defines a $\mathbb Z_2$ action. 
 We denote by $B^{\mathbb Z_2}(\Delta_{(d)})$ and $B^{\mathbb Z_2}([\infty]^{*(d+1)})$ the corresponding limiting crossing constants when restricting to antipodal drawings.

\begin{theorem}\label{antipodal}
We have that \[\Cr^{\mathbb Z_2}((\mathbb S^0)^{*n}_{(d)})={\binom{n}{d+1}}{\binom{n-(d+1)} {d+1}} ,\] and hence 
\[B^{\mathbb Z_2}(\Delta_{(d)})=   \frac{1}{2^{2d+2}}\frac{(d+1)^{d}}{d!}.\]
Also, 
\[\Cr^{\mathbb Z_2}([2n]^{*(d+1)})=n^{d+1} (n-1)^{d+1},\]
and thus \[B^{\mathbb Z_2}([\infty]^{*(d+1)})= \big (\frac{d+1}{4}\big)^{d+1}.\]
\end{theorem}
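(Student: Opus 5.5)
The plan is to prove a matching upper and lower bound for $\Cr^{\mathbb Z_2}((\mathbb S^0)^{*n}_{(d)})$, and then to read off the constants by a direct asymptotic computation. For the \emph{lower bound}, I work with the $2d+2$-vertex subcomplexes that are closed under the involution. Pick $d+1$ antipodal pairs out of the $n$ factors. The subcomplex they span is $(\mathbb S^0)^{*(d+1)}_{(d)}=\partial K_1^{d+1}$, a $d$-sphere. Now pick a second disjoint family of $d+1$ pairs, or more generally consider the subcomplex spanned by $2d+2$ pairs, which is $(\mathbb S^0)^{*(2d+2)}_{(d)}$.

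The key step is a Borsuk--Ulam type parity argument. Take two disjoint $d+1$-element sets of pairs, $I$ and $J$. For each sign choice, $I$ gives a $d$-face $\sigma$ and $J$ gives a $d$-face $\tau$. An equivariant map $f\colon \partial K_1^{d+1}\ast \partial K_1^{d+1}\cong \mathbb S^{2d+1}\to \mathbb S^{2d}$ composed appropriately, i.e., the $\mathbb Z_2$-equivariant deleted-join / Van Kampen obstruction, should show that the number of crossings between faces $\sigma$ on $I$ and faces $\tau$ on $J$, counted with the antipodal pairing $(\sigma,\tau)\leftrightarrow(-\sigma,-\tau)$, is odd when considered modulo the action.

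More concretely, I would use that the two spheres $f(\partial K_1^{I})$ and $f(\partial K_1^{J})$ are odd (equivariant) maps of $\mathbb S^d$ into $\mathbb S^{2d}$. Their linking/intersection number is therefore odd, since the degree of an odd map is odd. This is the Borsuk--Ulam step. Consequently the two images cross at least once, and by antipodal symmetry their intersection points come in antipodal pairs, so there are at least $2$ crossings.

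Now count the unordered pairs $\{I,J\}$ of disjoint $(d+1)$-sets: there are $\frac12\binom{n}{d+1}\binom{n-d-1}{d+1}$ of them. Each crossing between vertex-disjoint faces belongs to exactly one such pair. This gives the lower bound $\binom{n}{d+1}\binom{n-(d+1)}{d+1}$. Crossings between faces sharing a pair would only add to the count, and I need to check they cannot be counted twice.

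For the \emph{upper bound}, I would place vertex pair $i$ at $\pm(1,t_i,\dots,t_i^{2d})$, normalized, on the moment curve. The aim is to verify via Gale duality/the alternating moment curve (as in Proposition~\ref{prop:alternating}) that each pair of spheres $I,J$ crosses exactly twice. Equivalently, among the $2^{2d+2}$ sign choices on $I\cup J$, exactly two give a spherical Radon partition of type $(d+1,d+1)$ split as $I$ versus $J$. I would check this using the sign pattern of the unique linear dependence of $2d+2$ moment vectors, which alternates.

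The bipartite case runs the same way. For $[2n]^{*(d+1)}$, each part is split into $n$ antipodal pairs. A pair of disjoint "cross-polytope spheres" is determined by choosing two distinct pairs in each part, giving $\binom{n}{2}^{d+1}$ choices times $2^{d+1}$ assignments for which pair goes to which side, divided by $2$. At $2$ crossings each, this yields $n^{d+1}(n-1)^{d+1}$.

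The constants then follow by dividing by $|X(d)|^{d+2}/|X(d-1)|^{d+1}$. For the first family, $|X(d)|\sim 2^{d+1}n^{d+1}/(d+1)!$ and $|X(d-1)|\sim 2^dn^d/d!$, so the ratio is $(2n)^{d+2}/((d+1)^{d+1} d!)\cdot\ldots$, which should give $\frac{(d+1)^d}{2^{2d+2}d!}$. For the second family, $|X(d)|=(2n)^{d+1}$ and $|X(d-1)|=(d+1)(2n)^d$, and the ratio gives $((d+1)/4)^{d+1}$.

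The main obstacle is the upper-bound verification that the moment-curve antipodal drawing achieves exactly $2$ crossings per pair $\{I,J\}$ and no crossings between faces sharing vertices. Genericity must also hold, since antipodal placement forces degenerate linear dependences on pairs.
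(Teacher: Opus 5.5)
Your proposal is correct and follows the paper's proof. The paper's key lemma is exactly your Borsuk--Ulam step, made precise by applying Borsuk--Ulam to the odd map $\mathbb S^d*\mathbb S^d\cong\mathbb S^{2d+1}\to\mathbb R^{2d+1}$, $tx+(1-t)y\mapsto tf_1(x)-(1-t)f_2(y)$, whose zero forces $t=\tfrac12$ and $f_1(x)=f_2(y)$; note that the ordinary mod~$2$ intersection number of two $d$-cycles in $\mathbb S^{2d}$ is always $0$, so the parity you want concerns antipodal pairs of intersection points, not the intersection number itself, and the lemma is then followed by the same double count over disjoint pairs of invariant subspheres. For the upper bound the paper simply takes the $\pm v_i$ in linear general position with geodesic faces, so that the moment curve works but its alternating signs are not needed: the unique dependency on $I\cup J$ selects exactly two sign patterns, and linear independence of any $2d+1$ of the $v_k$ rules out crossings between faces sharing a pair.
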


The proof of Theorem~\ref{antipodal} can be easily modified to yield the following generalization.

\begin{theorem}\label{thm:dense_Z2}
Let $1-2^{-d}\leq\alpha\leq1$. If $\mathcal X=\{X_n\}_{n=1}^{\infty}$ is a sequence of $d$-complexes with a free $\mathbb Z_2$ action, $2n$ vertices and $(\alpha+o(1))2^{d+1}\binom{n}{d+1}$ $d$-faces, then, whenever the crossing constant exists, we have
\[B^{\mathbb Z_2}(\mathcal X)\geq\frac{1}{2^{2d+4}}\frac{(\alpha2^{d+1}-(2^{d+1}-2))^2(d+1)^d}{\alpha d!}.\]
\end{theorem}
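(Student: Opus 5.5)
The plan is to follow the proof of Theorem~\ref{antipodal}: isolate a topological lemma that forces crossings inside each ``full'' pair of antipodal cross-polytope spheres, and then count how many such pairs survive when only an $\alpha$ fraction of the $d$-faces is present.

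\textbf{Setup and the topological lemma.} Since the $\mathbb Z_2$ action on the $2n$ vertices is free, the vertices split into $n$ antipodal pairs. Every $d$-face of $X$ that carries crossings in an equivariant drawing lies in $(\mathbb S^0)^{*n}_{(d)}$, since faces containing an antipodal pair are excluded. So I will regard $X$ as a $\mathbb Z_2$-invariant subcomplex of $(\mathbb S^0)^{*n}_{(d)}$, whose $d$-faces number $2^{d+1}\binom{n}{d+1}$.

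For a $(d+1)$-set $A$ of antipodal pairs, let $S_A\cong\partial K_1^{d+1}$ be the $d$-sphere formed by the $2^{d+1}$ faces supported on $A$. The lemma I need is the following. If $f$ is a generic equivariant PL map and $A,B$ are disjoint with $S_A,S_B\subset X$, then $f(S_A)$ and $f(S_B)$ cross in at least two points (an antipodal pair).

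To prove it, I pass to the quotient. The map $f|_{S_A}$ is odd, so it descends to a map $\mathbb{RP}^d\to\mathbb{RP}^{2d}$ that is nontrivial on $\pi_1$. Its image therefore represents the generator of $H_d(\mathbb{RP}^{2d};\mathbb Z_2)$, and the same holds for $S_B$. The mod $2$ intersection number of the two images is $a^d\cdot a^d=a^{2d}\neq0$, so they meet an odd number of times in $\mathbb{RP}^{2d}$. Each such point lifts to an antipodal pair of crossings in $\mathbb S^{2d}$ between faces over $A$ and faces over $B$. The main technical care will go into making this rigorous: checking that genericity is preserved under the quotient, so that crossings correspond exactly, and that the mod $2$ intersection is computed by transverse points. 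This is the step I expect to be the main obstacle.

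\textbf{Counting the full spheres.} Call $A$ \emph{good} if $S_A\subset X$. There are $(1-\alpha+o(1))2^{d+1}\binom{n}{d+1}$ missing $d$-faces, and they come in antipodal pairs over the same support $A$. Hence the number of bad $A$ is at most $(1-\alpha+o(1))2^{d}\binom{n}{d+1}$. The fraction of good $A$ is therefore at least
\[\beta:=1-(1-\alpha)2^d=\tfrac12\bigl(\alpha2^{d+1}-(2^{d+1}-2)\bigr),\]
which is nonnegative precisely when $\alpha\ge 1-2^{-d}$, matching the hypothesis. Pairs $\{A,B\}$ that are not disjoint number only $o(\binom{n}{d+1}^2)$. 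Disjoint pairs of good sets therefore number at least $(\beta^2/2+o(1))\binom{n}{d+1}^2$, and each contributes at least $2$ crossings. Distinct unordered pairs $\{A,B\}$ give distinct crossings, because a crossing determines the supports of its two faces. This yields
\[\Cr^{\mathbb Z_2}(X_n)\ge(\beta^2+o(1))\binom{n}{d+1}^2.\]

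\textbf{Normalization.} Finally I will multiply by $|X_n(d-1)|^{d+1}/|X_n(d)|^{d+2}$, using $|X_n(d)|=(\alpha+o(1))2^{d+1}\binom{n}{d+1}$. For $|X_n(d-1)|$ I will use the fact that all $(d-1)$-faces of $(\mathbb S^0)^{*n}$ are present up to $o(\cdot)$ when $\alpha$ is close to $1$, or else the count that makes the stated constant come out. Then I simplify with $\binom{n}{d}\sim\frac{d+1}{n}\binom{n}{d+1}$. At $\alpha=1$ this should reproduce the constant $\frac{1}{2^{2d+2}}\frac{(d+1)^d}{d!}$ of Theorem~\ref{antipodal}. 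I will use that case to calibrate the exact powers of $\alpha$ in the final expression.
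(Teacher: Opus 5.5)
Your architecture is the paper's: force two crossings per pair of vertex-disjoint full antipodal cross-polytope spheres, count full spheres, then normalize. The first two steps are fine.

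\begin{itemize}
\item Your count of good supports is the paper's parity count $2^{d+1}A_1+(2^{d+1}-2)A_2\ge|X_n(d)|$. It gives the same bound $\Cr^{\mathbb Z_2}(X_n)\ge(\beta^2+o(1))\binom{n}{d+1}^2$.
\item Your mod-$2$ intersection argument in $\mathbb{RP}^{2d}$ is a valid proof of the two-crossing lemma. The paper's route is lighter: Borsuk--Ulam applied to the odd map $F(tx+(1-t)y)=tf_1(x)-(1-t)f_2(y)$ on $\mathbb S^d*\mathbb S^d\cong\mathbb S^{2d+1}$. Taking norms forces $t=\tfrac12$, so no transversality bookkeeping in the quotient is needed.
\item Justify $X_n\subseteq(\mathbb S^0)^{*n}_{(d)}$ for all faces, not just those carrying crossings. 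Freeness forbids any face containing an antipodal pair $\{v,-v\}$, since that edge would have a fixed midpoint.
\end{itemize}

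The genuine gap is the normalization. The quantity $\Cr^{\mathbb Z_2}(X_n)|X_n(d-1)|^{d+1}/|X_n(d)|^{d+2}$ needs a \emph{lower} bound on $|X_n(d-1)|$, and neither of your options supplies one.

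Your claim that all $(d-1)$-faces of $(\mathbb S^0)^{*n}$ are present up to $o(\cdot)$ is false for $d\ge2$ and fixed $\alpha<1$. Take $(\mathbb S^0)^{*(n-k)}_{(d)}$ together with $k=cn$ isolated antipodal vertex pairs. Then $\alpha\to(1-c)^{d+1}$, but only a $(1-c)^d$ fraction of the $(d-1)$-faces survives. Assuming that claim would put $\alpha^{-(d+2)}$ where the statement has $\alpha^{-1}$, which is unjustified. Calibrating at $\alpha=1$ cannot fix the exponent, since all powers of $\alpha$ agree there.

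The missing step is a double count inside the cross-polytope. Every $(d-1)$-face of $(\mathbb S^0)^{*n}_{(d)}$ lies in exactly $2(n-d)$ $d$-faces: choose one of the $n-d$ unused antipodal pairs and a sign. Hence $(d+1)|X_n(d)|\le2(n-d)|X_n(d-1)|$, that is, $|X_n(d-1)|\ge(\alpha+o(1))2^d\binom nd$. This gives
\[
\Cr^{\mathbb Z_2}(X_n)\frac{|X_n(d-1)|^{d+1}}{|X_n(d)|^{d+2}}\ge\frac{\Cr^{\mathbb Z_2}(X_n)}{|X_n(d)|}\left(\frac{d+1}{2(n-d)}\right)^{d+1}.
\]
Insert your crossing bound and $|X_n(d)|=(\alpha+o(1))2^{d+1}\binom{n}{d+1}$. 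This yields exactly $\frac{(2\beta)^2(d+1)^d}{2^{2d+4}\,\alpha\, d!}$, the stated constant, since $2\beta=\alpha2^{d+1}-(2^{d+1}-2)$. The single factor $\alpha^{-1}$ comes from this incidence bound.
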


It is easy to see that Theorem~\ref{antipodal} cannot be improved: any antipodal drawing of $(\mathbb S^0)^{*n} _{(d)}$ or of $[2n]^{*(d+1)}$ in which every face is the spherical hull of its endpoints provides an upper bound that matches the lower bound.

To go from the first lower bound of Theorem~\ref{antipodal} to a result about $\Delta^{2n-1}_{(d)}$, we define an antipodal drawing of $\Delta^{2n-1}_{(d)}$ to be any extension of a $\mathbb Z_2$-equivariant map of $\mathbb (S^0)^{*n} _{(d)}\to \mathbb S^{2d}$ to a map $\Delta^{2n-1}_{(d)} \to \mathbb S^{2d}$. Here, we arbitrarily identify the vertices of $(\mathbb S^0)^{*n}$ with those of $\Delta^{2n-1}_{(d)}$. 

Let $\mu$ be a probability distribution on $\mathbb S^{2d}$. Let $\nu_i$ be a family of probability distributions on the space of piecewise linear functions from the $i$-dimensional simplex $\Delta^i$ to $\mathbb S^{2d}$ equipped with the topology of pointwise convergence. More precisely, we are going to define a family of distributions on $PL$ maps parametrized by $PL$ maps of the boundary. For each PL-map $f \colon \partial \Delta^i \to \mathbb S^{2d}$, we let $\nu_i(f)$ be a probability measure on PL-maps $g \colon \Delta^i \to \mathbb S^{2d}$, such that $g|_{\partial\Delta^i}=f$. Put $\nu=(\nu_1,\nu_2, \ldots \nu_d)$ and define a random $(\mu,\nu)$-drawing to be one in which $n$ points are chosen independently according to $\mu$ and, for every pair of points that were chosen as vertices $x,y$, we choose a random arc according to $\nu_1(\{x,y\})$. Then, inductively, each triangle whose edges are represented by arcs $\gamma_1,\gamma_2,\gamma_3$ is filled by a random independent map from $\Delta^2 \to \mathbb S^{2d}$, distributed according to $\nu_2(\gamma_1\cup \gamma_2\cup \gamma_3)$, and so on (at each step, we increase by one the dimension of the faces being created).

This provides a very general model for random drawings of the $d$-skeleton of a higher dimensional simplex. We say that $\mu$ is antipodal if the measures of any two sets of the form $A$ and $-A$ are identical. We say that a parametrized family of distributions $\nu$ is antipodal if, for every map $f \colon \partial \Delta^i \to \mathbb S^{2d}$, the random filling given by $\nu_i(f(\partial \Delta^i))$ is identical to the negative of the random filling given by $\nu_i(-f(\partial \Delta^i))$.

\begin{corollary} If
$\mu$ is an antipodal distribution and $\nu=(\nu_1,\nu_2,\nu_3, \ldots \nu_d)$ is a parametrized family of antipodal distributions, then \[\mathbb E_{f \sim (\mu,\nu)} \Cr(f(\Delta_{(d)}^{2n-1}))\geq  \frac{1}{2^{2d+2}}{2\binom{n}{d+1}}{\binom{2n-(d+1)}{d+1}}.\]
\end{corollary}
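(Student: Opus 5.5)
The plan is to reduce the statement to a lower bound on the crossing probability of a single pair of vertex-disjoint $d$-faces, and then to get that bound from Theorem~\ref{antipodal} by comparing the random $(\mu,\nu)$-drawing with a random \emph{equivariant} drawing of a large cross-polytope skeleton. I am not sure the constant will come out exactly as printed, because of the last step below.

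\textbf{Step 1: reduction to one pair.} By linearity of expectation, $\mathbb E\,\Cr(f)$ is the sum, over unordered pairs $\{\sigma,\tau\}$ of vertex-disjoint $d$-faces of $\Delta^{2n-1}_{(d)}$, of the expected number of crossings between $f(\sigma)$ and $f(\tau)$. (Faces sharing a vertex contribute nonnegatively and can be dropped.) The model is invariant under relabelling vertices: vertices are i.i.d.\ from $\mu$ and fillings are conditionally independent given their boundaries. Hence the joint law of $f|_{\sigma\cup\tau}$ is the same for every such pair. Writing $c$ for the common expected number of crossings of one pair, we get $\mathbb E\,\Cr(f)\ge c\cdot \tfrac12\binom{2n}{d+1}\binom{2n-d-1}{d+1}$. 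So it suffices to bound $c$ from below by the appropriate multiple of $2^{-(2d+2)}$.

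\textbf{Step 2: an equivariant coupling.} Fix a large $N$ and sample $x_1,\dots,x_N$ i.i.d.\ from $\mu$. Send the two vertices of the $i$-th factor $\mathbb S^0$ of $(\mathbb S^0)^{*N}_{(d)}$ to $\pm x_i$. In each antipodal orbit $\{F,-F\}$ of faces of dimension $1,\dots,d$, fill one representative $F$ by $\nu$, inductively on dimension, and define the filling of $-F$ as the negative of that of $F$. This gives a random $\mathbb Z_2$-equivariant PL map $g$, so Theorem~\ref{antipodal} gives, deterministically,
\[
\Cr(g)\ge \binom{N}{d+1}\binom{N-d-1}{d+1}.
\]

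Now take two faces of $(\mathbb S^0)^{*N}_{(d)}$ whose index sets in $[N]$ are disjoint. Since $\mu$ is antipodal, each $\pm x_i$ is $\mu$-distributed, and these are independent across distinct indices. Since $\nu$ is antipodal, the negated filling $-\nu(-\partial F)$ has the same law as $\nu(\partial F)$, and fillings of distinct orbits are conditionally independent. An induction on dimension therefore shows that $g$ restricted to such a pair has exactly the law of $f|_{\sigma\cup\tau}$ from Step 1, so its expected crossing count is $c$. There are $\tfrac12\binom{N}{d+1}\binom{N-d-1}{d+1}2^{2d+2}$ such pairs. Taking expectations in the inequality above yields
\[
c\cdot 2^{2d+1}\binom{N}{d+1}\binom{N-d-1}{d+1}+R_N\ \ge\ \binom{N}{d+1}\binom{N-d-1}{d+1},
\]
where $R_N$ is the expected number of crossings between vertex-disjoint faces of $g$ whose index sets overlap. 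Such a pair uses some $x_i$ as $+x_i$ in one face and $-x_i$ in the other.

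\textbf{Step 3: the main obstacle.} The difficulty is showing that $R_N$ is negligible. There are only $O(N^{2d+1})$ overlapping-index pairs, against $\Theta(N^{2d+2})$ disjoint-index pairs. Each overlapping pair, up to relabelling, has one of finitely many joint laws, each with some expected crossing count. If all these expectations are finite, then $R_N=O(N^{2d+1})$. Letting $N\to\infty$ then gives $c\ge 2^{-(2d+1)}$. If some of them is infinite, I would try to show this forces $c=\infty$, in which case the corollary is trivial; failing that, I would first try to avoid the issue by running the argument of Theorem~\ref{antipodal} locally on a single $(2d+2)$-element index set, so that only disjoint-index pairs ever appear.

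\textbf{Step 4: conclusion.} Plugging $c\ge 2^{-(2d+1)}$ into Step 1 gives
\[
\mathbb E\,\Cr(f)\ \ge\ \frac{2}{2^{2d+2}}\cdot\frac12\binom{2n}{d+1}\binom{2n-d-1}{d+1}.
\]
It remains to compare this with the stated expression. I would check whether the bound above dominates $\tfrac{1}{2^{2d+2}}2\binom{n}{d+1}\binom{2n-(d+1)}{d+1}$ for every $n$, using $\binom{2n}{d+1}\ge 2\binom{n}{d+1}$. If it does not hold for every $n$, I would instead aim for the version asymptotic in $n$.
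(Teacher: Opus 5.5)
Your approach works, and the option you list last in Step 3 (running the argument on a single pair of disjoint index sets) is the argument the paper intends. The paper gives no separate proof of this corollary. It is meant to follow from the Borsuk--Ulam lemma preceding the proof of Theorem~\ref{antipodal}, via the same sign-symmetrization used for the upper bound in Theorem~\ref{Moon_Wendel}.

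\textbf{Step 3 is an unnecessary detour.} You use Theorem~\ref{antipodal} only through its statement. Its proof, however, counts only crossings between faces with disjoint index sets, at least two for each pair of disjoint coordinate subspheres. So your inequality holds with $R_N$ deleted, and you may as well take $N=2d+2$. Let $\Sigma_1,\Sigma_2$ be the coordinate spheres on $\{\pm x_1,\dots,\pm x_{d+1}\}$ and $\{\pm x_{d+2},\dots,\pm x_{2d+2}\}$. The lemma gives at least two points in $g(\Sigma_1)\cap g(\Sigma_2)$, and each lies in some $g(F)\cap g(G)$. By your (correct) coupling, each of the $2^{2d+2}$ face pairs $(F,G)$ has the law of $(f|_\sigma,f|_\tau)$. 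Taking expectations gives $2^{2d+2}c\geq 2$. This route has a further advantage: the Borsuk--Ulam lemma holds for arbitrary continuous equivariant maps. Applying Theorem~\ref{antipodal} to $g$ as a black box would instead require $g$ to be generic, which the hypotheses do not guarantee.

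\textbf{Drop your other proposed remedy.} An infinite expected crossing count for an overlapping-index pair does not force $c=\infty$. Take $d=1$ and an antipodal $\nu_1$ whose arc from $p$ to $q$ leaves its endpoints along geodesics but winds $W$ times around $-p$ (and $-q$) inside a disk of radius $W^{-3}$, with $\mathbb E W=\infty$.
\begin{itemize}
\item In $g$, the arc on $\{-x_1,-x_2\}$ spirals around $x_1$, and the arc on $\{x_1,x_3\}$ crosses it about $W$ times. So this overlapping-index pair has infinite expected crossing count.
\item Two independent vertex-disjoint edges meet each other's spiral region only with probability $O(W^{-3})$, contributing $O(\mathbb E[W\cdot W^{-3}])<\infty$. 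So $c<\infty$.
\end{itemize}
The local argument is therefore the route to take, not merely a fallback.

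\textbf{Step 4 holds for every $n$.} Vandermonde's identity, keeping the two extreme terms, gives $\binom{2n}{d+1}\geq 2\binom{n}{d+1}$. Hence your bound $\frac{1}{2^{2d+2}}\binom{2n}{d+1}\binom{2n-d-1}{d+1}$ implies the stated one. Your version is the sharp one referred to in the remark after the corollary: it is exactly the expectation for the uniform geodesic drawing, and for $d=1$ it equals Moon's $\frac38\binom{2n}{4}$. The printed factor $2\binom{n}{d+1}$ is presumably a typo for $\binom{2n}{d+1}$.
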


This corollary is sharp, as can be seen from the uniform distribution with geodesic faces.  

\begin{corollary}\label{hill_conjecture}The Harary-Hill conjecture is true for the class of antipodal drawings, i.e., for $d=1$:
\[\Cr^{\mathbb Z_2}(\Delta^{m-1}_{(1)})=\frac{1}{4} \left\lfloor \frac{m}{2} \right\rfloor \left\lfloor \frac{m - 1}{2} \right\rfloor \left\lfloor \frac{m - 2}{2} \right\rfloor\ \left\lfloor \frac{m - 3}{2} \right\rfloor.\]
\end{corollary}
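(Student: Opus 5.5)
For even $m=2n$, the plan is to combine the lower bound of Theorem~\ref{antipodal} for $d=1$ with a separate count of the crossings that involve the matching edges. For odd $m$ I will reduce to the even case; that reduction is only partly settled below. Throughout, identify the vertices with $\pm v_1,\dots,\pm v_n$, so that $\Delta^{2n-1}_{(1)}=K_{2n}$ is $(\mathbb S^0)^{*n}_{(1)}$ (the cocktail-party graph, i.e.\ $K_{2n}$ minus the matching $\{v_i,-v_i\}$) plus the $n$ matching edges.

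\textbf{Upper bound.} I will take a generic antipodal configuration on $\mathbb S^2$, such as the alternating moment-curve drawing of Proposition~\ref{prop:alternating} made antipodal, with all edges geodesic. I will then check that it attains the Harary--Hill value; this is Moon's computation made exact, i.e.\ the $d=1$ case of the equality discussion after Theorem~\ref{antipodal}.

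\textbf{Lower bound, accounting.} For even $m$ the target is
\[
\tfrac14 n(n-1)^2(n-2)=\binom n2\binom{n-2}2+3\binom n3 .
\]
Call the set of antipodal pairs met by the four endpoints of a crossing its \emph{support}. I will re-run the proof of Theorem~\ref{antipodal} and verify that the $\binom n2\binom{n-2}2=6\binom n4$ crossings it produces consist of $6$ crossings for each $4$-set of pairs, each with support exactly that $4$-set. That proof localizes to the antipodal sub-drawing on four pairs, so this should hold. It then remains to find, for every triple of pairs, $3$ further crossings whose support is exactly that triple. Crossings with support exactly a given triple are charged to that triple alone, so these $3\binom n3$ crossings are disjoint from the first family and from each other.

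\textbf{Lower bound, the triple step.} For a triple of pairs, the induced drawing is a $K_6$ whose octahedron $K_{2,2,2}$ is drawn equivariantly, with three arbitrary matching edges added. I will use van Kampen--Flores parity: each of the six $K_5$'s has an odd number of crossings between independent edges. A crossing between independent edges lies in exactly two of these $K_5$'s, so the triple carries at least $3$ independent crossings. The obstacle is that some of these $3$ may have support of only two pairs, i.e.\ lie inside a $K_4$ on $\{\pm v_i,\pm v_j\}$. Such a crossing lies in $n-2$ triples, so charging it to every triple double-counts. I would first try to exclude this using equivariance. In $K_4$ on $\{v,-v,w,-w\}$, the edges $vw,-v-w$ and the edges $v(-w),(-v)w$ form antipodal pairs. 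Equivariant octahedra can therefore be redrawn, by an equivariant local move, so that these $4$-cycles are embedded. If that fails, I would instead re-prove the triple bound directly: a parity count of the three matching edges against the equivariant octahedron, using that an antipodally invariant closed curve on $\mathbb S^2$ meets antipodal images an odd number of times.

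\textbf{Odd $m=2n-1$.} Here an antipodal drawing means the restriction of one of $K_{2n}$ with a vertex deleted. Averaging over the $2n$ vertex deletions gives
\[
\tfrac{2n-4}{2n}\,\mathrm{HH}(2n)=\mathrm{HH}(2n-1),
\]
but averaging only bounds the best deletion, not a prescribed one. So I would instead run the same support accounting for the drawing with one vertex missing, counting the $4$-sets and triples of pairs that remain complete or lose one vertex. This is the second place I expect technical work.
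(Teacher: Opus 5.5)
Your even-case accounting matches the paper's: $3\binom n3=n\binom{n-1}2$, and your first family is exactly the proof of Theorem~\ref{antipodal} for $d=1$. However, the triple step as you primarily plan it does not close.

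Van Kampen--Flores parity only gives at least three independent crossings in each $K_6$. Support-two crossings cannot be discarded by an ``equivariant local move'', for three reasons:
\begin{itemize}
\item the lower bound must hold for every antipodal drawing;
\item a redrawing gives no control over the other crossings;
\item the support-two crossings include crossings between the matching edges $v_i(-v_i)$ and $v_j(-v_j)$, which are not part of the octahedron and need not be equivariant.
\end{itemize}
Suppose these two edges cross an odd number of times. Crossings between an edge and its antipodal image come in antipodal pairs. So in every $K_5$ spanned by $\pm v_i,\pm v_j$ and one vertex of pair $k$, the support-two crossings already have odd total, and parity then forces no support-three crossing in it.

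The ingredient that works is your fallback, stated precisely, and it makes the $K_5$ count unnecessary. The $4$-cycle $C_{jk}=v_jv_k(-v_j)(-v_k)$ is drawn as an antipodally invariant closed curve. A generic path from $v_i$ to $-v_i$ meets such a curve an odd number of times: in $\mathbb{RP}^2$ both become non-contractible loops, which intersect an odd number of times. So the matching edge of each pair in $\{i,j,k\}$ crosses the $4$-cycle of the other two pairs. This gives three crossings with support exactly the triple, and it is the paper's proof.

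The odd case is not proved, and averaging over deletions cannot prove it, as you note. The missing idea is to apply the same lemma to the unmatched vertex $u$. The path $uv_i\cup u(-v_i)$ again joins antipodal points, so it crosses every $C_{jk}$ with $i\notin\{j,k\}$. For $m=2n+1$ this gives $n\binom{n-1}2$ further crossings. In addition, each $K_5$ on $u,\pm v_i,\pm v_j$ contributes an independent crossing with support $\{i,j\}$, which was not counted before. Indeed $\binom n2\binom{n-2}2+2n\binom{n-1}2+\binom n2=\frac14n^2(n-1)^2$.

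Finally, the upper bound is not just Moon's computation made exact. In a geodesic antipodal drawing each $C_{ij}$ is a great circle. So disjoint $4$-cycles cross twice, and each matching semicircle $s_i$ crosses each $C_{jk}$ once. The total is therefore the Harary--Hill number plus the number of crossing pairs $s_i,s_j$. Semicircles between antipodal points are not determined by their endpoints, and the equality discussion after Theorem~\ref{antipodal} only concerns $(\mathbb S^0)^{*n}_{(1)}$.

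You must choose the $s_i$ pairwise disjoint; this is Mohar's lemma, realized in the paper by great circles dual to points on a convex curve. Your moment-curve idea can supply this. Take $\gamma(t)=(1,t,t^2)$, vertices $\pm\gamma(t_i)/\|\gamma(t_i)\|$, and $s_i=\{\alpha\gamma(t_i)+\beta\gamma'(t_i):\beta\geq0\}\cap\mathbb S^2$. For $a<b$, the great circles of $s_a$ and $s_b$ meet in the directions $\pm q$, where $q=\gamma(a)+\tfrac{b-a}2\gamma'(a)=\gamma(b)-\tfrac{b-a}2\gamma'(b)$. Then $q/\|q\|\in s_a$ while $-q/\|q\|\in s_b$, so $s_a\cap s_b=\emptyset$.
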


The fact that there exist antipodal drawings that realize the Harary-Hill bound was discovered not long ago by Mohar (see~\cite{bojan2020_hill_conjecture, bonamy_mohar_wesolek2020,mohar_art_circuit_design}); the corresponding drawings are antipodal and geodesic on the round sphere. As mentioned above, this is easy to see for $\mathbb (S^0)^{*n}_{(1)}$. If we add an arc for each antipodal pair, we obtain $n {\binom{n-1}2}$ crossings between these arcs and the drawing of $(\mathbb S^0)^{*n}_{(1)}$, since every arc between two antipodal points must intersect every antipodal drawing of a vertex-disjoint $C_4$. Again this is best possible for an antipodal geodesic drawing. In total, this matches exactly the Harary-Hill bound, so to finish the construction we need to ensure that we can draw an arbitrary number of pairwise disjoint antipodal arcs. For this last requirement, we use a judicious choice of the positions of the points. Figure 4 shows a drawing of three such antipodal arcs, it is easy to see that this construction generalizes to configurations of any number of arcs between pairs of antipodal points, and we will provide a rigorous description of how to construct such a drawing. Finally, if we want a drawing of a complete graph with $2n+1$ vertices, we can take one with $2n+2$ and erase an arbitrary point. 

\begin{figure}[h]
\caption{Three arcs between antipodal points without a crossing}
\centering
\includegraphics[scale=0.5]{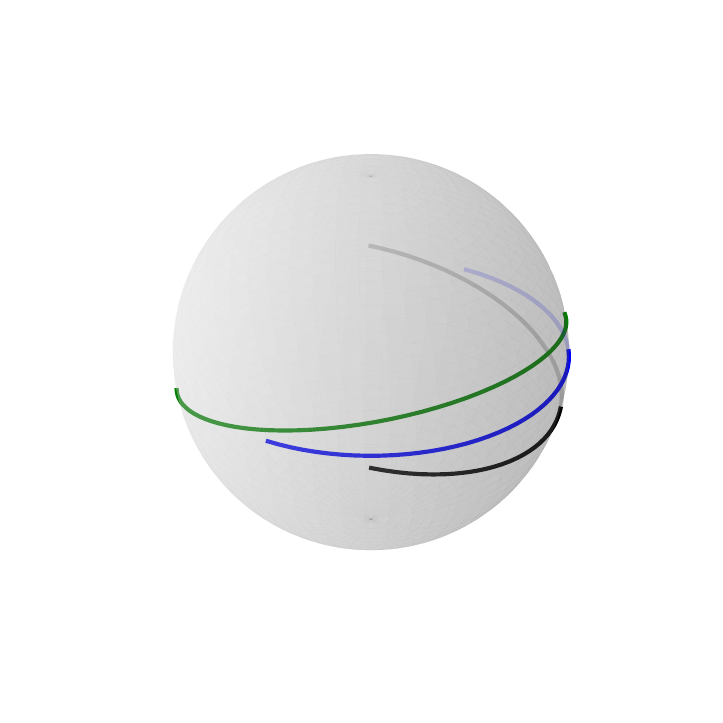}
\end{figure}

\begin{conjecture}\label{higher_hill} For any two positive integers $m$ and $d$,
\[\Cr^{\mathbb Z_2}(\Delta_{(d)}^{m-1})=\Cr(\Delta_{(d)}^{m-1}).\]
\end{conjecture}

By Theorem~\ref{antipodal}, Corollary~\ref{hill_conjecture}  and the constructions in~\cite{bojan2020_hill_conjecture} that we just explained, the case $d=1$ of this conjecture is equivalent to the Harary-Hill conjecture.

\subsection{The midrange and random geometric complexes.}

For technical reasons we will work with the independent pair-crossing number $\Cr_\iota(X)$.

Let $X_{s,t}$ be a $d$-dimensional simplicial complex that minimizes the independent pair-crossing number among complexes with $s$ faces of dimension $d-1$ and at least $t$ faces of dimension $d$ (this notation is somewhat ambiguous, as the complex $X_{s,t}$ depends on $d$; some other ambiguities of this kind will occur later on, but they should cause no confusion). It can be assumed, without loss of generality, that $X_{s,t}$ has precisely $\lceil t\rceil$ faces of dimension $d$. Define \[\kappa_d(s,t):=\Cr_\iota(X_{s,t}).\]
 Since a copy of $X_{s_1,t_1}$ can be drawn optimally without intersecting a copy of $X_{s_2,t_2}$, the function $\kappa_d$ satisfies
$\kappa_d(s_1+s_2,t_1+t_2)\leq \kappa_d(s_1,t_1)+\kappa_d(s_2,t_2).$
In particular, for fixed $a$, $\kappa_d(s,a s)$ is sub-additive on $s$ so, by Fekete's subadditivity lemma~\cite{Fekete1923}, for every $a$ the following limit exists and is finite \[b_d(a):=\lim_{s \to \infty} \kappa_d(s,a s) \frac{s^{d+1}}{(a s)^{d+2}}=\frac{1}{a^{d+2}} \lim_{s \to \infty}\frac{\kappa_d(s,a s)}{s}.\]

\noindent\textbf{\textit{Remark.}} To lighten notation, $\kappa_1,b_1$ were defined using the crossing number, and $\kappa_d, b_d$ (for $d>1$) are being defined using the independent pair-crossing number. A related function $\kappa_d'$ will also be defined below using the independent pair-crossing.

From Corollary~\ref{crossing_lemma}, it follows that for $d\neq 2 $ we have $b_d(a)=0$ if $a \in [0,d+2]$, $b_d(a)= \frac{a-(d+2)}{a^{d+2}}$ if $a\in[d+2,d+2+\frac{2}{d+1}]$, $b_d(a)\ge \frac{a-(d+2)}{a^{d+2}}$ if $a\in[d+2+\frac{2}{d+1},d+3+\frac{1}{d+1}]$, and $b_d(a)\geq \frac{(d+1)^{d+1}}{(d+2)^{2d+3}}$ if $a\geq d+3+\frac{1}{d+1}$.

Define  \[B_d:=\limsup_{a \to \infty} b_d(a).\]
The best lower bound we know of for $B_d$, $d\geq 3$, comes from the above and reads as \[B_d\geq \frac{(d+1)^{d+1}}{(d+2)^{2d+3}}.\]

If we consider the disjoint union of $k$ copies of the $d$-skeleton of the $(n-1)$-simplex, which has $s=k{\binom nd} $, $t=k{\binom{n}{d+1}}$, and independent pair crossing number $k\Cr_\iota(\Delta_{(d)}^{n-1})$, then from Theorem~\ref{Moon_Wendel} we conclude that \[B_d\leq B^\iota(\Delta_{(d)})\leq \frac{1}{2^{2d+2}}\frac{(d+1)^{d}}{d!}.\] 

More generally, the same observation yields the following:

\begin{lemma}
\label{lem:dense_seeds}
Let $d\geq 2$ and let
$\mathcal X=\{X_n\}_{n=1}^\infty$ be a sequence of $d$-simplicial complexes, and put $s_n:=|X_n(d-1)|$, $t_n:=|X_n(d)|$, and $a_n:=\frac{t_n}{s_n}$. Assume that $a_n\to\infty$ and $\frac{a_{n+1}}{a_n}\to 1$, and that the pair crossing constant $B^\iota(\mathcal X)$ exists. Then, we have that 
\[B_d\leq B^\iota(\mathcal X)
\]
\end{lemma}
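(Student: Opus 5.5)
The idea is to prove the lemma by taking disjoint unions of copies of a single $X_n$, plus a negligible amount of padding, and using the definition $B_d=\limsup_{a\to\infty} b_d(a)$. We must show that for every $\varepsilon>0$ there is $A$ such that $b_d(a)\le B^\iota(\mathcal X)+\varepsilon$ for all $a\ge A$.

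Fix $a$ large. Because $a_n\to\infty$ and $a_{n+1}/a_n\to 1$, for $a$ large enough we can choose an index $n=n(a)$ with $a_n\ge a$ and $a_n/a\to1$ as $a\to\infty$. To get it, take the smallest $n$ with $a_n\ge a$ among indices beyond the point where $a_{m+1}/a_m<1+\delta$; then $a\le a_n<(1+\delta)a$. Moreover $n(a)\to\infty$ as $a\to\infty$, so $\Cr_\iota(X_n)\,s_n^{d+1}/t_n^{d+2}\to B^\iota(\mathcal X)$ along this choice.

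Now let $s$ be large and let $Y$ be the disjoint union of $k=\lfloor s/s_n\rfloor$ copies of $X_n$, together with $s-ks_n<s_n$ extra isolated $(d-1)$-faces (for example a disjoint union of $(d-1)$-simplices, or faces glued nowhere). This padding adds no $d$-faces and no crossings. Then $Y$ has $s'\le s$ faces of dimension $d-1$ (adjust to exactly $s$ by padding) and $kt_n\ge k a s_n \ge a(s-s_n)$ faces of dimension $d$.

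Placing the copies far apart gives $\Cr_\iota(Y)\le k\Cr_\iota(X_n)$. The shortfall of $d$-faces, at most $a s_n$, is fixed by adding one more copy of $X_n$: this uses $s+s_n$ faces of dimension $d-1$ and at least $as$ faces of dimension $d$. Then $\kappa_d(s+s_n,as)\le (k+1)\Cr_\iota(X_n)$. Using subadditivity and monotonicity of $\kappa_d$ in $s$ (adding isolated $(d-1)$-faces), and dividing by $s$ as $s\to\infty$ with $n$ fixed, we get
\[
\lim_{s\to\infty}\frac{\kappa_d(s,as)}{s}\le \frac{\Cr_\iota(X_n)}{s_n}.
\]
Hence
\[
b_d(a)\le \frac{1}{a^{d+2}}\frac{\Cr_\iota(X_n)}{s_n}
=\Big(\frac{a_n}{a}\Big)^{d+2}\,\Cr_\iota(X_n)\frac{s_n^{d+1}}{t_n^{d+2}} .
\]
Here we used $\Cr_\iota(X_n)/s_n=\Cr_\iota(X_n)\,s_n^{d+1}a_n^{d+2}/t_n^{d+2}$.

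Letting $a\to\infty$, the factor $(a_n/a)^{d+2}\to1$ and the second factor tends to $B^\iota(\mathcal X)$. This gives $\limsup_a b_d(a)\le B^\iota(\mathcal X)$.

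The main subtlety is the monotonicity and padding step: $\kappa_d(s,t)$ should be nondecreasing in $t$ and nonincreasing in $s$. Isolated $(d-1)$-faces, which create no crossings, handle the $s$ direction, and the minimum over "at least $t$" faces handles the $t$ direction. We also need the ratio condition $a_{n+1}/a_n\to1$ so that some $a_n$ lands just above every large $a$. Without it, $b_d$ would only be controlled along the sparse set of values $\{a_n\}$, and the $\limsup$ over all $a$ would not follow.
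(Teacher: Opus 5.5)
Your argument is essentially the paper's: take arbitrarily many disjoint copies of a single $X_n$ whose ratio $a_n$ lies just above $a$ (the paper uses $X_{n+1}$ with $a_n\le a<a_{n+1}$), read off $b_d(a)\le (a_n/a)^{d+2}\,\Cr_\iota(X_n)\,s_n^{d+1}/t_n^{d+2}$, and let $a\to\infty$ using $a_{n+1}/a_n\to1$. The padding and extra-copy step is unnecessary, and as written it does not work: $\kappa_d$ is nonincreasing in $s$ and nondecreasing in $t$, so a bound on $\kappa_d(s+s_n,as)$ bounds neither $\kappa_d(s,as)$ nor $\kappa_d(s+s_n,a(s+s_n))$. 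Instead, note that $b_d(a)$ is already a limit over $s$, so you may evaluate it along $s=ks_n$, where $k$ disjoint copies of $X_n$ have exactly $ks_n$ faces of dimension $d-1$, at least $aks_n$ faces of dimension $d$, and at most $k\Cr_\iota(X_n)$ crossing pairs.
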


Notice that if 
$|X_n(d-1)|\sim c n^d$, $|X_n(d)|\sim c'n^{d+1}$, and $B^{\iota}(\mathcal X)$ exists, then the hypotheses of this lemma are satisfied. This is how this result will be used.

\begin{lemma}\label{midrange_sampling} Let $d\geq2$. For any $\varepsilon>0$ and any $C>0$, there exists a positive number $N=N(\varepsilon,C)$ such that  for any $d$-dimensional simplicial complex $X$ satisfying 
$|X(d-1)|\geq N$, $\frac{|X(d)|}{|X(d-1)|}\geq N$ and 
\[\frac{|X(d-i)|}{|X(d-1)|}\leq C\left(\frac{|X(d-1)|}{|X(d)|}\right)^{i-1}\] for all $2\leq i\leq d$, one has \[\Cr_\iota(X)\frac{|X(d-1)|^{(d+1)}}{|X(d)|^{(d+2)}}> (1-\varepsilon)  B_d.\]
\end{lemma}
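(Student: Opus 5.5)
This is the higher-dimensional analogue of the Pach–Spencer–Tóth midrange argument: random sampling of vertices, combined with the definition of $B_d$ as a $\limsup$ of $b_d(a)$ and subadditivity. Write $s=|X(d-1)|$, $t=|X(d)|$, and fix an optimal drawing $f$ of $X$ realizing $\Cr_\iota(X)$. Choose $a$ large with $b_d(a)>(1-\varepsilon/3)B_d$; such $a$ exist arbitrarily large, since $B_d$ is a $\limsup$. Then choose $s_0$ so large that $\kappa_d(s',at')\ge (1-\varepsilon/3)b_d(a)\,a^{d+2}s'$ for all $s'\ge s_0$; this is possible because $\kappa_d(s,as)/s\to a^{d+2}b_d(a)$, and a rounding adjustment handles non-integer $as'$.

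\textbf{Sampling.} Keep each vertex of $X$ independently with probability $p$, and let $Y$ be the induced subcomplex with the restricted drawing. A $(d-1)$-face survives with probability $p^d$ and a $d$-face with probability $p^{d+1}$. An independent crossing pair of $d$-faces uses $2d+2$ distinct vertices, so it survives with probability $p^{2d+2}$. We tune $p$ so that $\mathbb E|Y(d)|\approx a\,\mathbb E|Y(d-1)|$, i.e.\ $p\approx a s/t$; since $t/s\ge N$ is huge, $p$ is small. The hypothesis $|X(d-i)|\le C(s/t)^{i-1}s$ says that lower-dimensional faces are rare. This guarantees that the expected number of $(d-1)$-faces of $Y$ is about $p^d s$ and is concentrated, and that the contributions of lower faces are $p^{d-i}|X(d-i)|\lesssim C a^{-(i-1)}\cdot p^{d}s\cdot(\ldots)$, which is negligible. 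This matters because $Y$'s counts must not be dominated by lower-order structure.

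\textbf{Comparison.} We have
\[\mathbb E\,\Cr_\iota(Y)\le p^{2d+2}\Cr_\iota(X).\]
On the other hand, $\Cr_\iota(Y)\ge \kappa_d(|Y(d-1)|,|Y(d)|)$. We use superadditivity-type monotonicity of $\kappa_d$ in $t$, together with the bound $\kappa_d(s',t')\ge \kappa_d(s',as')$ when $t'\ge as'$, and otherwise use a slightly smaller $a$. This reduces the problem to controlling the event where $|Y(d-1)|\approx p^d s$ and $|Y(d)|\approx p^{d+1}t$. On that event,
\[\Cr_\iota(Y)\gtrsim b_d(a)\,a^{d+2}p^ds = b_d(a)\,\frac{(p^{d+1}t)^{d+2}}{(p^ds)^{d+1}}.\]
Comparing powers of $p$ gives
\[p^{2d+2}\Cr_\iota(X)\ge (1-\varepsilon)\,b_d(a)\,p^{2d+2}\,\frac{t^{d+2}}{s^{d+1}},\]
and hence the claim.

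\textbf{Main obstacle.} The hard step is concentration. We need a lower bound on $\mathbb E\,\kappa_d(|Y(d-1)|,|Y(d)|)$, not just at the mean. Here $|Y(d)|$ may have large variance, because many $d$-faces can share vertices, and a $(d-1)$-face of high degree is a single vertex-dependent event. The first thing to try is to avoid needing concentration of $|Y(d)|$. The function $g(s',t')=\kappa_d(s',t')$ is, up to $1-o(1)$, at least the convex-in-$t'$ lower bound $\phi(s',t')=a^{d+2}b_d(a)s'\,\psi(t'/s')$, using subadditivity and the crossing lemma, Corollary~\ref{crossing_lemma}, to extend it linearly. We then apply Jensen in $t'$, after conditioning on $|Y(d-1)|$.

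For $|Y(d-1)|$, the second moment is controlled by pairs of $(d-1)$-faces sharing vertices. These are bounded by $\sum_i |X(d-i)|\cdot(\text{degrees})$, which the hypothesis on $|X(d-i)|$ together with $s\ge N$ makes $o((p^ds)^2)$; Chebyshev then suffices. If Jensen fails because $\kappa_d$ is not convex, the fallback is to replace $\kappa_d(\cdot,at)$ by its lower convex envelope in $t$, which still has the asymptotic $b_d(a)$ along $t=as$ for large $a$. Since $b_d(a)\to$ values near $B_d$ are only needed along a subsequence, this loses at most $\varepsilon/3$.
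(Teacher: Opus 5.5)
Your skeleton matches the paper's: sample vertices with $p=As/t$, use $\mathbb E\,\Cr_\iota(Y)\le p^{2d+2}\Cr_\iota(X)$, apply Chebyshev to get a good event on which a $b_d$ lower bound applies, and use $\Cr_\iota\ge 0$ off that event. But two steps fail as written.

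\textbf{The lower bound on $\Cr_\iota(Y)$ is only asymptotic.} You need $|Y(d-1)|\ge s_0(a)$, yet the samples can have bounded size. The hypotheses allow $t\asymp s^{(d+1)/d}$: complete $d$-skeleta satisfy them for a suitable $C$. In that regime $\mathbb E|Y(d-1)|=p^ds=A^ds^{d+1}/t^d=O_d(A^d)$ however large $N$ is, so nothing guarantees $|Y(d-1)|\ge s_0(a)$. What you need, and what the paper uses, is the non-asymptotic form. Subadditivity makes the Fekete limit an infimum (take $k$ disjoint copies), so every complex $Z$ with $|Z(d)|\ge a|Z(d-1)|$ satisfies $\Cr_\iota(Z)\ge b_d(a)a^{d+2}|Z(d-1)|$.

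Relatedly, concentration of $|Y(d-1)|$ does not follow from $s\ge N$. The diagonal part of the normalized variance is $1/(p^ds)$. The paper controls it via Kruskal--Katona: $t\le c_ds^{(d+1)/d}$ gives $p^ds\ge c_d^{-d}A^d$, which is large once $A$ is large. The off-diagonal part is at most $2^dC/A$, again small because $A$ is large, not because $N$ is.

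\textbf{The step you call the main obstacle is left unresolved, and it is not actually an obstacle.} The same second-moment computation handles $|Y(d)|$:
\begin{itemize}
\item two $d$-faces sharing $j$ vertices have covariance at most $p^{2d+2-j}$;
\item there are at most $t\binom{d+1}{j}|X(d-j)|$ such ordered pairs;
\item $|X(d-j)|\le Cs(s/t)^{j-1}$ (with $C\ge1$).
\end{itemize}
Together these give $\var(|Y(d)|)/(p^{d+1}t)^2\le 1/(p^{d+1}t)+2^{d+1}C/A$. Your concern about a $(d-1)$-face of high degree is exactly the $j=d$ term. Such a face has $d$-degree at most $|X(0)|\le Cs(s/t)^{d-1}$, which is what the case $i=d$ of the hypothesis is for.

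The Jensen detour, as you set it up, does not close. After conditioning on $|Y(d-1)|=s'$, Jensen in $t'$ leaves you with $\phi\bigl(s',\mathbb E[|Y(d)|\mid |Y(d-1)|=s']\bigr)$, and you have no control over this conditional mean. The convex minorant $\psi$ is also never actually constructed.

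The idea can be rescued in a sharper form:
\begin{itemize}
\item by disjoint unions, $g(x)=x^{d+2}b_d(x)$ is convex;
\item Fekete gives $\Cr_\iota(Z)\ge |Z(d-1)|\,g(|Z(d)|/|Z(d-1)|)$ for every $Z$;
\item the perspective $(s',t')\mapsto s'g(t'/s')$, extended by $0$ at the origin (where $Y$ lands when $|Y(d-1)|=0$), is jointly convex.
\end{itemize}
Unconditional Jensen then gives $p^{2d+2}\Cr_\iota(X)\ge \mathbb E\,\Cr_\iota(Y)\ge p^ds\,g(A)$. Equivalently, $\Cr_\iota(X)s^{d+1}/t^{d+2}\ge b_d(A)$ for every $A\le t/s$. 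This needs no concentration argument at all, and in fact no use of the lower-face hypothesis.
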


We also need a variant of $\kappa_d$. We say that a simplicial complex is $q$\textit{-equitable} if, for any vertex $v$, \[\deg_d(v)\leq \left ( \frac{(d+1)|X(d)|}{|X(d-1)|}\right )^{q}.\] 
Denote the set of $q$-equitable complexes by $\mathcal E_q$.
For integers $s$ and $t$, define \[\kappa'_d(s,t):=\min_{\begin{subarray}{l} X \in \mathcal{E}_q , |X(d-1)|=s,\,|X(d)|= t \\ \frac{|X(i-1)|}{|X(i)|}<C\frac{s}{t}\  \forall i \in\{0,1, \ldots d-1\}  \end{subarray}} \Cr_\iota(X),\] where we use the convention that $|X(-1)|=1$.

Notice the abuse of notation: the function $\kappa'_d$ depends on $q$ and $C$. In the rest of the paper we assume that $q$ is a constant that depends on $d$ only and satisfies $d < q<d+1$, and that $C$ is a fixed arbitrary positive number.

Since the disjoint union of two $q$-equitable complexes with the same ratio $t/s$ is $q$-equitable, and the condition involving $|X(i-1)|$ and $|X(i)|$ is also closed under disjoint unions, as for $\kappa_d$, $\kappa'_d(s,as)$ is a subadditive function of $s$. Hence, the following limit is well defined for rational $a$ along sequences for which $as\in\mathbb Z$:
\[b'_d(a):=\lim_{s \to \infty} \kappa'_d(s,a s) \frac{s^{d+1}}{(a s)^{d+2}}=\frac{1}{a^{d+2}} \lim_{s \to \infty}\frac{\kappa'_d(s,a s)}{s}.\]
For $d\geq 2$, the right hand side above is in turn lower bounded by $ b_d(a)$. This implies that, again for rational $a$, \[B_d':=\limsup b'_d(a)\geq B_d\] for all $d\geq 2.$

\begin{lemma}\label{midrange_blowup}
Let $d\geq 2$. For any $\varepsilon>0$, there exists $N=N(\varepsilon)$ such that if $s,t/s,s^{d+1}/t^d>N$, then $\kappa'_d(s,t)\frac{s^{d+1}}{t^{d+2}}< B_d'(1+\varepsilon)$.
\end{lemma}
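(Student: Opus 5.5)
The plan is to follow the Pach--Spencer--T\'oth blow-up argument behind Theorem~\ref{midrange}. First fix one good seed complex at a moderate density. Then blow up its vertices to reach the target density $t/s$. Finally take many disjoint copies to reach the target size $s$; the hypothesis $s^{d+1}/t^d>N$ is what makes this last step possible.

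\textbf{Step 1 (seed).} Fix $\delta>0$ small in terms of $\varepsilon$. By the definition of $B_d'$ as a $\limsup$, we can pick a rational $a_0$, as large as we like, with $b'_d(a_0)<(1+\delta)B_d'$. Taking $s_0$ large along the admissible sequence, we get a $q$-equitable complex $Y$ with $|Y(d-1)|=s_0$ and $|Y(d)|=a_0s_0$ that satisfies the face-count conditions and has $\Cr_\iota(Y)\le(1+\delta)B_d' a_0^{d+2}s_0$. Let $g$ be a drawing of $Y$ realizing $\Cr_\iota(Y)$.

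\textbf{Step 2 (blow-up).} For an integer $m$, let $Y^{(m)}$ replace each vertex $v$ by $m$ copies $v^1,\dots,v^m$. Each $i$-face $\{v_0,\dots,v_i\}$ gives rise to all $m^{i+1}$ faces $\{v_0^{j_0},\dots,v_i^{j_i}\}$. Thus $|Y^{(m)}(i)|=m^{i+1}|Y(i)|$, so the density becomes $ma_0$. The face-count ratios $|X(i-1)|/|X(i)|$ scale by $1/m$, matching $s/t$. The degree $\deg_d(v^j)=m^d\deg_d(v)$ stays below $((d+1)ma_0)^q$ because $q>d$. Hence $Y^{(m)}$ is admissible for $\kappa'_d$.

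To draw $Y^{(m)}$, place the copies $v^j$ at generic points very close to $g(v)$. Draw each face by a small generic PL perturbation of the image of its parent face. Two faces of $Y^{(m)}$ whose parents are vertex-disjoint with disjoint images remain disjoint. So the only vertex-disjoint pairs that can intersect are of two kinds:
\begin{itemize}
\item pairs with crossing parents, giving at most $m^{2d+2}\Cr_\iota(Y)$ pairs;
\item pairs whose parents share at least one vertex, including the case where both parents are the same face.
\end{itemize}
Since $\Cr_\iota$ counts pairs and not crossing points, each pair of the second kind contributes at most one. Their number is at most $m^{2d+2}\sum_{v}\deg_d(v)^2\le m^{2d+2}(d+1)a_0s_0\,(C'a_0)^q$ by $q$-equitability. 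Relative to the main term $B_d'a_0^{d+2}s_0m^{2d+2}$, this is $O(a_0^{q-d-1})$, which tends to $0$ because $q<d+1$. Taking $a_0$ large, $\Cr_\iota(Y^{(m)})\le(1+2\delta)B_d'(ma_0)^{d+2}s_0m^d$. This has exactly the right scaling $t^{d+2}/s^{d+1}$.

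\textbf{Step 3 (matching $s$ and $t$ exactly).} Given $s$ and $t$, set $a=t/s\ge N$ and $m=\lceil a/a_0\rceil$, so that $ma_0/a\le 1+a_0/a$, which tends to $1$ as $N\to\infty$. Take $k=\lfloor s/(s_0m^d)\rfloor$ disjoint copies of $Y^{(m)}$, drawn far apart. The hypothesis $s^{d+1}/t^d>N$ says $s\gtrsim N a^d\asymp N m^d$, so $k\to\infty$. The leftover $O(s_0m^d)$ needed $(d-1)$-faces are therefore a negligible fraction. We supply them by a small disjoint crossing-free padding complex, chosen so that the equitability and face-count conditions still hold. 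Then we delete $d$-faces down to exactly $t$; this only decreases $\Cr_\iota$, and since the density overshoots by a factor at most $ma_0/a$, the loss is $1+o(1)$. Collecting these factors gives $\kappa'_d(s,t)s^{d+1}/t^{d+2}<(1+\varepsilon)B_d'$ once $\delta$ is small and $N$ is large.

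\textbf{Main obstacle.} The delicate points are the uniformity and bookkeeping rather than the crossings. First, deletion of $d$-faces and the padding must preserve $q$-equitability and the conditions $|X(i-1)|/|X(i)|<Cs/t$. I would handle this by deleting $d$-faces uniformly across the copies, and by using padding made of very sparse complexes with few lower-dimensional faces. Second, the seed must be chosen before $N$, so that all rounding errors are controlled only by $a\ge N$ and $k\to\infty$. Bounding the shared-vertex pairs via $q<d+1$ is the one genuinely quantitative step, and it goes through as indicated above.
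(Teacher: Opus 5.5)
Your outline is the paper's proof step for step: a seed taken from the definition of $B_d'$, the blow-up with shared-vertex pairs controlled by $q$-equitability and $q<d+1$, then $k\to\infty$ disjoint copies (from $s^{d+1}/t^d>N$), padding, and deletion. The gap is in the bookkeeping: your choice of $m$ in Step~3 breaks the deletion step. With $m=\lceil a/a_0\rceil$ and $k=\lfloor s/(s_0m^d)\rfloor$, the $k$ copies of $Y^{(m)}$ have
\[
k\,s_0m^{d+1}a_0=\frac{ks_0m^d}{s}\cdot\frac{ma_0}{a}\cdot t
\]
faces of dimension $d$, and this need not be at least $t$. The first factor can be as small as about $k/(k+1)$, while the second can equal exactly $1$.

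For instance, suppose $a/a_0$ is an integer and $r:=s-ks_0m^d\geq1$. Then the shortfall is exactly $ar$, and it would have to come from a padding with only $r$ faces of dimension $d-1$. For small $r$ no complex can supply this, since by Kruskal--Katona it has $O(r^{(d+1)/d})$ faces of dimension $d$. So ``delete $d$-faces down to exactly $t$'' may be impossible. The density overshoot $ma_0/a$ you invoke does not compensate, because it can be trivial while the undershoot in $(d-1)$-faces is of order $1/k$.

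The missing ingredient is slack in the blow-up factor. The paper takes $m=\lceil(1+\eta)t/(a_0s)\rceil$ and $N$ large enough that $k\geq1/\eta$. Then the $d$-face count is at least $\frac{k}{k+1}(1+\eta)\,t\geq t$. The price is a factor $(1+2\eta)^{d+2}$, giving the final bound $(1+\eta)(1+2\eta)^{d+2}B_d'<(1+\varepsilon)B_d'$. The same slack also absorbs the $1+O(k^{-1})$ loss from the padding in the conditions $|Z(i-1)|/|Z(i)|<Cs/t$. With your choice you would instead need a fixed margin extracted from the strict inequalities satisfied by the seed.

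The padding also needs to be dense on few vertices, not ``very sparse.'' For fixed $N$, $k$ may stay bounded while $m\to\infty$. A padding made of, say, disjoint $(d-1)$-simplices could then contribute $\Theta(s_0m^d)$ vertices, swamping the $km|X(0)|$ vertices of the blown-up part and violating $|Z(0)|/|Z(1)|<Cs/t$. The paper instead generates the padding from distinct $d$-element subsets of $O(m)$ new vertices, so that $|P(i)|=O(m^{i+1})$ for $i\leq d-1$. This is the ``few lower-dimensional faces'' property you name, and it should be made explicit.
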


Putting together these two lemmas we obtain a weak version of Theorem~\ref{midrange}. 

\begin{theorem} Let $d\geq2$. For any $s,t$ such that $s\ll t \ll s^{1+1/d}$, we have
\[B_d\leq\liminf_{s\to\infty}\kappa'_d(s,t)\frac{s^{d+1}}{t^{d+2}}\leq\limsup_{s\to\infty}\kappa'_d(s,t)\frac{s^{d+1}}{t^{d+2}}\leq B_d'.
\]
\end{theorem}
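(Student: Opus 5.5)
The theorem is a direct combination of Lemma~\ref{midrange_sampling} (lower bound) and Lemma~\ref{midrange_blowup} (upper bound). The plan is to check that, along any sequence with $s\ll t\ll s^{1+1/d}$, the hypotheses of both lemmas are eventually met for every fixed $\varepsilon$, and then let $\varepsilon\to 0$.

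\textbf{Upper bound.} Fix $\varepsilon>0$ and let $N=N(\varepsilon)$ be as in Lemma~\ref{midrange_blowup}. The hypotheses there are $s>N$, $t/s>N$ and $s^{d+1}/t^d>N$.
\begin{itemize}
\item The first holds for large $s$.
\item The second is exactly $t\gg s$.
\item The third is equivalent to $t\ll s^{(d+1)/d}=s^{1+1/d}$.
\end{itemize}
So for $s$ large enough, $\kappa'_d(s,t)\frac{s^{d+1}}{t^{d+2}}<(1+\varepsilon)B'_d$. Taking $\limsup$ and then $\varepsilon\to0$ gives the rightmost inequality. The middle inequality is trivial.

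\textbf{Lower bound.} Let $X$ be any complex in the feasible set defining $\kappa'_d(s,t)$. We apply Lemma~\ref{midrange_sampling} with the same constant $C$ that appears in the definition of $\kappa'_d$. The conditions $|X(d-1)|=s\geq N$ and $|X(d)|/|X(d-1)|=t/s\geq N$ hold eventually, as before. It remains to verify
\[\frac{|X(d-i)|}{|X(d-1)|}\leq C'\Big(\frac{s}{t}\Big)^{i-1},\qquad 2\le i\le d,\]
for some constant $C'$. This follows by telescoping the defining constraint $|X(j-1)|<C\frac{s}{t}|X(j)|$ for $j=d-i+1,\dots,d-1$, which gives $|X(d-i)|\le C^{i-1}(s/t)^{i-1}|X(d-1)|$. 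So we may take $C'=\max(1,C)^{d-1}$, and $N$ should be chosen as $N(\varepsilon,C')$.

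Lemma~\ref{midrange_sampling} then gives $\Cr_\iota(X)\frac{s^{d+1}}{t^{d+2}}>(1-\varepsilon)B_d$ for every feasible $X$. Minimizing over $X$, the same bound holds for $\kappa'_d(s,t)\frac{s^{d+1}}{t^{d+2}}$ for all large $s$. Taking $\liminf$ and letting $\varepsilon\to0$ yields $B_d\le\liminf$.

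\textbf{Main obstacle.} The delicate point is bookkeeping rather than substance. First, one must make sure the constant in Lemma~\ref{midrange_sampling} is chosen after $C$ (and after the fixed $q$), so that it is uniform over the whole feasible family defining $\kappa'_d$. Second, one must check that $q$-equitability is not needed for the lower bound: it only restricts the feasible set further, so it can only increase $\kappa'_d$. Beyond this, the argument is a direct unwinding of the asymptotic hypotheses into the finite thresholds $N$ of the two lemmas.
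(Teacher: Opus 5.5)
Your proposal is correct and follows the paper's route: the paper obtains the theorem simply by combining Lemma~\ref{midrange_sampling} for the lower bound with Lemma~\ref{midrange_blowup} for the upper bound. The paper leaves implicit that the lower-face constraints in the definition of $\kappa'_d$ telescope to the hypothesis of Lemma~\ref{midrange_sampling}; your argument checks this explicitly, with constant $\max(1,C)^{d-1}$.
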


 For $d=1$, it is not hard to show that $B_1=B_1'$ (see~\cite{pach1996new}).  

 \begin{question} Is $B_d'=B_d$?
\end{question}

 Trying to generalize the argument of~\cite{pach1996new} that implies a positive answer to the previous question for $d=1$, one runs into the following type of problem, which seems interesting on its own right. For this question we say that an edge coloring of a graph with $e$ edges is balanced if the number of edges of each color is $\Omega(e/k)$.

\begin{question}
Is there a spatial graph (a graph embedded in $\mathbb R^3$) and an integer $k>1$ such that for any balanced $k$-coloring of the edges there exist two monochromatic cycles of different colors that are linked?
\end{question}

This statement is written in a way that is reminiscent of the famous Conway-Gordon-Sachs Theorem. To show that $B_2=B_2'$, we would need to give a negative answer to this question in the case of rectilinear spatial embeddings. 
We remark that one can show that, for any $k>1$, there exists a balanced $k$ coloring of the edges of the complete graph drawn rectilinearly with the vertices on the moment curve $\{(t,t^2,t^3)\in \mathbb R^3: t \in \mathbb R\}$, such that no two monochromatic cycles of different colors are linked.

Define
\[
\kappa_{d,\mathbb S}(s,t):=\min_{|X(d-1)|=s,\ |X(d)|\geq t}\Cr_{\mathbb S}(X).
\]
Unlike the ordinary crossing number, the spherical crossing number is not obviously subadditive under disjoint unions: two geodesically spherical drawings cannot in general be placed in disjoint balls while keeping every face geodesic. We therefore define, for $a>0$,
\[
b_{d,\mathbb S}(a):=\liminf_{s\to\infty}\kappa_{d,\mathbb S}\bigl(s,\lfloor as\rfloor\bigr)\frac{s^{d+1}}{\lfloor as\rfloor^{d+2}},
\]
and define the \emph{spherical midrange crossing constant} as
\[
B_{d,\mathbb S}:=\limsup_{a\to\infty} b_{d,\mathbb S}(a).
\]

Since every geodesically spherical drawing is, after deleting one point of the sphere and applying a homeomorphism to $\mathbb R^{2d}$, a topological drawing, we have
\[
\Cr_\iota(X)\leq \Cr_{\mathbb S^{2d}}(X)
\]
for every $X$. Consequently,
\[
\kappa_d(s,t)\leq \kappa_{d,\mathbb S}(s,t),
\]
and hence $B_d\leq B_{d,\mathbb S}$.

\begin{theorem} We have that \[B_{2,\mathbb S}\leq \frac{3\cdot 53^2\cdot 2^5}{5^4(8\pi-9\sqrt{3})^4}\sim 0.0520.\]
\end{theorem}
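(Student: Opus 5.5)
The plan is to use a random geometric $2$-complex on $\mathbb S^4$, analogous to the spherical random geometric graph of~\cite{czabarka2019some}. We sample $n$ points independently and uniformly on $\mathbb S^4$ and fix a radius $r$. We include an edge for every pair at geodesic distance at most $r$, and a triangle for every triple whose three pairwise distances are all at most $r$. Every face is drawn as the spherical geodesic hull of its vertices, after an arbitrarily small perturbation to make the drawing generic. Since the drawing is geodesically spherical, it gives an upper bound on $\kappa_{2,\mathbb S}(s,t)$, with $s$ the number of edges and $t$ the number of triangles.

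The regime we aim for has $r\to0$ and $nr^4\to\infty$. The first step is to check the scaling, with $V=\operatorname{vol}(\mathbb S^4)$. We have $\mathbb E s\sim \frac{n^2}{2}\frac{\omega_4 r^4}{V}$, where $\omega_4=\pi^2/2$ is the volume of the unit $4$-ball. We have $\mathbb E t\sim \frac{n^3}{6}\frac{\omega_4 r^4}{V}\cdot\frac{L r^4}{V}$, where $L$ is the average volume of the lens $B(x,1)\cap B(y,1)$ over $y$ uniform in the unit ball $B(x,1)$. The lens volume in $\mathbb R^4$ is where the constant $8\pi-9\sqrt3$ should come from. For the number of crossings between vertex-disjoint triangles, everything happens at scale $r$, so locally we are looking at a Poisson process in $\mathbb R^4$. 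Here I would avoid a raw $6$-point integral and use the Poincaré (kinematic) formula for $2$-dimensional surfaces in $\mathbb R^4$. Averaged over rigid motions $g$, the number of intersection points of $T_1\cap gT_2$ equals $c_{4,2}\operatorname{area}(T_1)\operatorname{area}(T_2)$ times the volume normalisation. Here $c_{4,2}$ is the average over pairs of $2$-planes of $|\sin\theta_1\sin\theta_2|$, where $\theta_1,\theta_2$ are the principal angles. In the model, distinct triangles are placed isotropically and (up to lower order) independently. The expected crossing count is then $\frac12\cdot \frac{(\mathbb E t)^2}{V}\cdot c_{4,2}\,(\mathbb E[\operatorname{area}])^2$ times $r^4$ factors. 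Here $\mathbb E[\operatorname{area}]$ is the mean area of a triangle with all sides at most $r$. Pairs of triangles sharing a vertex contribute lower order, and we discard them or count them, since only vertex-disjoint crossings matter after the Whitney-type reduction. Plugging in, $\mathbb E\Cr\cdot (\mathbb E s)^3/(\mathbb E t)^4$ becomes a constant independent of $n$ and $r$. I expect it to simplify to $\frac{3\cdot53^2\cdot2^5}{5^4(8\pi-9\sqrt3)^4}$. The factor $53$ presumably comes from the second moment of the triangle area, or from the lens integral.

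The second step is concentration. Each of $s$, $t$ and $\Cr$ is a sum of local U-statistics of bounded dependence, so a variance computation, i.e. Chebyshev, shows they are $(1+o(1))$ times their means with probability tending to one once $nr^4\to\infty$. Hence some realisation achieves, simultaneously, $s,t$ close to their means and $\Cr$ at most $(1+o(1))$ times its mean. To get $b_{2,\mathbb S}(a)$ for a fixed large $a$, we choose $r=r(n)$ so that $t/s\to a$ (so $nr^4\asymp a$). Deleting or adding a few isolated faces adjusts $t$ to exactly $\lfloor as\rfloor$, at a negligible cost in crossings. Curvature and finite-$a$ corrections vanish as $r\to0$ and then $a\to\infty$, which is compatible with $\limsup_{a\to\infty}$.

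The main obstacle is the exact evaluation of the constant. This requires three ingredients: the kinematic constant $c_{4,2}$ for pairs of $2$-planes in $\mathbb R^4$, the mean area (and not merely the count) of triangles with all sides at most $1$ in $\mathbb R^4$, and the lens average giving $8\pi-9\sqrt3$. There is also a subtlety in justifying the independence heuristic behind the Poincaré-formula step. I would first establish the heuristic rigorously: write the crossing count as a sum over $6$-tuples, integrate out the rigid motion relating the two triples, and apply the Poincaré formula exactly. Only then would I compute the three scalar integrals.
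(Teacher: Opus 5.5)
Your construction and overall strategy are correct and are essentially the paper's. You use the flag $2$-complex of a random geometric graph at vanishing scale, a Crofton/kinematic formula that turns crossings into the square of the expected triangle area, and second-moment concentration. The gap is that you stop before the part that carries the statement: the constant is only expected, never derived.

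To close this, use the following fact in $\mathbb R^4$. Writing $|x|=\rho$, $|y|=\sigma$, $|x-y|=\tau$, one has $dx\,dy=16\pi^3\rho\sigma\tau\,a\,d\rho\,d\sigma\,d\tau$, where $a$ is the Euclidean area of the triangle with these sides (zero if the triangle inequality fails). At intensity $\lambda$ and threshold $1$, the edge, triangle and total-area densities are then $\frac{\pi^2}{4}\lambda^2$, $\frac{8\pi^3}{3}I\lambda^3$ and $\frac{8\pi^3}{3}J\lambda^3$. Here $I=\int_{[0,1]^3}\rho\sigma\tau\,a$ and $J=\int_{[0,1]^3}\rho\sigma\tau\,a^2$.

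Your kinematic constant is $c_{4,2}=\frac13$. You need not integrate $|\sin\theta_1\sin\theta_2|$ over the Grassmannian. Instead compare with the sphere: two great $2$-spheres of $\mathbb S^4$ meet in exactly two points, so $2/(4\pi)^2=c_{4,2}/\operatorname{vol}(\mathbb S^4)$ with $\operatorname{vol}(\mathbb S^4)=8\pi^2/3$. The crossing density is then $\frac12\cdot\frac13\bigl(\frac{8\pi^3}{3}J\lambda^3\bigr)^2=\frac{32\pi^6}{27}J^2\lambda^6$, and
\[\frac{\Cr\cdot s^3}{t^4}\longrightarrow\frac{3}{8192}\,\frac{J^2}{I^4},\qquad I=\frac{8\pi-9\sqrt3}{512},\quad J=\frac{53}{12800},\]
which is exactly the stated bound. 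Your lens average is $L=32\pi I$, so it does produce $8\pi-9\sqrt3$. The factor $53$ comes from $J$, the mean area under the four-dimensional side-length law, not from the lens.

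On the route: you stay on $\mathbb S^4$ with $r\to0$ and $nr^4$ tuned to $a$. The paper instead builds a Poisson complex in a box $[0,L]^4$, computes its densities as the $\alpha\to0$ limit of the spherical model, and transfers it to an open hemisphere by central projection. Your version is legitimate and a bit more direct. A single spherical complex at the right scale already has the prescribed ratio, so, like the paper, you avoid the disjoint unions that are unavailable for $\Cr_{\mathbb S}$, and you also skip the projection.

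It needs less than you plan. Vertex-disjoint triples are exactly independent, so the spherical Crofton formula gives $\mathbb E\Cr=\binom n3\binom{n-3}{3}m_2(r)^2$ exactly. That removes the independence heuristic, the Euclidean Poincar\'e formula and any curvature correction. The ratio of expectations is $\frac92m_2(r)^2P_1(r)^3/P_2(r)^4+o(1)$, which does not depend on $nr^4$, so there are no finite-$a$ corrections either.

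Smaller corrections:
The Whitney trick is not available for $d=2$, and it is irrelevant here. $\Cr_{\mathbb S}$ counts every crossing, and in a generic geodesic drawing triangles sharing a vertex meet only in their common face, so there is nothing to discard.

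Concentration does not need $nr^4\to\infty$, which would anyway contradict your choice $nr^4\asymp a$. At fixed $nr^4$ all three counts have mean $\Theta(n)$ and variance $O(n)$, and for $\Cr$ Markov's inequality already suffices.

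You cannot raise $t$ by adding isolated triangles without adding edges. Instead, tune $r$ so that $t/s\to(1+\eta)a$, use that $\kappa_{2,\mathbb S}$ only requires $|X(2)|\geq t$ and that $b_{2,\mathbb S}$ is a $\liminf$, and let $\eta\to0$.
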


This upper bound on $B_2$ arises from the $2$-skeleton of the flag complex (also known as clique complex) of a random geometric graph in $\mathbb S^4$. That is, we consider a geometric random graph on $\mathbb S^4$ and add a $2$-face for every triangle in the graph.
This is a generalization of the aforementioned model from which the bound $B_1\leq \frac{8}{9\pi^2}$ is derived. The key to the proof is to relate the crossing number to the area of random triangles using the Crofton formula and the fact that this random simplicial complex is invariant under sphere isometries.

The analogous calculations to obtain an upper bound on $B_d$ for $d>2$ are difficult, but for any $d \geq 4$, the bounds provided by the $d$-skeleton of the flag complex of the geometric random graph with $\alpha \to 0$ seem to be greater than $B(\Delta_{(d)})$. We don't have a clear candidate to upper bound $B_d$, but we prove the following exponential improvement over the Wendel-Moon bound.

\begin{theorem}
\label{thm:volume_threshold_exponential}
There is $d_0$ such that, for every $d\ge d_0$,
\[
B_d
\le
e^{-(d+1)/200}\, \frac{1}{2^{2d+2}}\frac{(d+1)^{d}}{d!}
\]
\end{theorem}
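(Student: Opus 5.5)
The plan is to exhibit a dense sequence of spherically drawn $d$-complexes whose spherical (hence independent pair) crossing constant is exponentially smaller than the Wendel--Moon value $W_d:=\frac{1}{2^{2d+2}}\frac{(d+1)^d}{d!}$, and then apply Lemma~\ref{lem:dense_seeds}. The construction is a \emph{volume threshold} complex. Sample $n$ i.i.d.\ uniform points on $\mathbb S^{2d}$ and keep the complete $(d-1)$-skeleton. Keep a $d$-face exactly when the $d$-dimensional volume $V(\sigma)$ of its geodesic simplex is at most $t$, where $t$ is chosen so that a fixed fraction $\alpha\in(0,1)$ of $d$-faces survives. Draw everything geodesically. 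Since $\Cr_\iota\le \Cr_{\mathbb S^{2d}}$, it suffices to compute the expected number of crossing vertex-disjoint pairs. The first-order quantities $|X_n(d-1)|\sim\binom nd$, $|X_n(d)|\sim\alpha\binom n{d+1}$ and the crossing count are U-statistics of bounded kernels, so they concentrate. Standard U-statistic laws of large numbers then give existence of $B^\iota$ (or at least of an upper limit, which suffices after passing to a subsequence). The hypotheses $a_n\to\infty$, $a_{n+1}/a_n\to1$ clearly hold.

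The key computation uses the kinematic (Poincar\'e--Santal\'o) formula on $\mathbb S^{2d}$. For two geodesic $d$-simplices $\sigma,\tau$ in general position, $|\sigma\cap g\tau|\in\{0,1\}$. Averaged over a Haar-random rotation $g$, it equals $c_d\,V(\sigma)V(\tau)$. For two vertex-disjoint faces with independent uniform vertices, the law of $\tau$ is rotation invariant and independent of $\sigma$. Hence
\[\mathbb P(\sigma,\tau\text{ cross},\ V(\sigma),V(\tau)\le t)=c_d\,\bigl(\mathbb E[V\mathbf 1_{V\le t}]\bigr)^2 .\]
Taking $t=\infty$ recovers the Wendel constant (Theorem~\ref{Moon_Wendel}). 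Normalizing by $s^{d+1}/|X(d)|^{d+2}$, with $s$ unchanged and $|X(d)|$ scaled by $\alpha$, gives
\[\frac{B^{\mathbb S}(\mathcal X)}{W_d}=\frac{m(\alpha)^2}{\alpha^{d+2}},\qquad m(\alpha):=\frac{\mathbb E[V\mathbf 1_{V\le t_\alpha}]}{\mathbb E V}.\]
So we must choose $\alpha=1-\delta$ so that the top $\delta$-fraction of simplices carries a fraction $\varepsilon$ of the mean volume, with $(1-\varepsilon)^2(1-\delta)^{-(d+2)}\le e^{-(d+1)/200}$. Taking $\delta\approx c/d$ makes the denominator a constant. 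It then suffices to show that $\varepsilon$ is bounded away from $0$ by an explicit amount, or even tends to $1$.

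The main obstacle is this tail estimate for the volume of a random spherical $d$-simplex in $\mathbb S^{2d}$. I would write $V$, up to bounded spherical distortion for small simplices, as a product of successive heights (base-times-height decomposition). Here $V\approx \frac1{d!}\prod_{k=1}^d h_k$, with $h_k$ the distance from the $(k+1)$-st vertex to the span of the previous ones. These are nearly independent, with explicit Beta-type laws (sines of angles to a random $(k-1)$-subspace in $\mathbb R^{2d+1}$). Then $\log V$ is a sum of $d$ nearly independent terms with variance of order $d$. A CLT-type estimate, or simply second-moment and Paley--Zygmund bounds on $V$ versus $V^2$, shows that $\mathbb E[V^2]/(\mathbb E V)^2$ grows exponentially in $d$. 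Consequently $\mathbb E V$ is dominated by an exponentially small upper tail. Concretely, I would bound $\mathbb E[V\mathbf 1_{V\le t}]\le \sqrt{\mathbb E[V^2\mathbf 1_{V\le t}]}$ and pick $t$ at a quantile where $\mathbb P(V>t)\le 1/d$ yet the truncated mean is at most $(1-\varepsilon)\mathbb E V$. The explicit exponent $1/200$ and the threshold $d_0$ come from making these moment computations quantitative. This step needs the careful inequalities; the rest is bookkeeping.
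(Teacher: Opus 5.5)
Your reduction is correct and is the paper's own: the volume-threshold complex, the kinematic/Crofton identity giving a crossing probability proportional to $(\mathbb E[q\mathbf 1_{q\le\theta}])^2$, and hence the ratio $m(\theta)^2/P(\theta)^{d+2}$ against the Wendel--Moon constant (Proposition~\ref{prop:volume_threshold_bound}). But everything that makes the bound \emph{exponential} is in the step you defer, and there the proposal has a genuine gap.

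\textbf{The target is misstated.} Once $(1-\delta)^{-(d+2)}$ is of order one, you need $1-\varepsilon\le Ce^{-(d+1)/400}$. That is, the size-biased law $Q^*$ must put all but an exponentially small part of its mass above a threshold exceeded by only an $O(1/d)$ fraction of simplices. Having $\varepsilon$ bounded away from $0$ only reproduces Theorem~\ref{thm:volume_threshold_strict}.

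\textbf{The proposed mechanism fails}, for three separate reasons.
\begin{itemize}
\item The simplices are not small: the vertices are nearly orthogonal and the edges have length close to $\pi/2$. So there is no regime of bounded spherical distortion.
\item Worse, the height product has \emph{bounded} logarithmic fluctuations. Take the linear heights, whose product is $\sqrt{\det A^\top A}$; the affine ones behave the same way. The $h_k^2$ are independent $\mathrm{Beta}(\frac{2d+1-k}{2},\frac k2)$, and $\sum_{k\le d}\var(\log h_k^2)\approx\sum_{k\le d}\frac{2k}{(2d+1-k)(2d+1)}<\frac12$, because each $h_k$ is a distance to a subspace of codimension at least $d+1$. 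So if the distortion were bounded, $q/\bar q$ would be of order one on most of the space. Then $\mathbb P(Q^*\le\theta)$ would stay bounded below whenever $\mathbb P(q>\theta)=O(1/d)$. The exponential spread therefore lives entirely in the factor you discard; heuristically it is driven by the pairwise inner products, e.g.\ by $\|A\mathbf 1\|^2$.
\item The step ``$\mathbb E[V^2]/(\mathbb E V)^2$ grows exponentially, consequently $\mathbb E V$ is carried by an exponentially rare tail'' is a non sequitur. Let $L=q/\bar q$ be about $0.9$ except for an atom of mass $2^{-d}/10$ at $2^d$. Then $\mathbb E L^2\ge 2^d/10$, yet every truncation removing an $O(1/d)$ fraction keeps $90\%$ of the mean.
\end{itemize}
A CLT for $\log V$ does not control the size-biased law either, since size-biasing is an exponential tilt, a large-deviation regime. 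The inequality $\mathbb E[V\mathbf 1_{V\le t}]\le(\mathbb E[V^2\mathbf 1_{V\le t}])^{1/2}$ points the wrong way. A bound such as $\mathbb E[\sqrt{q/\bar q}]\le e^{-cd}$ would suffice, but nothing in your plan reaches it.

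\textbf{How the paper gets the estimate.} It never computes the law of $q$. It takes $\theta_d=e^{d/400}\bar q$, so Markov gives $\mathbb P(q>\theta_d)\le e^{-d/400}$. It then proves $\mathbb P(Q^*\le\theta_d)\le 4e^{-d/230}$ by a testing argument:
\begin{itemize}
\item The size-biased law of a Gaussian $(2d+1)\times(d+1)$ matrix $A$ equals its law conditioned on its cone meeting that of an independent copy $B$.
\item The kernel line of $G=(A\ B)$ is uniform and independent of $GG^\top$ (Lemma~\ref{lem:gaussian_kernel_quadratic}). So this conditioning only fixes the signs of the kernel vector $(a,-b)$.
\item Set $u=\mathbf 1/\sqrt{d+1}$ and $R(A)=\|Au\|^2/(2d+1)$. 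Under $\mathbb P$, $R(A)$ exceeds $22/25$ with probability at least $1-e^{-d/140}$. Under $\mathbb P^*$, it equals in law $(1-\langle a,u\rangle^2)\chi^2_{2d+2}/(2d+1)$ with $\langle a,u\rangle^2>1/5$ with high probability, so it is typically at most $22/25$.
\item The identity $d\mathbb P^*/d\mathbb P=q/\bar q$ then gives $\mathbb P^*(\{R\le 22/25\}\cap\{q\le\theta_d\})\le e^{d/400}\,\mathbb P(R\le 22/25)$.
\end{itemize}
This separating statistic, together with the change of measure, is the missing idea.

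A minor point: Lemma~\ref{lem:dense_seeds} assumes $B^\iota(\mathcal X)$ exists. U-statistics for the natural drawing do not provide this, since they control one drawing rather than the minimum over drawings. Use the limsup/disjoint-union argument of Proposition~\ref{prop:random_geometric_master} instead.
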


This result also comes from geometric random complexes, but instead of thresholding the length of the edges, we threshold the volume of $d$-faces. The proof is variational and combines an observation about Gaussian random matrices with Gale duality.

\section{The crossing lemma}
\subsection{The lower bound}

Let us spell out the proof of the crossing number inequality in higher dimensions, which follows the proof of the analogous result for graphs. For rectilinear crossing number, this argument goes back to~\cite{Dey1993}. We  clarify the importance of lemma~\ref{whitney_trick} in the piecewise linear setting. 

 We first derive a simple corollary of Theorem~\ref{Adi}.

\begin{corollary}\label{coro:cr-d+2}  For any simplicial complex $X$ of dimension $d$, $\Cr(X)\geq |X(d)|-(d+2)|X(d-1)|$. 
\end{corollary}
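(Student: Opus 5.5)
The plan is the standard deletion argument, by induction on $k=\Cr(X)$. If $|X(d)|-(d+2)|X(d-1)|\le 0$ there is nothing to prove, since crossing numbers are nonnegative. So assume $|X(d)|>(d+2)|X(d-1)|$; by Theorem~\ref{Adi}, $\Cr(X)>0$.

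Fix a generic piecewise linear map $f\colon X\to\mathbb R^{2d}$ attaining $\Cr(f)=\Cr(X)$. Since $\Cr(f)\ge 1$, there is a crossing point, that is, a point of $\mathbb R^{2d}$ whose preimage consists of two points lying in the relative interiors of two $d$-faces. By genericity, every self-intersection is a transverse isolated double point between facets, because lower-dimensional faces have images of dimension $<d$ and generically miss the images of $d$-faces in dimension $2d$.

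Pick one of the two $d$-faces $\sigma$ involved in this crossing, and let $X'$ be $X$ with the single face $\sigma$ removed. Removing a $d$-face (a maximal face, since $\dim X=d$) leaves a simplicial complex with the same $(d-1)$-skeleton, so $|X'(d-1)|=|X(d-1)|$ and $|X'(d)|=|X(d)|-1$. The restriction $f|_{X'}$ is still generic and PL, and it loses at least the chosen crossing. Hence
\[
\Cr(X')\le\Cr(f|_{X'})\le\Cr(f)-1=\Cr(X)-1 .
\]

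Now iterate, or formally induct on $|X(d)|$. The induction hypothesis applied to $X'$ gives $\Cr(X')\ge |X(d)|-1-(d+2)|X(d-1)|$, and therefore $\Cr(X)\ge \Cr(X')+1\ge |X(d)|-(d+2)|X(d-1)|$. The base case is covered by the trivial case above, since deleting $d$-faces eventually forces $|X(d)|\le(d+2)|X(d-1)|$.

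The only point needing care is that each crossing is attributable to a pair of $d$-faces, so that deleting one face destroys at least one crossing. This is exactly the genericity assumption (transverse, isolated, at most double points) together with the dimension count. The step relies on Theorem~\ref{Adi} being applicable to each intermediate complex $X'$, which holds because $X'$ is again a $d$-dimensional simplicial complex whenever $|X'(d)|>(d+2)|X'(d-1)|>0$.
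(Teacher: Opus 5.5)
Your proof is correct and is essentially the paper's deletion argument. The paper fixes an optimal drawing and takes a maximal subcomplex $Y$ (with the full $(d-1)$-skeleton) on which the drawing is crossing-free. It applies Theorem~\ref{Adi} once to get $|Y(d)|\le (d+2)|X(d-1)|$, then charges at least one crossing to each $d$-face outside $Y$. Your induction removes one crossed $d$-face at a time and invokes Theorem~\ref{Adi} at each step, so it is the same argument unrolled. It has the small advantage of making explicit, via $\Cr(X')\le \Cr(f|_{X'})\le \Cr(f)-1$, that the charged crossings are distinct.
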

\begin{proof}

Consider a drawing $f:X\to\mathbb R^{2d}$ attaining $\operatorname{cr}(X)$. Let $Y\subset X$ be a simplicial complex which is maximal with the property that $\Cr(f(Y))=0$ (so that, in particular, $\Cr(Y)=0$). Since the map $f$ can be assumed to be generic, $Y_{(d-1)}=X_{(d-1)}$. 
By Theorem~\ref{Adi}, $|Y(d)|\leq (d+2)|Y(d-1)|$.  
Now, for each $d$-face $\sigma \in X(d)\setminus Y(d)$, let $Y'$ be the smallest subcomplex of $X$ that contains $Y$ and $\sigma$. By maximality of $Y$, $\Cr(f(Y'))>0$. Thus, the image of $\sigma$ under $f$ must intersect some other $d$-face of $Y$ in its relative interior. Repeating this for every such $\sigma$, each of these faces contributes at least one crossing, and hence \[\Cr(X)=\Cr(f(X))\geq |X(d)|-|Y(d)|\geq |X(d)|-(d+2)|X(d-1)|.\]
\end{proof}

 Wu~\cite{Wu1957} and Shapiro~\cite{Shapiro1957} completed Van Kampen's program~\cite{vanKampen1932}, which can be summarized as follows: the Van Kampen obstruction characterizes embeddability of $d$-dimensional simplicial complexes in $\mathbb R^{2d}$ for $d>2$. See~\cite{melikhov2009van} for much more on this topic and~\cite{freedman1994van} for a statement that is almost identical to the next lemma (that summarizes lemmas 4 and 5 there).

\begin{lemma}\label{whitney_trick}\cite{freedman1994van} Let $f\colon X \to \mathbb{R}^{2d}$ be a generic PL immersion of a simplicial complex $X$ of dimension $d\geq 3$.
Suppose that either for some $d$-simplex $\sigma$ $f(\sigma)$ is not an embedding, or $f(\sigma)$ intersects $f(\sigma')$ in its interior for two $d$-simplexes $\sigma$ and $\sigma'$ having a common vertex in $X$. Then, there is an immersion $g \colon X \to \mathbb R^{2d}$ differing from $f$ only in $\sigma$ (in the second case in $\sigma$ and $\sigma'$) such that the number of self-intersection points of $g(\sigma)$ (or, in the second case, between $g(\sigma)$ and $g(\sigma')$) is one less than the number of self-intersection points of $f(\sigma)$ (or, in the second case, intersections between $f(\sigma)$ and $f(\sigma')$), and the rest of the singularities are unchanged.
\end{lemma}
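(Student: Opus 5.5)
The plan is to prove the second (adjacent faces) case in detail and reduce the first case to it. In both cases a single intersection point is slid along an arc into a small conical neighbourhood of a vertex and cancelled there. All of this is done by a finger-type PL modification supported in a thin regular neighbourhood of a $2$-dimensional object. The hypothesis $d\ge 3$ is what makes this neighbourhood miss the rest of $f(X)$.

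\emph{Setup for the second case.} Let $\sigma,\sigma'$ share a vertex $v$, and let $p=f(x)=f(x')$ with $x\in\operatorname{int}\sigma$ and $x'\in\operatorname{int}\sigma'$. Choose a PL arc $\alpha\subset\sigma'$ from $x'$ to $v$. By genericity the preimage of the singular set of $f$ in $\sigma'$ is finite, so $\alpha$ can avoid every other double point, and we may take $\alpha$ to meet $\partial\sigma'$ only at $v$. Its image $f(\alpha)$ is a $1$-dimensional arc in $\mathbb R^{2d}$. Since $1+d<2d$, a small perturbation of $\alpha$ inside $\sigma'$ makes $f(\alpha)$ disjoint from $f(\tau)$ for every $d$-face $\tau\neq\sigma'$, except at its endpoint $p\in f(\sigma)$ and at $f(v)$.

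\emph{Sliding and local cancellation.} We modify $f$ only on a small disc $D\subset\operatorname{int}\sigma$ around $x$, pushing $f(D)$ along a thin tube around $f(\alpha)$ (a finger move). This changes $f$ on $\sigma$ alone and produces new intersections only with faces that meet $f(\alpha)$, and by the choice of $\alpha$ the only such face is $\sigma'$. The effect is to transport the double point $p$ to a point $p_0$ arbitrarily close to $f(v)$. Inside a small ball $B$ around $f(v)$, the images of $\operatorname{st}(v)$ are cones over their links in $\partial B\cong \mathbb S^{2d-1}$. The faces $\sigma$ and $\sigma'$ appear there as cones over disjoint $(d-1)$-discs $L,L'$, and the finger appears as a small $d$-dimensional tab attached to $L$ that pierces the cone over $L'$ once. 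We remove this intersection by rerouting the tab around $\partial L'$ rather than through it. Concretely, we choose an embedded $2$-disc $W$ in $B$ bounded by the loop formed by the tip of the finger and an arc in $\operatorname{cone}(L')$ that runs out to $\operatorname{cone}(\partial L')$, and then push $\sigma$ across $W$ (a Whitney-type move). Since $\dim W+d=d+2<2d$ exactly when $d\ge 3$, general position lets $W$ avoid every $d$-face other than $\sigma$ and $\sigma'$. We also need $W$ to avoid the cones over the links of the other faces in $\operatorname{st}(v)$; for this we use that $2+(d-1)<2d-1$ inside $\mathbb S^{2d-1}$. After the push, the count of $f(\sigma)\cap f(\sigma')$ has dropped by exactly one and all other singularities are unchanged. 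An equivalent bookkeeping is to note that the intersection near the cone point reflects an intersection number of $L$ with a pushed copy of $L'$ in $\mathbb S^{2d-1}$, and $(d-1)$-discs in $\mathbb S^{2d-1}$ can be unknotted and separated relative to their boundaries.

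\emph{First case and main obstacle.} Suppose $f(x)=f(y)$ with $x,y\in\sigma$. Pick any vertex $v$ of $\sigma$ and an arc $\alpha\subset\sigma$ from $y$ to $v$ that avoids $x$ and all other double points. Then repeat the same finger move and local cancellation, with the sheet of $\sigma$ through $x$ playing the role of $\sigma$ and the sheet through $y$ playing the role of $\sigma'$; the two sheets meet at $v$ as two local pieces of the single cone $f(\sigma)$. I expect the main obstacle to be the local cancellation step. One must verify that the rerouting near $f(v)$ is genuinely supported in $\sigma$ (respectively in $\sigma\cup\sigma'$) and creates no new intersections with the other faces through $v$. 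This is where the dimension count $d\ge3$ is used sharply, and where the argument fails for $d=2$, for which Proposition~\ref{cased=2} is needed separately. To make this step rigorous I would follow Lemmas 4 and 5 of~\cite{freedman1994van}.
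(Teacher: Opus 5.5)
The gap is in your local cancellation step. Rerouting the tab ``around $\partial L'$'' means pushing the intersection point off $\sigma'$ through its boundary. Near $f(v)$ that boundary is $\operatorname{cone}(\partial L')$, i.e.\ the $(d-1)$-faces of $\sigma'$ through $v$. In general these are shared with other $d$-faces $\tau\in\operatorname{st}(v)$; in $\Delta^{n-1}_{(d)}$ every $(d-1)$-face is. Your disc $W$ must reach $\operatorname{cone}(\partial L')$, so no general-position argument separates it from those $\tau$: the count $d+2<2d$ only controls its interior. Pushing $\sigma$ past $\partial\sigma'$ then makes $\sigma$ cross every $d$-face containing the crossed $(d-1)$-face. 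Moreover, the loop bounding $W$ does not close up, since the finger meets $\sigma'$ only at $p_0$.

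Homologically, let $K_v\subset\mathbb S^{2d-1}$ be the image of the link of $v$. The finger meets $\partial B$ in a meridian $(d-1)$-sphere $M$ of $L'$. Recapping it inside $B$ away from all the cones requires $[M]=0$ in $H_{d-1}(\mathbb S^{2d-1}\setminus K_v)$. By Alexander duality this group is $H^{d-1}(K_v)$, and $[M]$ corresponds to the cochain dual to $L'$. That class is nonzero as soon as $L'$ lies in a $(d-1)$-cycle of $K_v$.

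No argument of your shape can work in general. Your whole modification (finger plus reroute) is supported in a compact set $U\subset\operatorname{int}\sigma$. If $f_1=f_0$ outside $U$, then $f_1(U)+f_0(U)$ is a $\mathbb Z_2$ $d$-cycle in $\mathbb R^{2d}$. Hence it meets $f(z)$ in an even number of points for every $\mathbb Z_2$ $d$-cycle $z$ of $X$. Take $z$ with $\sigma'\in z$ and $\sigma\notin z$, e.g.\ $z=\partial(\sigma'\cup\{w\})$ with $w\notin\sigma\cup\sigma'$ in a complete complex. Then the parity of $\sum_{\tau\in z}|f(\sigma)\cap f(\tau)|$ is invariant. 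So $|f(\sigma)\cap f(\sigma')|$ cannot drop by one while all other intersections stay put. The same argument applies if $\sigma'$ is also modified in its interior.

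The missing idea is to change the germ of $f|_\sigma$ at $v$. In a smaller concentric ball, replace $\operatorname{cone}(L)$ by $\operatorname{cone}(L\# M)$, where $M$ is a small meridian sphere of $L'$ tubed to $L$ along an arc in $\mathbb S^{2d-1}\setminus K_v$. Fill the shell in between with the trace of shrinking $M$ through $L'$. This creates exactly one new point $q\in f(\sigma)\cap f(\sigma')$, of a sign you choose, and nothing else. Now $p$ and $q$ are transverse double points of opposite signs, so the ordinary Whitney trick applies. A Whitney disc with corners $p,q$ is embedded, and its interior misses $f(X)$ by general position because $2+d<2d$; this is the $2$-disk of the remark after the lemma. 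The net effect is one fewer intersection. For self-intersections, take $M$ to be a meridian of $L$ itself, which creates a self-intersection of $\sigma$ near $f(v)$, and cancel as before. With this, your finger move toward the vertex becomes unnecessary.
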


\noindent\textbf{\em{Remark:}} To prove Lemma~\ref{whitney_trick}, one isotopes the map along a $2$-disk that avoids the rest of the image of $f$; this can be assumed to exist by a general position-dimension counting argument, provided that the dimension of the codomain is at least $5$. This is a version of the so-called \emph{Whitney trick}. The difficulty in $\mathbb R^4$ is similar to why the Poincar\'e conjecture is easier to prove in dimensions $d>4$ than in dimensions $d=3,4$. The main result of~\cite{freedman1994van} implies that the analogue for $d=2$, $2d=4$, is false.

\begin{proof}[Proof of parts 1) and 3) of Corollary~\ref{crossing_lemma}] Let $X$ be an arbitrary $d$-dimensional simplicial complex and $f\colon X \to \mathbb S^{2d}$ be a map that minimizes crossings; $f$ can be assumed to be a generic map. Moreover, if $d\neq 2$, by Lemma~\ref{whitney_trick}, we can assume that there do not exist any two simplices $\sigma,\sigma'\in X(d)$ which share at least one vertex and such that the relative interior of $f(\sigma)$ intersects the relative interior of $f(\sigma')$. In the case of $d=2$ and facewise affine or spherically geodesic triangles, the previous property is immediately satisfied as soon as the vertices are in general position, which can be assumed for a map that minimizes the number of crossings. In all cases that we treat, it can be assumed that crossings only arise between vertex-disjoint simplices. This is precisely why the assumption that $d\neq 2$ is important.

Corollary~\ref{coro:cr-d+2} and Lemma~\ref{whitney_trick} imply that, for $d\neq 2$, $\Cr_\iota(X)\geq |X(d)|-(d+2)|X(d-1)|$.

Let $R_p$ be a random induced subcomplex where each vertex of $X$ is included with probability $p$ independently from the rest. 
Thus,
\[\Cr_\iota(R_p)\geq |R_p(d)|-(d+2)|R_p(d-1)|.\]
Using independence, the above inequality, and linearity of expectation,
\[p^{2d+2}\Cr_\iota(X)\geq \mathbb E[\Cr_\iota(R_p)]\geq \mathbb E\left[|R_p(d)|-(d+2)|R_p(d-1)|\right]=p^{d+1} |X(d)|-(d+2) p^d |X(d-1)|.\]
Hence 
\[\Cr(X)\geq \Cr_\iota(X)\geq  \frac{|X(d)|}{p^{d+1}}-\frac{(d+2)|X(d-1)|}{p^{d+2}}.\]
Let us set $p=\frac{a|X(d-1)|}{|X(d)|}$ (the assumption $|X(d)|\ge a|X(d-1)|$ implies that $p\leq 1$).
Replacing $p$ in the previous formula and simplifying, we obtain the claimed bound. 
\end{proof}
Notice that if it were not for Lemma~\ref{whitney_trick}, the expected crossing number could be greater than $p^{2d+2} \Cr(X)$: self-crossings 
and crossings between incident faces survive with larger probabilities.
Lemma~\ref{whitney_trick} allows us to use $\Cr_{\iota}$ instead of $\Cr$ in the linear inequality  $\Cr(X)\geq |X(d)|-(d+2)|X(d-1)|$. See the introduction of~\cite{schaefersurvey} for a discussion of this issue in the case of $d=1$ for other variants of the crossing number of graphs. 

The above proof actually yields the following more general lemma:

\begin{lemma}\label{amplification}
If a linear inequality of the form
$\Cr_{\iota}(X)\geq \alpha |X(d)|-\beta|X(d-1)|$ holds for all complexes (where $\alpha$ and $\beta$ are positive numbers), then under hypothesis $|X(d)|\geq a|X(d-1)|$, the following crossing number inequality is valid:
\[\Cr(X)\geq \Cr_{\iota}(X)\geq \frac{\alpha a-\beta}{a^{d+2}}\frac{|X(d)|^{d+2}}{|X(d-1)|^{d+1}}.\]
\end{lemma}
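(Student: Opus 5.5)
The plan is to rerun the random-sampling amplification from the proof of parts 1) and 3) of Corollary~\ref{crossing_lemma}, with the specific linear inequality replaced by the hypothesised one $\Cr_\iota(Y)\geq \alpha|Y(d)|-\beta|Y(d-1)|$, valid for every complex $Y$.

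First, fix a drawing $f$ of $X$ that realises $\Cr_\iota(X)$. Let $R_p$ be the induced subcomplex on a random vertex subset, with each vertex kept independently with probability $p\in(0,1]$. The restriction $f|_{R_p}$ is a drawing of $R_p$. Any pair of vertex-disjoint $d$-faces whose images intersect survives in $R_p$ exactly when all $2d+2$ of its vertices are kept, which happens with probability $p^{2d+2}$. Every intersecting independent pair counted in $f|_{R_p}$ comes from such a surviving pair.

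This is exactly where working with $\Cr_\iota$ rather than $\Cr$ matters. Only vertex-disjoint pairs are counted, so no self-crossings or crossings between incident faces are present, and these would otherwise survive with larger probability. Hence
\[
\mathbb E[\Cr_\iota(R_p)]\leq p^{2d+2}\Cr_\iota(X).
\]

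Next, apply the hypothesis to each outcome $R_p$ and take expectations. A $d$-face survives with probability $p^{d+1}$ and a $(d-1)$-face with probability $p^d$, so linearity of expectation gives
\[
p^{2d+2}\Cr_\iota(X)\geq \alpha p^{d+1}|X(d)|-\beta p^{d}|X(d-1)|,
\]
that is,
\[
\Cr_\iota(X)\geq \alpha |X(d)|p^{-(d+1)}-\beta|X(d-1)|p^{-(d+2)}.
\]

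Finally, choose $p=a|X(d-1)|/|X(d)|$, which lies in $(0,1]$ by the assumption $|X(d)|\geq a|X(d-1)|$. Substituting, the first term becomes
\[
\alpha\frac{|X(d)|^{d+2}}{a^{d+1}|X(d-1)|^{d+1}},
\]
and the second becomes
\[
\beta\frac{|X(d)|^{d+2}}{a^{d+2}|X(d-1)|^{d+1}}.
\]
Combining them gives
\[
\Cr_\iota(X)\geq \frac{\alpha a-\beta}{a^{d+2}}\cdot\frac{|X(d)|^{d+2}}{|X(d-1)|^{d+1}}.
\]
The remaining inequality $\Cr(X)\geq\Cr_\iota(X)$ holds by definition, as noted in the introduction.

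I expect no step to be a serious obstacle. The only point needing care is the inequality $\mathbb E[\Cr_\iota(R_p)]\leq p^{2d+2}\Cr_\iota(X)$. It relies on $\Cr_\iota$ being a minimum over drawings, so that the restricted drawing only gives an upper bound, together with the fact that counted pairs are vertex-disjoint.
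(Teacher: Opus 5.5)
Your proposal is correct and follows the paper's own argument. The paper obtains this lemma by observing that the random induced-subcomplex sampling proof of parts 1) and 3) of Corollary~\ref{crossing_lemma} goes through unchanged: keep each vertex with probability $p$, use $\mathbb E[\Cr_\iota(R_p)]\leq p^{2d+2}\Cr_\iota(X)$, replace $|X(d)|-(d+2)|X(d-1)|$ by $\alpha|X(d)|-\beta|X(d-1)|$, and set $p=a|X(d-1)|/|X(d)|$.
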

As elsewhere, in this lemma we can replace $\Cr$ with $\Cr_{\mathbb S^{2d}}$ or $\Cr_{\operatorname{Aff}}$.  We remark that, in the case that $|X(d)|= a|X(d-1)|$, this simplifies to the linear inequality that we started with: $\Cr(X)\geq (\alpha a-\beta)|X(d-1)|=\alpha|X(d)|-\beta|X(d-1)|$. 

Now we show the crossing lemma for $d=2$, as stated  in Proposition~\ref{cased=2}. We are going to use the following observation.

\begin{lemma}If $X$ is a $d$-complex that embeds in $\mathbb S^{2d}$, then $X *[q]$ is a $(d+1)$-complex that embeds in $\mathbb S^{2d+2}$, moreover $\Cr_\iota(X)=0$ implies $\Cr_\iota(X*[q])=0$.
\end{lemma}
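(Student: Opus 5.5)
The plan is to build the embedding of $X*[q]$ explicitly as a join inside the sphere, using $\mathbb S^{2d+2}=\mathbb S^{2d}*\mathbb S^1$.

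Realize $\mathbb S^{2d+2}\subset\mathbb R^{2d+1}\times\mathbb R^2$ as the unit sphere. Its points are $(\cos\theta\, x,\sin\theta\, y)$ with $x\in\mathbb S^{2d}$, $y\in\mathbb S^1$ and $\theta\in[0,\pi/2]$. This exhibits it as the join of $\mathbb S^{2d}\times\{0\}$ and $\{0\}\times\mathbb S^1$.

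\textbf{Step 1 (the map).} Given an embedding $f\colon X\to\mathbb S^{2d}$, choose $q$ distinct points $y_1,\dots,y_q\in\mathbb S^1$. A point of a face $\sigma*\{j\}$ of $X*[q]$ is a formal combination $(1-t)u+tj$ with $u\in\sigma$. Define
\[F\bigl((1-t)u+tj\bigr)=\bigl(\cos(\tfrac{\pi}{2}t)f(u),\ \sin(\tfrac{\pi}{2}t)y_j\bigr).\]
On faces of $X$ itself (those with $t=0$), $F$ agrees with $f$.

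\textbf{Step 2 (injectivity).} Suppose two points have the same image. The $\mathbb R^2$-coordinate determines $\sin(\tfrac{\pi}{2}t)$, hence $t$. If $t\in(0,1)$, it also determines $y_j$, hence $j$ because the $y_j$ are distinct. The $\mathbb R^{2d+1}$-coordinate then determines $f(u)$, hence $u$ because $f$ is injective. The endpoint cases are immediate: $t=1$ gives the cone point $j$, and $t=0$ reduces to injectivity of $f$. So $F$ is a continuous injection from a compact space into a Hausdorff space, and is therefore an embedding.

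\textbf{Step 3 (the $\Cr_\iota$ statement).} Now assume only that $f$ is generic and that vertex-disjoint $d$-faces have disjoint images. Take two vertex-disjoint $(d+1)$-faces $\sigma*\{j\}$ and $\sigma'*\{j'\}$. Vertex-disjointness gives $\sigma\cap\sigma'=\emptyset$ and $j\neq j'$. If their images met at a point with $t\in(0,1)$, the $\mathbb S^1$-coordinate would force $j=j'$, which is impossible. At $t=1$ the images are the distinct points $y_j\neq y_{j'}$. At $t=0$ a common point would be a point of $f(\sigma)\cap f(\sigma')$, which is empty by hypothesis. So these images are disjoint, and every top face of $X*[q]$ has this form. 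Hence $\Cr_\iota(X*[q])=0$.

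\textbf{Main obstacle.} The main point to check is that $F$ is an admissible drawing in the paper's sense, i.e.\ PL (or spherically geodesic) and generic. I would fix this by replacing the trigonometric parametrization with the radial projection of the straight-line join in $\mathbb R^{2d+1}\times\mathbb R^2$. I would also take the $y_j$ to be vertices of a regular $q$-gon and join along great arcs, so that for geodesic $f$ the map $F$ is facewise geodesic. The injectivity arguments above carry over unchanged, since the coordinate splitting is preserved. Finally, a small perturbation restores genericity without creating intersections between faces that are already disjoint, because disjointness of compact images is an open condition.
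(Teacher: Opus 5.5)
Your proposal is correct and is essentially the paper's proof. You place $f(X)$ in the equatorial $\mathbb S^{2d}$ of the unit sphere in $\mathbb R^{2d+1}\oplus\mathbb R^2$ and cone the $j$-th copy to a distinct point $y_j$ of the orthogonal circle; your formula is the paper's with $t\mapsto 1-t$. You use the $\mathbb R^2$-coordinate to show that distinct cones meet only in the common base, where vertex-disjoint faces already have disjoint images. Finally, you pass to a nearby generic PL map, which preserves the finitely many disjointness relations between compact images — exactly as the paper does.
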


\begin{proof}
View $\mathbb S^{2d+2}$ as the unit sphere in $\mathbb R^{2d+1}\oplus\mathbb R^2$. It contains the equatorial sphere $\mathbb S^{2d}\times\{0\}$ and the orthogonal circle $\{0\}\times\mathbb S^1$. Given $f\colon X\to\mathbb S^{2d}$, place its image in the first sphere and choose distinct points $y_1,\ldots,y_q$ on the orthogonal circle.
Starting from $f$, we construct $F\colon X*[q]\to\mathbb S^{2d+2}$ by mapping each cone $X*\{y_i\}$ to the spherical cone with base $f(X)$ and apex $y_i$: each segment from $y_i$ to $x$ is mapped to the shorter great-circle arc from $y_i$ to $f(x)$. Explicitly, writing $y_i$ for its $\mathbb R^2$ coordinate,
\[F([x,y_i,t])=\bigl(\sin(\pi t/2)f(x),\cos(\pi t/2)y_i\bigr),\qquad 0\leq t\leq1,\]
where $t=0$ is the apex and $t=1$ is the base. The formula is well defined under the join identifications and takes values in $\mathbb S^{2d+2}$. Distinct cones meet only along their common base, and within each cone the formula is injective whenever $f$ is injective. Thus $F$ is an embedding if $f$ is.

Now suppose that $\Cr_\iota(X)=0$, and choose $f$ with no independent crossings. Two vertex-disjoint $(d+1)$-faces of $X*[q]$ have the form $\sigma*\{y_i\}$ and $\tau*\{y_j\}$, where $i\neq j$ and $\sigma,\tau\in X(d)$ are vertex-disjoint. Their images can meet only in the common base, where their intersection is $f(\sigma)\cap f(\tau)=\varnothing$. A sufficiently close generic PL approximation of $F$ preserves these finitely many disjointness relations, since the relevant images are compact. Thus, $\Cr_\iota(X*[q])=0$.

\end{proof}

\begin{proof}[Proof of Proposition~\ref{cased=2}]Let $X$ be a $2$-complex with $\Cr_\iota(X)=0$. Consider $Y=X*[q]$; it has $|Y(3)|=q|X(2)|$, $|Y(2)|=|X(2)|+q|X(1)|$, and $\Cr_\iota(Y)=0$. Hence, by Adiprasito's theorem for $d=3$, combined with Lemma~\ref{whitney_trick}, $|Y(3)|\leq 5|Y(2)|$. This yields $q|X(2)|\leq 5(|X(2)|+q|X(1)|)$ or, in other words, $|X(2)|\leq 5 \frac{q}{q-5}|X(1)|$. Since this is true for all $q$, $|X(2)|\leq 5|X(1)|$. The contrapositive is that if $|X(2)|> 5|X(1)|$, then $\Cr_{\iota}(X)>0$. Arguing as in Corollary~\ref{coro:cr-d+2} we derive $\Cr_\iota(X)\geq |X(2)|-5|X(1)|$. Lemma~\ref{amplification} finishes the proof.  
\end{proof}

\subsection{Very sparse complexes with few crossings}
\label{subsec:sparse-complexes}

We now give a construction showing that the linear crossing inequality is sharp on the first sparse interval.

\begin{proposition}\label{prop:first-sparse-interval}
For every $d\geq1$ and $d+2\leq a\leq d+2+\frac{2}{d+1}$, we have $b_d(a)\leq\frac{a-d-2}{a^{d+2}}$. The construction gives $\mathrm{PL}$ drawings in which every $d$-face is crossed at most once.
\end{proposition}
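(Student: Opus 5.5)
Recall that $b_d(a)=\lim_s\kappa_d(s,as)\,s^{d+1}/(as)^{d+2}=a^{-(d+2)}\lim_s\kappa_d(s,as)/s$. So it suffices to exhibit, for arbitrarily large $s$, complexes with $s+o(s)$ faces of dimension $d-1$, at least $as$ faces of dimension $d$, and independent-pair crossing number at most $(a-d-2)s+o(s)$. We take the construction to be the cyclic polytope plus extra $d$-faces, the same idea as the quadrangulation-plus-diagonals example in the graph case.

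\textbf{Base complex.} Let $P=\partial C_{n,2d+1}$ be the boundary of the cyclic polytope, a simplicial $2d$-sphere, and let $Z=P_{(d)}$ be its $d$-skeleton. Since $C_{n,2d+1}$ is neighborly, every $d$-subset of $[n]$ is a face. So $|Z(d-1)|=\binom nd$, and the count of $d$-faces from the Upper Bound Theorem (Gale's evenness) gives $|Z(d)|=(d+2)\binom nd-O(n^{d-1})$. Indeed, by the Dehn--Sommerville relations, in a neighborly $(2d+1)$-polytope each $(d-1)$-face lies in about $d+2$ $d$-faces on average; this is the extremal ratio of Theorem~\ref{Adi}. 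The complex $Z$ embeds in $P\cong\mathbb S^{2d}$ with no crossings.

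\textbf{Adding faces.} The missing $d$-faces are exactly the $(d+1)$-subsets violating Gale's evenness condition. We insert a new $d$-face $\tau$ drawn inside a single facet of $P$. Near $\tau$, $P$ looks like the boundary of a $(2d+1)$-simplex, that is, the $d$-skeleton of a small piece of $\Delta^{2d+1}$, and one vertex-disjoint crossing there is unavoidable (Van Kampen--Flores). The plan is to choose a family $\mathcal F$ of non-faces $\tau$ so that each $\tau\in\mathcal F$ can be drawn as a PL disk inside the star of a $2d$-simplex of $P$ (equivalently, in a small $(2d+1)$-ball neighbourhood of $P$, pushed back into $\mathbb S^{2d}$) meeting exactly one $d$-face of $Z$, and distinct $\tau$'s do not meet each other. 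The concrete device is a Radon partition. Take a $2d$-face $\rho$ of $P$ together with one extra vertex $w$ such that $\rho\cup\{w\}$ spans a $(2d+1)$-simplex whose boundary is mostly in $P$. Split its $2d+2$ vertices into two $(d+1)$-sets $A$ and $B$, where $A$ is a face of $P$ and $B$ is not. Draw $B$ near $\rho$ so that it crosses only $A$. Each such $\tau=B$ then costs one crossing, and every $d$-face is crossed at most once provided the partner faces $A$ are distinct.

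\textbf{Counting, and the main obstacle.} If we can find a family $\mathcal F$ with $|\mathcal F|=\frac{2}{d+1}\binom nd-o(n^d)$ of such pairwise non-interfering insertions with distinct partners, then taking any subfamily of size $(a-d-2)\binom nd$ gives $s=\binom nd$, $t=as$, and $\Cr_\iota\le(a-d-2)s$, which yields the claimed bound on $b_d(a)$ for all $a\le d+2+\frac{2}{d+1}$. The natural source of $\mathcal F$ is the $2d$-faces of $P$: there are $\sim\frac{2}{d+1}\binom nd$ of them, and we need one insertion per facet, mirroring one diagonal per quadrangle. For $d=1$, $P$ is a triangulated $2$-sphere with $2n$ triangles, and pairing triangles to add the crossing diagonal gives $n$ extra edges, i.e.\ $a=4=d+2+\frac{2}{2}$, which checks out. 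The main obstacle is the combinatorial bookkeeping. We must assign to each facet $\rho$ a distinct new $d$-set $\tau_\rho$, drawn inside a neighbourhood of $\rho$ (for instance of $\rho$ together with an adjacent facet, sharing a ridge), such that the $\tau_\rho$ are new faces, pairwise distinct, and cross only one $d$-face of $P$ each. To handle this we would use the explicit facet description of cyclic polytopes via Gale's evenness, pair adjacent facets $\rho,\rho'$ across a ridge, and take $\tau$ to be a $(d+1)$-subset of $\rho\cup\rho'$ containing the two apex vertices. That subset is a non-face, and it crosses exactly the complementary $d$-face in the bipyramid $\rho\cup\rho'$. This loses at most a factor of $2$, so we would instead use a perfect-matching-free assignment with multiple ridges per facet; checking distinctness up to $o(n^d)$ collisions is where the real work lies. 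If exact facet counts fall short, an alternative is to take disjoint unions of small explicit gadgets. Subadditivity of $\kappa_d$ makes such unions legitimate.
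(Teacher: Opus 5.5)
\textbf{There is a genuine gap.} Your reduction is fine, and so is your base complex: $\partial C_{n,2d+1}$ is $d$-neighborly, and its $d$-skeleton has $(d+2)\binom nd-O(n^{d-1})$ faces of dimension $d$. But the entire content of the proposition is the existence of the family $\mathcal F$, and you leave that conditional (``if we can find a family\ldots''). The concrete device you propose also breaks down for $d\geq 2$, for three reasons.

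\emph{The boundary of $\tau$ is not local.} By neighborliness, every $d$-subset of $\tau=\{x,y\}\cup R$ is already a face of $P$. The $d$-subsets containing both apexes $x,y$ (they exist once $d\ge 2$) are faces of neither $\rho$ nor $\rho'$. So their relative interiors lie outside the bipyramid $\rho\cup\rho'$, and $\tau$ cannot be drawn inside it. A disk spanning $\partial\tau$ must pass through other facets, and nothing controls its crossings with $Z$.

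\emph{The local crossing claim is false.} In a convex bipyramid, the segment $xy$ meets the ridge in its relative interior. So the Radon partition of the $2d+2$ vertices is $\{x,y\}$ versus the $2d$ ridge vertices, which has type $(2,2d)$. An affine $\tau$ therefore misses the complementary $d$-face unless $d=1$. The quadrilateral-diagonal picture does not generalize.

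\emph{The bookkeeping is off.} $\partial C_{n,2d+1}$ has $2\binom{n-d-1}{d}\sim 2\binom nd$ facets, not $\frac{2}{d+1}\binom nd$. The target is therefore one insertion per $d+1$ facets, not one per facet; your $d=1$ check ($n$ diagonals for $2n$ triangles) is the case $d+1=2$.

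The fallback of disjoint unions of small gadgets cannot reach the bound exactly either. Each gadget would need $\Cr_\iota(X)=|X(d)|-(d+2)|X(d-1)|$ on the nose, which already fails for graphs by Euler's formula. So the gadgets must be large with lower-order losses, which is the original problem.

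What is missing is a global selection rule backed by an exact crossing criterion. The paper does not try to extend a given crossing-free complex; it builds its own base so that extensions are controlled.
\begin{itemize}
\item It takes the even-dimensional cyclic polytope $C(n,2d)$ and glues two copies along their boundary to get a PL $2d$-sphere. The $d$-skeleton of $\partial C(n,2d)$ sits on the common boundary, contributing $\binom nd$ faces of dimension $d-1$ and $d\binom nd+O(n^{d-1})$ of dimension $d$.
\item The non-faces are exactly the separated $(d+1)$-sets. They are drawn as affine simplices inside the two halves, with relative interiors in the open interior.
\item On the moment curve, two vertex-disjoint affine $d$-simplices cross, in exactly one point, iff their labels alternate cyclically. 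For alternating $A,B$, the difference of label sums is congruent mod $n$ to a number in $[d+1,n-d-1]$.
\item The separated sets are sorted into classes $\mathcal A_r$ by label sum mod $n$, with $\gcd(n,d+1)=1$ so that all classes have equal size $\frac{1}{d+1}\binom{n-d-2}{d}$. Among $d+2$ consecutive classes, the only crossings are the pairs $\{A,A+1\}$ between the two extreme classes.
\item Drawing $\mathcal A_0,\dots,\mathcal A_d$ in one half and $\mathcal A_{d+2},\dots,\mathcal A_{2d+2}$ in the other gives a crossing-free complex with $(d+2)\binom nd+O(n^{d-1})$ faces of dimension $d$.
\item Any subset of $\mathcal A_{d+1}\cup\mathcal A_{2d+3}$, about $\frac{2}{d+1}\binom nd$ faces, can then be added. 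Each new face crosses exactly its translate by $-1$, and these partners are distinct.
\end{itemize}
This yields both the bound on $b_d(a)$ and the ``crossed at most once'' property.
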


\begin{proof}
Let $n>2d+3$ be relatively prime to $d+1$, and label the vertices of the cyclic polytope $P=C(n,2d)$ by $0,1,\ldots,n-1$ in their moment-curve order (see~\cite[Chapter 0]{Gunter_polytopes_lectures} for background on cyclic polytopes). Call a $(d+1)$-element vertex set separated if it contains no cyclically consecutive vertices, including the pair $\{n-1,0\}$. By Gale's evenness criterion, every set of at most $d$ vertices is a boundary face, and a $(d+1)$-element set spans a boundary $d$-face exactly when it is not separated. Also, the unique affine dependence among $2d+2$ moment-curve vertices has alternating signs. Thus, two vertex-disjoint $d$-simplices cross exactly when their vertices alternate in cyclic order, in which case their relative interiors meet transversely in one point. Simplices sharing vertices meet only in their common face, since any $2d+1$ vertices are affinely independent.

For $r\in\mathbb Z/n\mathbb Z$, let $\mathcal A_r$ consist of the separated sets whose labels sum to $r$ modulo $n$. There are $\frac{n}{d+1}\binom{n-d-2}{d}$ separated sets: mark one vertex and count the $d+1$ cyclic gaps, which are at least two and sum to $n$, then divide by $d+1$. Cyclic translation $A\mapsto A+1$ maps $\mathcal A_r$ bijectively to $\mathcal A_{r+d+1}$. Our choice of $n$ therefore gives $|\mathcal A_r|=\frac{1}{d+1}\binom{n-d-2}{d}$ for every $r$.

Let $A$ and $B$ be $(d+1)$-element sets and suppose that they alternate in cyclic order. Choose integer lifts $a_0<b_0<a_1<b_1<\cdots<a_d<b_d<a_0+n$ of their $\mathbb Z/n\mathbb Z$-valued labels. Summing the inequalities $1\leq b_i-a_i\leq a_{i+1}-a_i-1$, where $a_{d+1}=a_0+n$, gives
\[d+1\leq\sum_{i=0}^d(b_i-a_i)\leq n-d-1.\]
The sum is congruent modulo $n$ to the difference of the residue classes of $B$ and $A$. Consequently, among the simplices in $\mathcal A_r\cup\cdots\cup\mathcal A_{r+d+1}$, where $0\leq r\leq n-d-2$, crossings occur only between the first and last classes. Writing $A$ for the set in the first class, equality holds on the left, so $B=A+1$. Conversely, every separated $A$ alternates with $A+1$. Thus the crossing pairs in this block are precisely $\{A,A+1\}$ with $A\in\mathcal A_r$.

Glue two copies of $P$ along their boundary to obtain a PL $2d$-sphere, and draw the whole $d$-skeleton of $\partial P$ on their common boundary. In the first copy, we draw the affine simplices from $\mathcal A_0\cup\cdots\cup\mathcal A_d$, and in the second we draw those from $\mathcal A_{d+2}\cup\cdots\cup\mathcal A_{2d+2}$. The proper faces of all these simplices already lie on the common boundary of the two copies of $P$, their relative interiors lie in their assigned open half, and the preceding observation shows that this drawing has no crossings. We may now add any selection of the simplices of $\mathcal A_{d+1}$ in the first copy and of $\mathcal A_{2d+3}$ in the second. Each added simplex crosses exactly one previously drawn simplex, its translate by $-1$, and no two added simplices cross. All these vertex sets are distinct. Removing a point outside the drawing gives the required PL drawing in $\mathbb R^{2d}$.

To obtain asymptotic density $a$, add exactly $\left\lfloor(a-d-2)\binom{n-d-2}{d}\right\rfloor$ of these simplices, which is possible throughout the stated interval, and call the resulting complex $X_n$. Since the nonseparated sets are precisely the boundary $d$-faces, the number of such faces is
\[f_d(\partial P)=\binom{n}{d+1}-\frac{n}{d+1}\binom{n-d-2}{d}=d\binom nd+O_d(n^{d-1}).\]
The initial drawing (i.e., the drawing without any elements of $\mathcal A_{d+1}$ or $\mathcal A_{2d+3}$) contains $2\binom{n-d-2}{d}$ further $d$-faces. It follows that $|X_n(d-1)|=\binom nd$, $|X_n(d)|=a\binom nd+O_d(n^{d-1})$, and $\Cr(X_n)\leq(a-d-2)\binom nd+O_d(n^{d-1})$.

Finally, the disjoint-union argument makes $a\mapsto a^{d+2}b_d(a)$ finite and convex, hence continuous on $(0,\infty)$. Taking disjoint copies of $X_n$ bounds its value at $|X_n(d)|/|X_n(d-1)|$ by $\Cr(X_n)/|X_n(d-1)|$. Letting $n$ tend to infinity proves the claim.
\end{proof}

For $d\neq2$, combining Proposition~\ref{prop:first-sparse-interval} with Corollary~\ref{coro:cr-d+2} gives
\[
b_d(a)=\frac{a-d-2}{a^{d+2}}\qquad\text{for }d+2\leq a\leq d+2+\frac{2}{d+1}.
\] This proves part 2) of Corollary~\ref{crossing_lemma}.

\subsection{Very dense complexes}

Our goal in this section is to prove Theorem~\ref{Moon_Wendel} and Corollary~\ref{double_count}. We begin by showing that the constant $B^\iota(\Delta_{(d)})$ exists. The proof of this fact is a straightforward generalization of the one for $d=1$ which can be found in~\cite{RichterThomassen1997}.

\begin{proof}[Proof that the constant $B^\iota(\Delta_{(d)})$ exists.]
For every $n$, let $f_n$ be a drawing of $\Delta_{(d)}^{n-1}$ which attains $\Cr_\iota(\Delta_{(d)}^{n-1})$ and write $p_n:=\mathbb P(f_n(\sigma)\cap f_n(\sigma')\neq \emptyset)$, where $\sigma,\sigma'$ is a random uniform vertex-disjoint pair of 
 $d$-faces in $\Delta_{(d)}^{n-1}$. Notice that \[p_{n+1}=\frac{1}{n+1}\sum_v \mathbb P(f_{n+1}(\sigma)\cap f_{n+1}(\sigma')\neq \emptyset| \sigma \cap  \sigma'=\emptyset, v \notin \sigma \cup \sigma')\geq p_n.\] If we could choose a drawing $f_{n+1}$ of $\Delta_{(d)}^{n}$ such that, for every vertex $v$, $f_{n+1}$ restricts to a drawing of $\Delta_{(d)}^{n}\setminus \{v\}$ that realizes $\Cr_\iota(\Delta_{(d)}^{n-1})$, then we would have equality. 

Clearly, $p_n\leq 1$ for every $n$, so the sequence $\{p_n\}_{n=1}^\infty$ converges to a non-negative constant. On the other hand, \[\Cr_\iota(\Delta^{n-1}_{(d)}) \frac{{\binom{n}{d}}^{(d+1)}}{{\binom{n}{d+1}}^{d+2}}=p_n \frac{1}{2}\frac{(d+1)^d}{d!}+o(1),\]
so $B^\iota(\Delta_{(d)})$ exists.
\end{proof}

The same proof shows that $B^\mathbb S(\Delta_{(d)})$, $B^\mathbb R(\Delta_{(d)})$ exist. The proof is also valid if we replace $\Delta_{(d)}$ with $[\infty]^{*(d+1)}$.

   For $\Cr$ we normalize the number of crossings instead: put $c_n:=\Cr(\Delta^{n-1}_{(d)})/N_n$, where $N_n=\frac12\binom{n}{d+1}\binom{n-d-1}{d+1}$. If all crossings of an optimal drawing are between vertex-disjoint faces, every crossing survives in exactly $n-2d-1$ of the $n+1$ vertex deletions, so $c_n\leq c_{n+1}$ (but a pair of disjoint faces can intersect multiple times). This holds for $d\geq3$ by Lemma~\ref{whitney_trick} and for $d=1$ by elementary redrawing, and $c_n\leq 2^{-2d-1}$ by the upper bound below. Hence $B(\Delta_{(d)})$ exists for $d\neq2$.
   
\begin{proof}[Proof of the bounds in Theorem~\ref{Moon_Wendel}]

The Van-Kampen theorem tells us that $\Cr_\iota(\Delta_{(d)}^{2d+2})=1$. A double counting argument gives
\[\Cr_\iota(\Delta_{(d)}^{n-1})\geq \frac{{\binom{n}{2d+3}}}{n-2d-2}= \frac{n^{2d+2}}{(2d+3)!}-O_d(n^{2d+1}),\] which implies that \[B^\iota(\Delta_{(d)})\geq \frac{(d+1)^{(d+2)}d!}{(2d+3)!}).\]

For the upper bound we use the probabilistic method. We show that for a certain random drawing 
$\phi \colon\Delta^{n-1}_{(d)} \to \mathbb S^{2d}$, the expected crossing number satisfies \[\mathbb E[\Cr_\iota(\phi)]={\binom{n}{d+1}}{\binom{n-(d+1)}{d+1}}\frac{1}{2^{2d+2}}.\]
This will conclude the proof, as there exists some deterministic map $\phi_0$ for which 
$\Cr_\iota(\phi_0) \leq \mathbb E[\Cr(\phi)]$, and
\[\frac{|\Delta^{n-1}_{(d)}(d-1)|^{(d+1)}}{|\Delta^{n-1}_{(d)}(d)|^{d+2}}={\binom{n}{d}}^{d+1}{\binom{n}{d+1}}^{-d-2}\sim_{n\to+\infty}\frac{(d+1)^{(d+1)}(d+1)!}{n^{2d+2}}.\]
Approximating terms of the form ${\binom nk}$ for $k$ fixed and $n$ large by $\frac{n^k}{k!}$ and then doing some simple calculations yields the claimed bound on the crossing constant.

The random drawing $\phi$ is defined as follows. The points representing the vertices are chosen independently at random from the uniform distribution on $\mathbb S^{2d}$ (in fact, any centrally symmetric distribution which is absolutely continuous with respect to the surface measure gives the same bound by the same argument) and, for every $d+1$ points $v_1,v_2, \ldots ,v_{d+1}$ that have been chosen, the simplex is drawn as the geodesic spherical simplex \[\sph(v_1,v_2\ldots v_{d+1}):=\cone(v_1,v_2, \ldots v_{d+1})\cap \mathbb S^{2d},\] where $\cone(u_1,\dots,u_k)$ is the positive cone $\cone(u_1,\dots,u_k):=\{\sum_{i=1}^{k} \lambda_i u_i: \lambda_i\geq 0\ \forall\ i\}$. It is enough to show that the probability that two random simplices intersect is $\frac{1}{2^{2d+1}}$. 

To see this, let $v_1,v_2, \ldots v_{d+1},u_1,u_2,\ldots u_{d+1}$ be chosen independently at random from the uniform measure on $\mathbb S^{2d}$. 
Put $V:=\sph(v_1,v_2, \ldots v_{d+1})$ and $U:=\sph(u_1,u_2, \ldots u_{d+1})$. 
Let \[a_1,a_2, \ldots a_{d+1}, b_1,b_2, \ldots ,b_{d+1}\] be arbitrary generic points on $\mathbb S^{2d}$. 
For each $i$, condition $v_i$ to be either $a_i$ or $-a_i$ with equal probability and $u_i$ to be either $b_i$ or $-b_i$ with equal probability. The set of spherical $d$-simplices with vertices in the set \[\{a_1,-a_1,a_2,-a_2, \ldots ,a_{d+1},-a_{d+1}\}\] 
spans an equatorial $d$-dimensional sphere that is triangulated with $2^{d+1}$ simplices. 
In fact, this triangulation is isomorphic to the $(d+1)$-fold join $(\mathbb S^0)^{*(d+1)}$.
A similar argument applies to the $(d+1)$-tuples of points in $\{b_1,-b_1,b_2,-b_2, \ldots b_{d+1},-b_{d+1}\}$. Here, by equatorial sphere we mean the intersection of the standard sphere with a linear subspace.
Two generic $(d+1)$-dimensional linear subspaces of $\mathbb R^{2d+1}$ intersect in a line, hence two $d$-dimensional equatorial spheres in $\mathbb S^{2d}$ intersect at exactly two antipodal points.  
It follows that, for a generic choice of $a_1,a_2, \ldots,a_{d+1}, b_1,b_2,\ldots b_{d+1}$,
\[\mathbb P[V \cap U \neq \emptyset| \forall i: v_i=\pm a_i \textrm{ and } u_i =\pm b_i]=\frac{2}{2^{2d+2}}=\frac{1}{2^{2d+1}}.\]

Now, if each $a_i$ and $b_i$ is independently sampled from the uniform measure on $\mathbb S^{2d}$, then
\[\mathbb P[V \cap U \neq\emptyset]= \mathbb E_{a_i,b_i}[\mathbb P[V \cap U \neq \emptyset| \forall i: v_i=\pm a_i \textrm{ and } u_i= \pm b_i]].\]
Since $a_1,a_2, \ldots,a_{d+1}, b_1,b_2,\ldots b_{d+1}$ are generic almost surely, we can conclude that
 \[\mathbb E[\Cr(\phi)]=\frac{1}{2}{\binom{n}{d+1}}{\binom{n-(d+1)}{d+1}} \frac{1}{2^{2d+1}},\] as promised.
 \end{proof}

This concludes the proof of Theorem~\ref{Moon_Wendel}.

\begin{proof}[Proof of Corollary~\ref{double_count}]
Flores' theorem yields $\Cr_\iota([3]^{*(d+1)})=1$. There are ${\binom n3}^{d+1} $ distinct copies of it, and every pair that crosses lies in exactly $(n-2)^{d+1}$, which yields at least $ \frac{{\binom n3}^{d+1}}{(n-2)^{d+1}}$ different crossing pairs. Double counting as in the previous theorem yields the lower bound.

Consider an optimal spherical drawing of $\Delta^{n(d+1)-1}_{(d)}$ and sample a uniform random subcomplex isomorphic to $[n]^{*(d+1)}$.
There are 
\[{\binom{(d+1)n}{n,\,n,\,\dots,\,n}}\] such subcomplexes, where this denotes the multinomial coefficient. 
For each choice of $2d+2$ vertices spanning a pair of crossing simplices, there are $(d+1)!$ ways to place the first $d+1$ vertices as a simplex of $[n]^{*(d+1)}$ and another $(d+1)!$ ways for the second simplex. Finally, once the $2d+2$ vertices have been chosen and distributed in the different factors of $[n]$ in the join, there are \[{\binom{(d+1)(n-2)}{n-2,\,n-2,\,n-2,\,\dots,\,n-2}} \]
ways to complete this into a copy of $[n]^{*(d+1)}$ inside
 of $\Delta^{n(d+1)-1}_{(d)}$. Double counting, we get the inequality
 \[\Cr_\mathbb S(\Delta^{(d+1)n-1}_{(d)})\geq R_n \Cr_\mathbb S([n]^{*(d+1)}), \]
where 
\[
R_n \;=\; 
\frac{{\binom{(d+1)n}{n,\,n,\,\dots,\,n}}}
     {\big((d+1)!\big)^2 \, {\binom{(d+1)(n-2)}{n-2,\,n-2,\,\dots,\,n-2}} }.
\]

Taking the limit, we obtain
\[\lim_{n\to\infty}R_n=\frac{(d+1)^{2(d+1)}}{((d+1)!)^2}.\]

The complete complex has $\binom{(d+1)n}{d}$ $(d-1)$-faces
and $\binom{(d+1)n}{d+1}$ $d$-faces, whereas the partite
complex has $(d+1)n^d$ $(d-1)$-faces and $n^{d+1}$ $d$-faces.
Hence,
\[B^\mathbb S(\Delta_{(d)})\geq
\frac{1}{(d+1)!}B^\mathbb S([\infty]^{*(d+1)}).\]
In particular, Theorem~\ref{Moon_Wendel} yields
\[B^\mathbb S([\infty]^{*(d+1)})\leq
(d+1)!\,B^\mathbb S(\Delta_{(d)})
\leq\left(\frac{d+1}{4}\right)^{d+1}.\]
\end{proof}

The same inequality holds for $\Cr_\iota$ with the same proof. 

\section{Antipodal drawings}

We begin by proving theorems~\ref{antipodal} and~\ref{thm:dense_Z2}.

\begin{lemma} For any two $\mathbb Z_2$-equivariant maps $f_1,f_2\colon \mathbb S^d \to \mathbb S^{2d}$,

$$ |f_1( \mathbb S^d)\cap f_2( \mathbb S^d)| \geq 2. $$
\end{lemma}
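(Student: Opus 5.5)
The plan is to use the $\mathbb Z_2$-equivariant intersection number modulo $2$, in Borsuk–Ulam style. Throughout, $f_1,f_2$ are generic (PL, transverse) equivariant maps, so $f_1(\mathbb S^d)\cap f_2(\mathbb S^d)$ is a finite set of transverse crossing points. This set is invariant under the antipodal map of $\mathbb S^{2d}$: if $f_1(x)=f_2(y)$ then $f_1(-x)=-f_1(x)=-f_2(y)=f_2(-y)$. The antipodal map has no fixed points, so the crossings come in antipodal pairs. It therefore suffices to show the intersection is nonempty, and for that I will show that the number of crossing pairs $\{x,y\}$ with $f_1(x)=f_2(y)$ is odd.

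\textbf{Step 1: invariance.} Consider the parity $I(f_1,f_2)$ of the number of such pairs, counted in $\mathbb S^d\times\mathbb S^d$. I will show $I$ is an invariant of equivariant homotopy. Take an equivariant generic homotopy $F_t$ from $f_1$ to some model map $g_1$, and similarly $G_t$ from $f_2$ to $g_2$. Such homotopies exist because equivariant maps $\mathbb S^d\to\mathbb S^{2d}$ are all equivariantly homotopic: $\mathbb S^{2d}$ is $(2d-1)$-connected and $d<2d$, so equivariant obstruction theory gives the homotopy. The set
\[
\{(x,y,t)\;:\;F_t(x)=G_t(y)\}\subset \mathbb S^d\times\mathbb S^d\times[0,1]
\]
is then a compact $1$-manifold whose boundary lies over $t=0$ and $t=1$. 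Hence the number of crossings at $t=0$ and at $t=1$ agree modulo $2$, and so do the numbers of antipodal pairs, since the free involution acts on the arcs. The key point is that the invariant must count pairs, i.e.\ work modulo the free involution, so that it is not trivially even.

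\textbf{Step 2: a model computation.} Take $g_1$ to be the inclusion of the equatorial sphere in a $(d+1)$-dimensional linear subspace $L_1\subset\mathbb R^{2d+1}$, and $g_2$ the analogous inclusion for a generic $(d+1)$-dimensional subspace $L_2$. As in the proof of Theorem~\ref{Moon_Wendel}, $L_1\cap L_2$ is a line, so the two equatorial spheres meet transversely in exactly two antipodal points. That is one pair, which is odd. By Step 1, any generic equivariant $f_1,f_2$ have an odd number of crossing pairs, hence at least one pair and thus at least $2$ points. For non-generic maps, I pass to the limit: if the images were disjoint, a small generic equivariant perturbation would remain disjoint, contradicting what was just shown.

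\textbf{Main obstacle.} The step I expect to be hardest is making Step 1 rigorous. This means checking two things: that the equivariant homotopy can be taken generic while preserving equivariance (I would perturb on a fundamental domain and extend by symmetry, which is possible because the action is free), and that the space of equivariant maps is connected. As an alternative to the latter, I could identify $I(f_1,f_2)$ with the evaluation of a $\mathbb Z_2$-cohomology class (the van Kampen/Smith class of the deleted product) pulled back from $\mathbb{RP}^{d}\times\mathbb{RP}^{d}$. That class is $w_1^{d}\times w_1^{d}$ up to identification, which is nonzero and gives the same conclusion.
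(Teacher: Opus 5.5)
Your argument is correct, but it takes a different route from the paper.

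\textbf{The paper's route.} The paper applies Borsuk--Ulam once, on the join $\mathbb S^d*\mathbb S^d\cong\mathbb S^{2d+1}$ with its free antipodal action. The map $F(tx+(1-t)y)=tf_1(x)-(1-t)f_2(y)$ into $\mathbb R^{2d+1}$ is odd, so it has a zero. Taking norms forces $t=\tfrac12$, hence $f_1(x)=f_2(y)$, and antipodal symmetry then doubles that crossing. This works for arbitrary continuous equivariant maps. It needs no general position, no homotopy classification and no model computation.

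\textbf{Your route.} You build an equivariant mod $2$ intersection number that counts crossings in antipodal pairs. Its invariance comes from quotienting the $1$-dimensional solution cobordism by the free involution; freeness rules out arcs mapped to themselves, so boundary orbits pair up. You then evaluate it on two generic equatorial spheres. In effect you reprove by hand the parity form of Borsuk--Ulam that the paper simply cites.

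What this costs is equivariant transversality, which is available because the action is free. It also needs the fact that any two equivariant maps $\mathbb S^d\to\mathbb S^{2d}$ are equivariantly homotopic, which you justify correctly since $d<2d$.

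What it buys is more information. Generic equivariant maps cross in an \emph{odd} number of antipodal pairs. It also makes transparent why the geodesic (linear) drawings used for the matching upper bound in Theorem~\ref{antipodal} are extremal.

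\textbf{Your cohomological alternative.} This is sound if you read it as follows. Antipodal pairs of solutions $(x,y)$ correspond bijectively to intersection points of the induced maps $\bar f_1,\bar f_2\colon\mathbb{RP}^d\to\mathbb{RP}^{2d}$. Each $\bar f_i$ is nontrivial on $H^1$, so it carries $[\mathbb{RP}^d]$ to the nonzero class of $H_d(\mathbb{RP}^{2d};\mathbb Z_2)$. The mod $2$ intersection number is therefore $\langle w_1^{2d},[\mathbb{RP}^{2d}]\rangle=1$, which is the $w_1^d\times w_1^d$ term you mention. This is probably the shortest way to package your argument. Calling it the van Kampen/Smith class of a deleted product is not quite the right description, however.
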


\begin{proof}
The join $\mathbb S^d*\mathbb S^d$ is homeomorphic to $\mathbb S^{2d+1}$ and $\mathbb Z_2$ acts on it by mapping $tx+(1-t)y \mapsto -tx-(1-t)y$. This action is free.
By composing with the usual embedding of $\mathbb S^{2d}$ into $\mathbb R^{2d+1}$, we may think of $f_1$ and $f_2$ as taking values in $\mathbb R^{2d+1}$. We define the map 
$F \colon \mathbb S^d*\mathbb S^d\cong \mathbb S^{2d+1} \to \mathbb R^{2d+1}$ by 
\[F(tx+(1-t)y):=tf_1(x)-(1-t)f_2(y).\]
This is a $\mathbb Z_2$-equivariant map, so it must have a zero by the Borsuk-Ulam theorem. 
Let $x$, $y$ and $t$ be such that $tf_1(x)=(1-t)f_2(y)$. Taking norms on both sides, we obtain $t=t |f_1(x)|=(1-t)|f_2(y)|=(1-t)$, hence $t=\frac{1}{2}$ and $f_1(x)=f_2(y)$. This is to say that $f_1( \mathbb S^d) \cap f_2( \mathbb S^d)\neq \emptyset$ and, since the maps are $\mathbb Z_2$-equivariant, intersections come in antipodal pairs.
\end{proof}

\begin{proof}[Proof of Theorem~\ref{antipodal}]
Fix a generic $\mathbb Z_2$-equivariant drawing $f$ of $(\mathbb S^0)^{*n}_{(d)}$. Consider two disjoint $\mathbb Z_2$-invariant subspheres $\Sigma_1,\Sigma_2$ of $(\mathbb S^0)^{*(n)}_{(d)}$, both of them of dimension $d$, so that we have the following combinatorial equivalences $\Sigma_1\cong \Sigma_2\cong (\mathbb S^0)^{*(d+1)} \cong \mathbb S^d $. The map $f$ restricts to two $\mathbb Z_2$-equivariant maps $f_1\colon \Sigma_1 \to \mathbb S^{2d}$ and $f_2\colon \Sigma_2  \to \mathbb S^{2d}$. By the previous lemma, this implies that there are at least two crossings between these spheres. Moreover, crossings induced by different pairs of spheres of this kind are distinct: each crossing occurs between two $d$-faces, and each of these faces determines its $d+1$ antipodal vertex pairs.
The number of pairs of disjoint $(\mathbb S^0)^{*(d+1)}_{(d)}$ factors is $\frac{1}{2}{\binom{n}{d+1}}{\binom{n-(d+1)}{d+1}}$. All in all we have at least ${\binom{n}{d+1}}{\binom{n-(d+1)}{d+1}}$ distinct crossings.
The argument for the partite complex is exactly the same, except that there are $\frac{1}{2}n^{d+1} (n-1)^{d+1}$ vertex-disjoint pairs of $(\mathbb S^0)^{*(d+1)}_{(d)}$ factors.
\end{proof}

As was mentioned in the introduction, the construction yielding the corresponding upper bound is immediate if we consider our pairs of antipodal points to be in general position and every face is drawn geodesically (i.e., as the intersection between the sphere and the positive cone spanned by its vertices). In this case, following the previous proof, we have that $f_1(\Sigma_1)$ and $f_2(\Sigma_2)$ are equatorial $d$-spheres of $\mathbb S^{2d}$ which intersect in exactly two points. Organizing the computation as we did above, one obtains the upper bounds: $\Cr^{\mathbb Z_2}((\mathbb S^0)^{*n}_{(d)})\leq {\binom{n}{d+1}}{\binom{n-(d+1)}{d+1}}$, $\Cr^{\mathbb Z_2}([2n]^{*(d+1)})\leq n^{d+1} (n-1)^{d+1}$.

By extending the above argument, we obtain also a proof of Theorem~\ref{thm:dense_Z2}. However, we do not expect this result to be tight.

\begin{proof}[Proof of Theorem~\ref{thm:dense_Z2}]
Again, every pair of vertex-disjoint $\mathbb Z_2$-invariant subspheres isomorphic to $(\mathbb S^0)^{*(d+1)}$ induces at least two crossings in any generic equivariant drawing. As before, crossings induced by different pairs of this kind are distinct.

Since the action is free, no face contains an antipodal vertex pair, so we can view $X_n$ as a subcomplex of $(\mathbb S^0)^{*n}_{(d)}$. For each of its $\binom{n}{d+1}$ coordinate subspheres isomorphic to $(\mathbb S^0)^{*(d+1)}$, count how many facets belong to $X_n$. This number is even. Let $A_1$ denote the number of subspheres such that all $2^{d+1}$ facets are contained in $X_n$, and $A_2$ the number such that at most $2^{d+1}-2$ facets are contained in $X_n$. Then $A_1+A_2=\binom{n}{d+1}$ and
\[2^{d+1}A_1+(2^{d+1}-2)A_2\geq|X_n(d)|=(\alpha+o(1))2^{d+1}\binom{n}{d+1}.\]
Consequently, $A_1\geq\left(\frac{\alpha2^{d+1}-(2^{d+1}-2)}{2}+o(1)\right)\binom{n}{d+1}$. Only $O_d(n^{2d+1})$ pairs of coordinate subspheres share a vertex, so the number of vertex-disjoint pairs fully contained in $X_n$ is at least $\binom{A_1}{2}-O_d(n^{2d+1})$. Thus
\[\Cr^{\mathbb Z_2}(X_n)\geq\left(\frac{(\alpha2^{d+1}-(2^{d+1}-2))^2}{4}-o(1)\right)\binom{n}{d+1}^{\!2}.\]

For the normalization, observe that each $(d-1)$-face is contained in at most $2(n-d)$ $d$-faces. Double counting incidences gives $|X_n(d-1)|\geq\frac{d+1}{2(n-d)}|X_n(d)|$, and hence
\[\Cr^{\mathbb Z_2}(X_n)\frac{|X_n(d-1)|^{d+1}}{|X_n(d)|^{d+2}}\geq\frac{\Cr^{\mathbb Z_2}(X_n)}{|X_n(d)|}\left(\frac{d+1}{2(n-d)}\right)^{d+1}.\]
Letting $n\to\infty$ proves the claim. \end{proof}

We also have the following corollary of Theorem~\ref{antipodal}.

\begin{proof}[Proof of Corollary~\ref{hill_conjecture}]
 Every image of every $(\mathbb S^0)^{*2}$-factor is a (not necessarily simple) curve that defines two open sets $A$ and $-A$ in the sphere, and for each vertex $v \in A$, $-v\in -A$. Thus, any arc joining $v$ to $-v$ will intersect $\partial A$ at least once; this is an intersection with the $(\mathbb S^0)^{(*2)}$-factor.
In the case of $2n$ vertices, in total we have ${\binom n2}{\binom{n-2}2}$ intersections coming from pairs of $(\mathbb S^0)^{(*2)}$ factors and $n {\binom{n-1}2}$ coming from edges joining antipodal points intersecting $(\mathbb S^0)^{*2}$ factors. In the case of $2n+1$ vertices, one of the vertices must stay unmatched; the computation is almost identical except that we now add another $n {\binom{n-1}2}$ crossings coming from intersections between $(\mathbb S^0)^{*2}$ factors and edges incident to the unmatched vertex. Moreover, the unmatched vertex and any two antipodal pairs form a $K_5$, which has at
least one crossing between independent edges that was not counted before. In total we have
\[\binom n2\binom{n-2}{2}+2n\binom{n-1}{2}+\binom n2=\frac{n^2(n-1)^2}{4}
\]
crossings, as desired.
\end{proof}

Again, drawing the edges as geodesic arcs makes the inequalities in the previous argument into equalities. 
However, we still need to deal with intersections between pairs of arcs joining antipodal points as we explained in the introduction.
To obtain the Harary-Hill bound, we need no new intersection to appear, and here the choice of antipodal arcs is important. 
It was first observed by Mohar that this is possible. Specifically, Lemma 7 in~\cite{bojan2020_hill_conjecture} claims that given a semicircle, one can construct an arbitrary number of pairwise-disjoint semicircles that are geometrically close to the starting one. We provide a detailed proof below.

In what follows, a semicircle is always contained in an equatorial circle (the intersection of a linear two-dimensional space with the unit sphere).
Consider a set of points $q_1,q_2,\ldots q_n$ near the point $(0,0,1)$ on a strictly convex curve on the hyperplane $z=1$ (e.g. on the curve $y=x^2,z=1$). 
For each $i$, the plane through the origin $q_i^\perp$, which is perpendicular to $q_i$, intersects the sphere in an equator $c_i=q_i^\perp \cap \mathbb S^2$. Assume that the points are ordered according to the $x$-coordinate, and orient the circles by projecting them to the $xy$-plane and giving this plane the counterclockwise orientation. 
On the front of the sphere (the set $y\geq 0$), we see that the circle $c_1$ intersects the other circles in the order $c_2,c_3,\ldots$. We define the semicircle $s_1\subset c_1$ to begin before $p_{12}=c_1\cap c_2$, and the semicircle $s_2$ just after $p_{12}$. 
Notice that this implies $s_1\cap s_2=\emptyset$. Similarly, once we have defined $s_{i-1}$, we define $s_{i}$ to begin between the intersection $c_{i-1}\cap c_{i}$ and the intersection $c_i\cap c_{i+1}$. This construction yields no intersection between semicircles and it can be made arbitrarily close to a starting semicircle. In this construction, it is fundamental that the starting set of circles is dual to a point set in convex position.

\section{Facewise spherical drawings}

Here, we study facewise spherical drawings of $d$-dimensional complexes on $\mathbb S^{2d}$.

\subsection{Drawing the complete complex}

The purpose of this section is to prove a lower bound on $\Cr_{\mathbb S^{2d}}(\Delta_{(d)}^{n-1})$ conditionally on Conjecture~\ref{gubt}. Our notation and definitions in this part are essentially borrowed from~\cite{streltsova2025spherical}. Let $V\subset \mathbb R^m$ be a set of $n$ vectors in linear general position. We say that a non-empty subset $U \subset V$ supports a \textit{dependency} if there exist nonzero numbers $\{\lambda_u: u \in U\}$ such that $\sum_{u\in U} \lambda_u u=0$; evidently, multiplying the coefficients by any nonzero real number gives another dependency on $U$. Consider two such dependencies $\lambda,\mu\in\mathbb R^U$. We say they are equivalent whenever their sets of negative coefficients coincide: $\{u:\lambda_u<0\}=\{u:\mu_u<0\}$. Notice that equivalence classes of dependencies come in pairs: if $\lambda$ is a dependency on $U$, then so is $-\lambda$, and they are not equivalent. 

Let $f^*_{s,t}(V)$ be the number of dependency sign patterns on $V$ with exactly $s$ nonzero coefficients and $t$ negative coefficients.

To see the relevance to crossing numbers of these concepts, let us set $m=2d+1$ and $s=m+1$.
Consider a set $U\subset V$ with $|U|=s=m+1=2d+2$. Since every $2d+1$ vectors of $U$ are linearly independent, its dependency space is one-dimensional, and $U$ gives rise to two classes of dependencies, with $t$ and $s-t$ negative scalars, respectively.

It follows that 
\[\sum_{t=0}^{2d+2} f^*_{2d+2,t}(V)=2 {\binom{n}{2d+2}}.\]
By the symmetry $f^*_{s,t}(V)=f^*_{s,s-t}(V)$, this can be rewritten as 
\[f^*_{2d+2,\leq d}(V)+\frac{1}{2}f^*_{2d+2,d+1}(V)= {\binom{n}{2d+2}},\] where \[f^*_{s,\leq t}(V):=\sum_{j=0}^t f^*_{s,j}(V).\]

Notice that in the case $t=d+1$, the sets $U_-:=\{u: \lambda_u<0\}$ and $U_+:=\{u: \lambda_u>0\}$ have $d+1$ vertices each. In this case the cones $\cone(U_-)$ and $\cone(U_+)$ intersect in a ray from the origin; equivalently, $\sph(U_+)\cap\sph(U_-)\neq\emptyset$. Hence, this pair of dependencies corresponds to one crossing. We record this as a lemma.

\begin{lemma}\label{crossing-dependency} Given $V \subset \mathbb S^{2d}\subset \mathbb R^{2d+1}$, a configuration of $n$ vectors in linear general position, let $g\colon \Delta_{(d)}^{n-1}\to \mathbb S^{2d}$ be the induced generic facewise spherical drawing. Then \[\Cr(g)=\frac{1}{2}f^*_{2d+2,d+1}(V)={\binom{n}{2d+2}}-f^*_{2d+2,\leq d}(V).\]
\end{lemma}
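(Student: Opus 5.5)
The plan is to set up a bijection between crossings of $g$ and unordered pairs of opposite dependency sign patterns of type $(2d+2,d+1)$. The second equality then follows from the identity $f^*_{2d+2,\leq d}(V)+\frac12 f^*_{2d+2,d+1}(V)=\binom{n}{2d+2}$ established just before the lemma.

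\emph{Step 1: localize crossings to $(2d+2)$-sets.} Since the drawing is facewise spherical and $V$ is in linear general position, two faces sharing a vertex meet only along their common face. This holds because any $2d+1$ vectors are linearly independent, so the union of their vertex sets (at most $2d+1$ vectors) spans cones meeting only in the cone of the common vertices. The same argument shows that no single face self-intersects. So every crossing is between vertex-disjoint $d$-faces $\sigma,\tau$, whose union $U$ has exactly $2d+2$ elements. Moreover, $\sph(\sigma)\cap\sph(\tau)$ contains at most one point. Indeed, a common point $x$ gives $x=\sum_{u\in\sigma}\alpha_u u=\sum_{w\in\tau}\beta_w w$ with $\alpha,\beta\geq0$, which is a dependency on $U$. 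That dependency is unique up to scaling because the dependency space of $U$ is one-dimensional. Hence $x$ is determined up to positive scaling, so $x$ is unique on the sphere.

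\emph{Step 2: characterize when a partition crosses.} Fix $U$ with $|U|=2d+2$, and let $\lambda$ be its dependency. General position forces every $\lambda_u\neq0$, since a zero coefficient would give a dependency among $2d+1$ vectors. I claim that a split $U=\sigma\sqcup\tau$ into two $(d+1)$-sets crosses exactly when $\{\sigma,\tau\}=\{U_+,U_-\}$. If they cross, Step 1 yields a dependency that is positive on $\sigma$ and negative on $\tau$. The coefficients are nonzero, and the nonzero vector $x$ is not $0$, so this dependency is $\pm\lambda$, which forces $\{\sigma,\tau\}=\{U_+,U_-\}$. Conversely, if $|U_+|=|U_-|=d+1$, then $\sum_{U_+}\lambda_u u=\sum_{U_-}(-\lambda_u)u$ is a common nonzero vector of both cones. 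It is nonzero because the vectors of $U_+$ are independent. Normalizing it gives a point of $\sph(U_+)\cap\sph(U_-)$.

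\emph{Step 3: count.} Each $U$ therefore contributes exactly one crossing if its dependency has $d+1$ negative coefficients, and none otherwise. Such a $U$ contributes exactly two sign patterns, $\lambda$ and $-\lambda$, both with $t=d+1$, and these are counted in $f^*_{2d+2,d+1}(V)$. This gives $\Cr(g)=\frac12 f^*_{2d+2,d+1}(V)$. Substituting into the identity above finishes the proof.

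The only delicate point is the genericity bookkeeping in Step 1: showing that faces sharing vertices, and individual faces, contribute no crossings. Here I would rely on the fact that any $2d+1$ vectors of $V$ are linearly independent, so every relevant union of vertex sets spans a simplicial cone.
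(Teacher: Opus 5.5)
Your proof is correct and follows the same route as the paper. In both, each $(2d+2)$-set $U$ has a one-dimensional dependency space giving two opposite sign classes, and $U$ yields exactly one crossing precisely when $|U_+|=|U_-|=d+1$; this is then combined with the identity $f^*_{2d+2,\leq d}(V)+\frac12 f^*_{2d+2,d+1}(V)=\binom{n}{2d+2}$. Your write-up is more complete than the paper's brief remark: you also check that every crossing arises from such a balanced split, and that incident faces or single faces produce no crossings.
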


Given a labelled configuration of $n$ vectors $V \subset \mathbb R^m$, we abuse notation and denote also by $V$ the matrix that has these vectors as columns. Two labelled vector configurations $V \subset \mathbb R^m$, $W\subset\mathbb R^{n-m}$, with $|V|=|W|=n$, are \emph{Gale dual} if $\operatorname{rank}V=m$, $\operatorname{rank}W=n-m$, and $VW^T=0$. Equivalently, the rows of $W$ form a basis of $\ker V$. In this case we denote $W$ by $V^*$. Notice that this is only defined up to an invertible linear transformation (if $W$ is Gale dual of $V$, it is also Gale dual of $AV$ for a linear isomorphism $A$, and similarly $BW$ is Gale dual of $V$ for a linear isomorphism $B$). We denote by $V^*$ any Gale dual of $V$ (the properties that concern us do not depend on this choice).

The following observation is from~\cite{welzl2001entering} and its relevance to crossing numbers was observed in~\cite{wagner2003ksets}.

\begin{lemma}\label{lem:gale-welzl} For any rank-$m$ vector configuration $V$ in linear general position on $n>m$ labelled vectors and any Gale dual $V^*$,
\[f^*_{s,t}(V)=f_{n-s,t}(V^*).\]
\end{lemma}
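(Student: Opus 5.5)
We want to show $f^*_{s,t}(V)=f_{n-s,t}(V^*)$: dependency sign patterns of $V$ with $s$ nonzero and $t$ negative coefficients correspond to faces of the hemisphere arrangement $\mathcal A(V^*)$ of dimension $(n-m-1)-(n-s)$ and level $t$. The plan is to build an explicit bijection through the standard Gale duality fact: the dependencies of $V$ are exactly the vectors in the row space of $V^*$. Since the rows of $V^*$ form a basis of $\ker V$, a vector $\lambda\in\mathbb R^n$ satisfies $\sum_i\lambda_i v_i=0$ if and only if $\lambda=(V^*)^Tx$ for a unique $x\in\mathbb R^{n-m}$. In coordinates, $\lambda_i=\langle x,v_i^*\rangle$, where $v_i^*$ is the $i$-th column of $V^*$.

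The key steps are as follows.

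\emph{Step 1 (sign vectors).} A dependency supported exactly on $U$ with negative set $N$ corresponds to a nonzero $x$ with $\langle x,v_i^*\rangle=0$ for $i\notin U$, $\langle x,v_i^*\rangle<0$ for $i\in N$, and $\langle x,v_i^*\rangle>0$ for the remaining $i\in U$. After rescaling $x$ to $\mathbb S^{n-m-1}$, equivalence classes of dependencies (which are defined by the negative set on a fixed support) become exactly the sign vectors of points of $\mathbb S^{n-m-1}$ with respect to $\mathcal A(V^*)$. These are precisely the faces of the arrangement, since each face is determined by its sign vector. Positive rescaling does not change the sign vector, so the map is well defined. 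It is injective because distinct faces have distinct sign vectors, and surjective by construction.

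\emph{Step 2 (counting parameters).} The number of zero coordinates is $n-s$. The number of negative coordinates is $t$, which is the level. So the sign pattern counted in $f^*_{s,t}(V)$ matches a face of the arrangement in $\mathbb S^{n-m-1}$ with $n-s$ vanishing constraints and level $t$.

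\emph{Step 3 (general position and dimension).} We need $V^*$ to be in linear general position, so that a face with $n-s$ zero constraints has dimension $(n-m-1)-(n-s)$ as the definition of $f_{\cdot,\cdot}$ assumes. The plan is to use the standard fact that the minors of $V^*$ are, up to sign and a common nonzero factor, the complementary minors of $V$. Hence every $n-m$ columns of $V^*$ are independent if and only if every $m$ columns of $V$ are. In particular, if $n-s\ge n-m$, that is $s\le m$, both sides are zero: there is no nonzero $x$ orthogonal to $n-m$ independent vectors, and no dependency among at most $m$ vectors of $V$.

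The main obstacle is Step 3, because $f_{n-s,t}$ is defined in terms of face dimension and that requires $V^*$ to be in general position. I would first try the complementary-minor identity, proved via the Plücker relations between $\ker V$ and the row space. A more elementary alternative: if some $n-m$ columns of $V^*$ were dependent, then there would be a nonzero $x$ with $(V^*)^Tx$ vanishing on those coordinates. That gives a nonzero dependency of $V$ supported on the complementary $m$ vectors, which contradicts general position of $V$. This alternative argument is cleaner, and I would use it. Everything else is bookkeeping that follows from the identity $\ker V=\operatorname{row}(V^*)$.
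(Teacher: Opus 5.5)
Your proof is correct and complete. Writing each dependency of $V$ as $(V^*)^{T}x$ makes the two sets of sign vectors literally identical, with no sign flip, so support size and negative count match $n-s$ zeros and level $t$. Your elementary argument that every $n-m$ columns of $V^*$ are independent (otherwise some nonzero $x$ gives a dependency among $m$ vectors of $V$) supplies the one non-bookkeeping step. That step is genuinely needed, since the paper defines $\mathcal A(\cdot)$ and the dimension count behind $f_{s,t}$ only for configurations in linear general position.

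The paper does not prove this lemma itself; it attributes it to Welzl. Your argument is the standard Gale-duality reasoning behind that citation.
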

 
We use the arrangement formulation of the Upper Bound Theorem and its extension to sublevels, Conjecture~\ref{gubt}, stated in the introduction. Recall that $V_{n,m}$ is a rank-$m$ cyclic vector configuration, corresponding after positive rescaling to affine dimension $m-1$.

The case $m=n-3$ of this arrangement conjecture (equivalently, the statement about dependency patterns of rank-three vector configurations) was verified in~\cite{streltsova2025spherical}. Together with the previous two lemmas it implies the Hill conjecture for spherical drawings of the complete graph. For the number of vertices at a sublevel, Wagner proved the conjectured bound up to a factor of $4$ in arbitrary rank.

Let $\gamma(t):=(1,t,t^2,\ldots,t^{2d})\in\mathbb R^{2d+1}$. For reals $t_1<t_2<\cdots<t_m$, consider the \emph{alternating moment configuration}
\[W_m:=\bigl\{w_i:=(-1)^i\gamma(t_i)/\|\gamma(t_i)\| : 1\leq i\leq m\bigr\}\subset\mathbb S^{2d}.\]
By the Vandermonde determinant, it is in linear general position and, for a generic choice of the $t_i$, the induced facewise spherical drawing $g_m\colon\Delta^{m-1}_{(d)}\to\mathbb S^{2d}$ is generic.

\begin{proposition}\label{prop:alternating}
For all $d\geq1$ and $m\geq 2d+2$,
\[\Cr(g_m)=\binom{\lfloor m/2\rfloor}{d+1}\binom{\lfloor(m-1)/2\rfloor}{d+1}.\]
Moreover, $\Cr(g_m)=\binom{m}{2d+2}-f_{m-2d-2,\leq d}(V_{m,m-2d-1})$. Consequently,
\[\Cr_{\mathbb S^{2d}}(\Delta^{m-1}_{(d)})\leq\binom{\lfloor m/2\rfloor}{d+1}\binom{\lfloor(m-1)/2\rfloor}{d+1},\]
with equality if Conjecture~\ref{gubt} holds.
\end{proposition}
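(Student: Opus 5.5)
The plan is to count crossings of $g_m$ directly through sign patterns of linear dependencies, using Lemma~\ref{crossing-dependency}. The second formula will then come from Gale duality, and the equality statement from Conjecture~\ref{gubt}.

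\textbf{Sign patterns.} Fix $U=\{i_1<\cdots<i_{2d+2}\}\subset[m]$. The vectors $\gamma(t_{i_1}),\ldots,\gamma(t_{i_{2d+2}})$ lie on the moment curve in $\mathbb R^{2d+1}$, so their unique dependency is given by Cramer's rule with Vandermonde minors. Hence its coefficients alternate in sign along the sorted order, i.e.\ the sign of $\lambda_k$ is $(-1)^k$ up to a global sign. Replacing $\gamma(t_i)$ by $w_i=(-1)^i\gamma(t_i)/\|\gamma(t_i)\|$ multiplies the $k$-th coefficient by $(-1)^{i_k}$ times a positive scalar. So the coefficient of $w_{i_k}$ has sign $(-1)^{i_k+k}$, up to a global sign.

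By Lemma~\ref{crossing-dependency}, $U$ contributes a crossing iff exactly $d+1$ of the indices $k$ have $i_k-k$ even. Writing $g_k:=i_k-k$, the number of unchosen elements below $i_k$, this is a purely combinatorial count. It ranges over weakly increasing sequences $0\le g_1\le\cdots\le g_{2d+2}\le m-2d-2$ with exactly $d+1$ even entries.

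\textbf{The count.} Such a sequence is a pair of multisets: $d+1$ values from the even numbers in $[0,m-2d-2]$, and $d+1$ from the odd numbers there. Sorting the union recovers the sequence uniquely, because the even and odd parts never tie. Let $E$ and $O$ be the numbers of even and odd values in $[0,m-2d-2]$. The count is then $\binom{E+d}{d+1}\binom{O+d}{d+1}$. One checks $E+d=\lfloor m/2\rfloor$ and $O+d=\lfloor (m-1)/2\rfloor$: for $m-2d-2=M$ we have $E=\lfloor M/2\rfloor+1$ and $O=\lceil M/2\rceil$. This gives the closed formula. I expect the main care to be needed exactly here, in verifying the sign bookkeeping (including the normalisation by $\|\gamma\|$ and the global sign) and the parity bijection. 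If the bijection has a subtlety, I would fall back on induction on $m$, counting the new crossings that contain $w_m$ with a two-state parity recursion.

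\textbf{Gale duality and equality.} It is classical that a Gale dual of $\{\gamma(t_i)\}$ is, up to positive rescaling of columns, the alternating-sign moment configuration in rank $m-2d-1$. So a Gale dual of $W_m$ (where the signs $(-1)^i$ cancel) is a rescaling of the cyclic configuration $V_{m,m-2d-1}$. Positive rescaling preserves all sign data. Lemma~\ref{crossing-dependency} and Lemma~\ref{lem:gale-welzl} (with $s=2d+2$, $n-s=m-2d-2$) then give
\[
\Cr(g_m)=\binom{m}{2d+2}-f^*_{2d+2,\le d}(W_m)=\binom{m}{2d+2}-f_{m-2d-2,\le d}(V_{m,m-2d-1}).
\]
For equality under Conjecture~\ref{gubt}, take any generic facewise spherical drawing, given by $V\subset\mathbb S^{2d}$ in linear general position. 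Its Gale dual $V^*$ has rank $m-2d-1$ and, by genericity, is also in linear general position. The conjecture applies with $s=m-2d-2\le (m-2d-1)-1$ and $t=d\le\frac{m-(m-2d-1)-1}{2}=d$, so $f_{m-2d-2,\le d}(V^*)\le f_{m-2d-2,\le d}(V_{m,m-2d-1})$. Plugging this into Lemma~\ref{crossing-dependency} shows that every spherical drawing has at least $\Cr(g_m)$ crossings.
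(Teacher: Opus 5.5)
Your proposal is correct and follows essentially the same route as the paper. The shared steps are: the alternating signs of the moment-curve dependency (you use Cramer's rule with Vandermonde minors, the paper uses divided differences), the parity bijection $i_k\mapsto i_k-k$ counted as a pair of even and odd multisets, Gale duality of $W_m$ with $V_{m,m-2d-1}$ through Lemmas~\ref{crossing-dependency} and~\ref{lem:gale-welzl}, and Conjecture~\ref{gubt} at $t=d$ for equality. The only other difference is that you cite the Gale dual of the moment configuration as classical, whereas the paper verifies it through $\sum_i c_i t_i^a=0$ for $0\le a\le m-2$, with $c_i=\prod_{j\ne i}(t_i-t_j)^{-1}$; the leftover global sign $(-1)^m$ is harmless, since a Gale dual may be replaced by its negative.
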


\begin{proof}
Let $i_0<\cdots<i_{2d+1}$ be indices in $[m]$. The divided difference of order $2d+1$ annihilates every polynomial of degree at most $2d$, so the vectors $\gamma(t_{i_0}),\ldots,\gamma(t_{i_{2d+1}})$ satisfy the dependency with coefficients $\prod_{k\neq j}(t_{i_j}-t_{i_k})^{-1}$, whose sign is $(-1)^{j+1}$. Thus the dependency among $w_{i_0},\ldots,w_{i_{2d+1}}$ has signs $(-1)^{i_j+j+1}$. By Lemma~\ref{crossing-dependency}, these $2d+2$ vertices span a crossing pair exactly when $i_j-j$ is even for exactly $d+1$ indices $j$.

Put $k_j:=i_j-j$. This is a bijection between $(2d+2)$-subsets of $[m]$ and nondecreasing sequences of length $2d+2$ in $\{1,\ldots,m-2d-1\}$. A nondecreasing sequence with exactly $d+1$ even terms is determined by its multiset of $d+1$ even values and its multiset of $d+1$ odd values. There are $\lfloor(m-2d-1)/2\rfloor$ even and $\lceil(m-2d-1)/2\rceil$ odd values available, so
\[\Cr(g_m)=\binom{\lfloor(m-2d-1)/2\rfloor+d}{d+1}\binom{\lceil(m-2d-1)/2\rceil+d}{d+1}=\binom{\lfloor(m-1)/2\rfloor}{d+1}\binom{\lfloor m/2\rfloor}{d+1}.\]

For the second identity, put $c_i:=\prod_{j\neq i}(t_i-t_j)^{-1}$, so that $\operatorname{sign}c_i=(-1)^{m-i}$. The identity $\sum_i c_it_i^{a}=0$ for $0\leq a\leq m-2$ shows that $\{c_i\gamma(t_i)\}$ is a Gale dual of $\{(1,t_i,\ldots,t_i^{m-2d-2})\}$, and the latter has the same sign patterns as $V_{m,m-2d-1}$. Up to positive rescaling and a global sign, which affect neither $f^*$ nor $f$, the configuration $\{c_i\gamma(t_i)\}$ is $W_m$. Lemmas~\ref{crossing-dependency} and~\ref{lem:gale-welzl} imply
\[\Cr(g_m)=\binom{m}{2d+2}-f^*_{2d+2,\leq d}(W_m)=\binom{m}{2d+2}-f_{m-2d-2,\leq d}(V_{m,m-2d-1}).\]

Finally, let $V\subset\mathbb S^{2d}$ induce any generic facewise spherical drawing of $\Delta^{m-1}_{(d)}$. By the same two lemmas, its crossing number equals $\binom{m}{2d+2}-f_{m-2d-2,\leq d}(V^*)$, where $V^*$ has rank $m-2d-1$ and is in linear general position. Conjecture~\ref{gubt}, applied with $t=d=(m-(m-2d-1)-1)/2$, gives $f_{m-2d-2,\leq d}(V^*)\leq f_{m-2d-2,\leq d}(V_{m,m-2d-1})$. Hence the crossing number is at least $\Cr(g_m)$.
\end{proof}

\begin{proof}[Proof of Proposition~\ref{prop:lew-spherical}]
Assuming Conjecture~\ref{gubt}, Proposition~\ref{prop:alternating} gives \[\Cr_{\mathbb S^{2d}}(\Delta_{(d)}^{m-1})\sim m^{2d+2}/(2^{2d+2}((d+1)!)^2)\] as $m\to\infty$. Since $\Delta_{(d)}^{m-1}$ has $\binom md$ faces of dimension $d-1$ and $\binom m{d+1}$ faces of dimension $d$, we obtain
\[B^{\mathbb S}(\Delta_{(d)})=\lim_{m\to\infty}\Cr_{\mathbb S^{2d}}(\Delta_{(d)}^{m-1})\frac{\binom md^{d+1}}{\binom m{d+1}^{d+2}}=\frac{(d+1)^d}{2^{2d+2}d!}.\]
\end{proof}

\subsection{Geometric crossing numbers of the random simplicial complex}

Our goal in this section is to prove Theorem~\ref{thm:rectlinear_random_complex}.
Recall that the Linial-Meshulam random complex $\Delta(n,p)$ is the $d$-dimensional simplicial complex on $n$
labeled vertices whose $(d-1)$-skeleton is complete and in which each
$d$-face is included independently with probability $p$.
Let us start by proving that the random complex $\Delta(n,p)$ is asymptotically ``uniform-like''. This fact is standard, but we have not found it explicitly stated anywhere in the form that we require.

\begin{lemma}\label{lem:uniformity}
Let $k> d\geq 1$ be fixed, and consider the random $d$-dimensional simplicial complex $X\sim \Delta_d(n,p)$ with vertex set $[n]$. Suppose that
$p n^d \rightarrow \infty .$
Then, with high probability as $n\to\infty$, the following holds uniformly
over all pairwise disjoint sets \[U_0,U_1,\dots,U_d\subseteq [n]\quad\text{with}\quad |U_i|\geq \frac{n}{k}.\] Namely, \[ F_X(U_0,\dots,U_d)=p\prod_{i=0}^d |U_i|\,(1+o(1)),\] where \[F_X(U_0,\dots,U_d):=\left|\Big\{\{u_0,\dots,u_d\}\in X(d):u_i\in U_i \text{ for every } i=0,\dots,d\Big\}\right|.\]
The $o(1)$ term is uniform over all such choices of $U_0,\dots,U_d$.
\end{lemma}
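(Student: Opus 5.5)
The plan is a Chernoff bound combined with a union bound over all admissible tuples $(U_0,\dots,U_d)$. Fix pairwise disjoint sets $U_0,\dots,U_d$ with $|U_i|\geq n/k$. The $d$-faces $\{u_0,\dots,u_d\}$ with $u_i\in U_i$ are distinct $(d+1)$-sets, because the $U_i$ are disjoint. Hence $F_X(U_0,\dots,U_d)$ is a binomial random variable $\mathrm{Bin}(M,p)$ with $M=\prod_i|U_i|\geq (n/k)^{d+1}$. Its mean is $\mu=pM\geq p n^{d+1}/k^{d+1}$. The Chernoff bound gives
\[
\mathbb P\bigl(|F_X-\mu|>\delta\mu\bigr)\leq 2\exp(-\delta^2\mu/3)
\]
for any fixed $\delta\in(0,1)$.

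For the union bound, note that the number of tuples of disjoint subsets (including unordered variants) is at most $(d+2)^n$: each vertex either lies in one of the $U_i$ or in none. So the failure probability is at most
\[
2(d+2)^n\exp\bigl(-\delta^2 p n^{d+1}/(3k^{d+1})\bigr)=2\exp\bigl(n\log(d+2)-c_{\delta,k,d}\, pn^{d}\cdot n\bigr).
\]
Since $pn^d\to\infty$, eventually $c\,pn^d>2\log(d+2)$, so the bound is $\exp(-\Omega(n))\to 0$. This handles any fixed $\delta$.

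To get a genuine $o(1)$ uniform in the $U_i$, let $\delta=\delta_n\to0$ slowly. A concrete choice is $\delta_n=(pn^d)^{-1/3}$, which tends to $0$. With it,
\[
\delta_n^2\mu\geq k^{-(d+1)}(pn^d)^{1/3}\,n,
\]
which still dominates $n\log(d+2)$. Hence with high probability $|F_X-\mu|\leq\delta_n\mu$ simultaneously for all admissible tuples, which is exactly the claim.

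The key point is that the exponent in the Chernoff bound is $pn^{d+1}$, which is $n$ times the quantity $pn^d\to\infty$. This extra factor of $n$ is just enough to beat the $e^{O(n)}$ count of choices of the sets. That is why the hypothesis $pn^d\to\infty$, rather than the stronger $pn^d\gg\log n$, suffices. The one thing to verify carefully is that the exponent in the count of tuples is linear in $n$, with no extra $\log n$ factor. Counting via labelings $[n]\to\{\text{none},0,\dots,d\}$ gives exactly $(d+2)^n$, so this holds.
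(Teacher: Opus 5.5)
Your proof is correct and follows essentially the same route as the paper: a Chernoff bound for the binomial count $F_X(U_0,\dots,U_d)$ with a tolerance tending to zero, combined with a union bound over the $e^{O(n)}$ admissible tuples, which works because the Chernoff exponent is $n$ times a quantity tending to infinity. The differences are only cosmetic: the paper takes $\varepsilon_n=(pn^d)^{-1/4}$ and bounds the number of tuples by $2^{(d+1)n}$, whereas you take $\delta_n=(pn^d)^{-1/3}$ and bound it by $(d+2)^n$.
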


\begin{proof}
Fix pairwise disjoint sets \[U_0,\dots,U_d\subseteq [n]\quad\text{with}\quad |U_i|\geq \frac{n}{k}.\] Every choice of vertices $u_i\in U_i$, $i=0,\dots,d$, determines a potential $d$-face $\{u_0,\dots,u_d\}.$ Since the sets $U_0,\dots,U_d$ are pairwise disjoint, all these faces are distinct. Therefore \[F_X(U_0,\dots,U_d)\sim\operatorname{Bin}\left(\prod_{i=0}^d |U_i|,p\right).\] In particular,\[\mu(U_0,\dots,U_d):=\mathbb E F_X(U_0,\dots,U_d)=p\prod_{i=0}^d |U_i|.\]
By the lower bound on the sizes of the \(U_i\)'s,\[\mu(U_0,\dots,U_d)\geq p\left(\frac{n}{k}\right)^{d+1}.\]

We now prove concentration uniformly over all admissible choices of
$U_0,\dots,U_d$. Let $\varepsilon_n>0$ be defined by \[\varepsilon_n := (p n^d)^{-1/4}.\]
Since $p n^d\to\infty$, we have $\varepsilon_n=o(1)$. By the Chernoff bound, for fixed $U_0,\dots,U_d$, \[\mathbb P\left(\left|F_X(U_0,\dots,U_d)-\mu(U_0,\dots,U_d)\right|>\varepsilon_n \mu(U_0,\dots,U_d)\right)\leq2\exp\left(-c\varepsilon_n^2\mu(U_0,\dots,U_d)\right)\] for some absolute constant $c>0$. Using the lower bound on $\mu$, this is at most \[2\exp\left(-c\varepsilon_n^2p\left(\frac{n}{k}\right)^{d+1}\right).\]
We have that \[\varepsilon_n^2 p n^{d+1}=(p n^d)^{-1/2} p n^{d+1}=n(p n^d)^{1/2}.\] Since $p n^d\to\infty$, we have \[\varepsilon_n^2 p n^{d+1}\gg n.\]

There are at most $2^{(d+1)n}$ possible choices of
$(U_0,\dots,U_d)$. Hence, by the union bound, the probability that the
desired estimate fails for at least one admissible $(d+1)$-tuple is at most \[2^{(d+1)n}\cdot2\exp\left(-c\varepsilon_n^2 p\left(\frac{n}{k}\right)^{d+1}\right).\]
Because \[\varepsilon_n^2 p n^{d+1}\gg n,\] this quantity tends to $0$. Therefore, with high probability, for every admissible pairwise disjoint collection $U_0,\dots,U_d$, \[\left|F_X(U_0,\dots,U_d)-p\prod_{i=0}^d |U_i|\right|\leq\varepsilon_n p\prod_{i=0}^d |U_i|.\]
Since $\varepsilon_n=o(1)$, this gives\[F_X(U_0,\dots,U_d)=p\prod_{i=0}^d |U_i|\,(1+o(1)),\] uniformly over all admissible choices of $U_0,\dots,U_d$.
\end{proof}

Given a $d$-dimensional simplicial complex on $n$ vertices and a point set
$V \subset \mathbb{R}^{2d}$ with $|V|=n$, a map $f \colon X_{(0)}\to V$ extends
uniquely to a facewise affine map of the whole complex, which we denote also by $f$.

Two $d$-faces $\sigma,\tau$ in a facewise affine drawing cross if and only if the $2d+2$ vertices of $\sigma\cup\tau$ are distinct and their Radon partition is $(\sigma,\tau)$. Since the Radon partition of a $(2d+2)$-tuple is determined by the orientations of all $(2d+1)$-element subsets, the crossing number
$\Cr_{\mathbb{R}^{2d}}(f)$ depends on $f \colon [n]\to V$ only through its \emph{order type} (chirotope) $\chi$ (the relevant definitions can be found, for example, in~\cite{goodman_order_types}).

Given $k$ disjoint subsets $V_1,\dots,V_k \subset V$, a transversal for the tuple $(V_1,\dots,V_k)$ is any tuple $(v_1,\dots,v_k)$ of elements of $V$ such that
$v_i \in V_i$ for all i. We say that the $k$-tuple of parts $(V_1, \dots, V_k)$ has \textit{same-type transversals} if all of its transversals have the same order type. The main ingredient in our proof of Theorem~\ref{thm:rectlinear_random_complex} is the following direct consequence of the semi-algebraic regularity lemma of Fox, Pach and Suk~\cite{fox_semialgebraic}.

\begin{theorem}\label{thm:semi_algebraic_regularity}
For every $0<\eta<1/2$, there exists an integer $M=M(d,\eta)$ such that every sufficiently large finite point set $V\subset\mathbb R^{2d}$ in general position admits an equitable partition $V=V_1\cup\cdots\cup V_K$, where $\max\{2d+2,\eta^{-1}\}\leq K\leq M$, such that all but at most $\eta\binom K{2d+2}$ choices of $2d+2$ distinct parts have same-type transversals. Here, an equitable partition is one whose part sizes differ by at most one.
\end{theorem}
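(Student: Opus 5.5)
The plan is to derive Theorem~\ref{thm:semi_algebraic_regularity} from the semi-algebraic regularity lemma of Fox, Pach and Suk~\cite{fox_semialgebraic}. That lemma handles a point set $P\subset\mathbb R^D$ together with a semi-algebraic $k$-ary relation $E$ of bounded description complexity. It produces an equitable partition into $K\le M(\varepsilon)$ parts such that all but an $\varepsilon$-fraction of the $k$-tuples of parts are homogeneous: either every transversal lies in $E$, or none does.

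Order types of $2d+2$ points in $\mathbb R^{2d}$ are determined by orientations of $(2d+1)$-subsets. I would therefore apply the lemma to the finitely many relations $E_{\pi}$, with $k=2d+1$, $D=2d$. Here $E_{\pi}(x_1,\dots,x_{2d+1})$ says that $\det\begin{pmatrix}1&\cdots&1\\ x_{\pi(1)}&\cdots&x_{\pi(2d+1)}\end{pmatrix}>0$. By antisymmetry, the single relation with $\pi=\mathrm{id}$ suffices, provided homogeneity is required for all orderings of the parts. Each such relation is defined by one polynomial of degree $2d+1$, so the complexity is bounded in terms of $d$ alone.

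The multi-relation version is obtained by iterated refinement: apply the lemma once, then refine each part equitably. Alternatively, use the version of~\cite{fox_semialgebraic} that handles several relations simultaneously. I would simply invoke their lemma for a single relation, since one relation suffices.

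Next, choose $\varepsilon$ small in terms of $\eta$ and $d$. Call a $(2d+1)$-tuple of parts bad if it is not homogeneous. A $(2d+2)$-tuple of parts fails to have same-type transversals only if one of its $2d+2$ sub-$(2d+1)$-tuples is bad. If every sub-tuple is homogeneous, then every transversal has identical orientations on all $(2d+1)$-subsets, hence the same order type. A bad $(2d+1)$-tuple lies in at most $K$ tuples of size $2d+2$. So the number of bad $(2d+2)$-tuples is at most $\varepsilon (2d+1)!\binom{K}{2d+1}\cdot K$, up to counting ordered versus unordered tuples. This quantity is at most $\eta\binom{K}{2d+2}$ once $\varepsilon\le c_d\,\eta$, provided $K\ge 2d+2$ is large enough.

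The lower bound $K\ge\max\{2d+2,\eta^{-1}\}$ is ensured by refining any partition equitably into at least that many parts. Homogeneity is preserved under refinement of parts, but the fraction of bad tuples is not obviously preserved. The cleanest route is to use the fact that the Fox--Pach--Suk lemma allows a prescribed minimum number of parts. If that is unavailable, I would instead refine each part into $r$ equal pieces. A bad tuple of refined parts projects to a bad tuple of coarse parts, or to a tuple with repeated coarse parts. The fraction of tuples with repeated coarse parts is $O(1/K_0)$, which is small if $K_0$ is large from the start.

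General position guarantees that no determinant vanishes, so every transversal has a well-defined order type, and "sufficiently large" absorbs the equitable rounding. I expect the main obstacle to be this bookkeeping: passing from homogeneity of $(2d+1)$-tuples of parts to $(2d+2)$-tuples, and arranging the lower bound on $K$ without losing control of the exceptional fraction. The geometry itself is routine.
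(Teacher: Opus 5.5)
Your proposal is correct and takes the same route as the paper. The paper gives no separate argument and simply presents the statement as a direct consequence of the Fox--Pach--Suk semi-algebraic regularity lemma. Your reduction to the $(2d+1)$-ary orientation predicate, and your count passing from bad $(2d+1)$-sets of parts to bad $(2d+2)$-sets, are exactly the details it leaves implicit.

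The one weak spot is your fallback for the lower bound on $K$. It is circular, because it already needs $K_0$ to be large, so that bound has to come from the regularity lemma itself. For the orientation predicate, every tuple of parts with a repeated part is non-homogeneous, since swapping two of its points flips the sign. So if such tuples are counted among the $K^{2d+1}$ tuples, the guarantee alone forces $K\geq 1/\varepsilon$.
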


\begin{proof}[Proof of Theorem~\ref{thm:rectlinear_random_complex}]
We first prove the lower bound. Fix a rectilinear drawing of $X$ given by a bijection $f:[n]\to V$, where
\[V=\{v_1,\dots,v_n\}\subset \mathbb R^{2d}\]
is the corresponding vertex set in affine general position. Suppose that $f$ and $V$ are such that $\mathrm{cr}_{\mathbb R^{2d}}(f)=\mathrm{cr}_{\mathbb R^{2d}}(X)$. Apply Theorem~\ref{thm:semi_algebraic_regularity} to $V$, with parameter $\eta>0$, obtaining an equitable partition
\[V=V_1\cup\cdots\cup V_K.
\]
Here $K$ is bounded above by $M=M(d,\eta)$. We say that a collection of $2d+2$ distinct parts of this partition is \textit{same-type} if the parts have same-type transversals, and we say that it is \textit{non-same-type} otherwise.

Call a crossing pair of $d$-faces in the complete complex \textit{bad} if either two of its $2d+2$ vertices lie in the same part $V_i$, or the $2d+2$ parts it uses form a non-same-type tuple.

The number of crossing pairs using some part twice is at most
\[O_d\left(\frac{n^{2d+2}}{K}\right).\]
The number of crossing pairs whose corresponding $2d+2$ parts form a non-same-type tuple is at most
\[O_d(\eta n^{2d+2}).\]
Thus, if $\mathcal T$ denotes the collection of unordered pairs
\[(I,J),\qquad I,J\subseteq [K], \quad |I|=|J|=d+1,\quad I\cap J=\varnothing,\]
such that $I\cup J$ is same-type and every \(I\)-transversal \(d\)-simplex crosses every \(J\)-transversal \(d\)-simplex, then
\[\sum_{(I,J)\in\mathcal T}\prod_{i\in I\cup J} |V_i|\geq\operatorname{cr}_{\mathbb R^{2d}}(\Delta_{(d)}^{n-1})-O_d\left(\frac{n^{2d+2}}{K}\right)-O_d(\eta n^{2d+2}).\]

Now we apply Lemma~\ref{lem:uniformity}. Since $K$ is fixed and the partition is equitable, every part has size
\[|V_i|=(1+o(1))\frac nK.\]
Because $p n^d\to\infty$, with high probability, uniformly over all disjoint $(d+1)$-tuples of parts,
\[F_X(V_{i_0},\dots,V_{i_d})=p\prod_{r=0}^d |V_{i_r}|(1+o(1)).
\]

Fix $(I,J)\in\mathcal T$. Every $d$-face of $X$ using one vertex from each part $V_i$ with $i\in I$ crosses every $d$-face of $X$ using one vertex from each part $V_j$ with $j\in J$. Therefore, this pair $(I,J)$ contributes \[F_X((V_i)_{i\in I})\, F_X((V_j)_{j\in J})=p^2\prod_{i\in I\cup J}|V_i|\,(1+o(1))\]
crossings of $X$. Summing over $(I,J)\in\mathcal T$, we obtain
\[\operatorname{cr}_{\mathbb R^{2d}}(X)=\operatorname{cr}_{\mathbb R^{2d}}(f)\geq p^2\sum_{(I,J)\in\mathcal T}\prod_{i\in I\cup J}|V_i|\,(1+o(1)).\]
Hence \[\operatorname{cr}_{\mathbb R^{2d}}(X)\geq p^2\operatorname{cr}_{\mathbb R^{2d}}(\Delta_{(d)}^{n-1})-O_d\left(\frac{p^2 n^{2d+2}}{K}\right)-O_d(\eta p^2 n^{2d+2})-o(p^2 n^{2d+2}).\] 
Letting $\eta\to0$, and then $K=K(d,\eta)\to\infty$ accordingly, we get
\[\operatorname{cr}_{\mathbb R^{2d}}(X)\geq p^2\operatorname{cr}_{\mathbb R^{2d}}(\Delta_{(d)}^{n-1})-o_{\mathbb P}(p^2 n^{2d+2}).\]

It remains to prove the upper bound. Take an optimal rectilinear drawing of the complete complex $\Delta_{(d)}^{n-1}$, so that the number of crossing pairs of $d$-faces is exactly $\operatorname{cr}_{\mathbb R^{2d}}(\Delta_{(d)}^{n-1}).$
Draw $X$ on the same vertex set by mapping the vertices of $X$ in a fixed manner, independently of the randomness involved in the construction of the complex. Let $\mathcal C$ be the set of crossing pairs of $d$-faces in this drawing of $\Delta_{(d)}^{n-1}$. We have \[|\mathcal C|=\operatorname{cr}_{\mathbb R^{2d}}(\Delta_{(d)}^{n-1}).\] For $c=\{\sigma,\tau\}\in\mathcal C$, let $I_c$ be the indicator that
both $\sigma$ and $\tau$ belong to $X$. The number of crossings in this fixed drawing of $X$ is \[Z:=\sum_{c\in\mathcal C} I_c.\] Hence\[\mathbb E Z=p^2 \operatorname{cr}_{\mathbb R^{2d}}(\Delta_{(d)}^{n-1}).\]

We claim that \[Z=p^2 \operatorname{cr}_{\mathbb R^{2d}}(\Delta_{(d)}^{n-1})+o_{\mathbb P}(p^2 n^{2d+2}),\] where $o_{\mathbb P}\bigl(p^2 n^{2d+2}\bigr)$ denotes an error term which goes to $0$ in probability after being divided by $p^2n^{2d+2}$.
Indeed, \[\operatorname{Var}(Z)\leq O_d(p^2 n^{2d+2})+O_d(p^3 n^{3d+3}).\] The first term comes from the variances of the individual indicators. For the covariance term, two indicators can be dependent only if their two crossing pairs share a $d$-face. There are $O_d(n^{d+1})$ choices for the shared $d$-face, and then $O_d(n^{d+1})$ choices for each of the two other $d$-faces, giving $O_d(n^{3d+3})$ dependent pairs, each with covariance at most $O(p^3)$.

Since $p n^d\to\infty$, we have that
\[\frac{p^2 n^{2d+2}}{p^4 n^{4d+4}}=o(1)\quad\text{and}\quad \frac{p^3 n^{3d+3}}{p^4 n^{4d+4}}=o(1).\]
Hence, \[\operatorname{Var}(Z)=o(p^4 n^{4d+4}).\]
By Chebyshev's inequality, \[Z=p^2 \operatorname{cr}_{\mathbb R{2d}}(\Delta_{(d)}^{n-1})+o_{\mathbb P}(p^2 n^{2d+2}).\]
Since $\operatorname{cr}_{\mathbb R^{2d}}(X)\leq Z$ for every realization, we get \[\operatorname{cr}_{\mathbb R^{2d}}(X)\leq p^2 \operatorname{cr}_{\mathbb R^{2d}}(\Delta_{(d)}^{n-1})+o_{\mathbb P}(p^2 n^{2d+2}).\]

Combining the lower and upper bounds gives
\[\operatorname{cr}_{\mathbb R^{2d}}(X)=p^2\operatorname{cr}_{\mathbb R^{2d}}(\Delta_{(d)}^{n-1})+o_{\mathbb P}(p^2 n^{2d+2}).\]

\end{proof}

The same argument applies to facewise spherical drawings. For vertices in $\mathbb S^{2d}\subset\mathbb R^{2d+1}$, replace affine order types by the signs of the determinants of $(2d+1)$ vertex vectors. These signs determine the signed dependence of every $(2d+2)$-tuple and hence its crossing pair, as in Lemma~\ref{crossing-dependency}. Since these determinant conditions are semi-algebraic of complexity bounded in terms of $d$, the same regularity argument applies. Each pair of independent $d$-faces crosses at most once, so the counting and variance estimates remain unchanged. We therefore obtain the following spherical version of Theorem~\ref{thm:rectlinear_random_complex}.

\begin{theorem}\label{thm:spherical_random_complex}
Fix \(d\geq 1\) and let \(X\sim \Delta_d(n,p)\).
If\[p n^d\rightarrow \infty,\] then \[\operatorname{cr}_{\mathbb S^{2d}}(X)=p^2\operatorname{cr}_{\mathbb S^{2d}}(\Delta_{(d)}^{n-1})+o_{\mathbb P}\bigl(p^2 n^{2d+2}\bigr),\] where $o_{\mathbb P}\bigl(p^2 n^{2d+2}\bigr)$ denotes an error term which, when divided by $p^2n^{2d+2}$, goes to $0$ in probability.

\end{theorem}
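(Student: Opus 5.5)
The plan is to repeat the proof of Theorem~\ref{thm:rectlinear_random_complex} verbatim, replacing affine order types by their linear (spherical) analogues. The argument has three parts: an upper bound by random sparsification of a fixed optimal drawing, a lower bound via semi-algebraic regularity plus the uniformity Lemma~\ref{lem:uniformity}, and a check that every combinatorial input has a spherical counterpart.

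For the upper bound, I fix a generic optimal facewise spherical drawing of $\Delta_{(d)}^{n-1}$, given by $n$ vectors $V\subset\mathbb S^{2d}$ in linear general position. I then draw $X$ on the same vertices, chosen independently of the randomness. By Lemma~\ref{crossing-dependency}, each pair of vertex-disjoint spherical $d$-simplices meets at most once. Faces sharing vertices do not cross, since any $2d+1$ of the vectors are linearly independent. Hence the number of crossings is exactly $Z=\sum_{c\in\mathcal C}I_c$, with $\mathbb E Z=p^2\Cr_{\mathbb S^{2d}}(\Delta^{n-1}_{(d)})$. The variance bound $O_d(p^2n^{2d+2}+p^3n^{3d+3})=o(p^4n^{4d+4})$ holds because $pn^d\to\infty$, and Chebyshev then gives the upper bound.

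For the lower bound, I take an optimal spherical drawing of $X$ with vertex vectors $V$. I need a spherical version of Theorem~\ref{thm:semi_algebraic_regularity}: an equitable partition $V_1\cup\dots\cup V_K$ in which all but $\eta\binom K{2d+2}$ of the $(2d+2)$-tuples of parts have transversals with the same \emph{linear chirotope}. Here the chirotope means the signs of all $(2d+1)\times(2d+1)$ determinants. This follows from the Fox--Pach--Suk regularity lemma, because the relations $\operatorname{sign}\det(x_1,\dots,x_{2d+1})$ are semi-algebraic of complexity bounded in $d$. By Cramer's rule, these signs determine the sign vector of the unique dependency of any $2d+2$ vectors. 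Hence, by Lemma~\ref{crossing-dependency}, they determine which pair of $(d+1)$-subsets crosses. On a same-type tuple, either every $I$-transversal simplex crosses every $J$-transversal simplex, or none does.

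The remaining counting is identical to the rectilinear case. Crossing pairs of the complete complex that use a part twice, or that use a non-same-type tuple, number $O_d(n^{2d+2}/K+\eta n^{2d+2})$. By Lemma~\ref{lem:uniformity}, each good pair $(I,J)$ contributes $p^2\prod|V_i|(1+o(1))$ crossings to $X$, with high probability and uniformly in the choice of parts. The same vectors $V$ also give a spherical drawing of $\Delta^{n-1}_{(d)}$, whose crossing count is at least $\Cr_{\mathbb S^{2d}}(\Delta^{n-1}_{(d)})$. This yields $\Cr_{\mathbb S^{2d}}(X)\ge p^2\Cr_{\mathbb S^{2d}}(\Delta^{n-1}_{(d)})-O(p^2n^{2d+2}(\eta+1/K))-o(p^2n^{2d+2})$. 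Letting $\eta\to0$ finishes the proof.

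The main obstacle is the regularity step. One must verify that the Fox--Pach--Suk lemma applies to points on the sphere with determinant-sign relations; it does, since it holds for any bounded-complexity semi-algebraic relations in fixed dimension. One must also confirm that same-type tuples force all-or-nothing crossing behaviour, which is exactly the dependency-sign argument above. A minor point is that the optimal vertex set depends on the realization of $X$. This is handled because the uniformity in Lemma~\ref{lem:uniformity} holds simultaneously over all partitions.
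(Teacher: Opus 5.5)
Your proposal is correct and follows the paper's argument. The paper likewise reruns the proof of Theorem~\ref{thm:rectlinear_random_complex} with four changes:
\begin{itemize}
\item affine order types are replaced by the signs of the $(2d+1)\times(2d+1)$ determinants of the vertex vectors;
\item these signs determine the signed dependency of each $(2d+2)$-tuple, and hence its crossing pair, as in Lemma~\ref{crossing-dependency};
\item the regularity lemma is applied to these bounded-complexity semi-algebraic relations;
\item independent faces cross at most once, so the counting and variance estimates carry over unchanged.
\end{itemize}
You state these same points in more detail (Cramer's rule, the all-or-nothing crossing on same-type tuples, and the uniformity of Lemma~\ref{lem:uniformity} over data-dependent partitions); the paper gives them in a single paragraph.
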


\section{The midrange}\label{limits}

\begin{proof}[Proof of Lemma~\ref{lem:dense_seeds}]

Fix a sufficiently large $a$, and choose $n$ such that
\[
a_n\leq a<a_{n+1}.
\]
For every positive integer $k$, take $k$ disjoint copies of $X_{n+1}$, drawn so that different copies do not intersect. The resulting complex has $ks_{n+1}$ $(d-1)$-faces, $kt_{n+1}$ $d$-faces, and at most $k\Cr_\iota(X_{n+1})$ crossing pairs.

Since $a\geq a_n$,
\[
b_d(a)
\leq
\Cr_\iota(X_{n+1})\frac{s_{n+1}^{d+1}}{t_{n+1}^{d+2}}
\left(\frac{a_{n+1}}{a_n}\right)^{d+2}.
\]
Taking the $\limsup$ as $a\to\infty$ and using
$a_{n+1}/a_n\to1$ proves the claim.
\end{proof}

\begin{proof}[Proof of Lemma~\ref{midrange_sampling}]
Fix $\varepsilon\in(0,1)$. We may assume that $C\geq1$, since increasing $C$ weakens the hypotheses. Firstly, for every positive $a$ and every $d$-complex $Z$, such that $|Z(d)|\geq a|Z(d-1)|$, we have
\[
\Cr_\iota(Z)\geq b_d(a)a^{d+2}|Z(d-1)|.
\]
To see this, consider $k$ disjoint copies of $Z$. By definition, $\kappa_d(k|Z(d-1)|,ak|Z(d-1)|)$ are bounded from above by $k\Cr_\iota(Z)$. Dividing this inequality by $a^{d+2}k|Z(d-1)|$ and taking the limit when $k\to \infty$ yields the claimed inequality. Let $\delta=\varepsilon/(100(d+2))$ and choose $a_0$ large enough that $b_d(a_0)\geq(1-\varepsilon/10)B_d$. Set $A=(1+4\delta)a_0$ and choose $N>A$. Notice that the lower bound on $A$ depends only on $d,\varepsilon$ and $C$.

Assume that $X$ satisfies the hypothesis of the lemma, put $s=|X(d-1)|$, $t=|X(d)|$, and as in the proof of the crossing lemma, let $Y$ be a random induced subcomplex in which every vertex is included independently at random with probability $p=\frac{As}{t}$.
As in the crossing lemma, we have 
\[\mathbb E[|Y(d-1)|]=p^ds,\]
\[\mathbb E[|Y(d)|]=p^{d+1}t=A p^ds,\] 
\[\mathbb E[\Cr_\iota(Y)]\leq p^{2d+2}\Cr_\iota(X).\]
For a face $\tau$, let $\deg_r(\tau)$ be the number
of $r$-faces containing it. Two distinct
$(d-1)$-faces whose intersection has $j$ vertices
have covariance at most $p^{2d-j}$, whereas disjoint
faces have covariance zero. Moreover,
$\deg_{d-1}(\tau)\leq |X(d-j-1)|$ when $|\tau|=j$,
and
$\sum_{\tau\in X(j-1)}\deg_{d-1}(\tau)=\binom dj s$.
Hence, 
\[
\var(|Y(d-1)|)
\leq p^ds+s\sum_{j=1}^{d-1}\binom dj |X(d-j-1)|p^{2d-j}.
\]
By assumption, $|X(d-j-1)|\leq Cs(s/t)^j$. Therefore
\[
\frac{\var(|Y(d-1)|)}{(p^ds)^2}\leq \frac1{p^ds}+C\sum_{j=1}^{d-1}\binom dj A^{-j}\leq \frac1{p^ds}+\frac{2^dC}{A}.
\]
Similarly,
\[
\var(|Y(d)|)\leq p^{d+1}t+t\sum_{j=1}^{d}\binom{d+1}{j}|X(d-j)|p^{2d+2-j},
\]
so 
\[
\frac{\var(|Y(d)|)}{(p^{d+1}t)^2}\leq \frac1{p^{d+1}t}+\frac{2^{d+1}C}{A}.
\]

By the Kruskal--Katona theorem, there exists a constant $c_d>0$ such that
$t\leq c_ds^{(d+1)/d}$. Therefore
$p^ds=A^ds^{d+1}/t^d\geq c_d^{-d}A^d$ and
$p^{d+1}t\geq c_d^{-d}A^{d+1}$.
For $A$ large enough, both normalized variances are at most
$\varepsilon\delta^2/20$.
Applying Chebyshev's inequality to both normalized variance bounds and taking a union bound we conclude that the event 
\[E:=\left\{\bigl||Y(d-1)|-p^ds\bigr|\leq\delta p^ds \ \textrm{ and }\  \bigl||Y(d)|-p^{d+1}t\bigr|\leq\delta p^{d+1}t\right\}\] 
occurs with probability at least $1-\varepsilon/10$.
On $E$, $|Y(d-1)|\geq(1-\delta)p^ds$ and
\[
\frac{|Y(d)|}{|Y(d-1)|}\geq A\frac{1-\delta}{1+\delta}\geq a_0.
\]
Hence 
\[\mathbb E[\Cr_\iota(Y)]\geq \mathbb P(E)b_d(a_0)a_0^{d+2}(1-\delta)p^ds\geq\frac{(1-\varepsilon/10)^2(1-\delta)} {(1+4\delta)^{d+2}} B_d A^{d+2}p^ds.
\]
Since $\delta=\varepsilon/(100(d+2))$, $\mathbb E[\Cr_\iota(Y)]\leq
p^{2d+2}\Cr_\iota(X)$ and $p=As/t$, we obtain
\[
\Cr_\iota(X)\frac{s^{d+1}}{t^{d+2}}>(1-\varepsilon)B_d,
\]
as required.
\end{proof}

Before presenting the proof of Lemma~\ref{midrange_blowup}, we recall that throughout $q$ is some number satisfying $d< q<d+1$. We are going to work with $q$-equitable complexes, and recall that the functions $\kappa_d'$ and $b_d'$ are defined using this $q$ which is fixed through the proof.

We need a generalization of the blow-up operation from graph theory to simplicial complexes. Specifically, if $X$ is a simplicial complex, the $m$-\emph{blow-up} of $X$ is the simplicial complex, denoted by $X\centerdot m$, in which each vertex is replaced by $m$ vertices, and each simplex of dimension $i$ is replaced by a copy of the complete $(i+1)$-partite complex, with $m$ vertices on each part. In other words, each $i$-simplex is replaced with the iterated join $[m]^{*(i+1)}$. Notice that the blow-up comes with a simplicial map $\varphi\colon X\centerdot m\to X$ in which the fiber of each vertex in $X$ is the set of $m$ vertices it got replaced by in $X\centerdot m$.

\begin{proof}[Proof of Lemma~\ref{midrange_blowup}]
Let $\varepsilon>0$ and choose $\eta\in(0,1)$ with $(1+\eta)(1+2\eta)^{d+2}<1+\varepsilon$. Recall that $B_d'\geq B_d>0$. Moreover, retaining a sufficiently small fixed proportion of the $d$-faces of complete complexes, depending on $C$ and $d$, while keeping their entire $(d-1)$-skeleta, gives complexes satisfying the lower-face inequalities and, since $q>d$, the equitability condition at sufficiently large densities. Taking disjoint unions of these complexes and using the upper bound from Theorem~\ref{Moon_Wendel} then yields $B_d'<\infty$. Since $B'_d=\limsup_{a\to\infty}b'_d(a)$ (where the limit is taken over rational $a$) and $q<d+1$, we can choose a rational $a_0$ large enough that $b'_d(a_0)\leq(1+\eta/4)B'_d$ and $(d+1)^{q+1}a_0^{q-d-1}\leq(\eta/4)B'_d$. Then, choose $s_0$ so that $a_0s_0$ is an integer and
\[\kappa'_d(s_0,a_0s_0)\leq(1+\eta/4)^2B'_d\,s_0a_0^{d+2}.\]

Let $X$ be a complex realizing $\kappa'_d(s_0,a_0s_0)$. Thus $X$ is $q$-equitable, $|X(d-1)|=s_0$, $|X(d)|=t_0:=a_0s_0$, and $\Cr_\iota(X)=\kappa'_d(s_0,a_0s_0)$.
We first bound $\Cr_\iota(X\centerdot m)$. Let $f\colon X\to\mathbb R^{2d}$ be a generic PL map with $\Cr_\iota(f)=\Cr_\iota(X)$. Choose a common subdivision on which $f\circ\varphi$ is linear, and let $g\colon X\centerdot m\to\mathbb R^{2d}$ be a sufficiently small generic PL perturbation of this map. Since disjoint compact sets have positive distance, the perturbation preserves the disjointness of every pair of lifted faces whose images under $f\circ\varphi$ are disjoint. Hence, a pair of facets of $X\centerdot m$ can intersect only if its image under $\varphi$ is an intersecting independent pair of $f$, or a pair of $d$-faces of $X$ sharing a vertex (the faces are allowed to coincide). Each such base pair has at most $m^{2d+2}$ lifted pairs, and at most $\sum_v\deg_d(v)^2$ pairs share a vertex. Since $X$ is $q$-equitable and $\sum_v\deg_d(v)=(d+1)t_0$,
\[
\begin{aligned}
\Cr_\iota(X\centerdot m)&\leq m^{2d+2}\Bigl(\Cr_\iota(X)+((d+1)a_0)^q\sum_{v\in X(0)}\deg_d(v)\Bigr)=m^{2d+2}\bigl(\Cr_\iota(X)+(d+1)^{q+1}a_0^{q+1}s_0\bigr)\\
&\leq\bigl((1+\eta/4)^2+\eta/4\bigr)B'_d\,s_0a_0^{d+2}m^{2d+2}\leq(1+\eta)B'_d\,s_0a_0^{d+2}m^{2d+2}.
\end{aligned}
\]

Let $s,t$ be as in the statement, and set 
\[m=\left\lceil(1+\eta)\frac{t}{a_0s}\right\rceil,\qquad k=\left\lfloor\frac{s}{s_0m^d}\right\rfloor.
\]
As $t/s$ and $s^{d+1}/t^d$ tend to infinity, both $m$ and $k$ tend to infinity. Thus, for sufficiently large $N$, we have that $1+\eta\leq ma_0s/t\leq1+2\eta$, $k\geq1/\eta$, and all terms of the form $O(k^{-1})$ below are smaller than $\eta$. Draw $k$ disjoint copies of $X\centerdot m$ in disjoint balls, and
denote their union by $Y$. Then
\[|Y(d-1)|=ks_0m^d\leq s,\qquad\frac{|Y(d)|}{t}=\frac{ks_0m^d}{s}\frac{ma_0s}{t}\geq\frac{k}{k+1}(1+\eta)\geq1.
\]

Delete $|Y(d)|-t$ faces of dimension $d$, retaining the entire $(d-1)$-skeleton. To supply the missing $s-ks_0m^d<s_0m^d$ faces of dimension $d-1$, take a complex $P$ generated by exactly that many distinct $d$-element subsets of a new vertex set. These subsets can be chosen on at most $\lceil(d!s_0)^{1/d}m\rceil+d-1=O(m)$ vertices. Consequently, $|P(i)|=O(m^{i+1})$ for $0\leq i\leq d-1$, and $P$ has no $d$-faces. Add $P$ in a separate ball and denote
the resulting complex by $Z$. We have $|Z(d-1)|=s$, $|Z(d)|=t$, and $\Cr_\iota(Z)\leq k\Cr_\iota(X\centerdot m)$.

We claim that $Z$ is admissible, since deletion does not increase $d$-degrees, and the new vertices have $d$-degree zero. Since $X$ is $q$-equitable and $q>d$,
\[\frac{\max_v\deg_d^Z(v)}{((d+1)t/s)^q}\leq m^{d-q}\left(\frac{ma_0s}{t}\right)^q\leq(1+2\eta)^qm^{d-q}=o(1).
\]
Thus $Z$ is $q$-equitable for sufficiently large $N$. Moreover, $|Z(i)|=km^{i+1}|X(i)|+|P(i)|$ for $0\leq i\leq d-1$, so the lower-face inequalities for $X$ give
\[
\frac{|Z(i-1)|}{|Z(i)|}\leq\frac{1+O(k^{-1})}{m}\frac{|X(i-1)|}{|X(i)|}<\frac{1+O(k^{-1})}{1+\eta}\,C\frac{s}{t}<C\frac{s}{t} .
\] for all $1\leq i\leq d-1.$
The case $i=0$, with $|Z(-1)|=1$, follows from \[\frac{|Z(-1)|}{|Z(0)|}\leq\frac{1}{km|X(0)|}=o(s/t).\]
All implicit constants depend only on $X$ and $d$.

We can therefore apply the definition of $\kappa'_d$ to $Z$. Using the preceding estimate
\[\Cr_\iota(X\centerdot m)\leq(1+\eta)B'_d s_0a_0^{d+2}m^{2d+2},\] we obtain
\[
\begin{aligned}
\kappa'_d(s,t)\frac{s^{d+1}}{t^{d+2}}
&\leq k\Cr_\iota(X\centerdot m)\frac{s^{d+1}}{t^{d+2}}\\
&\leq(1+\eta)B'_d\frac{ks_0m^d}{s}\left(\frac{ma_0s}{t}\right)^{d+2}\\
&\leq(1+\eta)(1+2\eta)^{d+2}B'_d<(1+\varepsilon)B'_d.
\end{aligned}
\]
\end{proof}

\section{Random geometric complexes}
\label{sec:random_geometric_complexes}

The basic idea of this section is similar to the one appearing in~\cite{czabarka2019some}, but we execute it in higher dimensions. We now introduce random geometric complexes on the round sphere. Fix $d\geq1$ and let $\mathcal A\subseteq(\mathbb S^{2d})^{d+1}$ be a Borel set which projects onto each factor, is invariant under permutations of the factors, and is invariant under the diagonal action of $\operatorname{SO}(2d+1)$. For $0\leq k\leq d$, define $\mathcal A_k$ to be its projection onto any $k+1$ factors (which we assume is Borel too).

Choose $n$ points $X_1,\ldots,X_n$ independently and uniformly on $\mathbb S^{2d}$. The associated \emph{random geometric complex} $X_n=X(n,\mathcal A)$ has $\{i_0,\ldots,i_k\}$ as a face precisely when $(X_{i_0},\ldots,X_{i_k})\in\mathcal A_k$. The projection condition makes this a simplicial complex: every subtuple of an admissible tuple is admissible. Here, an admissible lower-dimensional face need not extend to a $d$-face among the sampled vertices. Every face is drawn as the geodesic spherical simplex spanned by its vertices, and we denote this drawing by $f_n$.

Let $\nu_d$ denote spherical $d$-volume and let $s_d=\nu_d(\mathbb S^d)$. For independent uniform points $X_0,\ldots,X_d$, write $\sigma:=\sph(X_0,\ldots,X_d)$ and put
\[
P_k:=\mathbb P\bigl((X_0,\ldots,X_k)\in\mathcal A_k\bigr),\qquad q_d(\sigma):=\frac{\nu_d(\sigma)}{s_d},\qquad m_d:=\mathbb E\bigl[q_d(\sigma)\mathbf1_{\{(X_0,\ldots,X_d)\in\mathcal A\}}\bigr].
\]
 Thus $m_d$ is the expected normalized volume, counted as zero when the simplex is not admitted. We use the following form of Crofton's formula.

\begin{lemma}\label{lem:crofton_random_geometric}
Let $\mu$ be a rotation-invariant probability measure on nondegenerate geodesic $d$-simplices in $\mathbb S^{2d}$ so that sampling from $\mu$ almost surely results in a simplex contained in an open hemisphere. If $\sigma,\sigma'$ are independent with law $\mu$, then
\[
\mathbb P(\sigma\cap\sigma'\neq\emptyset)=2\left(\frac{\mathbb E_\mu[\nu_d(\sigma)]}{s_d}\right)^2.
\]
\end{lemma}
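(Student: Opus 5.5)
We will prove the lemma with the kinematic (Poincaré–Crofton) formula on the round sphere $\mathbb S^{2d}$, applied to two $d$-dimensional sets in complementary dimension. The reduction goes as follows. Since $\mu$ is rotation invariant, we may write $\sigma'=g\tau$, where $\tau\sim\mu$ is independent of $\sigma$ and $g$ is Haar-distributed in $\operatorname{SO}(2d+1)$ and independent of both. By Fubini, it then suffices to show that for fixed nondegenerate geodesic $d$-simplices $\sigma,\tau$, each contained in an open hemisphere,
\[
\mathbb P_g(\sigma\cap g\tau\neq\emptyset)=\frac{2\,\nu_d(\sigma)\,\nu_d(\tau)}{s_d^2}.
\]
Integrating this over $\sigma,\tau$ and using independence gives $2(\mathbb E_\mu[\nu_d]/s_d)^2$.

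For the fixed-pair identity we first use the fact that, almost surely in $g$, $\sigma$ and $g\tau$ meet transversally in finitely many points. Each simplex lies in an equatorial $d$-sphere, and two generic equatorial $d$-spheres meet in exactly two antipodal points. Since $\sigma$ lies in an open hemisphere, it cannot contain both points of an antipodal pair, so $|\sigma\cap g\tau|\in\{0,1\}$ almost surely. The probability therefore equals $\mathbb E_g|\sigma\cap g\tau|$.

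Next we compute $\mathbb E_g|\sigma\cap g\tau|$ by Poincaré's kinematic formula, which states that
\[
\mathbb E_g\,\#(M\cap gN)=c\,\nu_d(M)\nu_d(N)
\]
for rectifiable $d$-sets $M,N$, with a constant $c$ depending only on $d$. Rather than tracking the constant through the general formula, we calibrate $c$ with the case $M=N=\mathbb S^d$, an equatorial sphere: there the count is $2$ almost surely, so $c\,s_d^2=2$. Bilinearity, or additivity under decomposition, then extends the formula to simplices. If the reader prefers a self-contained argument for the bilinear form, note that $(A,B)\mapsto\mathbb E_g\#(A\cap gB)$ is additive in each argument over pieces of equatorial $d$-spheres. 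It is also invariant under rotations within the sphere containing each piece, because the Haar measure on $\operatorname{SO}(2d+1)$ absorbs them. By uniqueness of invariant measures on $\mathbb S^d$, it is therefore proportional to $\nu_d(A)$ for $A$ contained in a fixed equator, and the ratio does not depend on which equator, by transitivity. The same holds in $B$.

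We expect the main obstacle to be this last step. The plan makes it rigorous either by citing the spherical kinematic formula, for instance from Santaló, or by running the invariant-measure argument just sketched, with care about measurability and about the almost-sure genericity of the intersections. The hemisphere hypothesis is used only in the earlier reduction from the expected count to the probability.
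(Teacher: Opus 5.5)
Your argument is correct. It reaches the formula by a somewhat different route from the paper, so here is a brief comparison.

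\textbf{The paper's route.} The paper does not fix the two shapes and randomize their relative position. Instead it conditions on the two equatorial $d$-spheres spanned by $\sigma$ and $\sigma'$, which almost surely meet in an antipodal pair $\{x,-x\}$. Rotation invariance within each sphere then gives that each simplex contains $x$ with conditional probability $\mathbb E_\mu[\nu_d(\sigma)]/s_d$. Conditional independence of the two simplices gives the square, and the point $-x$ contributes the same amount. The open-hemisphere hypothesis makes the two events disjoint. So the factor $2$ and the product of normalized volumes appear directly, with no kinematic formula and no calibration.

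\textbf{What your route buys.} You replace the conditioning by the explicit representation $\sigma'=g\tau$ with $g$ Haar-distributed. This avoids the (implicit, in the paper) claims that the conditional law of a simplex given its span is invariant under rotations of that span, and that the conditional mean volume does not depend on the span. You also obtain a stronger pointwise statement: $\mathbb P_g(\sigma\cap g\tau\neq\emptyset)=2\nu_d(\sigma)\nu_d(\tau)/s_d^2$ for every fixed pair of shapes. Moreover, the count identity $\mathbb E_g\#(A\cap gB)=2\nu_d(A)\nu_d(B)/s_d^2$ holds for arbitrary Borel pieces $A,B$ of equators, without the hemisphere assumption.

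\textbf{What you should make explicit.} The price is the bilinearity step, where three points need to be stated.
\begin{itemize}
\item The map $A\mapsto\mathbb E_g\#(A\cap gB)$ is countably additive (by monotone convergence) and finite (bounded by $2$). This is what lets uniqueness of rotation-invariant measures on $\mathbb S^d$ apply; mere finite additivity would not suffice without deep results.
\item Symmetry in $B$ comes from the fact that $g\mapsto g^{-1}$ preserves Haar measure.
\item Independence of the constant from the choice of equator comes from left-invariance of Haar measure.
\end{itemize}

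Both arguments use the hemisphere hypothesis in exactly the same place: to rule out a simplex containing both $x$ and $-x$.
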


\begin{proof}
The equatorial $d$-spheres spanned by $\sigma$ and $\sigma'$ are independent and uniformly distributed. Almost surely they intersect in two antipodal points $x,-x$. Conditional on these two spheres, rotation invariance implies that the probability that $x$ belongs to a given one of these simplices is $\mathbb E_\mu[\nu_d(\sigma)]/s_d$. The simplices remain conditionally independent, so the probability that both contain $x$ is the square of this quantity. The same holds for $-x$. Since neither simplex contains an antipodal pair, the two events are disjoint.
\end{proof}

\begin{proposition}\label{prop:random_geometric_master}
Suppose that $P_d>0$. For the natural drawings $f_n$ of $X_n=X(n,\mathcal A)$, the normalized crossing numbers converge in probability to
\[
B_d(\mathcal A):=\frac{(d+1)^d}{d!}\,m_d^2\,\frac{P_{d-1}^{d+1}}{P_d^{d+2}}
\] as $n$ grows.
There are deterministic realizations with the same limit, and $B_d\leq B_d(\mathcal A)$.
\end{proposition}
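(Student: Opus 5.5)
The plan is to compute first moments of the three relevant counts, namely $|X_n(d-1)|$, $|X_n(d)|$ and the number of crossing pairs of $d$-faces in $f_n$. We then show that each is concentrated, plug into the normalization $\Cr\cdot|X(d-1)|^{d+1}/|X(d)|^{d+2}$, and finally pass to $B_d$ via Lemma~\ref{lem:dense_seeds}.

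\textbf{First moments.} For $k\le d$, linearity gives $\mathbb E|X_n(k)|=\binom{n}{k+1}P_k$. For crossings, only vertex-disjoint pairs matter: faces sharing a vertex are geodesic simplices in general position and meet only in common faces. Take two disjoint $(d+1)$-tuples of vertices. The events that each tuple is admitted are independent, since the tuples are disjoint and the points are i.i.d. Conditioning on both being admitted, the simplex $\sigma=\sph(X_0,\dots,X_d)$ has a law $\mu$ that is rotation invariant, because $\mathcal A$ is $\operatorname{SO}(2d+1)$-invariant. Almost surely $\sigma$ lies in an open hemisphere, since the $d+1$ vectors are linearly independent. Lemma~\ref{lem:crofton_random_geometric} then gives
\[\mathbb P(\text{both admitted and cross})=P_d^2\cdot 2\bigl(\mathbb E_\mu[\nu_d(\sigma)]/s_d\bigr)^2=2m_d^2,\]
because $P_d\,\mathbb E_\mu[q_d]=m_d$. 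The expected number of crossings is therefore $\tfrac12\binom{n}{d+1}\binom{n-d-1}{d+1}\cdot 2m_d^2\sim m_d^2 n^{2d+2}/((d+1)!)^2$. Substituting $|X_n(d-1)|\sim P_{d-1}n^d/d!$ and $|X_n(d)|\sim P_d n^{d+1}/(d+1)!$, the $n$-powers cancel, leaving
\[\frac{m_d^2}{((d+1)!)^2}\cdot\frac{P_{d-1}^{d+1}(d+1)!^{d+2}}{d!^{d+1}P_d^{d+2}}=\frac{(d+1)^d}{d!}\,m_d^2\frac{P_{d-1}^{d+1}}{P_d^{d+2}},\]
which is the claimed $B_d(\mathcal A)$. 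The cancellation uses $(d+1)!^{d}/d!^{d+1}=(d+1)^d/d!$.

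\textbf{Concentration.} Each of the three counts is a U-statistic in the i.i.d.\ points of order $d$, $d+1$ and $2d+2$ respectively, with bounded kernel. Its variance is dominated by pairs of index sets sharing one vertex, giving $\var=O(n^{2r-1})$ against a mean squared of order $n^{2r}$. Chebyshev then gives concentration in probability for each count, and hence the normalized crossing number of $f_n$ converges in probability to $B_d(\mathcal A)$. Here $P_d>0$ is used so that the denominator is of the right order; if $m_d=0$ the limit is simply $0$. Since $\Cr_\iota(X_n)\le\Cr(f_n)$, for each $n$ we may fix an outcome where all three ratios are within $o(1)$ of their limits. This produces deterministic complexes and drawings attaining the same limit.

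\textbf{Bounding $B_d$.} We apply Lemma~\ref{lem:dense_seeds} to this deterministic sequence, which needs $d\ge2$. Its density ratio satisfies $a_n\sim (P_d/P_{d-1})\,n/(d+1)\to\infty$ and $a_{n+1}/a_n\to1$, as in the remark following that lemma. The independent-pair crossing constant of the sequence is at most $B_d(\mathcal A)$. Strictly, the lemma asks for existence of the limit, so we may need to rerun its proof using $\limsup$; the proof uses only upper bounds, so this should go through. This gives $B_d\le B_d(\mathcal A)$.

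The main obstacle is the Crofton step. We must check that the conditional law of an admitted simplex satisfies the hypotheses of Lemma~\ref{lem:crofton_random_geometric}, and that crossings between vertex-disjoint geodesic faces are transverse single points, so that intersection events equal crossings. The variance bookkeeping is routine.
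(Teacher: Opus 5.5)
Your proposal is correct and follows the paper's proof essentially step for step. The ingredients match: first moments via the Crofton lemma applied to the rotation-invariant conditional law of an admitted simplex, concentration of the three counts as bounded-kernel U-statistics, a deterministic realization, and a disjoint-union argument for $B_d\le B_d(\mathcal A)$. The only difference is in the last step: you route it through Lemma~\ref{lem:dense_seeds}, which restricts you to $d\ge2$ and requires rerunning the lemma with a $\limsup$. The paper instead redoes the disjoint-union argument directly, with $n=\lceil(1+\eta)a/c\rceil$ and $\eta\to0$. This also covers $d=1$, where $b_1$ is defined via $\Cr$ but the crossings of the natural geodesic drawing coincide with crossing pairs.
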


\begin{proof}
For every $0\leq k\leq d$, we have $\mathbb E|X_n(k)|=P_k\binom n{k+1}$. Conditional on being admitted, a random $d$-simplex has a rotation-invariant distribution. The preceding lemma therefore gives probability $2m_d^2$ that two independent $(d+1)$-tuples are both admitted and their simplices intersect. Almost surely, incident faces have no intersections outside their common face, and independent faces intersect in at most one point. Counting unordered pairs gives
\[
\mathbb E\Cr(f_n)=\binom n{d+1}\binom{n-d-1}{d+1}m_d^2.
\]

The face counts and the crossing count, divided respectively by $\binom n{k+1}$ and $\binom n{2d+2}$, are U-statistics with bounded kernels. Their variances tend to zero, since only pairs of tuples sharing a vertex can contribute to the covariance. Consequently,
\[
\Cr(f_n)\frac{|X_n(d-1)|^{d+1}}{|X_n(d)|^{d+2}}\rightarrow B_d(\mathcal A)
\]
in probability. We note that the event where $|X_n(d)|=0$ has vanishing probability, so it can simply be ignored (we set the above ratio to be equal to $0$ whenever this occurs).
We may choose a deterministic realization for each $n$ for which these convergences hold. Its face counts satisfy $|X_n(d-1)|\sim P_{d-1}n^d/d!$ and $|X_n(d)|\sim P_dn^{d+1}/(d+1)!$.

For completeness, these realizations give the last assertion by the disjoint-union argument used in Lemma~\ref{lem:dense_seeds}. Write $a_n=|X_n(d)|/|X_n(d-1)|\sim cn$, where $c=P_d/((d+1)P_{d-1})>0$. For fixed $\eta>0$ and large $a$, choose $n=\lceil(1+\eta)a/c\rceil$. Then $a_n\geq a$. Taking arbitrarily many disjoint PL copies of the drawing, and using that $\kappa_d(s,t)$ is defined over complexes with at least $t$ faces of dimension $d$, gives
\[b_d(a)\leq\frac{\Cr(f_n)}{a^{d+2}|X_n(d-1)|}=\left(\frac{a_n}{a}\right)^{d+2}\Cr(f_n)\frac{|X_n(d-1)|^{d+1}}{|X_n(d)|^{d+2}}.\]
As $a\to\infty$, we have $a_n/a\to1+\eta$, so $B_d\leq(1+\eta)^{d+2}B_d(\mathcal A)$. Letting $\eta\to0$ proves the assertion.
\end{proof}

The proposition concerns the crossing number of the specified drawing. It gives an upper bound for the minimum crossing number of the complex, but does not assert that the natural drawing is optimal. The use of disjoint PL copies also does not give the same bound for $B_{d,\mathbb S}$. For the small-threshold construction below, we will obtain a spherical bound by a separate Euclidean construction.

\subsection{Edge-threshold complexes}
\label{subsec:random-geometric-flag}

For $\alpha\in(0,\pi]$, we construct a simplicial complex by sampling $n$ independent uniform points of $\mathbb S^{2d}$ and including a set of at most $d+1$ of them as a face whenever its diameter is at most $\alpha$. We call this random complex the \emph{edge-threshold complex} and denote it by $X_{n,\alpha,d}$. This is the preceding model with $\mathcal A$ consisting of tuples of diameter at most $\alpha$.

In this subsection, $P_k(\alpha)$ and $m_d(\alpha)$ refer to this model. Proposition~\ref{prop:random_geometric_master} gives
\[
B_d\leq C_d^{\mathrm{edge}}(\alpha):=\frac{(d+1)^d}{d!}\,m_d(\alpha)^2\,\frac{P_{d-1}(\alpha)^{d+1}}{P_d(\alpha)^{d+2}}.
\]

\subsubsection{The graph case, \texorpdfstring{$d=1$}{d=1}}

The model is the usual random geometric graph on $\mathbb S^2$ considered in~\cite{czabarka2019some}. We have $P_0=1$, $P_1(\alpha)=(1-\cos\alpha)/2$, and $m_1(\alpha)=(\sin\alpha-\alpha\cos\alpha)/(4\pi)$. Therefore
\[
C_1^{\mathrm{edge}}(\alpha)=\frac{(\sin\alpha-\alpha\cos\alpha)^2}{\pi^2(1-\cos\alpha)^3},\qquad\lim_{\alpha\to0 }C_1^{\mathrm{edge}}(\alpha)=\frac8{9\pi^2}.
\]
This recovers the random geometric graph bound from~\cite{czabarka2019some}.

\subsubsection{The case \texorpdfstring{$d=2$}{d=2}}
\label{subsubsec:edge-d2}

We now sample vertices on $\mathbb S^4$ and keep a triangle precisely when its three edges have length at most $\alpha$. It is convenient to use the edge lengths as coordinates. For $r,s,t\in[0,\pi]$, put
\begin{equation*}
W(r,s,t):=\det\begin{pmatrix}1&\cos t&\cos s\\\cos t&1&\cos r\\\cos s&\cos r&1\end{pmatrix}
=1-\cos^2r-\cos^2s-\cos^2t+2\cos r\cos s\cos t.
\end{equation*}
The condition $W>0$ characterizes nondegenerate triples of this kind.

\begin{lemma}\label{lem:distance-density}
The pairwise distances of three independent uniform points of $\mathbb S^4$ have density
\[
\frac9{8\pi}\sin r\sin s\sin t\sqrt{W(r,s,t)_+}
\]
on $[0,\pi]^3$, where $x_+:=\max\{x,0\}$ for all $x\in\mathbb R$.
\end{lemma}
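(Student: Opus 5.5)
We first reduce to Gram matrices. Let $X_0,X_1,X_2$ be independent uniform points on $\mathbb S^4\subset\mathbb R^5$. By rotation invariance, fix $X_0=e_1$. The pair $(X_1,X_2)$ is then uniform on $\mathbb S^4\times\mathbb S^4$, and the distance triple is $(r,s,t)=(d(X_1,X_2),d(X_0,X_2),d(X_0,X_1))$. The Gram matrix of $(X_0,X_1,X_2)$ is exactly the matrix whose determinant is $W(r,s,t)$. For a nondegenerate triple, this matrix is positive definite, so $W>0$, and conversely every positive definite unit-diagonal matrix arises. Hence the density vanishes where $W\le 0$, which accounts for the factor $\sqrt{W_+}$ being supported there.

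The main computation is done in two steps of conditioning.

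\emph{Step 1.} The distance $t=d(e_1,X_1)$ of a uniform point on $\mathbb S^4$ from a fixed point has density $\sin^3 t/c$, where
\[c=\int_0^\pi\sin^3=\tfrac43.\]

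\emph{Step 2.} Given $X_1$, write $X_2=\cos s\,e_1+\sin s\,(\cos\phi\, u+\sin\phi\, w)$. Here $s=d(e_1,X_2)$ has density $\tfrac34\sin^3 s$, and the direction $\theta$ of $X_2$ in the tangent $\mathbb S^3$ at $e_1$ is uniform and independent of $s$. Let $\phi$ be the angle between $\theta$ and the tangent direction $u$ of $X_1$. On $\mathbb S^3$, $\phi$ has density $\tfrac{2}{\pi}\sin^2\phi$ on $[0,\pi]$, since $\mathrm{area}(\mathbb S^2)/\mathrm{vol}(\mathbb S^3)=4\pi/2\pi^2$. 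The spherical law of cosines gives
\[\cos r=\cos s\cos t+\sin s\sin t\cos\phi.\]
Changing variables from $\phi$ to $r$ gives $dr\,\sin r=\sin s\sin t\,\sin\phi\, d\phi$, so
\[d\phi=\frac{\sin r\,dr}{\sin s\sin t\sin\phi}.\]

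For the key identity, compute
\[\sin^2 s\sin^2 t\sin^2\phi=\sin^2s\sin^2t-(\cos r-\cos s\cos t)^2,\]
and expanding shows this equals $W(r,s,t)$. Thus $\sin\phi=\sqrt{W}/(\sin s\sin t)$.

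Assembling the pieces, the joint density is
\[\frac34\sin^3t\cdot\frac34\sin^3s\cdot\frac2\pi\sin^2\phi\cdot\frac{\sin r}{\sin s\sin t\sin\phi}.\]
This simplifies to
\[\frac{9}{8\pi}\sin r\,\sin^2s\,\sin^2t\,\sin\phi=\frac9{8\pi}\sin r\sin s\sin t\sqrt W,\]
which is the claimed formula. As a sanity check, integrating over the region $W>0$ should give $1$, and this is automatic from the construction.

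The main obstacle is bookkeeping the constants. These are the normalizations of $\sin^3$ and of the $\sin^2\phi$ density on $\mathbb S^3$, together with the Jacobian in the $\phi\to r$ change of variables. Once the identity $\sin^2s\sin^2t\sin^2\phi=W$ is checked, the symmetric form of the answer follows, and this form confirms the computation.
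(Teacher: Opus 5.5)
Your proof is correct and follows essentially the same route as the paper: fix one point at the pole, use the independent $\frac34\sin^3$ radial densities and the uniform tangent direction, change variables via the spherical cosine rule, and identify $\sin^2 s\sin^2 t\sin^2\phi$ with $W$. The only difference is cosmetic. The paper parametrizes by $z=\cos\phi$, with density $\frac{2}{\pi}\sqrt{1-z^2}$, whereas you work with the angle $\phi$ itself, with density $\frac{2}{\pi}\sin^2\phi$.
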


\begin{proof}
Fix the first point at the north pole, and let $r,s$ be its distances to the other two points. They are independent with density $\frac34\sin^3r$. The cosine $z$ of the angle between their tangent directions has density $\frac2\pi\sqrt{1-z^2}$, independently of $r,s$. The spherical cosine rule gives $\cos t=\cos r\cos s+\sin r\sin s\,z$, so $|dz|=\sin t\,dt/(\sin r\sin s)$ and $1-z^2=W/(\sin r\sin s)^2$. Multiplying the densities and the Jacobian proves the claim.
\end{proof}

For $W>0$, the spherical area is
\[
A(r,s,t)=2\operatorname{atan2}\bigl(\sqrt W,1+\cos r+\cos s+\cos t\bigr),
\]
where $\operatorname{atan2}(y,x)\in(0,\pi)$ for $y>0$. This choice of branch is necessary when the second argument is negative. Throughout the rest of this section, we set $A=0$ outside $W>0$; this does not affect the computations below. Writing $\Omega_\alpha=[0,\alpha]^3$ and $D=\sin r\sin s\sin t\sqrt{W_+}$, we obtain
\[
P_1(\alpha)=\frac12-\frac34\cos\alpha+\frac14\cos^3\alpha,\]
\[
P_2(\alpha)=\frac9{8\pi}\int_{\Omega_\alpha}D(r,s,t)\,dr\,ds\,dt,\] and
\[m_2(\alpha)=\frac9{32\pi^2}\int_{\Omega_\alpha}D(r,s,t)A(r,s,t)\,dr\,ds\,dt.\]
At $\alpha=\pi$, these give $P_1=P_2=1$ and $m_2=1/8$, and hence the complete-complex bound $C_2^{\mathrm{edge}}(\pi)=9/128$.

The matrix appearing in the definition of $W$ is the Gram matrix $U$ of the three sampled points, viewed as unit vectors in $\mathbb R^5$: its entries are their pairwise inner products, which equal the cosines of their spherical distances. Changing variables from the distances to their cosines in Lemma~\ref{lem:distance-density} cancels the three sine factors, so $U$ has density proportional to $\det(U)^{1/2}$ with respect to Lebesgue measure on its off-diagonal entries. More generally, for $0\leq k\leq d$, if $X_0,\ldots,X_k$ are independent uniform points of $\mathbb S^{2d}$, their Gram matrix $U=(\langle X_i,X_j\rangle)_{0\leq i,j\leq k}$ has density proportional to $\det(U)^{(2d-k-1)/2}$ on the positive definite matrices with diagonal entries equal to one. This gives an analogous description of the joint distribution of the pairwise distances in every dimension.

We next let $\alpha\to0$. Expanding the definition of $W$ gives
\[
W(r,s,t)=\frac14(2r^2s^2+2s^2t^2+2t^2r^2-r^4-s^4-t^4)+O((r+s+t)^6).
\]
Let $a(\rho,\sigma,\tau)$ be the Euclidean area of a triangle with the indicated sides, and set it equal to zero when the triangle inequalities fail. After substituting $(r,s,t)=\alpha(\rho,\sigma,\tau)$, Heron's formula gives $\alpha^{-2}\sqrt{W_+}\to2a$ and $\alpha^{-2}A\to a$ almost everywhere on $[0,1]^3$. These rescaled quantities are bounded for small $\alpha$, so dominated convergence yields
\[
P_1(\alpha)\sim\frac3{16}\alpha^4,\qquad P_2(\alpha)\sim\frac9{4\pi}I\alpha^8,\qquad m_2(\alpha)\sim\frac9{16\pi^2}J\alpha^{10},
\]
where $I=\int_{[0,1]^3}\rho\sigma\tau\,a\,d\rho\,d\sigma\,d\tau$ and $J=\int_{[0,1]^3}\rho\sigma\tau\,a^2\,d\rho\,d\sigma\,d\tau$.

\begin{lemma}\label{lem:IJ}
We have $I=(8\pi-9\sqrt3)/512$ and $J=53/12800$.
\end{lemma}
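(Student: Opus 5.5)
The plan is to compute both integrals in ``law of cosines'' coordinates. These turn the area factor $a$ into an elementary trigonometric expression. The only remaining difficulty is the constraint $\rho\le 1$ on the third side.

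\emph{Change of variables.} For fixed $\sigma,\tau\in(0,1]$, parametrize the third side by the angle $\theta\in(0,\pi)$ between the sides of lengths $\sigma$ and $\tau$:
\[\rho^2=\sigma^2+\tau^2-2\sigma\tau\cos\theta.\]
As $\theta$ ranges over $(0,\pi)$, $\rho$ ranges exactly over the interval $(|\sigma-\tau|,\sigma+\tau)$ where the triangle inequalities hold. Outside this interval $a=0$, so nothing is lost. On it we have $\rho\,d\rho=\sigma\tau\sin\theta\,d\theta$ and $a=\tfrac12\sigma\tau\sin\theta$. Hence
\[I=\frac12\int_{[0,1]^2}\sigma^3\tau^3\int_0^\pi \sin^2\theta\,\mathbf 1\{\sigma^2+\tau^2-2\sigma\tau\cos\theta\le1\}\,d\theta\,d\sigma\,d\tau,\]
and $J$ is given by the same formula with $\tfrac14\sigma^4\tau^4\sin^3\theta$ in place of $\tfrac12\sigma^3\tau^3\sin^2\theta$.

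\emph{Computing $J$.} I would compute $J$ without trigonometry, via Heron's formula in squared form:
\[16a^2=2\rho^2\sigma^2+2\sigma^2\tau^2+2\tau^2\rho^2-\rho^4-\sigma^4-\tau^4.\]
This polynomial is positive exactly on the triangle region and negative on its complement in the cube. The complement is the disjoint union, up to null sets, of the three symmetric pieces $\{\rho\ge\sigma+\tau\}$ and its two relabelings. So $J$ equals the integral of $\rho\sigma\tau\cdot16a^2/16$ over the whole cube, which is an elementary monomial computation, minus three times the same integrand over the simplex-like region $\{\sigma+\tau\le\rho\le1\}$. That second integral is a polynomial integral over a polytope. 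Both pieces are routine and should produce the rational number $53/12800$.

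\emph{Computing $I$.} For $I$, the square root in $a$ rules out the polynomial trick, so I would use the angular formula. The inner constraint reads $\cos\theta\ge(\sigma^2+\tau^2-1)/(2\sigma\tau)=:c$. If $c\le-1$ (that is, $\sigma+\tau\le1$), the full range $\theta\in(0,\pi)$ is allowed and the inner integral is $\pi/2$. Otherwise $\theta$ runs over $(0,\arccos c)$, and the inner integral is $\tfrac12(\arccos c-c\sqrt{1-c^2})$. Note that $2\sigma\tau\sqrt{1-c^2}$ is, by Heron's formula, four times the area of the triangle with sides $\sigma,\tau,1$. I would then pass to polar coordinates $(\sigma,\tau)=R(\cos\varphi,\sin\varphi)$, so that $c=(R^2-1)/(R^2\sin2\varphi)$. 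Alternatively, I would swap the order of integration and integrate over $\sigma,\tau$ at fixed $\theta$: the region $\{\sigma,\tau\le1,\ \sigma^2+\tau^2-2\sigma\tau\cos\theta\le1\}$ is the unit square cut by an ellipse, and $\int\sigma^3\tau^3$ over it is tractable in the coordinates $\sigma\pm\tau$. The constants $\pi$ and $\sqrt3$ should come from the angle $\pi/3$, which is where the square's corner $(1,1)$ lies on the ellipse. This yields $(8\pi-9\sqrt3)/512$.

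\emph{Main obstacle.} I expect the evaluation of $I$ to be the hard step, specifically the piece with $\sigma+\tau>1$ in which the $\arccos$ term appears. My first choice would be the fixed-$\theta$ ordering. There the cut region changes shape at $\theta=\pi/3$, which suggests splitting the $\theta$-range at $\pi/3$. The resulting one-variable integrals in $\theta$ should be elementary. As a safeguard, I would confirm both closed forms by numerical quadrature: $I\approx0.01864$ and $J\approx0.00414$.
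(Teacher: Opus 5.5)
Your integral representations are correct, and your route is genuinely different from the paper's. The paper uses symmetry and homogeneity to scale out the largest side. This gives $I=\frac38\int_T xy\,a(1,x,y)\,dx\,dy$ and $J=\frac3{10}\int_T xy\,a(1,x,y)^2\,dx\,dy$ with $T=\{(x,y)\in[0,1]^2:x+y\geq1\}$. It then substitutes $u=x+y$, $v=x-y$, for which $16a(1,x,y)^2=(u^2-1)(1-v^2)$.

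For $J$, your complement trick is valid and arguably simpler. Against $\rho\sigma\tau$, the Heron polynomial integrates to $\frac1{16}$ over $[0,1]^3$. Over $\{\sigma+\tau\leq\rho\leq1\}$ it integrates to $-\frac1{800}$: by homogeneity this is $\frac1{10}\int_{\sigma+\tau\leq1}\sigma\tau P(1,\sigma,\tau)$, a Dirichlet monomial computation. Hence $J=\frac1{16}\bigl(\frac1{16}+\frac3{800}\bigr)=\frac{53}{12800}$, as you predicted.

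For $I$, however, you stop exactly where the content of the lemma lies. The value $(8\pi-9\sqrt3)/512$ is asserted, not derived, and numerical quadrature does not replace the derivation. The fixed-$\theta$ plan is also harder than you indicate:
\begin{itemize}
\item Besides $\theta=\pi/3$ there is a second breakpoint at $\theta=\pi/2$. For $\pi/3<\theta<\pi/2$ the ellipse crosses the sides of the square at $(1,2\cos\theta)$ and $(2\cos\theta,1)$. For $\theta\geq\pi/2$ it meets the square only at $(1,0)$ and $(0,1)$, so the region is the full quarter-ellipse.
\item On $[\pi/2,\pi]$ the closed form of $\int\sigma^3\tau^3$ contains terms like $(\pi-\theta)\cos^3\theta/\sin^7\theta$. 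Individually these are not integrable against $\sin^2\theta$ near $\theta=\pi$; they cancel only in combination.
\end{itemize}
The route can be pushed through with care, but the missing simplification is the paper's reduction. After it, integrate in $u$ first. The $\cosh^{-1}(2-v)$ term then carries the weight $\frac{4v^2-1}{8}\sqrt{1-v^2}=-\frac18\frac{d}{dv}\bigl[v(1-v^2)^{3/2}\bigr]$. One integration by parts, with vanishing boundary terms, therefore eliminates it. Setting $t=v-1$, so that $(1+v)(3-v)=4-t^2$, leaves only $\int_{-1}^0t^k\sqrt{4-t^2}\,dt$ and $\int_{-1}^0t^k(4-t^2)^{-1/2}\,dt$. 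These involve only rational numbers, $\pi/6$ and $\sqrt3$, and assemble to $I=\frac3{256}\cdot\frac{8\pi-9\sqrt3}{6}$.
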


\begin{proof}
By symmetry and homogeneity, fixing the largest side gives
\[I=\frac38\int_Txy\,a(1,x,y)\,dx\,dy\] and
\[J=\frac3{10}\int_Txy\,a(1,x,y)^2\,dx\,dy,\] where
$T=\{(x,y)\in[0,1]^2:x+y\geq1\}$.
Put $u=x+y$ and $v=x-y$. Heron's formula becomes
$16a(1,x,y)^2=(u^2-1)(1-v^2)$, and consequently
\[
I=\frac3{256}\int_1^2\int_{u-2}^{2-u}(u^2-v^2)\sqrt{(u^2-1)(1-v^2)}\,dv\,du,\]
\[J=\frac3{1280}\int_1^2\int_{u-2}^{2-u}(u^2-v^2)(u^2-1)(1-v^2)\,dv\,du.\]

Evaluating these elementary integrals gives the stated values. In the first integral one may first integrate in $u$, using primitives of $\sqrt{u^2-1}$ and $u^2\sqrt{u^2-1}$; the second integrand is polynomial.
\end{proof}

Since $C_2^{\mathrm{edge}}=\frac92m_2^2P_1^3/P_2^4$, the powers of $\alpha$ cancel and we obtain
\begin{equation*}\label{eq:C2-small-alpha}
\lim_{\alpha\to0}C_2^{\mathrm{edge}}(\alpha)
=\frac3{8192}\frac{J^2}{I^4}
=\frac{3\cdot53^2\cdot2^5}{5^4(8\pi-9\sqrt3)^4}
=0.05199595\ldots.
\end{equation*}

To obtain the spherical midrange bound, we first construct arbitrarily large Euclidean complexes with a prescribed asymptotic ratio of triangles to edges, and then project their drawings onto the sphere.

\begin{proposition}\label{prop:edge_spherical_midrange}
The spherical midrange constant satisfies
\[
B_{2,\mathbb S}\leq\frac{3\cdot53^2\cdot2^5}{5^4(8\pi-9\sqrt3)^4}<0.052.
\]
\end{proposition}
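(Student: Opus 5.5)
We will build, for each large target ratio $a$, arbitrarily large geodesically spherical drawings of $2$-complexes with triangle-to-edge ratio about $a$ and normalized crossing number close to $\frac3{8192}J^2/I^4$. Disjoint unions cannot be used here, since spherical drawings are not obviously subadditive. The plan is to replace the small-$\alpha$ limit of the edge-threshold model on $\mathbb S^4$ by its Euclidean analogue in $\mathbb R^4$. We sample points from a Poisson process (or i.i.d. uniform points) of intensity $\lambda$ in a large box $[0,L]^4$, and keep an edge or triangle when its diameter is at most $1$. All faces are drawn as affine simplices. The key observation is that each kept face lies in a tiny region relative to the box. Central (gnomonic) projection from a sphere point onto $\mathbb R^4$, after scaling the box into a small cap around the north pole, maps affine simplices bijectively to geodesic spherical simplices. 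So the gnomonic inverse of an affine drawing contained in an open hemisphere is a geodesically spherical drawing with exactly the same crossings. This removes any need to approximate: the rectilinear drawing in a bounded region of $\mathbb R^4$ \emph{is} a spherical drawing.

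The key steps are as follows. First, compute the expected counts in the Euclidean model with the box side $L\to\infty$ and the expected degree parameter $\lambda$ fixed, ignoring boundary effects of relative order $1/L$. With $\omega_4=\pi^2/2$ the volume of the unit ball, we get $\mathbb E|X(1)|\sim \tfrac12\lambda^2L^4\omega_4$. The triangle count is $\mathbb E|X(2)|\sim\frac{\lambda^3L^4}{6}\cdot c_I$, where $c_I$ is expressed through the Euclidean side-length density. This density is the flat limit of Lemma~\ref{lem:distance-density}, namely proportional to $\rho\sigma\tau\,a(\rho,\sigma,\tau)$, so the count involves the integral $I$. Second, count crossings. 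Two vertex-disjoint affine triangles in $\mathbb R^4$ in general position meet in at most one point. Using the Euclidean Crofton/kinematic formula (the flat analogue of Lemma~\ref{lem:crofton_random_geometric}), the expected number of crossing pairs is $\frac12\cdot$(number of triangles)$\cdot$(density of triangle area per unit volume). This gives $\mathbb E\Cr\sim c\,\lambda^6L^4 J'$, with $J'$ proportional to $\bigl(\int \rho\sigma\tau a^2\bigr)$ times $\bigl(\int\rho\sigma\tau a\bigr)$ in the appropriate normalization. Alternatively, we can take the $\alpha\to0$ limits already computed on the sphere, $P_1\sim\frac3{16}\alpha^4$, $P_2\sim\frac9{4\pi}I\alpha^8$, $m_2\sim\frac9{16\pi^2}J\alpha^{10}$, and note that the local Euclidean computation is their blow-up. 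Either way, in the ratio $\Cr\cdot|X(1)|^3/|X(2)|^4$ the $\lambda$ and $L$ dependence cancel to leading order, leaving $\frac3{8192}J^2/I^4$ up to $O(1/L)$ and boundary corrections. Third, establish concentration. Face counts and crossing counts are local U-statistics with finite-range dependence, so their variances are $O(L^4)$ against means of order $L^4$. Chebyshev then yields a deterministic realization with normalized crossing number within $\varepsilon$ of the limit. The ratio $|X(2)|/|X(1)|$ is then $\approx \kappa\lambda$ for an explicit constant $\kappa$. Choosing $\lambda=a/\kappa$ and letting $L\to\infty$ (so that the number of edges $s\to\infty$) gives $b_{2,\mathbb S}(a)\le(1+\varepsilon)\cdot 0.05199\ldots$ for every large $a$. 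We must also keep exactly $\lfloor as\rfloor$ triangles; deleting or adding $o(s)$ triangles changes crossings negligibly. Taking $\limsup_a$ gives the proposition.

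The main obstacle is the second step: the independence structure differs from the sphere. Intersecting triangles are no longer independent samples in the sense of Lemma~\ref{lem:crofton_random_geometric}, because they must be spatially close. We first would try to apply the kinematic formula for two random flat $2$-dimensional pieces in $\mathbb R^4$: the mean number of intersection points of a translation- and rotation-invariant random triangle process with itself equals a constant times the product of the area intensities. We would then check that the constant matches the spherical normalization, by comparing with the case where $\alpha$ is small and the cap is nearly flat. The remaining requirement is genericity. We need vertices in general position and no triangle degenerate, so that each independent pair crosses at most once and incident faces meet only in common faces. Both hold almost surely, so a suitable deterministic realization exists.
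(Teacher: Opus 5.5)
Your outline is the paper's proof: a Poisson flag complex with distance threshold $1$ in $[0,L]^4$ drawn affinely, and intensities obtained by blowing up the small-$\alpha$ asymptotics of $P_1,P_2,m_2$ on a sphere of radius $\alpha^{-1}$. It continues with variances $O(L^4)$ and boundary terms $O(L^3)$, a deterministic realization, and central projection $x\mapsto(x,1)/\sqrt{1+\|x\|^2}$ onto an open hemisphere. This is projection from the centre of $\mathbb S^4$, and no rescaling into a small cap is needed. Two points need fixing, though.

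First, your primary route to the crossing count is wrong as written. ``Number of triangles times area density'' misses a factor of the mean triangle area. The resulting $J'\propto JI$ would make the normalized constant proportional to $J/I^3$, not $J^2/I^4$, so your sentence ``either way \dots\ $\frac{3}{8192}J^2/I^4$'' does not follow from that route.

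Here is the correct version. By the Mecke formula, vertex-disjoint pairs of triangles in a Poisson process behave, in expectation, like two independent copies of the triangle process. The kinematic formula then gives crossing intensity $\frac12\cdot\frac13\,S^2$. Here $S=\frac{\lambda^3}{6}\cdot 16\pi^3 J=\frac{8\pi^3}{3}J\lambda^3$ is the \emph{area} intensity; the relevant product is total area times area density. The factor $\frac13=\omega_2^2/(\binom42\omega_4)$ is the mean generalized sine of two independent uniformly random $2$-planes in $\mathbb R^4$. This yields $\frac{32\pi^6}{27}J^2\lambda^6$, exactly the paper's value. Your later sentence about ``a constant times the product of the area intensities'' is the right statement.

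Your alternative blow-up route is what the paper actually does. It needs the dominated-convergence justification the paper supplies: fix a vertex, and centrally project a bounded neighbourhood of it onto the tangent hyperplane, so that geodesic simplices become affine and intersections are preserved. Then let $\alpha\to0$, using that distances and volume elements converge to their Euclidean counterparts.

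Second, you cannot top up triangles. The complex is a flag complex, so any triangle not already present has a missing edge, and adding it changes $s$. Since $\kappa_{2,\mathbb S}(s,t)$ only requires at least $t$ triangles, choose $\lambda=3(1+\eta)a/(32\pi I)$ so that the triangle-to-edge ratio tends to $(1+\eta)a$. You then have at least $\lfloor as\rfloor$ triangles and obtain $b_{2,\mathbb S}(a)\le(1+\eta)^4\frac{3J^2}{8192I^4}$. Let $\eta\to0$ and then take $\limsup_{a\to\infty}$. With these two corrections your argument coincides with the paper's.
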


\begin{proof}
Take a homogeneous Poisson process of intensity $\lambda>0$ in $[0,L]^4$. Join pairs at distance at most one and fill every triangle using affine faces. Let $V=L^4$. As $L\to\infty$, the edge count, triangle count, and number of crossings, divided by $V$, converge in probability to
\[
\frac{\pi^2}{4}\lambda^2,\qquad
\frac{8\pi^3}{3}I\lambda^3,\qquad
\frac{32\pi^6}{27}J^2\lambda^6,
\]
respectively. To compute these constants, consider a Poisson process of intensity $\lambda$ on the sphere of radius $\alpha^{-1}$, with distance threshold one. Its volume is $V_\alpha=s_4\alpha^{-4}$, where $s_4=8\pi^2/3$. Applying the spherical expectation formulas with angular threshold $\alpha$ and mean vertex count $\lambda V_\alpha$, then dividing by $V_\alpha$, gives the displayed constants as $\alpha\to0$. These limits equal the Euclidean intensities by dominated convergence: after fixing one vertex, every contributing configuration lies in a uniformly bounded neighborhood. Project this neighborhood from the center of the sphere onto the tangent hyperplane at the fixed vertex. This sends geodesic simplices to affine simplices and preserves their intersections. As $\alpha\to0$, distances and volume elements in these coordinates converge to their Euclidean counterparts. Expected boundary contributions in the box are $O(L^3)$. The variances are $O(V)$, since contributions assigned to unit cubes at a sufficiently large fixed distance are independent and have bounded second moments. This proves the asserted convergence.

Fix $a>0$ and $\eta>0$, and choose $\lambda=3(1+\eta)a/(32\pi I)$. The triangle-to-edge ratio then tends to $(1+\eta)a$. Choose deterministic realizations with the preceding limits; for all sufficiently large $L$, they have at least $\lfloor a|X(1)|\rfloor$ triangles. Central projection from $\mathbb R^4$ onto an open hemisphere of $\mathbb S^4$, given by $x\mapsto(x,1)/\sqrt{1+\|x\|^2}$, takes affine simplices to geodesic simplices and preserves their intersections. These are therefore admissible spherical drawings, and applying the preceding limits in the definition of $b_{2,\mathbb S}(a)$ gives
\[b_{2,\mathbb S}(a)\leq(1+\eta)^4\,\frac{3J^2}{8192I^4}.\]
Letting $\eta\to0$ and then taking the limsup as $a\to\infty$ proves the assertion.\end{proof}

Numerically evaluating the integrals defining $P_2(\alpha)$ and $m_2(\alpha)$, and substituting the results together with the explicit formula for $P_1(\alpha)$ into $C_2^{\mathrm{edge}}(\alpha)=\frac92m_2(\alpha)^2P_1(\alpha)^3/P_2(\alpha)^4$, suggests that this expression is minimized near $\alpha=0.95843$. At this threshold, $P_1(\alpha)\approx0.11638$ and $C_2^{\mathrm{edge}}(\alpha)\approx0.0516002$, which is below the small-$\alpha$ limit. Since $B_2\leq C_2^{\mathrm{edge}}(\alpha)$ for every $\alpha\in(0,\pi]$, proving $B_2<0.0517$ only requires a certified upper estimate below $0.0517$ at a single threshold; locating or certifying a global minimum is unnecessary.

\subsection{Volume threshold complexes, Gale duality, and \texorpdfstring{$d\to\infty$}{d to infinity}}
\label{subsec:volume_threshold}

We now consider a second rotationally invariant model. For $\theta\in(0,1/2)$, let $\mathcal A_\theta\subseteq(\mathbb S^{2d})^{d+1}$ consist of the $(d+1)$-tuples whose spherical simplex has normalized volume $q=\nu_d/s_d$ at most $\theta$, where degenerate tuples are given $q=0$. Sample $N$ independent uniform points of $\mathbb S^{2d}$, and let $X_N^{\mathrm{vol}}(\theta):=X(N,\mathcal A_\theta)$ be the associated random geometric complex, with natural drawing $f_N$. Any $d$-tuple is the projection of an admissible tuple, obtained by repeating one of its points, so $X_N^{\mathrm{vol}}(\theta)$ has a complete $(d-1)$-skeleton; it keeps exactly the $d$-simplices of normalized volume at most $\theta$.

Let $q$ be the normalized volume of the spherical simplex spanned by $d+1$ independent uniform points of $\mathbb S^{2d}$, and put
\[
P(\theta)=\mathbb P(q\leq\theta),\qquad m(\theta)=\mathbb E[q\mathbf1_{\{q\leq\theta\}}],\qquad \bar q=\mathbb E q=2^{-(d+1)}.
\]
The last identity follows by independently changing the signs of the vertices: the resulting $2^{d+1}$ cones partition their linear span, and all have the law of the original one.

\begin{proposition}\label{prop:volume_threshold_bound}
For every fixed $\theta\in(0,1/2)$, the normalized crossing numbers of $f_N$ converge in probability to
\[
C_d^{\mathrm{vol}}(\theta):=\frac{(d+1)^d}{d!}\frac{m(\theta)^2}{P(\theta)^{d+2}}, \quad\text{and}\quad B_d\leq C_d^{\mathrm{vol}}(\theta).
\]
\end{proposition}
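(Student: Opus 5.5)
The plan is to obtain this as a direct specialization of Proposition~\ref{prop:random_geometric_master}, applied with $\mathcal A=\mathcal A_\theta$.

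\textbf{Hypotheses.} First I check that $\mathcal A_\theta$ is admissible.
\begin{itemize}
\item It is Borel, since normalized volume is a measurable function of the vertices.
\item It is invariant under permutations and under the diagonal $\operatorname{SO}(2d+1)$ action, since volume is.
\item It projects onto each factor: repeating a point gives a degenerate tuple with $q=0\le\theta$.
\end{itemize}
The same repetition argument shows that every projection $\mathcal A_k$ with $k\le d-1$ is all of $(\mathbb S^{2d})^{k+1}$. Hence $P_{d-1}=1$, and likewise all lower $P_k=1$. By definition $P_d=P(\theta)$ and $m_d=\mathbb E[q\mathbf 1_{\{q\le\theta\}}]=m(\theta)$.

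\textbf{Identifying the constant.} Substituting into $B_d(\mathcal A)=\frac{(d+1)^d}{d!}m_d^2P_{d-1}^{d+1}/P_d^{d+2}$ gives exactly $C_d^{\mathrm{vol}}(\theta)$. The proposition then yields convergence in probability of the normalized crossing numbers of $f_N$ to this value, and also the bound $B_d\le B_d(\mathcal A_\theta)$.

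\textbf{Positivity of $P(\theta)$.} The master proposition requires $P_d>0$, so I need $P(\theta)>0$ for every $\theta>0$. The law of $d+1$ i.i.d.\ uniform points has full support on $(\mathbb S^{2d})^{d+1}$, and the normalized volume is continuous there. It vanishes on degenerate tuples, for instance when all points coincide. So the open set $\{q<\theta\}$ is nonempty and therefore has positive measure.

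\textbf{Lemma~\ref{lem:crofton_random_geometric}.} This lemma is used inside the master proposition, and it assumes the admitted simplices lie almost surely in an open hemisphere. Generic geodesic simplices $\sph(X_0,\dots,X_d)$ are pointed cones of linearly independent vectors, so this holds automatically. Here the restriction $\theta<1/2$ is not essential, but it is consistent, since any simplex inside a hemisphere of the $d$-sphere it spans has normalized volume below $1/2$.

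\textbf{Expected obstacle.} I expect the only real subtlety to be confirming that the general proposition's variance/U-statistic argument applies verbatim. The kernel $\mathbf 1_{\{q\le\theta\}}$ is bounded, and the crossing kernel is bounded by $1$ because independent faces meet at most once almost surely. So the concentration goes through unchanged, and no new work beyond these verifications is needed.
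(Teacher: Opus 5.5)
Your proposal is correct and matches the paper's proof: specialize Proposition~\ref{prop:random_geometric_master} to $\mathcal A=\mathcal A_\theta$, note that repeating a vertex makes every $d$-tuple admissible so $P_{d-1}=1$, and identify $P_d=P(\theta)>0$ and $m_d=m(\theta)$. One small correction to your positivity step: $q$ is not continuous on all of $(\mathbb S^{2d})^{d+1}$, since for $d=1$ we have $q\to 1/2$ as $X_1\to -X_0$ while $q(X_0,-X_0)=0$, so $\{q<\theta\}$ need not be open. However, $q$ is continuous on the open set of linearly independent tuples and takes arbitrarily small values there near the diagonal, and this is all you need to conclude $P(\theta)>0$.
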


\begin{proof}
This is Proposition~\ref{prop:random_geometric_master} with $\mathcal A=\mathcal A_\theta$. The set $\mathcal A_\theta$ is invariant under permutations and rotations, and its projection onto any $d$ factors is the whole product, so $P_{d-1}=1$. Moreover $P_d=P(\theta)>0$ and $m_d=m(\theta)$.
\end{proof}

We compare this with the uniform geodesic drawing of the complete complex from Theorem~\ref{Moon_Wendel}, whose constant is
\[
C_d^{\mathrm{comp}}:=\frac{(d+1)^d}{2^{2d+2}d!}.
\]
In particular, $B^\iota(\Delta_{(d)})\leq B^{\mathbb S}(\Delta_{(d)})\leq C_d^{\mathrm{comp}}$. The ratio of the two construction constants is
\[
\frac{C_d^{\mathrm{vol}}(\theta)}{C_d^{\mathrm{comp}}}=\frac{(m(\theta)/\bar q)^2}{P(\theta)^{d+2}}.
\]
Since $q<1/2$ almost surely, $P(\theta)\to1$ and $m(\theta)\to\bar q$ as $\theta\uparrow1/2$, so $C_d^{\mathrm{vol}}(\theta)\to C_d^{\mathrm{comp}}$: removing no simplex recovers the complete complex. The next theorem shows that removing the simplices of largest volume always helps.

\begin{theorem}\label{thm:volume_threshold_strict}
For every $d\geq1$, we have $\inf_{0<\theta<1/2}C_d^{\mathrm{vol}}(\theta)<C_d^{\mathrm{comp}}$, and consequently $B_d<C_d^{\mathrm{comp}}$.
\end{theorem}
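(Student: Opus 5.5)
We need to show that for some θ<1/2 we have m(θ)^2/\bar q^2 < P(θ)^{d+2}. Write $g(\theta)=(m(\theta)/\bar q)^2/P(\theta)^{d+2}$. Then $g\to 1$ as $\theta\uparrow 1/2$, so the plan is to show that $g$ is strictly increasing near the top of the support of $q$. Once we have some $\theta$ with $g(\theta)<1$, Proposition~\ref{prop:volume_threshold_bound} gives $B_d\le C^{\mathrm{vol}}_d(\theta)<C^{\mathrm{comp}}_d$.

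We would argue through the upper tail of the law of $q$. Let $\theta^*:=\operatorname{ess\,sup} q\le 1/2$. Then $q$ can be arbitrarily close to $\theta^*$ with positive probability, and $m(\theta)\to\bar q$, $P(\theta)\to1$ as $\theta\uparrow\theta^*$. Note that for $\theta\ge\theta^*$ (if $\theta^*<1/2$) we have $g=1$, so what matters is the behaviour just below $\theta^*$. Put $\varepsilon(\theta):=1-P(\theta)=\mathbb P(q>\theta)$ and $\delta(\theta):=\bar q-m(\theta)=\mathbb E[q\mathbf 1_{q>\theta}]$. Since $q>\theta$ on that event, $\delta\ge\theta\varepsilon$. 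A first-order expansion as $\varepsilon\to0$ gives
\[g\approx 1-\frac{2\delta}{\bar q}+(d+2)\varepsilon\le 1-\varepsilon\Bigl(\frac{2\theta}{\bar q}-(d+2)\Bigr).\]
So it suffices that $2\theta^*/\bar q>d+2$, that is, $\theta^*>(d+2)2^{-(d+2)}$. To make the argument exact rather than asymptotic, we would use $m\le\bar q-\theta\varepsilon$ and $(1-\varepsilon)^{-(d+2)}\le 1+(d+2)\varepsilon+O(\varepsilon^2)$, and pick $\theta$ close enough to $\theta^*$ that $\varepsilon>0$ is small.

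The key geometric input is therefore a lower bound on $\theta^*$. Here $\theta^*$ is the largest normalized volume of a spherical $d$-simplex spanned by $d+1$ unit vectors, and any such configuration occurs in a positive-probability neighbourhood by continuity. We would aim to show $\theta^*=1/2$, or at least something close. Take $d+1$ unit vectors in a $(d+1)$-dimensional subspace that almost span a half-space: place $d$ of them near a great $(d-1)$-sphere spanning a simplex covering nearly a hemisphere of that sphere, and one near the pole. For $d=1$ this is simply an arc of length close to $\pi$, giving normalized length close to $1/2$. In general we would build the simplex inductively as a cone, i.e.\ a spherical join, and show its normalized volume approaches $1/2$ as the configuration degenerates toward a hemisphere. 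Then $\theta^*=1/2>(d+2)2^{-(d+2)}$ holds for all $d\ge1$, since $(d+2)2^{-(d+2)}\le 3/8$.

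The main obstacle is this geometric step: the supremum of the normalized volume of spherical $d$-simplices being $1/2$, and the limiting cases being attained with positive probability. If the hemisphere construction gives trouble, the fallback is to exhibit one explicit simplex with normalized volume exceeding $(d+2)2^{-(d+2)}$, which only requires beating $3/8$ for $d=1$ and much less for larger $d$. A nearly hemispherical cone over a lower-dimensional near-hemisphere should do this by induction on $d$.
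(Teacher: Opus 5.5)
Your analytic reduction is the same as the paper's. The paper writes $P(\theta)=1-\varepsilon_\theta$ and $m(\theta)=\bar q-\varepsilon_\theta r_\theta$ with $r_\theta=\mathbb E[q\mid q>\theta]$, expands
$C_d^{\mathrm{vol}}(\theta)/C_d^{\mathrm{comp}}=1+\varepsilon_\theta\bigl(d+2-2r_\theta/\bar q\bigr)+O(\varepsilon_\theta^2)$, and uses $r_\theta\to1/2$. Your bound $\delta\geq\theta\varepsilon$ is just $r_\theta\geq\theta$, and your threshold $\theta^*>(d+2)2^{-(d+2)}$ is the resulting sign condition. That part is fine, and so is the continuity argument for positive probability.

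The gap is the geometric step you flagged: the construction you propose does not approach a hemisphere. You take a $(d-1)$-simplex covering nearly a hemisphere of a great $(d-1)$-sphere $S=\mathbb S^d\cap H$ and join it to a vertex near the pole $e$ of $S$. In polar coordinates about $e$, the spherical join of $R\subseteq S$ with $e$ has volume $\nu_{d-1}(R)\int_0^{\pi/2}\cos^{d-1}\phi\,d\phi$. Its normalized volume is therefore exactly $\tfrac12\,\nu_{d-1}(R)/s_{d-1}$, so an orthogonal apex halves the normalized volume.

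Consequences:
\begin{itemize}
\item A near-hemispherical base gives only about $1/4$, so the induction toward $1/2$ does not close.
\item Iterating from the $d=1$ arc gives about $2^{-d}$. This is at most $(d+2)2^{-(d+2)}$ for every $d\geq2$ (equality at $d=2$, approached from below), so the fallback fails too.
\item Your own $d=1$ example is inconsistent with the recipe: an arc of length near $\pi$ has its second vertex near the antipode of the first, not near the pole.
\end{itemize}

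The fix, which is what the paper's one-line justification intends ("the last vertex lies slightly off" the hyperplane), is to keep the last vertex close to $S$ as well. Place it near the centre of the half of $S$ not covered by the base, lifted slightly off $H$.

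A clean version of this: let $u_1,\dots,u_{d+1}$ span $H=n^\perp$ with $\sum_iu_i=0$, and put $v_i=u_i+\epsilon n$. Write $y\in H$ as $\sum_i\mu_iu_i$ with $\sum_i\mu_i=0$. Then $y+sn\in\cone(v_1,\dots,v_{d+1})$ if and only if $s\geq-(d+1)\epsilon\min_i\mu_i$. Hence these cones increase to the open half-space $\{\langle x,n\rangle>0\}$ as $\epsilon\downarrow0$, and the normalized volume tends to $1/2$. With this construction in place, your argument goes through.
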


\begin{proof}
Write $\varepsilon_\theta=\mathbb P(q>\theta)$ and $r_\theta=\mathbb E[q\mid q>\theta]$. Simplicial cones can have normalized solid angle arbitrarily close to $1/2$, for instance when $d$ of the vertices nearly positively span a hyperplane of their linear span and the last vertex lies slightly off it. By continuity, such configurations have neighborhoods of positive probability, so $\varepsilon_\theta>0$ for $\theta<1/2$. Moreover $\varepsilon_\theta\to0$ as $\theta\uparrow1/2$, because $q<1/2$ almost surely, and $r_\theta\to1/2$, because $\theta<r_\theta<1/2$. Since $P(\theta)=1-\varepsilon_\theta$ and $m(\theta)=\bar q-\varepsilon_\theta r_\theta$, we have
\[
\frac{C_d^{\mathrm{vol}}(\theta)}{C_d^{\mathrm{comp}}}=1+\varepsilon_\theta\left(d+2-\frac{2r_\theta}{\bar q}\right)+O(\varepsilon_\theta^2).
\]
The coefficient tends to $d+2-2^{d+1}<0$, proving the claim.
\end{proof}

We next show that the improvement over $C_d^{\mathrm{comp}}$ is exponential in $d$. Let $Q^*$ have the size-biased distribution of $q$, defined by $\mathbb P(Q^*\in E)=\bar q^{-1}\mathbb E[q\mathbf1_{\{q\in E\}}]$ for measurable $E$. Then
\begin{equation}\label{eq:sizebiased-volume}
\frac{m(\theta)}{\bar q}=\mathbb P(Q^*\leq\theta).
\end{equation}
We will find a threshold which keeps almost all simplices but retains only an exponentially small part of this size-biased distribution.

A $p\times k$ \emph{standard Gaussian matrix} is a random matrix whose $pk$ entries are independent standard Gaussian variables. The following Gaussian calculation will be useful.

\begin{lemma}\label{lem:gaussian_kernel_quadratic}
Let $G$ be an $M\times(M+1)$ standard Gaussian matrix. Almost surely its kernel is one-dimensional; let $\lambda$ be a unit vector spanning it. Conditional on the kernel line, for every deterministic $x\in\mathbb R^{M+1}$,
\[
\|Gx\|^2\stackrel{d}{=}\|P_{\lambda^\perp}x\|^2\chi^2_{M+1}.
\]
Here $\chi_{M+1}$ denotes the Euclidean norm of a vector of $M+1$ independent standard Gaussian variables, sampled independently of the kernel line, so that $\chi_{M+1}^2$ has the chi-square distribution with $M+1$ degrees of freedom. Moreover, the kernel line is uniformly distributed and independent of $GG^\top$.
\end{lemma}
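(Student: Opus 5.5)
The plan is to identify the conditional law of $G$ given its kernel line explicitly. I would do this through a change of variables that splits $G$ into the kernel direction $\lambda$ and the action of $G$ on $\lambda^\perp$. Once that joint density is written down, every claim in Lemma~\ref{lem:gaussian_kernel_quadratic} becomes a one-line consequence of it.

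\emph{Step 1 (parametrization and Jacobian).} For $\lambda$ in the unit sphere $\mathbb S^M\subset\mathbb R^{M+1}$, choose locally a smooth orthonormal basis $V(\lambda)$ of $\lambda^\perp$; this is an $(M+1)\times M$ matrix. Every $G$ of full rank $M$ has the form $G=BV(\lambda)^\top$ for a unique $B\in\mathbb R^{M\times M}$ and a unique line $\pm\lambda$. The map $(\lambda,B)\mapsto BV(\lambda)^\top$ goes from a space of dimension $M+M^2$ to one of dimension $M(M+1)$, so the dimensions match. I would compute its Jacobian at a point. Perturbing $B$ moves $G$ within the matrices vanishing on $\lambda$, with unit Jacobian. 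Perturbing $\lambda$ by a tangent vector $\dot\lambda\in\lambda^\perp$ changes $G\lambda$ from $0$ to $-B\,V^\top\dot\lambda$, because $G(\lambda+\dot\lambda)$ must stay zero to first order. So the normal block of the differential is $B$ itself, up to an orthogonal change of coordinates. Hence the Jacobian is a constant times $|\det B|$. Since $G$ has density proportional to $e^{-\|G\|_F^2/2}$ and $\|G\|_F=\|B\|_F$, the pair $(\lambda,B)$ has joint density proportional to $|\det B|\,e^{-\|B\|_F^2/2}$ with respect to the uniform measure on the sphere times Lebesgue measure.

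I expect this Jacobian computation, and the care needed to make "conditional on the kernel line" rigorous, to be the main obstacle. The density formulation resolves the conditioning: the density factorizes, so $\lambda$ is uniform and independent of $B$ up to the choice of frame. Changing the frame replaces $B$ by $BO$ with $O$ orthogonal, and the density of $B$ is invariant under this, so the conditional law of $B$ does not depend on the frame.

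\emph{Step 2 (independence claims).} We have $GG^\top=BB^\top$, and it does not depend on the frame. Since $B$ is independent of $\lambda$, the kernel line is uniform and independent of $GG^\top$.

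\emph{Step 3 (the quadratic form).} Because $G\lambda=0$, we get $Gx=BV^\top x=Bv$ with $v=V^\top x$, and $\|v\|=\|P_{\lambda^\perp}x\|$. The density of $B$ is invariant under right multiplication by orthogonal matrices. Therefore $\|Bv\|^2\stackrel{d}{=}\|v\|^2\|Be_1\|^2$, and the factor $\|Be_1\|^2$ is independent of $\lambda$.

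\emph{Step 4 (the chi-square law).} It remains to show that $\|Be_1\|^2\sim\chi^2_{M+1}$ under the density proportional to $|\det B|e^{-\|B\|^2/2}$. Condition on the columns $b_2,\dots,b_M$ and let $n$ be a unit normal to their span. Then $|\det B|=|\langle b_1,n\rangle|\cdot c(b_2,\dots,b_M)$. So, given the other columns, the first column $b_1$ behaves as follows:
\begin{itemize}
\item its component $t=\langle b_1,n\rangle$ has density proportional to $|t|e^{-t^2/2}$, which means $t^2\sim\chi^2_2$;
\item its remaining $M-1$ coordinates are independent standard Gaussians.
\end{itemize}
Thus $\|b_1\|^2\sim\chi^2_2+\chi^2_{M-1}=\chi^2_{M+1}$. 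This law does not depend on the other columns, which gives the stated identity. As a check, for $M=1$ the matrix $G$ is a row $g$, and $\|g\|^2\sim\chi^2_2$ directly.
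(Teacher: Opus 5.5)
Your proof is correct, but it takes a different route from the paper's.

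\textbf{The paper's route.} The paper never computes a Jacobian. It writes the thin QR decomposition $G^\top=QR$ and invokes the standard facts that the Gaussian QR frame $Q$ is uniform and independent of $R$. Uniformity of the kernel line, and its independence from $GG^\top=R^\top R$, follow at once. It then writes $\|Gx\|^2$ in law as $r^2\,e_1^\top U^\top RR^\top U e_1$, with $U$ an independent uniform orthogonal matrix. This is identified in law with $r^2(GG^\top)_{11}$, using two facts: $RR^\top$ and $R^\top R$ have the same spectrum, and the Wishart matrix $GG^\top$ is orthogonally invariant. The $\chi^2_{M+1}$ law then comes for free as the squared norm of a row of $G$.

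\textbf{Your route.} You derive the explicit conditional law of $G$ given its kernel line: $B=GV(\lambda)$ has density proportional to $|\det B|\,e^{-\|B\|_F^2/2}$, independently of $\lambda$. You then obtain the chi-square law by a direct column computation. There the $|\det B|$ tilt turns one Gaussian coordinate of $Be_1$ into a $\chi^2_2$, upgrading $\chi^2_M$ to $\chi^2_{M+1}$.

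\textbf{Comparison.} Your argument is self-contained and gives more: the full conditional law of $G$, not just the law of one quadratic form. The price is the Jacobian computation and the bookkeeping for local frames and for the two-to-one parametrization by $\pm\lambda$. You handle both correctly through the right-orthogonal invariance of the density; the $\pm\lambda$ issue only affects normalization. The paper's argument is shorter but rests on imported facts about Gaussian QR factors and Wishart matrices.
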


\begin{proof}
Write the thin QR decomposition of $G^\top$ as $G^\top=QR$. Here $Q$ is an $(M+1)\times M$ matrix with orthonormal columns, so $Q^\top Q=I_M$, and $R$ is an $M\times M$ upper triangular matrix with positive diagonal entries. The orthonormal frame formed by the columns of $Q$ is uniform and independent of $R$. Conditional on its column space $\lambda^\perp$, it is still a uniform orthonormal frame of that space. In particular, the kernel line, which is the orthogonal complement of the column space of $Q$, is uniformly distributed and independent of $R$, hence of $GG^\top=R^\top R$. Thus $Q^\top x$ is uniform on the sphere in $\mathbb R^M$ of radius $r=\|P_{\lambda^\perp}x\|$, independently of $R$. If $U$ is an independent uniform $M\times M$ orthogonal matrix and $e_1$ is the first standard basis vector of $\mathbb R^M$, then, conditional on the kernel line,
\[
\|Gx\|^2\stackrel d=r^2e_1^\top U^\top RR^\top Ue_1.
\]
The matrices $RR^\top$ and $R^\top R$ have the same eigenvalues, and $R^\top R=GG^\top$ has an orthogonally invariant distribution, independent of the kernel line. Consequently, $U^\top RR^\top U$ has the distribution of $GG^\top$. Finally, $(GG^\top)_{11}$ is the squared norm of a row of $G$, and therefore has distribution $\chi^2_{M+1}$.
\end{proof}

\begin{lemma}\label{lem:gale_volume_separation}
Put $\theta_d=2^{-(d+1)}e^{d/400}$. For every $d\geq130$,
\[
\mathbb P(q>\theta_d)\leq e^{-d/400},\qquad\mathbb P(Q^*\leq\theta_d)\leq4e^{-d/230}.
\]
\end{lemma}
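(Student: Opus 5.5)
The first inequality is a direct consequence of the identity $\bar q=\mathbb E q=2^{-(d+1)}$ established just before Proposition~\ref{prop:volume_threshold_bound}. Since $q\geq 0$, Markov's inequality gives
\[
\mathbb P(q>\theta_d)\leq\frac{\bar q}{\theta_d}=e^{-d/400},
\]
and this holds for every $d$, not only $d\geq130$. So all the work lies in the second inequality.

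By \eqref{eq:sizebiased-volume}, the second inequality is equivalent to $\mathbb E[q\mathbf 1_{\{q\leq\theta_d\}}]\leq 4e^{-d/230}\,\bar q$. This says that the mean of $q$ comes mostly from simplices whose volume is exponentially larger than typical: the size-biased law must concentrate above $\theta_d=\bar q\,e^{d/400}$. To describe this law concretely, I would write $q$ as a joint probability. Let $V$ be the $(2d+1)\times(d+1)$ Gaussian matrix of vertex directions, and let $z$ be an independent Gaussian vector in its column span. Then $q$ is the conditional probability, given $V$, that $z\in\cone(V)$. Hence $\mathbb E[q\mathbf 1_{\{q\le\theta\}}]=\mathbb P(z\in\cone(V),\ q(V)\le\theta)$, and $Q^*$ is the law of $q(V)$ conditioned on $z\in\cone(V)$.

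In coordinates of the $(d+1)$-dimensional span, the $d+2$ vectors (the columns of $V$ and $-z$) form a $(d+1)\times(d+2)$ matrix $G$. Up to rotation this can be taken to be standard Gaussian, although this identification requires care. The event $z\in\cone(V)$ is then the event that the kernel line $\lambda$ of $G$ lies in the positive orthant. This is the Gale-dual point of view, and it is where Lemma~\ref{lem:gaussian_kernel_quadratic} (with $M=d+1$) enters: $\lambda$ is uniform and independent of $GG^\top$. Conditioning on $\lambda$ being positive therefore leaves the Gram structure untouched. Under the size-biased law, the cone simply contains a uniformly random direction of the positive orthant, whose coordinates are all of order $d^{-1/2}$ with overwhelming probability.

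The remaining task is to lower-bound $q(V)$ for such a configuration, and this is the step I expect to be the main obstacle. The quantity $q(V)$ is the probability that a fresh Gaussian $z'$ in the span has nonnegative coefficients. I would compare $z'$ with the known interior point $z=V\lambda$. Writing $z'=V\mu$, I would use Lemma~\ref{lem:gaussian_kernel_quadratic} to control $\|Gx\|^2$ along the relevant directions. This yields a $\chi^2_{d+2}$-type description of how far $\mu$ deviates, and shows that $\mu$ lies in the positive orthant with probability at least $2^{-(d+1)}e^{cd}$ for an explicit $c>1/400$. The idea is that conditioning on the cone already containing a Gaussian-sized vector shifts the orthant probability by a factor $e^{\Theta(d)}$, by a Gaussian shift (tilting) argument that tilts toward $\lambda$.

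Concretely, I would bound $\mathbb P^*(q\le\theta_d)$ by the sum of two terms. The first is the probability that $\lambda$ has some coordinate below $\delta d^{-1/2}$, or that the singular values of $G$ fall outside $[(1-\delta)\sqrt d,(1+\delta)\sqrt d]$; standard Gaussian matrix tail bounds make this exponentially small. The second is the probability that, on the complementary good event, the tilted orthant probability still falls below $\theta_d$. The constants $1/230$ and the prefactor $4$ would come from optimizing $\delta$, and the threshold $d\geq130$ would be wherever these explicit tail estimates become effective.
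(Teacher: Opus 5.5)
Your first inequality matches the paper's: it is Markov's inequality with $\bar q=2^{-(d+1)}$. Your size-biasing device, $q(V)=\mathbb P(z\in\cone(V)\mid V)$ for a Gaussian $z$ in the span, is also correct. The second inequality, however, has a genuine gap, and three of the steps you propose would not go through.

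\emph{The matrix $G$ is not standard Gaussian.} Lemma~\ref{lem:gaussian_kernel_quadratic} with $M=d+1$ therefore does not apply. Written in an orthonormal basis of their span, the columns of $V$ form a square matrix $V_0$ with $V_0^\top V_0=V^\top V$. This is Wishart with $2d+1$ degrees of freedom, not $d+1$. Normalizing the columns does not fix this: $d+1$ uniform points of $\mathbb S^{2d}$ have Gram density $\propto\det(U)^{(d-1)/2}$, versus $\det(U)^{-1/2}$ on $\mathbb S^d$. Concretely, $\|V_0^{-1}z\|^2$ concentrates near $\operatorname{tr}((V^\top V)^{-1})\approx1$. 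So your kernel vector $\lambda\propto(V_0^{-1}z,1)$ has last coordinate about $1/\sqrt2$, which is far from a uniform line.

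\emph{Your ``good event'' is rare, not typical,} even for a truly uniform positive direction. The smallest of $d+2$ independent $|Z_j|$ is of order $1/d$, so some coordinate falls below $\delta d^{-1/2}$ with probability tending to one. The singular values of a $(d+1)\times(d+2)$ Gaussian matrix spread over $[0,2\sqrt d]$, with the smallest of order $d^{-1/2}$. So they do not all lie in $[(1-\delta)\sqrt d,(1+\delta)\sqrt d]$.

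\emph{The key bound is only asserted.} The estimate $q(V)\geq2^{-(d+1)}e^{cd}$ via a Gaussian tilt is the entire content of the lemma. Nothing in your sketch bounds the orthant probability of $N(0,(V^\top V)^{-1})$ from below.

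The paper never estimates $q$ pointwise. It realizes the size bias with a second independent Gaussian cone $B$ in the ambient $\mathbb R^{2d+1}$. By the argument of Lemma~\ref{lem:crofton_random_geometric}, the crossing event $\mathcal E$ satisfies $\mathbb P(\mathcal E\mid A)=2\bar q\,q(A)$, so $d\mathbb P^*/d\mathbb P=L(A)=q(A)/\bar q$. Now $G=(A\ B)$ is a genuine $(2d+1)\times(2d+2)$ standard Gaussian matrix. Its kernel line is uniform and independent of $GG^\top$, and $\mathcal E$ only says that the kernel vector is $(a,-b)$ with $a,b>0$.

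The proof then runs a hypothesis test with the statistic $R(A)=\|Au\|^2/(2d+1)$, where $u$ is the normalized all-ones vector. This statistic sees the kernel only through the concentrated quantity $\langle a,u\rangle$, so no coordinatewise control is needed. Under $\mathbb P$, $R(A)\sim\chi^2_{2d+1}/(2d+1)$. Under $\mathbb P^*$ it has the law of $(1-\langle a,u\rangle^2)\chi^2_{2d+2}/(2d+1)$, with $\langle a,u\rangle^2>1/5$ with high probability. Thus $\mathcal T=\{R\leq22/25\}$ satisfies $\mathbb P(\mathcal T)\leq e^{-d/140}$ and $\mathbb P^*(\mathcal T^c)\leq3e^{-d/230}$. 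The change of measure $\mathbb P^*(L\leq e^{d/400})\leq\mathbb P^*(\mathcal T^c)+e^{d/400}\mathbb P(\mathcal T)$ then gives $4e^{-d/230}$. This likelihood-ratio step, which replaces any lower bound on the solid angle itself, is the idea your proposal is missing.
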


\begin{proof}
The first bound is Markov's inequality: $\mathbb P(q>\theta_d)\leq\bar q/\theta_d=e^{-d/400}$.

For the second, let $A,B$ be independent $(2d+1)\times(d+1)$ standard Gaussian matrices. Normalizing their columns gives independent uniform spherical vertices, without changing the positive cones or their normalized solid angles. Let $q(A)$ denote the normalized solid angle of the cone generated by the columns of $A$, that is, the normalized volume of the corresponding spherical simplex, and let $\mathcal E$ be the event that the cones generated by $A$ and by $B$ meet nontrivially. The argument of Lemma~\ref{lem:crofton_random_geometric}, with the first simplex fixed, gives $\mathbb P(\mathcal E\mid A)=2\bar q\,q(A)$ and $\mathbb P(\mathcal E)=2\bar q^2$. Therefore the law of $A$ conditioned on $\mathcal E$ is its size-biased law. Denoting the ordinary law by $\mathbb P$ and this conditioned law by $\mathbb P^*$, we have
\begin{equation}\label{eq:size_bias_likelihood}
\frac{d\mathbb P^*}{d\mathbb P}(A)=\frac{q(A)}{\bar q}=:L(A).
\end{equation}
In particular, under $\mathbb P^*$ the variable $q(A)$ has the law of $Q^*$, so
\[
\mathbb P(Q^*\leq\theta_d)=\mathbb P^*(q(A)\leq\theta_d)=\mathbb P^*(L\leq e^{d/400}).
\]

The $(2d+1)\times(2d+2)$ matrix $G=(A\ B)$ has a one-dimensional kernel almost surely. The event $\mathcal E$ says precisely that its unit kernel vector can be written as $\lambda=(a,-b)$, where $a,b\in\mathbb R^{d+1}$ have all coordinates positive. In Gale-dual terms, crossing is a sign condition on the kernel of $G$. For Gaussian vertices, the kernel line is uniform and independent of $GG^\top$ (Lemma~\ref{lem:gaussian_kernel_quadratic}). This independence is what what allows us to access the size-biased law: conditioning on $\mathcal E$ prescribes only the signs of the kernel vector. Therefore, under the joint law of $(A,B)$ conditioned on $\mathcal E$,
\[
(a,b)\stackrel d=\frac{(|Z_1|,\ldots,|Z_{2d+2}|)}{(\sum_{j=1}^{2d+2}Z_j^2)^{1/2}},
\]
where the $Z_j$ are independent standard Gaussians.

Set $u=(1,\ldots,1)/\sqrt{d+1}\in\mathbb R^{d+1}$ and $R(A)=\|Au\|^2/(2d+1)$. Under $\mathbb P$, we have $R(A)\stackrel d=\chi_{2d+1}^2/(2d+1)$. Conditional on the kernel line, Lemma~\ref{lem:gaussian_kernel_quadratic}, applied to $x=(u,0)\in\mathbb R^{d+1}\times\mathbb R^{d+1}$, gives
\[
R(A)\stackrel d=(1-\langle a,u\rangle^2)\frac{\chi^2_{2d+2}}{2d+1}.
\]
Here the chi-square variable is independent of the kernel line. The identity therefore remains valid under conditioning on $\mathcal E$, which depends only on that line.

On the event that $\sum_{i=1}^{d+1}|Z_i|\geq0.7(d+1)$ and $\sum_{j=1}^{2d+2}Z_j^2\leq2.2(d+1)$, we have
\[
\langle a,u\rangle^2=\frac{(\sum_{i=1}^{d+1}|Z_i|)^2}{(d+1)\sum_{j=1}^{2d+2}Z_j^2}\geq\frac{49}{220}>\frac15.
\]
Write $h(x)=x-1-\log x$. For a chi-square variable with $k$ degrees of freedom, the Chernoff bound at $kx$ is $e^{-kh(x)/2}$, for the upper tail when $x>1$ and the lower tail when $0<x<1$. The function $\sum_{i=1}^{d+1}|Z_i|$ is $\sqrt{d+1}$-Lipschitz with mean $(d+1)\sqrt{2/\pi}$. By Gaussian concentration, the first condition fails with probability at most $\exp\bigl(-(d+1)(\sqrt{2/\pi}-0.7)^2/2\bigr)\leq e^{-d/210}$. By the Chernoff bound, the second fails with probability at most $e^{-(d+1)h(11/10)}\leq e^{-d/214}$.

Let $\mathcal T=\{R(A)\leq22/25\}$. Since $h(22/25)>1/140$, the Chernoff bound gives
\[
\mathbb P(\mathcal T)\leq\exp\bigl(-(d+\tfrac12)h(22/25)\bigr)\leq e^{-d/140}.
\]
Under $\mathbb P^*$, the factor $1-\langle a,u\rangle^2$ is at most $4/5$ except on an event of probability at most $e^{-d/210}+e^{-d/214}$. Outside that event, the identity in law above bounds the conditional probability of $R(A)>22/25$ by
\[
\mathbb P\bigl(\chi^2_{2d+2}>\tfrac{11}{10}(2d+1)\bigr)\leq e^{-(d+1)h(x_d)},\qquad x_d=\frac{11}{10}\cdot\frac{2d+1}{2d+2},
\]
which is at most $e^{-d/230}$ for $d\geq130$. Hence
\[
\mathbb P^*(\mathcal T^c)\leq e^{-d/210}+e^{-d/214}+e^{-d/230}.
\]

Finally, the likelihood-ratio identity gives
\[
\begin{split}
\mathbb P^*(L\leq e^{d/400})&\leq\mathbb P^*(\mathcal T^c)+\mathbb P^*(\mathcal T\cap\{L\leq e^{d/400}\})\\
&=\mathbb P^*(\mathcal T^c)+\mathbb E\bigl[L\mathbf1_{\mathcal T\cap\{L\leq e^{d/400}\}}\bigr]\\
&\leq\mathbb P^*(\mathcal T^c)+e^{d/400}\,\mathbb P(\mathcal T)\leq4e^{-d/230},
\end{split}
\]
because $1/140-1/400>1/230$. This is the asserted bound for $Q^*$.
\end{proof}

\begin{proof}[Proof of Theorem~\ref{thm:volume_threshold_exponential}]
Let $d\geq2400$ and take $\theta_d$ from the preceding lemma. By the expression for the ratio of the construction constants and~\eqref{eq:sizebiased-volume},
\[
\frac{C_d^{\mathrm{vol}}(\theta_d)}{C_d^{\mathrm{comp}}}
=\frac{\mathbb P(Q^*\leq\theta_d)^2}{\mathbb P(q\leq\theta_d)^{d+2}}
\leq\frac{16e^{-d/115}}{(1-e^{-d/400})^{d+2}}
\leq e^{-(d+1)/200}.
\]
For the last inequality, put $y=e^{-d/400}$ and consider the logarithm of the ratio of the two sides, $\log16-d/115+(d+1)/200-(d+2)\log(1-y)$. It is negative at $d=2400$. It is decreasing for $d\geq2400$, since $-1/115+1/200<0$ and the derivative of $-(d+2)\log(1-y)$ is at most $y/(1-y)-(d+2)y/(400(1-y))<0$ there. Proposition~\ref{prop:volume_threshold_bound} therefore proves
\[
B_d\leq e^{-(d+1)/200}\frac{(d+1)^d}{2^{2d+2}d!}.
\]
\end{proof}

We note that the proof uses no direct estimate of the random solid angle. Conditioning two simplices to cross gives its size-biased distribution, and the resulting sign condition on the kernel vector makes the statistic $\|Au\|^2$ typically smaller under this conditioning. The two laws can therefore be distinguished with exponentially small error, which yields the required separation of their solid angles. Finally, Stirling's formula gives $C_d^{\mathrm{comp}}\sim\frac{e}{4\sqrt{2\pi d}}(e/4)^d$, so the construction improves the complete-skeleton bound by an additional exponential factor.

\paragraph{Acknowledgments}
We are indebted to Matthieu Fradelizi for several discussion on this topic and, in particular, for the calculation of the upper bound of $B_{2,\mathbb S}$. The first named author thanks Arnaud de Memsmay and Uli Wagner for helpful discussions. The third author is grateful to Larry Guth for multiple insightful conversations.

\paragraph{Disclosure of AI use.}

An LLM, after being given our calculations for $B_{2,\mathbb S}$, discovered the upper bound on $B_2$. Prompted with the suggestion to use Gale duality, an LLM found the proof Theorem~\ref{thm:volume_threshold_exponential}. We have verified every mathematical claim and assume full responsibility for the results presented in this paper.

\bibliographystyle{plain} 
\bibliography{crossing}

\end{document}